   \providecommand{\fg}{\ifdim\lastskip>\z@\unskip\fi~\frqq}%
\numberwithin{equation}{section}
\numberwithin{figure}{section}
\theoremstyle{plain}
\newtheorem{thm}{\protect\theoremname}[section]
  \theoremstyle{definition}
  \newtheorem{defn}[thm]{\protect\definitionname}
  \theoremstyle{remark}
  \newtheorem{rem}[thm]{\protect\remarkname}
  \theoremstyle{plain}
  \newtheorem{prop}[thm]{\protect\propositionname}
  \theoremstyle{plain}
  \newtheorem{lem}[thm]{\protect\lemmaname}
  \theoremstyle{plain}
  \newtheorem{cor}[thm]{\protect\corollaryname}
  \theoremstyle{plain}
  \newtheorem{fact}[thm]{\protect\factname}
  \theoremstyle{plain}
  \newtheorem*{fact*}{\protect\factname}
  \providecommand{\corollaryname}{Corollary}
  \providecommand{\definitionname}{Definition}
  \providecommand{\factname}{Fact}
  \providecommand{\lemmaname}{Lemma}
  \providecommand{\propositionname}{Proposition}
  \providecommand{\remarkname}{Remark}
\providecommand{\theoremname}{Theorem}
\begin{document}

\global\long\def\eps{\varepsilon}

\global\long\def\pain{{\displaystyle \left(P_{j}\right)}}

\global\long\def\tt#1{\mathtt{#1}}

\global\long\def\tx#1{\mathrm{#1}}

\global\long\def\cal#1{\mathcal{#1}}

\global\long\def\scrib#1{\mathscr{#1}}

\global\long\def\frak#1{\mathfrak{#1}}

\global\long\def\math#1{{\displaystyle #1}}


\global\long\def\pare#1{\Big({\displaystyle #1}\Big)}

\global\long\def\acc#1{\left\{  #1\right\}  }

\global\long\def\cro#1{\left[#1\right]}

\newcommandx\dd[1][usedefault, addprefix=\global, 1=]{\tx d#1}

\global\long\def\DD#1{\tx D#1}

\global\long\def\ppp#1#2{\frac{\partial#1}{\partial#2}}

\global\long\def\ddd#1#2{\frac{\tx d#1}{\tx d#2}}

\global\long\def\norm#1{\left\Vert #1\right\Vert }

\global\long\def\abs#1{\left|#1\right|}

\global\long\def\ps#1{\left\langle #1\right\rangle }

\global\long\def\crocro#1{\left\llbracket #1\right\rrbracket }

\global\long\def\ord#1{\tx{ord}\left(#1\right)}


\global\long\def\wc{\mathbb{C}}

\global\long\def\ww#1{\mathbb{#1}}

\global\long\def\wr{\mathbb{R}}

\global\long\def\wn{\mathbb{N}}

\global\long\def\wn{\mathbb{Z}}

\global\long\def\wq{\mathbb{Q}}

\global\long\def\wcc{\left(\ww C^{2},0\right)}

\global\long\def\wccc{\left(\ww C^{3},0\right)}

\global\long\def\wcn{\left(\ww C^{n},0\right)}

\global\long\def\cn{\ww C^{n},0}


\global\long\def\germ#1{\ww C\left\{  #1\right\}  }

\global\long\def\germinv#1{\ww C\left\{  #1\right\}  ^{\times} }

\global\long\def\form#1{\ww C\left\llbracket #1\right\rrbracket }

\global\long\def\forminv#1{\ww C\left\llbracket #1\right\rrbracket ^{\times}}

\global\long\def\maxid{\mathfrak{m}}

\newcommandx\jet[1][usedefault, addprefix=\global, 1=k]{\mbox{\ensuremath{\tx J}}_{#1}}

\global\long\def\hol#1{\mathcal{O}\left(#1\right)}

\newcommandx\formdiff[1][usedefault, addprefix=\global, 1=]{\widehat{\Omega^{#1}}}


\global\long\def\pp#1{\frac{\partial}{\partial#1}}

\newcommandx\der[2][usedefault, addprefix=\global, 1=\mathbf{x}, 2=]{\tx{Der}_{#2}\left(\form{#1}\right)}

\newcommandx\vf[1][usedefault, addprefix=\global, 1={\ww C^{3},0}]{\chi\left(#1\right)}

\newcommandx\fvf[1][usedefault, addprefix=\global, 1={\ww C^{3},0}]{\widehat{\chi}\left(#1\right)}

\newcommandx\sfvf[1][usedefault, addprefix=\global, 1={\ww C^{3},0}]{\widehat{\chi}_{\omega}\left(#1\right)}

\newcommandx\fisot[2][usedefault, addprefix=\global, 1=\tx{fib}]{\widehat{\tx{Isot}}_{#1}\left(#2\right)}

\global\long\def\isotsect#1#2#3{\tx{Isot}_{\tx{fib}}\left(#1,\cal S_{#2,#3};\tx{Id}\right)}

\global\long\def\ynorm{Y_{\tx{norm}}}

\global\long\def\lie#1{\cal L_{\math{#1}}}


\newcommandx\fdiff[1][usedefault, addprefix=\global, 1={\ww C^{3},0}]{\tx{Diff}_{\tx{fib}}\left(#1\right)}

\newcommandx\diff[1][usedefault, addprefix=\global, 1={\ww C^{3},0}]{\tx{Diff}\left(#1\right)}

\newcommandx\diffid[1][usedefault, addprefix=\global, 1={\ww C^{3},0}]{\tx{Diff}\left(#1;\tx{Id}\right)}

\newcommandx\fdiffid[1][usedefault, addprefix=\global, 1={\ww C^{3},0}]{\tx{Diff}_{\tx{fib}}\left(#1;\tx{Id}\right)}

\newcommandx\diffsect[2][usedefault, addprefix=\global, 1=\theta, 2=\eta]{\tx{Diff}_{\tx{fib}}\left(\cal S_{#1,#2};\tx{Id}\right)}

\newcommandx\diffform[1][usedefault, addprefix=\global, 1={\ww C^{3},0}]{\widehat{\tx{Diff}}\left(#1\right)}

\newcommandx\fdiffform[1][usedefault, addprefix=\global, 1={\ww C^{3},0}]{\widehat{\tx{Diff}}_{\tx{fib}}\left(#1\right)}

\newcommandx\fdiffformid[1][usedefault, addprefix=\global, 1={\ww C^{3},0}]{\widehat{\tx{Diff}}_{\tx{fib}}\left(#1;\tx{Id}\right)}

\newcommandx\gid[1][usedefault, addprefix=\global, 1=k]{\cal G_{\tx{Id}}^{\left(#1\right)}}

\newcommandx\diffformid[1][usedefault, addprefix=\global, 1={\ww C^{3},0}]{\widehat{\tx{Diff}}\left(#1;\tx{Id}\right)}

\newcommandx\sdiff[1][usedefault, addprefix=\global, 1={\ww C^{3},0}]{\tx{Diff}_{\omega}\left(#1\right)}

\newcommandx\sdiffid[1][usedefault, addprefix=\global, 1={\ww C^{3},0}]{\tx{Diff}_{\omega}\left(#1;\tx{Id}\right)}

\newcommandx\sdiffform[1][usedefault, addprefix=\global, 1={\ww C^{3},0}]{\widehat{\tx{Diff}}_{\omega}\left(#1\right)}

\newcommandx\sdiffformid[1][usedefault, addprefix=\global, 1={\ww C^{3},0}]{\widehat{\tx{Diff}}_{\omega}\left(#1;\tx{Id}\right)}


\global\long\def\sect#1#2#3{S\left(#1,#2,#3\right)}

\global\long\def\germsect#1#2{\cal S_{#1,#2}}

\global\long\def\asympsect#1#2{\cal{AS}_{#1,#2}}

\global\long\def\fol#1{\mathcal{F}_{#1}}


\global\long\def\rrel{\mathcal{R}}

\global\long\def\surj{\twoheadrightarrow}

\global\long\def\inj{\hookrightarrow}

\global\long\def\bij{\simeq}

\global\long\def\quotient#1#2{\bigslant{#1}{#2}}


\global\long\def\res#1{\tx{res}\left(#1\right)}

\global\long\def\param{\cal P}

\global\long\def\po{\cal P_{0}}

\global\long\def\pfib{\cal P_{\tx{fib}}}

\global\long\def\pw{\cal P_{\omega}}

\global\long\def\porb{\cal P_{\tx{orb}}}

\global\long\def\ls#1{\Gamma_{#1\lambda}}

\global\long\def\lsp#1{\Gamma'_{#1\lambda}}

\global\long\def\lsr#1#2{\Gamma_{#1\lambda}\left(#2\right)}

\global\long\def\sn{\mathbb{\cal{SN}}}

\global\long\def\sndiag{\mathbb{\cal{SN}}_{\tx{diag}}}

\global\long\def\sndiagnd{\mathbb{\cal{SN}}_{\tx{diag,nd}}}

\global\long\def\snodiag{\cal{SN}_{\tx{diag},0}}

\global\long\def\sns{\mathbb{\cal{SN}}_{\omega}}

\global\long\def\snsdiag{\mathbb{\cal{SN}}_{\mbox{\ensuremath{\tx{diag}}},\omega}}

\global\long\def\snsnd{\mathbb{\cal{SN}}_{\tx{snd}}}

\global\long\def\sno{\cal{SN}_{0}}

\global\long\def\snfib{\cal{SN}_{\tx{fib}}}

\global\long\def\snofib{\cal{SN}_{\tx{fib},0}}

\global\long\def\snoid{\cal{SN}_{\tx{Id},0}}

\global\long\def\fsn{\mathbb{\widehat{\cal{SN}}}}

\global\long\def\fsns{\widehat{\mathbb{\cal{SN}}}_{\omega}}

\global\long\def\fsndiag{\mathbb{\widehat{\cal{SN}}}_{\tx{diag}}}

\global\long\def\fsnnd{\mathbb{\widehat{\cal{SN}}}_{\tx{nd}}}

\global\long\def\fsnfibnd{\mathbb{\widehat{\cal{SN}}}_{\tx{fib,nd}}}

\global\long\def\fsndiagnd{\mathbb{\widehat{\cal{SN}}}_{\tx{diag,nd}}}

\global\long\def\fsno{\cal{\widehat{SN}}_{*}}

\global\long\def\fsnfib{\cal{\widehat{SN}}_{\tx{fib}}}

\global\long\def\mfibid{\mathscr{M}_{\tx{fib,Id}}}

\global\long\def\mfib{\mathscr{M}_{\tx{fib}}}

\global\long\def\mspaceid{\mathscr{M}_{\tx{Id}}}

\global\long\def\ms{\mathscr{M}_{\omega}}

\newcommand{\bigslant}[2]{{\raisebox{.3em}{$#1$}\left/\raisebox{-.3em}{$#2$}\right.}}

\title[Sectorial normalization of doubly-resonant saddle-nodes]{Sectorial analytic normalization for a class of doubly-resonant
saddle-node vector fields in $\left(\ww C^{3},0\right)$}

\author{Amaury Bittmann}

\address{IRMA, Université de Strasbourg, 7 rue René Descartes, 67084 Strasbourg
Cedex, France }

\email{\href{mailto:bittmann@math.unistra.fr}{bittmann@math.unistra.fr}}
\begin{abstract}
In this work, following \cite{bittmann1}, we consider germs of analytic
singular vector fields in $\ww C^{3}$ with an isolated and doubly-resonant
singularity of saddle-node type at the origin. Such vector fields
come from irregular two-dimensional differential systems with two
opposite non-zero eigenvalues, and appear for instance when studying
the irregular singularity at infinity in Painlevé equations $\left(P_{j}\right)_{j=I,\dots,V}$
for generic values of the parameters. Under suitable assumptions,
we prove a theorem of analytic normalization over sectorial domains,
analogous to the classical one due to Hukuhara-Kimura-Matuda \cite{HKM}
for saddle-nodes in $\ww C^{2}$. We also prove that the normalizing
map is essentially unique and \emph{weakly Gevrey-1 summable}.
\end{abstract}

\keywords{Painlevé equations, singular vector field, irregular singularity,
resonant singularity, Gevrey-1 summability}

\maketitle

\section{Introduction}

~

As in \cite{bittmann1}, we consider (germs of) singular vector fields
$Y$ in $\ww C^{3}$ which can be written in appropriate coordinates
$\left(x,\mathbf{y}\right):=\left(x,y_{1},y_{2}\right)$ as 
\begin{eqnarray}
Y & = & x^{2}\pp x+\Big(-\lambda y_{1}+F_{1}\left(x,\mathbf{y}\right)\Big)\pp{y_{1}}+\Big(\lambda y_{2}+F_{2}\left(x,\mathbf{y}\right)\Big)\pp{y_{2}}\,\,\,\,\,,\label{eq: intro}
\end{eqnarray}
where $\lambda\in\ww C^{*}$ and $F_{1},\, F_{2}$ are germs of holomorphic
functions in $\left(\ww C^{3},0\right)$ of homogeneous valuation
(order) at least two. They represent irregular two-dimensional differential
systems having two opposite non-zero eigenvalues:
\[
\begin{cases}
{\displaystyle x^{2}\ddd{y_{1}\left(x\right)}x=-\lambda y_{1}\left(x\right)+F_{1}\left(x,\mathbf{y}\left(x\right)\right)}\\
{\displaystyle x^{2}\ddd{y_{2}\left(x\right)}x=\lambda y_{2}\left(x\right)+F_{2}\left(x,\mathbf{y}\left(x\right)\right)} & .
\end{cases}
\]
 These we call doubly-resonant vector fields of saddle-node type (or
simply \textbf{doubly-resonant saddle-nodes}). We will impose more
(non-generic) conditions in the sequel. The motivation for studying
such vector fields is at least of two types.
\begin{enumerate}
\item There are two independent resonance relations between the eigenvalues
(here $0$, $-\lambda$ and $\lambda$): we generalize then the study
in \cite{MR82,MR83}. More generally, this work is aimed at understanding
singularities of vector fields in $\ww C^{3}$. According to a theorem
of resolution of singularities in dimension less than three in \cite{mcquillan2013almost},
there exists a list of ``final models'' for singularities (\emph{log-canonical})\emph{
}obtained after a finite procedure of \emph{weighted blow-ups} for
three dimensional singular analytic vector fields. In this list, we
find in particular doubly-resonant saddles-nodes, as those we are
interested in. In dimension $2$, these final models have been intensively
studied (for instance by Martinet, Ramis, Ecalle, Ilyashenko, Teyssier,
...) from the view point of both formal and analytic classification
(some important questions remain unsolved, though). In dimension $3$,
the problems of formal and analytic classification are still open
questions, although Stolovitch has performed such a classification
for 1-resonant vector fields in any dimension \cite{Stolo}. The presence
of two kinds of resonance relations brings new difficulties.
\item Our second main motivation is the study of the irregular singularity
at infinity in Painlevé equations$\left(P_{j}\right)_{j=I,\dots,V}$,
for generic values of the parameters (\emph{cf.} \cite{Yoshida85}).
These equations were discovered by Paul Painlevé~\cite{Painleve}
because the only movable singularities of the solutions are poles
(the so-called \emph{Painlevé property}). Their study has become a
rich domain of research since the important work of Okamoto~\cite{OkamotoSpace}.
The fixed singularities of the Painlevé equations, and more particularly
those at infinity, where notably investigated by Boutroux with his
famous \emph{tritronquées} solutions \cite{Boutroux13}. Recently,
several authors provided more complete information about such singularities,
studying ``quasi-linear Stokes phenomena'' and also giving connection
formulas; we refer to the following (non-exhaustive) sources \cite{Joshi,Kapaev,KapaevKitaev,KitaevJosi,ClarksonMcLeod,Costin15}.
Stokes coefficients are invariant under local changes of analytic
coordinates, but do not form a complete invariant of the vector field.
To the best of our knowledge there currently does not exist a general
analytic classification for doubly-resonant saddle-nodes. Such a classification
would provide a new framework allowing to analyze Stokes phenomena
in that class of singularities.
\end{enumerate}
In this paper we provide a theorem of analytic normalization over
sectorial domain \emph{(à la }Hukuhara-Kimura-Matuda \cite{HKM} for
saddle-nodes in $\left(\ww C^{2},0\right)$) for a specific class
(to be defined later on) of doubly-resonant saddle-nodes which contains
the Painlevé case. In~a forthcoming paper we use this theorem in
order to provide a complete analytic classification for this class
of vector fields, based on the ideas in the important works \cite{MR82,MR83,Stolo}.\bigskip{}

In \cite{Yoshida84,Yoshida85} Yoshida shows that doubly-resonant
saddle-nodes arising from the compactification of Painlevé equations
$\left(P_{j}\right)_{j=I,\dots,V}$ (for generic values for the parameters)
are conjugate to vector fields of the form: 
\begin{eqnarray}
 & Z= & x^{2}\pp x+\pare{-\left(1+\gamma y_{1}y_{2}\right)+a_{1}x}y_{1}\pp{y_{1}}\nonumber \\
 &  & +\pare{1+\gamma y_{1}y_{2}+a_{2}x}y_{2}\pp{y_{2}}\,\,\,\,\,,\label{eq: normal form Yoshida-1}
\end{eqnarray}
with $\gamma\in\ww C^{*}$ and $\left(a_{1},a_{2}\right)\in\ww C^{2}$
such that $a_{1}+a_{2}=1$. One should notice straight away that this
``conjugacy'' does not agree with what is traditionally (in particular
in this paper) meant by conjugacy, for Yoshida's transform $\Psi\left(x,\mathbf{y}\right)=\left(x,\psi_{1}\left(x,\mathbf{y}\right),\psi_{2}\left(x,\mathbf{y}\right)\right)$
takes the form
\begin{eqnarray}
\psi_{i}\left(x,\mathbf{y}\right) & = & y_{i}\left(1+\sum_{\substack{\left(k_{0},k_{1},k_{2}\right)\in\ww N^{3}\\
k_{1}+k_{2}\geq1
}
}\frac{q_{i,\mathbf{k}}\left(x\right)}{x^{k_{0}}}y_{1}^{k_{1}+k_{0}}y_{2}^{k_{1}+k_{0}}\right)\,\,\,\,\,,\label{eq: Yoshida-1}
\end{eqnarray}
where each $q_{i,\mathbf{k}}$ is formal power series although $x$
\emph{appears with negative exponents}. This expansion may not even
be a formal Laurent series. It is, though, the asymptotic expansion
along $\left\{ x=0\right\} $ of a function analytic in a domain 
\begin{eqnarray*}
 & \Big\{\left(x,\mathbf{z}\right)\in\text{\ensuremath{S\times\mathbf{D}\left(0,\mathbf{r}\right)\mid\abs{z_{1}z_{2}}<\nu\abs x}}\Big\}
\end{eqnarray*}
for some small $\nu>0$, where $S$ is a sector of opening greater
than $\pi$ with vertex at the origin and $\mathbf{D}\left(0,\mathbf{r}\right)$
is a polydisc of small poly-radius $\mathbf{r}=\left(r_{1},r_{2}\right)\in\left(\ww R_{>0}\right)^{2}$.
Moreover the $\left(q_{i,\mathbf{k}}\left(x\right)\right)_{i,\mathbf{k}}$
are actually Gevrey-1 power series. The drawback here is that the
transforms are convergent on regions so small that taken together
they cannot cover an entire neighborhood of the origin in $\ww C^{3}$
(which seems to be problematic to obtain an analytic classification
\emph{à la }Martinet-Ramis).

\bigskip{}

Several authors studied the problem of convergence of formal transformations
putting vector fields as in $\left(\mbox{\ref{eq: intro}}\right)$
into ``normal forms''. Shimomura, improving on a result of Iwano
\cite{Iwano}, shows in \cite{Shimo} that analytic doubly-resonant
saddle-nodes satisfying more restrictive conditions are conjugate
(formally and over sectors) to vector fields of the form
\begin{eqnarray*}
 &  & x^{2}\pp x+\left(-\lambda+a_{1}x\right)y_{1}\pp{y_{1}}+\left(\lambda+a_{2}x\right)y_{2}\pp{y_{2}}\,\,\,\,
\end{eqnarray*}
\emph{via} a diffeomorphism whose coefficients have asymptotic expansions
as $x\rightarrow0$ in sectors of opening greater than $\pi$. 

Stolovitch then generalized this result to any dimension in \cite{Stolo}.
More precisely, Stolovitch's work offers an analytic classification
of vector fields in $\ww C^{n+1}$ with an irregular singular point,
without further hypothesis on eventual additional resonance relations
between eigenvalues of the linear part. However, as Iwano and Shimomura
did, he needed to impose other assumptions, among which the condition
that the restriction of the vector field to the invariant hypersurface
$\acc{x=0}$ is a linear vector field. In \cite{DeMaesschalck}, the
authors obtain a \emph{Gevrey-1 summable} ``normal form'', though
not as simple as Stolovitch's one and not unique \emph{a priori},
but for more general kind of vector field with one zero eigenvalue.
However, the same assumption on hypersurface $\acc{x=0}$ is required
(the restriction is a linear vector field). Yet from \cite{Yoshida85}
stems the fact that this condition is not met in the case of Painlevé
equations $\left(P_{j}\right)_{j=I,\dots,V}$. 

In comparison, we merely ask here that the restricted vector field
be orbitally linearizable (see Definition \ref{def: asympt hamil}),
\emph{i.e.} the foliation\emph{ }induced by $Y$ on $\acc{x=0}$ (and
not the vector field $Y_{|\acc{x=0}}$ itself) be linearizable. The
fact that this condition is fulfilled by the singularities of Painlevé
equations formerly described is well-known.

\subsection{Scope of the paper}

~

The action of local analytic~/~formal diffeomorphisms $\Psi$ fixing
the origin on local holomorphic vector fields $Y$ of type $\left(\mbox{\ref{eq: intro}}\right)$
by change of coordinates is given by
\begin{eqnarray*}
\Psi_{*}Y & := & \mathrm{D}\Psi\left(Y\right)\circ\Psi^{-1}~.
\end{eqnarray*}
In \cite{bittmann1} we performed the formal classification of such
vector fields by exhibiting an explicit universal family of vector
fields for the action of formal changes of coordinates at $0$ (called
a family of normal forms). Such a result seems currently out of reach
in the analytic category: it is unlikely that an explicit universal
family for the action of local analytic changes of coordinates be
described anytime soon. If we want to describe the space of equivalent
classes (of germs of a doubly-resonant saddle-node under local analytic
changes of coordinates) with same formal normal form, we therefore
need to find a complete set of invariants which is of a different
nature. We call \textbf{moduli space} this quotient space and would
like to give it a (non-trivial) presentation based on functional invariants
\emph{à la} Martinet-Ramis \cite{MR82,MR83}. 

The main ingredient to obtain such analytic invariant is to prove
first the existence of analytic sectorial normalizing maps (over a
pair of opposite ``wide'' sectors of opening greater than $\pi$
whose union covers a full punctured neighborhood of $\left\{ x=0\right\} $).
This is the main result of the present paper. We have not been able
to perform this normalization in such a generality, and only deal
here with $x$-fibered local analytic conjugacies acting on vector
fields of the form (\ref{eq: intro}) with some additional assumptions
detailed further down (see Definitions \ref{def: drsn}, \ref{def: non-deg}
and \ref{def: asympt hamil}). Importantly, these hypothesis are met
in the case of Painlevé equations mentioned above. \medskip{}

Our approach has some geometric flavor, since we avoid the use of
fixed-point methods altogether to establish the existence of the normalizing
maps, and generalize instead the approach of Teyssier~\cite{teyssier2004equation,Teyssier03}
relying on path-integration of well-chosen $1$-forms (following Arnold's
method of characteristics~\cite{Arnold}).

As a by-product of this normalization we deduce that the normalizing
sectorial diffeomorphisms are \emph{weakly} Gevrey-$1$ asymptotic
to the normalizing formal power series of \cite{bittmann1}, retrospectively
proving their \emph{weak $1$-summability} (see subsection \ref{sub:Weak-Gevrey-1-power}
for definition). When the vector field additionally supports a symplectic
transverse structure (which is again the case of Painlevé equations)
we prove that the (essentially unique) sectorial normalizing map is
realized by a transversally symplectic diffeomorphism.

\subsection{Definitions and main results}

~

To state our main results we need to introduce some notations and
nomenclature. 
\begin{itemize}
\item For $n\in\ww N_{>0}$, we denote by $\left(\ww C^{n},0\right)$ an
(arbitrary small) open neighborhood of the origin in $\ww C^{n}$.
\item We denote by $\germ{x,\mathbf{y}}$, with $\mathbf{y}=\left(y_{1},y_{2}\right)$,
the $\ww C$-algebra of germs of holomorphic functions at the origin
of $\ww C^{3}$, and by $\germ{x,\mathbf{y}}^{\times}$ the group
of invertible elements for the multiplication (also called units),
\emph{i.e. }elements $U$ such that $U\left(0\right)\neq0$.
\item $\vf$ is the Lie algebra of germs of singular holomorphic vector
fields at the origin $\ww C^{3}$. Any vector field in $\vf$ can
be written as 
\[
Y={\displaystyle b\left(x,y_{1},y_{2}\right)\pp x+b_{1}\left(x,y_{1},y_{2}\right)\pp{y_{1}}+b_{2}\left(x,y_{1},y_{2}\right)\pp{y_{2}}}
\]
with $b,b_{1},b_{2}\in\germ{x,y_{1},y_{2}}$ vanishing at the origin.
\item $\diff$ is the group of germs of a holomorphic diffeomorphism fixing
the origin of $\ww C^{3}$. It acts on $\vf$ by conjugacy: for all
\[
\left(\Phi,Y\right)\in\diff\times\vf
\]
we define the push-forward of $Y$ by $\Phi$ by
\begin{equation}
\Phi_{*}\left(Y\right):=\left(\mbox{D}\Phi\cdot Y\right)\circ\Phi^{-1}\qquad,\label{eq: push forward intro}
\end{equation}
where $\mbox{D}\Phi$ is the Jacobian matrix of $\Phi$.
\item $\fdiff$ is the subgroup of $\diff$ of fibered diffeomorphisms preserving
the $x$-coordinate, \emph{i.e. }of the form $\left(x,\mathbf{y}\right)\mapsto\left(x,\phi\left(x,\mathbf{y}\right)\right)$.
\item We denote by $\fdiffid$ the subgroup of $\fdiff$ formed by diffeomorphisms
tangent to the identity.
\end{itemize}
All these concepts have \emph{formal} analogues, where we only suppose
that the objects are defined with formal power series, not necessarily
convergent near the origin. 
\begin{defn}
\label{def: drsn}A \textbf{diagonal doubly-resonant saddle-node}
is a vector field $Y\in\vf$ of the form 
\begin{eqnarray*}
Y & = & x^{2}\pp x+\Big(-\lambda y_{1}+F_{1}\left(x,\mathbf{y}\right)\Big)\pp{y_{1}}+\Big(\lambda y_{2}+F_{2}\left(x,\mathbf{y}\right)\Big)\pp{y_{2}}\,\,\,\,\,,
\end{eqnarray*}
with $\lambda\in\ww C^{*}$ and $F_{1},F_{2}\in\germ{x,\mathbf{y}}$
of order at least two. We denote by $\sndiag$ the set of such vector
fields.\end{defn}
\begin{rem}
One can also define the foliation associate to a diagonal doubly-resonant
saddle-node in a geometric way. A vector field $Y\in\vf$ is orbitally
equivalent to a diagonal doubly-resonant saddle-node $\Big($\emph{i.e.}
$Y$ is conjugate to some $VX$, where $V\in\germ{x,\mathbf{y}}^{\times}$
and $X\in\snfib$$\Big)$ if and only if the following conditions
hold:
\begin{enumerate}
\item $\tx{Spec}\left(\tx D_{0}Y\right)=\acc{0,-\lambda,\lambda}$ with
$\lambda\neq0$;
\item there exists a germ of irreducible analytic hypersurface $\mathscr{H}_{0}=\acc{S=0}$
which is transverse to the eigenspace $E_{0}$ (corresponding to the
zero eigenvalue) at the origin, and which is stable under the flow
of $Y$;
\item $\cal L_{Y}\left(S\right)=U.S^{2}$, where $\cal L_{Y}$ is the Lie
derivative of $Y$ and $U\in\germ{x,\mathbf{y}}^{\times}$. 
\end{enumerate}
\end{rem}
By Taylor expansion up to order $1$ with respect to $\mathbf{y}$,
given a vector field $Y\in\sndiag$ written as in (\ref{eq: intro})
we can consider the associate 2-dimensional system:
\begin{equation}
x^{2}\ddd{\mathbf{y}}x=\mathbf{\alpha}\left(x\right)+\mathbf{A}\left(x\right)\mathbf{y}\left(x\right)+\mathbf{F}\left(x,\mathbf{y}\left(x\right)\right)\qquad,\label{eq: system doubly resonant saddle node}
\end{equation}
with ${\displaystyle \mathbf{y}}=\left(y_{1},y_{2}\right)$, such
that the following conditions hold: 
\begin{itemize}
\item ${\displaystyle \alpha\left(x\right)=\left(\begin{array}{c}
\alpha_{1}\left(x\right)\\
\alpha_{2}\left(x\right)
\end{array}\right)},$ with $\alpha_{1},\alpha_{2}\in\germ x$ and ${\displaystyle {\displaystyle \alpha_{1},\alpha_{2}\in\tx O\left(x^{2}\right)}}$
\item ${\displaystyle \mathbf{A}\left(x\right)\in\mbox{Mat}_{2,2}\left(\germ x\right)}$
with ${\displaystyle \mathbf{A}\left(0\right)=\tx{diag}\left(-\lambda,\lambda\right)}$,
$\lambda\in\ww C^{*}$
\item ${\displaystyle \mathbf{F}\left(x,\mathbf{y}\right)=\left(\begin{array}{c}
F_{1}\left(x,\mathbf{y}\right)\\
F_{2}\left(x,\mathbf{y}\right)
\end{array}\right)}$, with ${\displaystyle F_{1},F_{2}\in\germ{x,\mathbf{y}}}$ and ${\displaystyle F_{1},F_{2}\in\tx O\left(\norm{\mathbf{y}}^{2}\right)}$.
\end{itemize}
Based on this expression, we state:
\begin{defn}
\label{def: non-deg} The \textbf{residue} of $Y\in\sndiag$ is the
complex number
\[
{\displaystyle \tx{res}\left(Y\right):=\left(\frac{\mbox{Tr}\left(\mathbf{A}\left(x\right)\right)}{x}\right)_{\mid x=0}}\qquad.
\]
We say that $Y$ is\textbf{ non-degenerate }(\emph{resp. }\textbf{strictly
non-degenerate) }if $\tx{res}\left(Y\right)\notin\ww Q_{\leq0}$ (\emph{resp.
}$\Re\left(\tx{res}\left(Y\right)\right)>0$).\end{defn}
\begin{rem}
It is obvious that there is an action of $\fdiff[\ww C^{3},0,\tx{Id}]$
on $\sndiag$. The residue is an invariant of each orbit of $\snfib$
under the action of $\fdiff[\ww C^{3},0,\tx{Id}]$ by conjugacy (see~\cite{bittmann1}).
\end{rem}
The main result of \cite{bittmann1} can now be stated as follows:
\begin{thm}
\cite{bittmann1}\label{thm: forme normalel formelle} Let $Y\in\sndiag$
be non-degenerate. Then there exists a unique formal fibered diffeomorphism
$\hat{\Phi}$ tangent to the identity such that: 
\begin{eqnarray}
\hat{\Phi}_{*}\left(Y\right) & = & x^{2}\pp x+\left(-\lambda+a_{1}x+c_{1}\left(y_{1}y_{2}\right)\right)y_{1}\pp{y_{1}}\nonumber \\
 &  & +\left(\lambda+a_{2}x+c_{2}\left(y_{1}y_{2}\right)\right)y_{2}\pp{y_{2}}\,\,\,\,,\label{eq: fibered normal form-1-2}
\end{eqnarray}
where $\lambda\in\ww C^{*}$, ${\displaystyle c_{1},c_{2}\in\form v}$
are formal power series in $v=y_{1}y_{2}$ without constant term and
$a_{1},a_{2}\in\ww C$ are such that ${\displaystyle a_{1}+a_{2}=\tx{res}\left(Y\right)\in\ww C\backslash\ww Q_{\leq0}}$.\end{thm}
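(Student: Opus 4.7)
My plan is to construct the normalizing formal fibered diffeomorphism $\hat{\Phi}$ iteratively, via a Poincar\'{e}--Dulac-type procedure adapted to the specific resonance structure $\{0,-\lambda,\lambda\}$. As a preparatory step, I would first kill the ``pure $x$'' obstruction $\alpha(x)$ by constructing a formal invariant curve $\mathbf{y}=\phi(x)$ of $Y$. The defining equation $x^{2}\phi'(x)=\alpha(x)+\mathbf{A}(x)\phi(x)+\mathbf{F}(x,\phi(x))$ is uniquely solvable with $\phi\in\form{x}^{2}$ and $\phi(0)=0$: since $\alpha=O(x^{2})$ and $\mathbf{A}(0)=\tx{diag}(-\lambda,\lambda)$ is invertible, the Taylor coefficients of $\phi$ are determined recursively by algebraic relations involving only $\pm\lambda$. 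The fibered tangent-to-identity translation $(x,\mathbf{y})\mapsto(x,\mathbf{y}-\phi(x))$ brings $Y$ into a form for which $\{\mathbf{y}=0\}$ is invariant ($\alpha\equiv 0$), leaving $\tx{res}(Y)$ unchanged.

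Next, I would split $Y=Y_{0}+R$ with linear principal part $Y_{0}:=x^{2}\pp x-\lambda y_{1}\pp{y_{1}}+\lambda y_{2}\pp{y_{2}}$ and conjugate by $\Phi=\exp(S)$, where $S=g_{1}\pp{y_{1}}+g_{2}\pp{y_{2}}$ is a formal vector field tangent to identity (since a fibered diffeomorphism automatically preserves $x^{2}\pp x$, no $\pp x$ component is needed in $S$). A direct computation yields
\begin{equation*}
\cro{Y_{0},x^{k}y_{1}^{k_{1}}y_{2}^{k_{2}}\pp{y_{i}}}=k\,x^{k+1}y_{1}^{k_{1}}y_{2}^{k_{2}}\pp{y_{i}}+\lambda\,\mu_{i}(k_{1},k_{2})\,x^{k}y_{1}^{k_{1}}y_{2}^{k_{2}}\pp{y_{i}},
\end{equation*}
with $\mu_{1}(k_{1},k_{2})=k_{2}-k_{1}+1$ and $\mu_{2}(k_{1},k_{2})=k_{2}-k_{1}-1$. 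Thus $\tx{ad}(Y_{0})$ is strictly upper-triangular in the $x$-adic filtration, with ``diagonal'' eigenvalue $\lambda\mu_{i}$ vanishing exactly on the monomials $x^{k}(y_{1}y_{2})^{j}y_{i}\pp{y_{i}}$. Outside this kernel, the diagonal is invertible, so non-resonant monomials can be eliminated by descent on the $x$-adic degree. Within the kernel, the superdiagonal multiplication by $k$ allows one to further reduce the $x$-degree of resonant monomials whenever $k\geq 2$ (division by $k-1\neq 0$), leaving only the targeted normal-form monomials $xy_{i}\pp{y_{i}}$ and $(y_{1}y_{2})^{j}y_{i}\pp{y_{i}}$ with $j\geq 1$.

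The overall reduction then proceeds by induction on a weighted degree combining the $\mathbf{y}$-valuation with the $x$-adic filtration, solving a linear homological equation at each stratum. The main subtlety concerns the elimination of the ``mixed'' resonant monomials $x(y_{1}y_{2})^{j}y_{i}\pp{y_{i}}$ with $j\geq 1$, which are invisible to the linearization $\tx{ad}(Y_{0})$ alone; one must exploit the nonlinear feedback obtained by bracketing against the partially-constructed normal-form terms $a_{i}xy_{i}\pp{y_{i}}$ and $c_{i}(y_{1}y_{2})y_{i}\pp{y_{i}}$. The resulting scalar obstruction equations take the form $(n_{1}\tx{res}(Y)+n_{2})\,c=(\text{known data})$ for positive integers $n_{1},n_{2}$, and the hypothesis $\tx{res}(Y)\notin\wq_{\leq 0}$ enters precisely here to guarantee $n_{1}\tx{res}(Y)+n_{2}\neq 0$ at every step, so that each equation has a unique solution that pins down the parameters $a_{i}$ and the Taylor coefficients of $c_{i}$. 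Uniqueness of $\hat{\Phi}$ then follows by applying the same analysis to any putative self-conjugacy of (\ref{eq: fibered normal form-1-2}): its infinitesimal generator $S$ would satisfy $[Y_{0},S]=0$ modulo each graded stratum while being tangent to identity, and the residue non-degeneracy again forces $S\equiv 0$. The principal obstacle is the careful graded bookkeeping of this induction --- simultaneously tracking how $R$-corrections propagate through strata, which monomials are eliminated versus retained, and confirming that the residue is the only arithmetical obstruction encountered.
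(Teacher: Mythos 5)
First, a point of reference: this paper does not actually prove Theorem \ref{thm: forme normalel formelle} --- it is quoted from the author's earlier work \cite{bittmann1} and used as a black box here. So there is no in-paper proof to compare against, and your proposal can only be judged on its own terms. On those terms, your overall strategy (straighten the formal center manifold, then run a Poincar\'e--Dulac induction adapted to the resonance set $\{0,-\lambda,\lambda\}$, with a secondary reduction of the resonant terms governed by the residue) is the standard and correct route to this statement, and your bracket computation $\cro{Y_{0},x^{k}y_{1}^{k_{1}}y_{2}^{k_{2}}\pp{y_{i}}}=k\,x^{k+1}y_{1}^{k_{1}}y_{2}^{k_{2}}\pp{y_{i}}+\lambda\mu_{i}x^{k}y_{1}^{k_{1}}y_{2}^{k_{2}}\pp{y_{i}}$ is right, as is the identification of the kernel with the monomials $x^{k}\left(y_{1}y_{2}\right)^{j}y_{i}\pp{y_{i}}$.

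The one place where your plan, read literally, would go wrong is the elimination of the resonant monomials $x^{k}\left(y_{1}y_{2}\right)^{j}y_{i}\pp{y_{i}}$ with $k\geq2$ \emph{and} $j\geq1$. You propose to remove these using the superdiagonal of $\tx{ad}\left(Y_{0}\right)$ alone, ``dividing by $k-1$''. But the generator $S=s\,x^{k-1}\left(y_{1}y_{2}\right)^{j}y_{i}\pp{y_{i}}$ also brackets against the already-installed terms $a_{1}xy_{1}\pp{y_{1}}+a_{2}xy_{2}\pp{y_{2}}$, and that bracket lands at the \emph{same} $x$-degree $k$: one computes $\cro{a_{1}xy_{1}\pp{y_{1}}+a_{2}xy_{2}\pp{y_{2}},\,S}=j\left(a_{1}+a_{2}\right)s\,x^{k}\left(y_{1}y_{2}\right)^{j}y_{i}\pp{y_{i}}$. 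Hence the true divisor is $\left(k-1\right)+j\,\tx{res}\left(Y\right)$, not $k-1$, and its non-vanishing for all $k\geq1$, $j\geq0$ with $\left(k-1,j\right)\neq\left(0,0\right)$ is \emph{exactly} the hypothesis $\tx{res}\left(Y\right)\notin\ww Q_{\leq0}$ (the case $j\geq1$, $k=1$ needs $\tx{res}\left(Y\right)\neq0$; the case $j\geq1$, $k\geq2$ needs $\tx{res}\left(Y\right)\neq-\left(k-1\right)/j\in\ww Q_{<0}$). Your closing formula $\left(n_{1}\tx{res}\left(Y\right)+n_{2}\right)c=\left(\text{known data}\right)$ is therefore the correct one, but with $n_{1},n_{2}$ \emph{non-negative} integers not both zero rather than both positive, and it governs the whole resonant reduction, not only the ``mixed'' stratum $k=1$, $j\geq1$ that you single out. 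The same correction applies to your uniqueness argument: the relevant operator is $\tx{ad}$ of the full normal form (not of $Y_{0}$), and its kernel on fibered fields of order $\geq2$ is trivial precisely because the only resonant monomials with vanishing divisor, namely $y_{i}\pp{y_{i}}$, are linear. With these adjustments the plan closes up; the remaining work is the (genuinely tedious but standard) bookkeeping of the double induction that you already acknowledge.
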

\begin{defn}
The vector field obtained in (\ref{eq: fibered normal form-1-2})
is called the \textbf{formal normal form }of $Y$. The formal fibered
diffeomorphism $\hat{\Phi}$ is called the \textbf{formal normalizing
map} of $Y$.
\end{defn}
The above result is valid for formal objects, without considering
problems of convergence. The first main result in this work states
that this formal normalizing map is analytic in sectorial domains,
under some additional assumptions that we are now going to precise.
\begin{defn}
\label{def: asympt hamil}~
\begin{itemize}
\item We say that a germ of a vector field $X$ in $\left(\ww C^{2},0\right)$
is \textbf{orbitally linear} if 
\[
X=U\left(\mathbf{y}\right)\left(\lambda_{1}y_{1}\pp{y_{1}}+\lambda_{2}y_{2}\pp{y_{2}}\right)\,\,,
\]
for some ${\displaystyle U\left(\mathbf{y}\right)\in\germ{\mathbf{y}}^{\times}}$
and $\left(\lambda_{1},\lambda_{2}\right)\in\ww C^{2}$.
\item We say that a germ of vector field $X$ in $\left(\ww C^{2},0\right)$
is analytically (\emph{resp. formally}) \textbf{orbitally linearizable}
if $X$ is analytically (\emph{resp.} formally) conjugate to an orbitally
linear vector field.
\item We say that a diagonal doubly-resonant saddle-node $Y\in\sndiag$
is \textbf{div-integrable} if $Y_{\mid\acc{x=0}}\in\vf[\ww C^{2},0]$
is (analytically) orbitally linearizable. 
\end{itemize}
\end{defn}
\begin{rem}
Alternatively we could say that the foliation associated to ${\displaystyle Y_{\mid\acc{x=0}}}$
is linearizable. Since ${\displaystyle Y_{\mid\acc{x=0}}}$ is analytic
at the origin of $\ww C^{2}$ and has two opposite eigenvalues, it
follows from a classical result of Brjuno (see \cite{Martinet}),
that $Y_{\mid\acc{x=0}}$ is analytically orbitally linearizable if
and only if it is formally orbitally linearizable.\end{rem}
\begin{defn}
We denote by $\snodiag$ the set of strictly non-degenerate diagonal
doubly-resonant saddle-nodes which are div-integrable.
\end{defn}
The vector field corresponding to the irregular singularity at infinity
in the Painlevé equations $\left(P_{j}\right)_{j=I,\dots,V}$ is orbitally
equivalent to an element of $\snofib$, for generic values of the
parameters (see \cite{Yoshida85}).

We can now state the first main result of our paper (we refer to subsection
\ref{sub:Weak-Gevrey-1-power} for the precise definition of \emph{weak
1-summability}). 
\begin{thm}
\label{Th: Th drsn}Let $Y\in\snodiag$ and let $\hat{\Phi}$ (given
by Theorem \ref{thm: forme normalel formelle}) be the unique formal
fibered diffeomorphism tangent to the identity such that 
\begin{eqnarray*}
\hat{\Phi}_{*}\left(Y\right) & = & x^{2}\pp x+\left(-\lambda+a_{1}x+c_{1}\left(y_{1}y_{2}\right)\right)y_{1}\pp{y_{1}}+\left(\lambda+a_{2}x+c_{2}\left(y_{1}y_{2}\right)\right)y_{2}\pp{y_{2}}\\
 & =: & \ynorm\,\,,
\end{eqnarray*}
where $\lambda\neq0$ and ${\displaystyle c_{1}\left(v\right),c_{2}\left(v\right)\in v\form v}$
are formal power series without constant term. Then:
\begin{enumerate}
\item the normal form $\ynorm$ is analytic \emph{(i.e. ${\displaystyle c_{1},c_{2}\in\germ v}$),
}and it also is div-integrable,\emph{ }i.e. $c_{1}+c_{2}=0$;
\item \label{enu:the-formal-normalizing}the formal normalizing map $\hat{\Phi}$
is weakly 1-summable in every direction, except $\arg\left(\pm\lambda\right)$;
\item \label{enu:there-exist-analytic}there exist analytic sectorial fibered
diffeomorphisms $\Phi_{+}$ and $\Phi_{-}$, (asymptotically) tangent
to the identity, defined in sectorial domains of the form ${\displaystyle S_{+}\times\left(\ww C^{2},0\right)}$
and ${\displaystyle S_{-}\times\left(\ww C^{2},0\right)}$ respectively,
where 
\begin{eqnarray*}
S_{+} & := & \acc{x\in\ww C\mid0<\abs x<r\mbox{ and }\abs{\arg\left(\frac{x}{i\lambda}\right)}<\frac{\pi}{2}+\epsilon}\\
S_{-} & := & \acc{x\in\ww C\mid0<\abs x<r\mbox{ and }\abs{\arg\left(\frac{-x}{i\lambda}\right)}<\frac{\pi}{2}+\epsilon}
\end{eqnarray*}
(for any ${\displaystyle \epsilon\in\left]0,\frac{\pi}{2}\right[}$
and some $r>0$ small enough), which admit $\hat{\Phi}$ as weak Gevrey-1
asymptotic expansion in these respective domains, and which conjugate
$Y$ to $\ynorm$. Moreover $\Phi_{+}$ and $\Phi_{-}$ are the unique
such germs of analytic functions in sectorial domains (see Definition
\ref{def: sectorial diff}).
\end{enumerate}
\end{thm}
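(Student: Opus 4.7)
The strategy is to build on the formal normalization $\hat{\Phi}$ produced by Theorem \ref{thm: forme normalel formelle} and to realize it analytically over the two sectors $S_{\pm}$ by a path-integration argument \emph{à la} Teyssier--Arnold, deliberately avoiding fixed-point iteration. First I would interpret $\Phi_{*}(Y)=\ynorm$ as a quasi-linear PDE on the unknown fibered diffeomorphism $\Phi$ tangent to the identity, and then, after conjugating $Y$ by a sufficiently large jet of $\hat{\Phi}$, reduce the task to solving a cohomological equation whose right-hand side is formally flat along $\acc{x=0}$.

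The core of the argument is the following. Introduce the homotopy $Y_{s}:=(1-s)Y+s\ynorm$ for $s\in[0,1]$ and look for a time-dependent sectorial vector field $W_{s}$ whose time-$1$ flow realizes the desired conjugacy. The defining equation is $\partial_{s}Y_{s}+[W_{s},Y_{s}]=0$, i.e. the cohomological equation
\[
\lie{Y_{s}}(W_{s})=\ynorm-Y~.
\]
I would rewrite this as a transport equation along trajectories of $Y_{s}$ and integrate along characteristics. The choice of sectors $S_{\pm}$ of opening $>\pi$ is imposed by the geometry of the flow of $x^{2}\pp x$, whose trajectories tend to $0$ as the time parameter goes to infinity in the directions $\pm i\lambda$; on $S_{\pm}\times\mathbf{D}(0,\mathbf{r})$ the backward characteristics issue from $\acc{x=0}$ while the transverse coordinates $\mathbf{y}$ can be controlled, since the linear part has spectrum $\acc{-\lambda,\lambda}$ on the $\mathbf{y}$-plane. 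This is precisely the regime in which the path integral defining $W_{s}$ converges absolutely and holomorphically. Weighted sup-norms on sectors combined with a Picard-type scheme in $s$ then produce holomorphic $W_{s}$, and hence $\Phi_{\pm}$.

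Item (\ref{enu:there-exist-analytic}) follows by construction, once one checks that $\Phi_{\pm}$ admits $\hat{\Phi}$ as weak Gevrey-$1$ asymptotic expansion at $\acc{x=0}$; this is obtained by tracking the size of successive partial sums of $\hat{\Phi}$ inside the remainder formula and by invoking the Gevrey-type bounds on the formal normalizer already present in the construction of \cite{bittmann1}. Item (\ref{enu:the-formal-normalizing}) is then a direct reformulation, since two overlapping sectorial realizations covering every direction except $\arg(\pm\lambda)$ amount exactly to a weak $1$-sum of $\hat{\Phi}$. For item (1), pushing $Y$ forward by the now analytic $\Phi_{+}$ on $S_{+}\times\mathbf{D}(0,\mathbf{r})$ shows that $\ynorm$ has convergent coefficients there, which together with the explicit form of $\ynorm$ forces $c_{1},c_{2}\in\germ v$. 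The equality $c_{1}+c_{2}=0$ comes from restricting to $\acc{x=0}$: $Y_{\mid\acc{x=0}}$ is orbitally linearizable by assumption, hence $\ynorm_{\mid\acc{x=0}}$ is formally orbitally linearizable, and a direct computation on the vector field $\left(-\lambda+c_{1}(y_{1}y_{2})\right)y_{1}\pp{y_{1}}+\left(\lambda+c_{2}(y_{1}y_{2})\right)y_{2}\pp{y_{2}}$ shows that this forces $c_{1}+c_{2}=0$.

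Uniqueness of $\Phi_{\pm}$ reduces to the fact that any fibered sectorial self-conjugacy of $\ynorm$ asymptotic to the identity must be the identity; one exploits the first integral $y_{1}y_{2}$ of $\ynorm_{\mid\acc{x=0}}$ and an expansion in adapted resonant monomials to see that all coefficients vanish formally and hence vanish in the weak Gevrey-$1$ class. The main obstacle I expect to meet is the simultaneous control of both transverse resonances $\acc{-\lambda,\lambda}$: in contrast with the classical two-dimensional saddle-node, the flow presents two opposite exponential regimes along trajectories, and one must carefully split the right-hand side of the cohomological equation to produce an integrand whose path integral converges on a domain of opening larger than $\pi$ in $x$; securing sharp enough estimates to upgrade Poincaré-flat asymptotics to weak Gevrey-$1$ asymptotics will be the delicate analytical step.
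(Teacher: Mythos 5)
Your overall geometric strategy --- integration along characteristics asymptotic to the origin over sectors of opening greater than $\pi$ bisected by $\arg\left(\pm i\lambda\right)$, and uniqueness via an expansion in monomials $\mathbf{y}^{\mathbf{j}}$ whose coefficients must remain bounded on a wide sector --- is the paper's, but two steps of your reduction contain genuine gaps. First, conjugating $Y$ by a finite jet of $\hat{\Phi}$ does not put the remainder in the form needed for the path integrals to converge. A polynomial truncation only makes $Y-\ynorm$ of high order in $\left(x,\mathbf{y}\right)$ \emph{jointly}; the characteristic method needs a remainder divisible by a high power of $x$ \emph{uniformly in} $\mathbf{y}$ (the paper works with $x^{N+2}R^{\left(N+2\right)}$), because along the asymptotic paths $\abs{x\left(t\right)}$ decays like $1/t$ while $\norm{\mathbf{y}\left(t\right)}$ decays only like a small power of $t$; a term such as $y_{1}^{N}$ carrying no factor of $x$ is not integrable along the path. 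Producing a transformation that is exact on $\acc{x=0}$ and kills the remainder to order $x^{N}$ requires the infinite-order-in-$\mathbf{y}$ preparation of Section 3: Brjuno linearization of $Y_{\mid\acc{x=0}}$ (this is where div-integrability and the analyticity of $c_{1}=-c_{2}$ actually enter --- note that your homotopy $Y_{s}=\left(1-s\right)Y+s\ynorm$ presupposes $\ynorm$ analytic, so you cannot defer the analyticity of $c_{1},c_{2}$ until after the sectorial construction), the $1$-summable center manifold via Ramis--Sibuya, the straightening of the two invariant hypersurfaces by a majorant-series argument in the Borel plane, and the order-by-order elimination in $x$. None of this is a ``jet of $\hat{\Phi}$'', and its $1$-summability is not free.

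Second, the source you invoke for the weak Gevrey-$1$ asymptotics --- Gevrey bounds on the formal normalizer ``already present in \cite{bittmann1}'' --- does not exist: that reference is purely formal, and the (weak) Gevrey-$1$ character of $\hat{\Phi}$ is a conclusion of the present theorem, not an input. The paper obtains it by showing that each coefficient $\alpha_{\mathbf{j}}\left(x\right)$ of a bounded sectorial solution of the homological equation satisfies a scalar irregular ODE of the form $x^{2}\alpha_{\mathbf{j}}'+\left(\lambda\left(j_{2}-j_{1}\right)+\left(a_{1}j_{1}+a_{2}j_{2}\right)x\right)\alpha_{\mathbf{j}}=G_{\mathbf{j}}$ whose bounded solution on a wide sector is the $1$-sum of its unique formal solution, with Borel-plane estimates uniform in $\mathbf{j}$ (Proposition \ref{prop: solution borel sommable precise} and Lemma \ref{lem: weak summability homo equation}); that is where the quantitative work lives. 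A further structural remark: your cohomological equation $\lie{Y_{s}}\left(W_{s}\right)=\ynorm-Y$ is vector-field-valued, hence involves $\mathrm{D}Y_{s}\cdot W_{s}$ terms; the paper sidesteps this by writing $Y^{\left(N\right)}-\ynorm=D\overrightarrow{\cal C}+R\overrightarrow{\cal R}$ and seeking the conjugacy as the flow of the \emph{fixed} linear fields $\overrightarrow{\cal R}$ and $\overrightarrow{\cal C}$ at variable time $\alpha\left(x,\mathbf{y}\right)$, which reduces everything to the scalar transport equation $\cal L_{Z}\left(\alpha\right)=x^{M+1}A$. Your Picard scheme in $s$ (which, incidentally, sits uneasily with your stated avoidance of fixed-point arguments) would have to control a time-dependent family of such vector-valued equations; this is not obviously unworkable, but it is a substantially harder analytic object than the one the theorem actually requires.
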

\begin{rem}
Although item \ref{enu:there-exist-analytic} above is a straightforward
consequence of the \emph{weak 1-summability} (see subsection \ref{sub:Weak-Gevrey-1-power})
of $\hat{\Phi}$ in item \ref{enu:the-formal-normalizing} above,
we will in fact start by proving item \ref{enu:there-exist-analytic}
in Corollary \ref{cor: existence normalisations sectorielles}, and
show item point \ref{enu:the-formal-normalizing} in Proposition \ref{prop: Weak sectorial normalizations}.\end{rem}
\begin{defn}
\label{def: sectorial normalizations}We call $\Phi_{+}$ and $\Phi_{-}$
the \textbf{sectorial normalizing maps} of $Y\in\snodiag$. 
\end{defn}
They are the weak 1-sums of $\hat{\Phi}$ along the respective directions
$\arg\left(i\lambda\right)$ and $\arg\left(-i\lambda\right)$. Notice
that $\Phi_{+}$ and $\Phi_{-}$ are \emph{germs of analytic sectorial
fibered diffeomorphisms}, \emph{i.e.} they are of the form
\begin{eqnarray*}
\Phi_{+}:S_{+}\times\left(\ww C^{2},0\right) & \longrightarrow & S_{+}\times\left(\ww C^{2},0\right)\\
\left(x,\mathbf{y}\right) & \longmapsto & \left(x,\Phi_{+,1}\left(x,\mathbf{y}\right),\Phi_{+,2}\left(x,\mathbf{y}\right)\right)
\end{eqnarray*}
and 
\begin{eqnarray*}
\Phi_{-}:S_{-}\times\left(\ww C^{2},0\right) & \longrightarrow & S_{-}\times\left(\ww C^{2},0\right)\\
\left(x,\mathbf{y}\right) & \longmapsto & \left(x,\Phi_{-,1}\left(x,\mathbf{y}\right),\Phi_{-,2}\left(x,\mathbf{y}\right)\right)
\end{eqnarray*}
(see section 2. for a precise definition of \emph{germ of analytic
sectorial fibered diffeomorphism}). The fact that they are\emph{ }also
\emph{(asymptotically) tangent to the identity }means that we have:
\[
{\displaystyle \Phi_{\pm}\left(x,\mathbf{y}\right)=\tx{Id}\left(x,\mathbf{y}\right)+\tx O\left(\norm{\left(x,\mathbf{y}\right)}^{2}\right)}\,\,.
\]

In fact, we can prove the uniqueness of the sectorial normalizing
maps under weaker assumptions.
\begin{prop}
\label{prop: unique normalizations}Let $\varphi_{+}$ and $\varphi_{-}$
be two germs of sectorial fibered diffeomorphisms in ${\displaystyle S_{+}\times\left(\ww C^{2},0\right)}$
and ${\displaystyle S_{-}\times\left(\ww C^{2},0\right)}$ respectively,
where $S_{+}$ and $S_{-}$ are as in Theorem \ref{Th: Th drsn},
which are (asymptotically) tangent to the identity and such that 
\[
\left(\varphi_{\pm}\right)_{*}\left(Y\right)=\ynorm\,\,.
\]
Then, they necessarily coincide with the weak 1-sums $\Phi_{+}$ and
$\Phi_{-}$ defined above. 
\end{prop}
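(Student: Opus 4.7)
The plan is to reduce the uniqueness statement to the vanishing of the fibered centralizer of $\ynorm$ among sectorial diffeomorphisms (asymptotically) tangent to the identity on the wide sectors $S_\pm$. To that end, I would form $\chi_\pm := \varphi_\pm \circ \Phi_\pm^{-1}$, which is a germ of sectorial fibered diffeomorphism on a domain of the same qualitative type as $S_\pm \times \left(\ww C^{2},0\right)$, is asymptotically tangent to the identity, and satisfies the centralizer equation $(\chi_\pm)_* \ynorm = \ynorm$. Proving $\chi_\pm = \mathrm{Id}$ then yields $\varphi_\pm = \Phi_\pm$.

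The first subtask is formal rigidity. Using $\mathrm{D}\chi_\pm \cdot \ynorm = \ynorm \circ \chi_\pm$ together with $\chi_\pm - \mathrm{Id} = \tx{O}(\norm{(x,\mathbf{y})}^{2})$, one builds order-by-order a formal fibered power series $\hat\chi_\pm$, tangent to the identity, that formally centralizes $\ynorm$. Composing on the right by $\hat\Phi$ exhibits $\hat\chi_\pm \circ \hat\Phi$ as another formal fibered normalizer of $Y$ tangent to the identity; by the uniqueness part of Theorem \ref{thm: forme normalel formelle} one has $\hat\chi_\pm \circ \hat\Phi = \hat\Phi$, hence $\hat\chi_\pm = \mathrm{Id}$. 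Consequently $\chi_\pm - \mathrm{Id}$ admits the zero formal series as its asymptotic expansion in $S_\pm$, i.e.\ $\chi_\pm$ is flat at $\acc{x=0}$.

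The core step is then to show that a flat sectorial fibered diffeomorphism centralizing $\ynorm$ on a sector of opening strictly greater than $\pi$ must be the identity. Writing $\chi_\pm(x,\mathbf{y}) = (x, \mathbf{y} + \mathbf{h}(x,\mathbf{y}))$, the centralizer equation becomes the first-order characteristic system
\[
\ynorm(h_i) \; = \; \ynorm_{,i}(x, \mathbf{y} + \mathbf{h}) - \ynorm_{,i}(x, \mathbf{y}),\qquad i=1,2,
\]
whose characteristic curves project onto the trajectories $x(t) = x_0/(1-x_0 t)$ of $x^{2}\partial_x$. The opening $>\pi$ of $S_\pm$ ensures that every point of $S_\pm \times \left(\ww C^{2},0\right)$ can be joined to the asymptotic regime $x \to 0$ along such a characteristic without leaving the domain of analyticity (after possibly shrinking the polydisc in $\mathbf{y}$). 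Integrating the coupled system along these characteristics with the vanishing boundary condition furnished by flatness then forces $\mathbf{h} \equiv 0$.

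The principal obstacle lies in this last step: one must integrate the characteristic ODE in complex time on a sector of opening $>\pi$, control $\mathbf{h}$ uniformly on a small polydisc in $\mathbf{y}$, and dominate the linearized coupling by a Phragm\'en--Lindel\"of type estimate that exploits the flatness. The div-integrability assumption $c_{1} + c_{2} = 0$ of $\ynorm$ greatly simplifies this analysis by decoupling the transverse dynamics through the semi-invariant $v = y_{1}y_{2}$, reducing the PDE to two essentially scalar cohomological equations to which the classical argument on sectors of opening $>\pi$ applies. All the quantitative tools needed for this step (estimates on $\ynorm$-trajectories across $S_\pm$, growth control of the transverse linear part) should already have been developed in the proof of the existence part of Theorem \ref{Th: Th drsn}.
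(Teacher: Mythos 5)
Your initial reduction is exactly the paper's: set $\chi_{\pm}:=\varphi_{\pm}\circ\Phi_{\pm}^{-1}$ and reduce the statement to the triviality of the group of sectorial fibered isotropies of $\ynorm$, tangent to the identity, over a sector of opening greater than $\pi$ (this is Proposition \ref{prop: isot sect} in the paper). The divergence --- and the gap --- lies in how you prove that triviality. Your ``formal rigidity'' step presupposes that $\chi_{\pm}$ admits an asymptotic expansion in $x$ to all orders, so that this expansion can be identified with a formal fibered centralizer $\hat{\chi}_{\pm}$ of $\ynorm$ and then killed by the uniqueness part of Theorem \ref{thm: forme normalel formelle}. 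But the only hypothesis on $\varphi_{\pm}$ is the second-order tangency $\varphi_{\pm}=\tx{Id}+\tx O\big(\norm{\left(x,\mathbf{y}\right)}^{2}\big)$; nothing guarantees that $\chi_{\pm}$ admits any asymptotic expansion beyond that order, and ``building $\hat{\chi}_{\pm}$ order by order from the centralizer equation'' is circular: to extract the coefficient of $x^{N}$ of $\chi_{\pm}$ you must already know that such a coefficient exists. Without the flatness conclusion, your final characteristics argument has no boundary condition to propagate, so the chain breaks precisely at the point where the real difficulty of the proposition sits.

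The paper sidesteps this by never expanding in $x$: since $\chi_{\pm}$ is analytic in $\mathbf{y}$ on a fixed polydisc, one may legitimately write $\chi_{\pm,i}\left(x,\mathbf{y}\right)=\sum_{\mathbf{j}}\phi_{i,\mathbf{j}}\left(x\right)\mathbf{y}^{\mathbf{j}}$ with coefficients analytic and \emph{bounded} on the wide sector $S_{\pm}$. The isotropy equation then yields a triangular family of explicit linear ODEs for the $\phi_{i,\mathbf{j}}$ (after first treating $\psi=\phi_{1}\phi_{2}$ to decouple the system --- close in spirit to your use of the semi-invariant $v=y_{1}y_{2}$), whose solutions are of the form $b\,\exp\!\left(\frac{j_{2}-j_{1}+1}{x}\right)x^{-\left(a_{1}\left(j_{1}-1\right)+a_{2}j_{2}\right)}$ in the non-resonant cases and $b\,x^{-\left(j+1\right)\left(a_{1}+a_{2}\right)}$ in the resonant ones; boundedness on a sector of opening $>\pi$ forces $b=0$, the strict non-degeneracy $\Re\left(a_{1}+a_{2}\right)>0$ being used exactly for the resonant indices. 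If you wish to keep your two-step scheme, you must first prove that a bounded sectorial fibered centralizer of $\ynorm$ tangent to the identity automatically admits the identity as asymptotic expansion --- which, in practice, amounts to the same coefficient-wise ODE analysis --- or else abandon the flatness route in favor of the direct one.
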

It is important to say that we will in fact begin with proving the
existence of germs of sectorial fibered diffeomorphisms $\Phi_{+}$
and $\Phi_{-}$ in ${\displaystyle S_{+}\times\left(\ww C^{2},0\right)}$
and ${\displaystyle S_{-}\times\left(\ww C^{2},0\right)}$ respectively,
which are tangent to the identity and conjugate $Y\in\snofib$ to
its normal form (see Corollary \ref{cor: existence normalisations sectorielles}).
The proposition above guarantees the uniqueness of such sectorial
transforms. It is proved in a second step that $\Phi_{+}$ and $\Phi_{-}$
admits the formal normalizing map $\hat{\Phi}$ as weak Gevrey-1 asymptotic
expansion, which is thus weakly 1-summable.
\begin{rem}
In this paper we prove a theorem of existence of sectorial normalizing
map analogous to the classical one due to Hukuhara-Kimura-Matuda for
saddle-nodes in $\left(\ww C^{2},0\right)$ \cite{HKM}, generalized
later by Stolovitch in any dimension in \cite{Stolo}. Unlike the
method based on a fixed point theorem used by these authors, we use
a more geometric approach (following the works of Teyssier~\cite{Teyssier03,teyssier2004equation})
based on the resolution of an homological equation, by integrating
a well chosen 1-form along asymptotic paths. This latter approach
turned out to be more efficient to deal with the fact that $Y_{\mid\acc{x=0}}$
is not necessarily linearizable. Indeed, if we look at \cite{Stolo}
in details, one of the first problem is that in the irregular systems
that needs to be solved by a fixed point method (for instance equation
$\left(2.7\right)$ in the cited paper), the non-linear terms would
not be divisible by the ``time'' variable $t$ in our situation.
This would complicate the different estimates that are done later
in the cited work. This was the first main new phenomena we have met. 

In~a forthcoming paper we prove that the sectorial normalizing maps
$\Phi_{+},\Phi_{-}$ in Theorem \ref{Th: Th drsn} admit in fact the
unique formal normalizing map $\hat{\Phi}$ given by Theorem \ref{thm: forme normalel formelle}
as ``true'' Gevrey-1 asymptotic expansion in ${\displaystyle S_{+}\times\left(\ww C^{2},0\right)}$
and ${\displaystyle S_{-}\times\left(\ww C^{2},0\right)}$ respectively.
This is done by studying $\Phi_{+}\circ\left(\Phi_{-}\right)^{-1}$
in ${\displaystyle \left(S_{+}\cap S_{-}\right)\times\left(\ww C^{2},0\right)}$
(and more generally any germ of sectorial fibered isotropy of $\ynorm$
in ``narrow'' sectorial neighborhoods $\left(S_{+}\cap S_{-}\right)\times\left(\ww C^{2},0\right)$
which admits the identity as weak Gevrey-1 asymptotic expansion).
In the cited paper we also:
\begin{itemize}
\item prove that the formal normalizing map $\hat{\Phi}$ in Theorem \ref{Th: Th drsn}
is in fact 1-summable (and not only \emph{weakly} 1-summable).
\item provide a theorem of analytic classification, based on the study over
``small'' sectors $S_{+}\cap S_{-}$ of the transition maps $\Phi_{+}\circ\Phi_{-}^{-1}$
(also called\emph{ Stokes diffeomorphisms}): they are sectorial isotropies
of the normal form $\ynorm$ which are exponentially close to the
identity.
\end{itemize}
The main difficulty is to establish that such a sectorial isotropy
of $\ynorm$ over the ``narrow'' sectors described above is necessarily
exponentially close to the identity. This will be done \emph{via}
a detailed analysis of these maps in the space of leaves. In fact,
this is the second main new difficulty we have met, which is due to
the presence of the ``resonant'' term 
\[
\frac{c_{m}\left(y_{1}y_{2}\right)^{m}\log\left(x\right)}{x}
\]
in the exponential expression of the first integrals of the vector
field in normal form. In \cite{Stolo}, similar computations are done
in subsection $3.4.1$. In this part of the paper, infinitely many
irregular differential equations appear when identifying terms of
same homogeneous degree. The fact that $Y_{\mid\acc{x=0}}$ is linear
implies that these differential equations are all linear and independent
of each others (\emph{i.e. }they are not mixed together). In our situation
this is not the case, which yields more complicated computations.
\end{rem}

\subsection{\label{sub: transversally symplectic}Painlevé equations: the transversally
Hamiltonian case }

~

In \cite{Yoshida85} Yoshida shows that a vector field in the class
$\snofib$ naturally appears after a suitable compactification (given
by the so-called Boutroux coordinates \cite{Boutroux13}) of the phase
space of Painlevé equations $\left(P_{j}\right)_{j=I,\dots,V}$, for
generic values of the parameters. In these cases the vector field
presents an additional transverse Hamiltonian structure. Let us illustrate
these computations in the case of the first Painlevé equation: 
\begin{eqnarray*}
\left(P_{I}\right)\,\,\,\,\,\,\,\,\,\,\,\,\,\,\,\,\,\,\,\,\,\ddd{^{2}z_{1}}{t^{2}} & = & 6z_{1}^{2}+t\,\,\,\,\,\,\,\,\,\,\,\,\,\,.
\end{eqnarray*}
As is well known since Okamoto \cite{Okamoto}, $\left(P_{I}\right)$
can be seen as a non-autonomous Hamiltonian system 
\[
\begin{cases}
{\displaystyle \ppp{z_{1}}t=-\ppp H{z_{2}}}\\
{\displaystyle \ppp{z_{2}}t=\ppp H{z_{1}}}
\end{cases}
\]
with Hamiltonian 
\begin{eqnarray*}
H\left(t,z_{1},z_{2}\right) & := & 2z_{1}^{3}+tz_{1}-\frac{z_{2}^{2}}{2}.
\end{eqnarray*}
More precisely, if we consider the standard symplectic form $\omega:=dz_{1}\wedge dz_{2}$
and the vector field 
\begin{eqnarray*}
Z & := & \pp t-\ppp H{z_{2}}\pp{z_{1}}+\ppp H{z_{1}}\pp{z_{2}}
\end{eqnarray*}
induced by $\left(P_{I}\right)$, then the Lie derivative 
\[
\cal L_{Z}\left(\omega\right)=\left(\ppp{^{2}H}{t\partial z_{1}}\mbox{d}z_{1}+\ppp{^{2}H}{t\partial z_{2}}\mbox{d}z_{2}\right)\wedge\mbox{d}t=\mbox{d}z_{1}\wedge\mbox{d}t
\]
 belongs to the ideal $\ps{\tx dt}$ generated by $\tx dt$ in the
exterior algebra $\Omega^{*}\left(\ww C^{3}\right)$ of differential
forms in variables $\left(t,z_{1},z_{2}\right)$. Equivalently, for
any $t_{1},t_{2}\in\ww C$ the flow of $Z$ at time $\left(t_{2}-t_{1}\right)$
acts as a \emph{symplectomorphism} between fibers $\acc{t=t_{1}}$
and $\acc{t=t_{2}}$. 

The weighted compactification given by the Boutroux coordinates~\cite{Boutroux13}
defines a chart near $\acc{t=\infty}$ as follows: 
\[
\begin{cases}
{\displaystyle z_{2}=y_{2}x^{-\frac{3}{5}}}\\
{\displaystyle z_{1}=y_{1}x^{-\frac{2}{5}}}\\
{\displaystyle t=x^{-\frac{4}{5}}} & .
\end{cases}
\]
In the coordinates $\left(x,y_{1},y_{2}\right)$, the vector field
$Z$ is transformed, up to a translation $y_{1}\leftarrow y_{1}+\zeta$
with ${\displaystyle \zeta=\frac{i}{\sqrt{6}}}$, to the vector field
\begin{eqnarray}
\tilde{Z} & = & -\frac{5}{4x^{\frac{1}{5}}}Y\label{eq: P1 ham at infinity}
\end{eqnarray}
where
\begin{eqnarray*}
Y & = & x^{2}\pp x+\left(-\frac{4}{5}y_{2}+\frac{2}{5}xy_{1}+\frac{2\zeta}{5}x\right)\pp{y_{1}}+\left(-\frac{24}{5}y_{1}^{2}-\frac{48\zeta}{5}y_{1}+\frac{3}{5}xy_{2}\right)\pp{y_{2}}\,\,\,\,\,\,\,\,\,.
\end{eqnarray*}
We observe that $Y$ is a\emph{ }strictly non-degenerate doubly-resonant
saddle-node as in Definitions \ref{def: drsn} and \ref{def: non-deg}
with residue $\tx{res}\left(Y\right)=1$. Furthermore we have: 
\[
\begin{cases}
{\displaystyle \tx dt} & {\displaystyle =-\frac{4}{5}5^{\frac{4}{5}}x^{-\frac{9}{5}}\tx dx}\\
{\displaystyle \tx dz_{1}\wedge\tx dz_{2}} & {\displaystyle =\frac{1}{x}\left(\tx dy_{1}\wedge\tx dy_{2}\right)+\frac{1}{5x^{2}}\left(2y_{1}\tx dy_{2}-3y_{2}\tx dy_{1}\right)\wedge\tx dx}\\
 & {\displaystyle \in\,\frac{1}{x}\left(\tx dy_{1}\wedge\tx dy_{2}\right)+\ps{\tx dx}}
\end{cases}\,\,\,\,\,,
\]
where $\ps{\mbox{d}x}$ denotes the ideal generated by $\mbox{d}x$
in the algebra of holomorphic forms in $\ww C^{*}\times\ww C^{2}$.
We finally obtain 
\[
\begin{cases}
{\displaystyle {\displaystyle \cal L_{Y}\left(\frac{\tx dy_{1}\wedge\tx dy_{2}}{x}\right)}=\frac{1}{5x}\left(3y_{2}\mbox{d}y_{1}-\left(2\zeta+2y_{1}\right)\mbox{d}y_{2}\right)\wedge\mbox{d}x}\\
{\displaystyle \cal L_{Y}\left(\mbox{d}x\right)=2x\mbox{d}x}
\end{cases}\quad.
\]
 Therefore, both ${\displaystyle \cal L_{Y}\left(\omega\right)}$
and $\cal L_{Y}\left(\mbox{d}x\right)$ are differential forms who
lie in the ideal $\ps{\tx dx}$, in the algebra of germs of meromorphic
1-forms in $\left(\ww C^{3},0\right)$ with poles only in $\acc{x=0}$.
This motivates the following:
\begin{defn}
\label{def: intro}Consider the rational 1-form 
\begin{eqnarray*}
\omega & := & \frac{\mbox{d}y_{1}\wedge\mbox{d}y_{2}}{x}~.
\end{eqnarray*}
We say that vector field $Y$ is \textbf{transversally Hamiltonian
}(with respect to $\omega$ and $\mbox{dx}$) if 
\begin{eqnarray*}
\mathcal{L}_{Y}\left(\tx dx\right)\in\left\langle \tx dx\right\rangle  & \mbox{ and } & \mathcal{L}_{Y}\left(\omega\right)\in\left\langle \tx dx\right\rangle \qquad.
\end{eqnarray*}
For any open sector $S\subset\ww C^{*}$, we say that a germ of sectorial
fibered diffeomorphism $\Phi$ in $S\times\left(\ww C^{2},0\right)$
is \textbf{transversally symplectic} (with respect to $\omega$ and
$\mbox{d}x$) if
\[
\Phi^{*}\left(\omega\right)\in\,\omega+\ps{\tx dx}\qquad
\]
 (Here $\Phi^{*}\left(\omega\right)$ denotes the pull-back of $\omega$
by $\Phi$). 

We denote by $\sdiffid$ the group of transversally symplectic diffeomorphisms
which are tangent to the identity.\end{defn}
\begin{rem}
~
\begin{enumerate}
\item The flow of a transversally Hamiltonian vector field $X$ defines
a map between fibers $\acc{x=x_{1}}$ and $\acc{x=x_{2}}$ which sends
$\omega_{\mid x=x_{1}}$ onto $\omega_{\mid x=x_{2}}$, since 
\[
{\displaystyle \left(\exp\left(X\right)\right)^{*}\left(\omega\right)\in\,\omega+\ps{\mbox{d}x}}\,\,\,.
\]

\item A fibered sectorial diffeomorphism $\Phi$ is transversally symplectic
if and only if $\det\left(\mbox{D}\Phi\right)=1$.
\end{enumerate}
\end{rem}
\begin{defn}
\label{def: nc dr th}A \textbf{transversally Hamiltonian doubly-resonant
saddle-node} is a transversally Hamiltonian vector field which is
conjugate, \emph{via }a transversally symplectic diffeomorphism, to
one of the form 
\begin{eqnarray*}
Y & = & x^{2}\pp x+\Big(-\lambda y_{1}+F_{1}\left(x,\mathbf{y}\right)\Big)\pp{y_{1}}+\Big(\lambda y_{2}+F_{2}\left(x,\mathbf{y}\right)\Big)\pp{y_{2}}\,\,\,\,\,,
\end{eqnarray*}
with $\lambda\in\ww C^{*}$ and $f_{1},f_{2}$ analytic in $\left(\ww C^{3},0\right)$
and of order at least $2$.
\end{defn}
Notice that a transversally Hamiltonian doubly-resonant saddle-node
is necessarily strictly non-degenerate (since its residue is always
equal to $1$), and also div-integrable (see section 3). It follows
from Yoshida's work \cite{Yoshida85} that the doubly-resonant saddle-nodes
at infinity in Painlevé equations$\left(P_{j}\right)_{j=I,\dots,V}$
(for generic values of the parameters) all are transversally Hamiltonian. 

We recall the second main result from \cite{bittmann1}. 
\begin{thm}
\cite{bittmann1}\label{thm: Th ham formel}~

Let $Y\in\sndiag$ be a diagonal doubly-resonant saddle-node which
is supposed to be transversally Hamiltonian. Then, there exists a
unique formal fibered transversally symplectic diffeomorphism $\hat{\Phi}$,
tangent to identity, such that: 
\begin{eqnarray}
\hat{\Phi}_{*}\left(Y\right) & = & x^{2}\pp x+\left(-\lambda+a_{1}x-c\left(y_{1}y_{2}\right)\right)y_{1}\pp{y_{1}}+\left(\lambda+a_{2}x+c\left(y_{1}y_{2}\right)\right)y_{2}\pp{y_{2}}\nonumber \\
 & =: & \ynorm\,\,,\label{eq: fibered normal form-1-1-1}
\end{eqnarray}
where $\lambda\in\ww C^{*}$, $c\left(v\right)\in v\form v$ a formal
power series in $v=y_{1}y_{2}$ without constant term and $a_{1},a_{2}\in\ww C$
are such that $a_{1}+a_{2}=1$.
\end{thm}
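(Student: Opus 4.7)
The strategy is to reduce Theorem~\ref{thm: Th ham formel} to Theorem~\ref{thm: forme normalel formelle} in two complementary steps: extract from the transversally Hamiltonian hypothesis the shape constraints $a_1+a_2=1$ and $c_1+c_2=0$ on the normal form, and then upgrade the formal normalization $\hat{\Phi}$ produced by Theorem~\ref{thm: forme normalel formelle} to one that is transversally symplectic. Uniqueness is automatic: any candidate in $\sdiffid$ is a fortiori in $\fdiffid$, so the uniqueness statement of Theorem~\ref{thm: forme normalel formelle} applies.

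\emph{Shape of the normal form.} A direct computation with $\omega=\tx dy_1\wedge\tx dy_2/x$ gives, for any fibered $Y$ with $Y_x=x^2$,
\[
\mathcal{L}_Y(\omega)=\bigl(\partial_{y_1}Y_1+\partial_{y_2}Y_2-x\bigr)\omega+\frac{1}{x}\bigl(\partial_xY_1\,\tx dx\wedge\tx dy_2-\partial_xY_2\,\tx dx\wedge\tx dy_1\bigr),
\]
so the transversally Hamiltonian condition $\mathcal{L}_Y(\omega)\in\ps{\tx dx}$ is equivalent to the pointwise identity $\partial_{y_1}Y_1+\partial_{y_2}Y_2\equiv x$. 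Setting $\mathbf{y}=0$ yields $\tx{tr}(\mathbf{A}(x))=x$, hence $\res Y=1$; setting $x=0$ yields $\partial_{y_1}Y_1+\partial_{y_2}Y_2\equiv 0$ on $\{x=0\}$, so $Y_{\mid\{x=0\}}$ is a planar Hamiltonian vector field with opposite non-zero eigenvalues at the origin. By Birkhoff's formal normal form for such saddles and Brjuno's theorem (see the remark after Definition~\ref{def: asympt hamil}), $Y_{\mid\{x=0\}}$ is analytically orbitally linearizable; thus $Y\in\snodiag$ and Theorem~\ref{thm: forme normalel formelle} applies. Applied to the normal form~\eqref{eq: fibered normal form-1-2}, the same identity reads $(a_1+a_2)x+\tfrac{\tx d}{\tx dv}\bigl(v(c_1+c_2)(v)\bigr)\big|_{v=y_1y_2}=x$, forcing $a_1+a_2=1$ and $v(c_1+c_2)\equiv 0$; since $c_1,c_2\in v\form v$ have no constant term, $c_1+c_2=0$, whence the shape~\eqref{eq: fibered normal form-1-1-1} with $c:=c_2$.

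\emph{Transversal symplecticity of $\hat{\Phi}$ and main obstacle.} Let $\hat{\Phi}\in\fdiffid$ be the unique normalization provided by Theorem~\ref{thm: forme normalel formelle}; we must show $\hat{\Phi}\in\sdiffid$. The idea is to replay the inductive construction of $\hat{\Phi}$ order by order while staying in the transversally Hamiltonian category. At order $n\ge 2$ one corrects the current approximation by $\exp(U_n)$ for a homogeneous vertical vector field $U_n$ of degree $n$, solving a cohomological equation $[Y_{\tx{lin}},U_n]=R_n$ modulo resonant terms; we show $U_n$ can be taken $\mathbf{y}$-divergence-free, in which case $\exp(U_n)\in\sdiffid$. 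The identity $\tx{div}[X,V]=X(\tx{div}\,V)-V(\tx{div}\,X)$, together with the fact that $Y_{\tx{lin}}$ has vanishing $\mathbf{y}$-divergence, shows that $[Y_{\tx{lin}},\cdot]$ preserves the subspace of $\mathbf{y}$-divergence-free vertical vector fields; a straightforward monomial analysis then shows that on non-resonant monomials $[Y_{\tx{lin}},\cdot]$ admits a divergence-free inverse, so that $U_n$ itself is Hamiltonian. The main technical obstacle is verifying inductively that the \emph{resonant} obstructions accumulating at each step are themselves divergence-free; for a purely resonant vertical vector field $\alpha\, y_1(y_1y_2)^k\partial_{y_1}+\beta\, y_2(y_1y_2)^k\partial_{y_2}$ the $\mathbf{y}$-divergence is $(k+1)(\alpha+\beta)(y_1y_2)^k$, so this reduces exactly to the identity $c_1+c_2=0$ derived in the first step, which is why the two sides of the argument fit together.
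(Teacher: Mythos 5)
This theorem is quoted from \cite{bittmann1}: the present paper states it without proof (it only proves its sectorial companion, Theorem \ref{thm: Th ham}), so there is no internal proof to compare yours against, and your attempt has to be judged on its own. Its first half is sound and consistent with what the paper does establish elsewhere: your computation showing that $\mathcal{L}_{Y}\left(\omega\right)\in\ps{\tx dx}$ is equivalent to the pointwise identity $\ppp{Y_{1}}{y_{1}}+\ppp{Y_{2}}{y_{2}}=x$ is correct; it recovers the paper's observations that a transversally Hamiltonian element of $\sndiag$ has residue $1$ and is div-integrable (the paper reaches div-integrability via the Morse lemma rather than Birkhoff plus Brjuno, but both routes work); and reducing uniqueness to Theorem \ref{thm: forme normalel formelle} is legitimate, since a transversally symplectic candidate is in particular a fibered tangent-to-identity one.

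The second half has two genuine gaps. The first is a circularity: you obtain $a_{1}+a_{2}=1$ and $c_{1}+c_{2}=0$ by applying the divergence identity to $\ynorm=\hat{\Phi}_{*}\left(Y\right)$, which presupposes that $\ynorm$ is transversally Hamiltonian, i.e.\ that $\hat{\Phi}$ is transversally symplectic --- exactly what your second step is meant to prove; and that second step is then said to ``reduce to'' the identity $c_{1}+c_{2}=0$ derived in the first. To close the loop the argument must live entirely inside the induction: maintain as inductive hypothesis that the partially normalized field still satisfies $\ppp{Y_{1}}{y_{1}}+\ppp{Y_{2}}{y_{2}}=x$, note that a vertical vector monomial has $\mathbf{y}$-divergence proportional to a power of $y_{1}y_{2}$ precisely when it is resonant, so that the homogeneous divergence identity forces the resonant obstruction and the non-resonant correction to be divergence-free \emph{separately} at each order; the relations $a_{1}+a_{2}=1$ and $c_{1}+c_{2}=0$ are then read off at the end rather than fed in at the start. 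The second gap is that the cohomological scheme you describe is not the one that produces the normal form of Theorem \ref{thm: forme normalel formelle}: the kernel of the adjoint of the semisimple linear part contains \emph{all} monomials $x^{k}\left(y_{1}y_{2}\right)^{j}y_{i}\pp{y_{i}}$ with $k,j\geq0$, whereas the normal form retains only those with $k=0$ or $\left(k,j\right)=\left(1,0\right)$. Eliminating the remaining resonant terms uses the irregular part $x^{2}\pp x$ together with the hypothesis $\res Y\notin\ww Q_{\leq0}$ (this is where the non-degeneracy enters), and these reductions are not inverses of $\mathrm{ad}_{Y_{\tx{lin}}}$; your divergence-free solvability argument is only given for the latter operator. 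Until one checks that this second family of corrections can also be chosen with vanishing $\mathbf{y}$-divergence, the conclusion $\hat{\Phi}\in\sdiffid$ is not established.
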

As a consequence of Theorem \ref{thm: Th ham formel}, Theorem \ref{Th: Th drsn}
we have the following:
\begin{thm}
\label{thm: Th ham}Let $Y$ be a transversally Hamiltonian doubly-resonant
saddle-node and let $\hat{\Phi}$ be the unique formal normalizing
map given by Theorem \ref{thm: Th ham formel}. Then the associate
sectorial normalizing maps $\Phi_{+}$ and $\Phi_{-}$ are also transversally
symplectic.\end{thm}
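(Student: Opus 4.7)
Apply Theorem~\ref{Th: Th drsn} to $Y$: this is legitimate since a transversally Hamiltonian doubly-resonant saddle-node is automatically strictly non-degenerate (its residue equals $1$) and div-integrable, hence $Y\in\snodiag$. We thus obtain sectorial fibered diffeomorphisms $\Phi_\pm$ tangent to the identity, each conjugating $Y$ to the unique formal normal form $\ynorm$ of Theorem~\ref{thm: Th ham formel}, and admitting $\hat\Phi$ as weak Gevrey-1 asymptotic expansion. Since $\hat\Phi$ is formally transversally symplectic, its fiber-wise Jacobian $\hat J := \det\bigl(\partial_{y_i}\hat\Phi_j\bigr)_{i,j}$ equals $1$ as a formal series. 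The plan is to show that the sectorial Jacobian $J_\pm := \det\bigl(\partial_{y_i}(\Phi_\pm)_j\bigr)_{i,j}$ is identically equal to $1$, which is equivalent to $\Phi_\pm\in\sdiffid$.

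The normal form $\ynorm$ is itself transversally Hamiltonian: with $a_1+a_2=1$ and $c_1+c_2=0$ provided by Theorem~\ref{thm: Th ham formel}, a direct computation of $\cal L_{\ynorm}\omega$ shows that its $\mathrm{d}y_1\wedge\mathrm{d}y_2$ coefficient vanishes identically and that $\mathrm{div}_{\mathbf{y}}(\ynorm^{(\mathbf{y})})=x$. Because $\Phi_\pm$ is $x$-fibered and conjugates $Y$ to $\ynorm$, we have $\cal L_Y(\Phi_\pm^*\omega)=\Phi_\pm^*(\cal L_{\ynorm}\omega)\in\langle\mathrm{d}x\rangle$; combined with $\cal L_Y\omega\in\langle\mathrm{d}x\rangle$, this gives $\cal L_Y\eta_\pm\in\langle\mathrm{d}x\rangle$ for $\eta_\pm:=\Phi_\pm^*\omega-\omega$. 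Writing $\eta_\pm=\frac{h_\pm}{x}\mathrm{d}y_1\wedge\mathrm{d}y_2+\xi_\pm\wedge\mathrm{d}x$ with $h_\pm:=J_\pm-1$, and extracting the $\mathrm{d}y_1\wedge\mathrm{d}y_2$ coefficient of $\cal L_Y\eta_\pm$, the relation $\mathrm{div}_{\mathbf{y}}(Y^{(\mathbf{y})})=x$ (transversal Hamiltonianity of $Y$) yields after cancellation the clean identity $Y(h_\pm)=0$. In other words, the symplectic defect $h_\pm$ is a sectorial holomorphic first integral of $Y$ on $S_\pm\times\left(\ww C^2,0\right)$.

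Since $\Phi_\pm$ is weakly Gevrey-1 asymptotic to $\hat\Phi$ and $\hat J=1$, the function $h_\pm$ admits the zero series as weak Gevrey-1 asymptotic expansion, hence is flat at $\acc{x=0}$. Transport to the normal side via $H_\pm:=h_\pm\circ\Phi_\pm^{-1}$, a flat sectorial first integral of $\ynorm$ that is holomorphic in $\mathbf{y}$, and expand $H_\pm(x,\mathbf{y})=\sum_{k,l\ge 0}H_{k,l}(x)\,y_1^k y_2^l$. Identifying coefficients in $\ynorm(H_\pm)=0$ and using $c_2=-c_1$ yields the coupled system
\[
x^2 H_{k,l}'(x)+\bigl((l-k)\lambda+(ka_1+la_2)x\bigr)H_{k,l}(x)=(k-l)\sum_{m\ge 1}c_{1,m}\,H_{k-m,l-m}(x)\,.
\]
On the diagonal $k=l$ the coupling vanishes and the equation reduces, thanks to $a_1+a_2=1$, to $xH_{k,k}'+kH_{k,k}=0$, whose only flat solution is zero. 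Off the diagonal, fundamental solutions of the homogeneous part involve an exponential $e^{(l-k)\lambda/x}$ whose real part changes sign on the sectors $S_\pm$ (of opening strictly greater than $\pi$ and bisected by $\pm i\lambda$), so any flat analytic solution on $S_\pm$ must vanish; an induction on $\min(k,l)$ at fixed value of $k-l$ propagates the vanishing through the coupling. Hence $H_\pm\equiv 0$, so $h_\pm\equiv 0$ and $\Phi_\pm$ is transversally symplectic.

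The hardest step is the sectorial rigidity argument in the third paragraph: the induction must close coherently across the diagonal and off-diagonal blocks, and the vanishing of off-diagonal flat first integrals depends essentially on the opening of $S_\pm$ exceeding $\pi$ in the direction $\pm i\lambda$. The identities $a_1+a_2=1$ and $c_1+c_2=0$ coming from the Hamiltonian normal form are what make the diagonal case trivially rigid and, more importantly, decouple the diagonal system from the off-diagonal one in the display above.
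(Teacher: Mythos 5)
Your proof is correct, but it takes a genuinely different and much heavier route than the paper's. The paper's argument is three lines: since weak $1$-summability is stable under products and partial derivatives and weak $1$-summation is an injective morphism of differential algebras (Lemma \ref{lem: proprietes faibles}), the sectorial Jacobian $\det\left(\mathrm{D}\Phi_{\pm}\right)$ is the weak $1$-sum of the formal series $\det\big(\mathrm{D}\hat{\Phi}\big)$, which equals the constant $1$ because $\hat{\Phi}$ is formally transversally symplectic; uniqueness of the weak $1$-sum then forces $\det\left(\mathrm{D}\Phi_{\pm}\right)=1$. You instead use only the \emph{flatness} of $J_{\pm}-1$ and compensate with geometry: you exploit the transversal Hamiltonian structure of both $Y$ and $\ynorm$ (the computations $\mathrm{div}_{\mathbf{y}}=x$ on both sides, relying on $a_{1}+a_{2}=1$ and $c_{1}+c_{2}=0$, are correct) to show that the symplectic defect $h_{\pm}=J_{\pm}-1$ is a sectorial first integral of $Y$, and then kill it by a coefficient-wise ODE rigidity argument on sectors of opening greater than $\pi$ — an argument that essentially replicates the proof of Proposition \ref{prop: isot sect}. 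This buys you a nice structural observation (the defect of symplecticity of any fibered conjugacy between two transversally Hamiltonian fields is a first integral) and works with cruder asymptotic information, at the cost of redoing rigidity that the summability framework already provides. Note also that your third paragraph is avoidable even within your own setup: once $h_{\pm}$ admits $0$ as weak Gevrey-$1$ asymptotic expansion on a sector of opening strictly greater than $\pi$, each coefficient of its $\mathbf{y}$-expansion is a Gevrey-$1$-flat function on a wide sector and hence vanishes by Watson's lemma — which is precisely the uniqueness-of-weak-$1$-sums step the paper invokes, so the first-integral transport and the coupled ODE induction are not needed.
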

\begin{proof}
Since $\hat{\Phi}$ is weakly 1-summable in $S_{\pm}\times\left(\ww C^{2},0\right)$,
the formal power series $\det\left(\mbox{D}\hat{\Phi}\right)$ is
also weakly 1-summable in $S_{\pm}\times\left(\ww C^{2},0\right)$,
and its asymptotic expansion has to be the constant $1$. By uniqueness
of the weak 1-sum, we thus have $\det\left(\mbox{D}\Phi_{\pm}\right)=1$.
\end{proof}

\subsection{Outline of the paper}

~

In section 2, we introduce the different tools we need concerning
asymptotic expansion, Gevrey-1 series and 1-summability. We will in
particular introduce a notion of ``\textbf{weak}'' 1-summability.

In section 3, we prove Proposition \ref{prop: forme pr=0000E9par=0000E9e ordre N},
which states that we can always formally conjugate a non-degenerate
doubly-resonant saddle-node which is also div-integrable to its normal
form up to remaining terms of order $\tx O\left(x^{N}\right)$, for
all $N\in\ww N_{>0}$, and the conjugacy is actually $1$-summable.

In section 4, we prove that for all $Y\in\snofib$, there exists a
pair of sectorial normalizing maps $\left(\Phi_{+},\Phi_{-}\right)$
tangent to the identity which conjugates $Y$ to its normal form $\ynorm$
over sectors with opening greater than $\pi$ and arbitrarily close
to $2\pi$ (see Corollary \ref{cor: existence normalisations sectorielles}).

In section $5$, the uniqueness of the sectorial normalizing maps,
stated in Proposition \ref{prop: unique normalizations}, is proved
thanks to Proposition \ref{prop: isot sect}. Moreover, we will see
that $\Phi_{+}$ and $\Phi_{-}$ both admit the unique formal normalizing
map $\hat{\Phi}$ given by Theorem \ref{thm: forme normalel formelle}
as weak Gevrey-1 asymptotic expansion (see Proposition \ref{prop: Weak sectorial normalizations}). 

\tableofcontents{}

\section{Background}

We refer the reader to \cite{MR82,malgrange1995sommation,ramis1993divergent,DeMaesschalck}
for a detailed introduction to the theory of asymptotic expansion,
Gevrey series and summability (see also \cite{Stolo} for a useful
discussion of these concepts), where one can find the proofs of the
classical results we recall (but we do not prove here). We call $x\in\ww C$
the \emph{independent} variable and ${\displaystyle \mathbf{y}:=\left(y_{1},\dots,y_{n}\right)\in\ww C^{n}}$,
$n\in\ww N$, the \emph{dependent} variables. As usual we define ${\displaystyle \mathbf{y^{k}}:=y_{1}^{k_{1}}\dots y_{n}^{k_{n}}}$
for ${\displaystyle \mathbf{k}=\left(k_{1},\dots,k_{n}\right)\in\ww N^{n}}$,
and ${\displaystyle \abs{\mathbf{k}}=k_{1}+\dots+k_{n}}$. The notions
of asymptotic expansion, Gevrey series and 1-summability presented
here are always considered with respect to the independent variable
$x$ living in (open) sectors, the dependent variable $\mathbf{y}$
belonging to poly-discs 
\[
\mathbf{D\left(0,r\right)}:=\acc{\mathbf{y}=\left(y_{1},\dots,y_{n}\right)\in\ww C^{n}\mid\abs{y_{1}}<r_{1},\dots\abs{y_{n}}<r_{n}}\,\,\,,
\]
of poly-radius ${\displaystyle \mathbf{r}=\left(r_{1},\dots,r_{n}\right)\in\left(\ww R_{>0}\right)^{n}}$.
Given an open subset 
\[
\cal U\subset\ww C^{n+1}=\acc{\left(x,\mathbf{y}\right)\in\ww C\times\ww C^{n}}
\]
 we denote by $\cal O\left({\cal U}\right)$ the algebra of holomorphic
function in $\cal U$. The algebra of germs of analytic functions
of $m$ variables $\mathbf{x}:=\left(x_{1},\dots,x_{m}\right)$ at
the origin is denoted by $\germ{\mathbf{x}}$.

The results recalled in this section are valid when $n=0$. Some statements
for which we do not give a proof can be proved exactly as in the classical
case $n=0$, uniformly in the dependent multi-variable $\mathbf{y}$.
For convenience and homogeneity reasons we will present some classical
results not in their original (and more general) form, but rather
in more specific cases which we will need. Finally, we will introduce
a notion of \emph{weak }Gevrey-1 summability, which we will compare
to the classical notion of 1-summability.

\subsection{Sectorial germs}

~

Given $r>0$, and $\alpha,\beta\in\ww R$ with $\alpha<\beta$, we
denote by $\sect r{\alpha}{\beta}$ the following open sector:

\[
S\left(r,\alpha,\beta\right):=\acc{x\in\ww C\mid0<\abs x<r\mbox{ and }\alpha<\arg\left(x\right)<\beta}\,\,.
\]
Let $\theta\in\ww R$, $\eta\in\ww R_{\geq0}$ and $n\in\ww N$.
\begin{defn}
\label{def: sectorial germs}
\begin{enumerate}
\item An\emph{ x-sectorial neighborhood }(or simply \emph{sectorial neighborhood})
\emph{of the origin} (in $\ww C^{n+1}$) \emph{in the direction $\theta$
with opening $\eta$} is an open set $\cal S\subset\ww C^{n+1}$ such
that
\[
\cal S\supset S\left(r,\theta-\frac{\eta}{2}-\epsilon,\theta+\frac{\eta}{2}+\epsilon\right)\times\mathbf{D\left(0,r\right)}
\]
for some $r>0$, $\mathbf{r}\in\left(\ww R_{>0}\right)^{n}$ and $\epsilon>0$.
We denote by $\left(\germsect{\theta}{\eta},\leq\right)$ the directed
set formed by all such neighborhoods, equipped with the order relation
\begin{eqnarray*}
S_{1}\leq S_{2} & \Longleftrightarrow & S_{1}\supset S_{2}\,\,.
\end{eqnarray*}

\item The algebra of \emph{germs of holomorphic functions in a sectorial
neighborhood of the origin in the direction $\theta$ with opening
$\eta$} is the direct limit 
\[
\cal O\left(\cal S_{\theta,\eta}\right):=\underrightarrow{\lim}\,\cal O\left(\cal S\right)
\]
with respect to the directed system defined by $\acc{\cal O\left(\cal S\right):\cal S\in\germsect{\theta}{\eta}}$.
\end{enumerate}
\end{defn}
We now give the definition of \emph{a (germ of a) sectorial diffeomorphism}.
\begin{defn}
\label{def: sectorial diff}
\begin{enumerate}
\item Given an element $\cal S\in\cal S_{\theta,\eta}$, we denote by $\fdiff[\cal S,\tx{Id}]$
the set of holomorphic fibered diffeomorphisms of the form 
\begin{eqnarray*}
\Phi:\cal S & \rightarrow & \Phi\left(\cal S\right)\\
\left(x,\mathbf{y}\right) & \mapsto & \left(x,\phi_{1}\left(x,\mathbf{y}\right),\phi_{2}\left(x,\mathbf{y}\right)\right)\,\,,
\end{eqnarray*}
such that ${\displaystyle \Phi\left(x,\mathbf{y}\right)-\tx{Id}\left(x,\mathbf{y}\right)=\tx O\left(\norm{x,\mathbf{y}}^{2}\right),\,\,\mbox{ as }\left(x,\mathbf{y}\right)\rightarrow\left(0,\mathbf{0}\right)\mbox{ in }\cal S.}$
\footnote{This condition implies in particular that $\Phi\left(\cal S\right)\in\germsect{\theta}{\eta}$.%
}
\item The set of \emph{germs of (fibered) sectorial diffeomorphisms in the
direction $\theta$ with opening $\eta$, tangent to the identity},
is the direct limit 
\[
\diffsect[\theta][\eta]:=\underrightarrow{\lim}\,\fdiff[\cal S,\tx{Id}]
\]
with respect to the directed system defined by $\acc{\fdiff[\cal S,\tx{Id}]:\cal S\in\cal S_{\theta,\eta}}$.
We equip $\diffsect[\theta][\eta]$ with a group structure as follows:
given two germs $\Phi,\Psi\in{\displaystyle \diffsect}$ we chose
corresponding representatives $\Phi_{0}\in\fdiff[\cal S,\tx{Id}]$
and $\Psi_{0}\in\fdiff[\cal T,\tx{Id}]$ with $\cal S,\cal T\in\cal S_{\theta,\eta}$
such that $\cal T\subset\Phi_{0}\left(\cal S\right)$ and let $\Psi\circ\Phi$
be the germ defined by $\Psi_{0}\circ\Phi_{0}$. %
\footnote{One can prove that this definition is independent of the choice of
the representatives%
}
\end{enumerate}
\end{defn}
We will also need the notion of \emph{asymptotic sectors.}
\begin{defn}
\label{def: asymptotitc sector}An\emph{ (open) asymptotic sector
of the origin in the direction $\theta$ and with opening $\eta$}
is an open set $S\subset\ww C$ such that 
\[
S\in\bigcap_{0\leq\eta'<\eta}\cal S_{\theta,\eta'}\,\,.
\]
We denote by $\cal{AS}_{\theta,\eta}$ the set of all such (open)
asymptotic sectors.
\end{defn}

\subsection{\label{sub: Strong-Gevrey-1-power} Gevrey-1 power series and 1-summability}

~

\subsubsection{Gevrey-1 asymptotic expansions}

~

In this subsection we fix a formal power series which we write under
two forms: 
\[
{\displaystyle \hat{f}\left(x,\mathbf{y}\right)=\sum_{k\geq0}f_{k}\left(\mathbf{y}\right)x^{k}=\sum_{\left(j_{0},\mathbf{j}\right)\in\ww N^{n+1}}f_{j_{0},\mathbf{j}}x^{j_{0}}\mathbf{y^{j}}\in\form{x,\mathbf{y}}}\,\,,
\]
using the canonical identification $\form{x,\mathbf{y}}=\form x\left\llbracket \mathbf{y}\right\rrbracket =\form{\mathbf{y}}\left\llbracket x\right\rrbracket $.
We also fix a norm $\norm{\cdot}$ in $\ww C^{n+1}$.
\begin{defn}
\label{def:Gevrey-1_expansion}~
\begin{itemize}
\item A function $f$ analytic in a domain ${\displaystyle \sect r{\alpha}{\beta}\times\mathbf{D\left(0,r\right)}}$
admits $\hat{f}$ as asymptotic expansion \emph{in the sense of Gérard-Sibuya}
in this domain if for all closed sub-sector $S'\subset S\left(r,\alpha,\beta\right)$
and compact $\mathbf{K}\subset\mathbf{D\left(0,r\right)}$, for all
$N\in\ww N$, there exists a constant $C_{S',K,N}>0$ such that: 
\[
\abs{f\left(x,\mathbf{y}\right)-\sum_{j_{0}+j_{1}+\dots j_{n}\leq N}f_{j_{0},\mathbf{j}}x^{j_{0}}\mathbf{y^{j}}}\leq C_{S',K,N}\norm{\left(x,\mathbf{y}\right)}^{N+1}
\]
for all $\left(x,\mathbf{y}\right)\in S'\times K$.
\item A function $f$ analytic in a domain ${\displaystyle \sect r{\alpha}{\beta}\times\mathbf{D\left(0,r\right)}}$
admits $\hat{f}$ as\emph{ asymptotic expansion (with respect to $x$)
}if for all $k\in\ww N$, $f_{k}\left(\mathbf{y}\right)$ is analytic
in $\mathbf{D\left(0,r\right)}$, and if for all closed sub-sector
$S'\subset S\left(r,\alpha,\beta\right)$, compact subset $\mathbf{K}\subset\mathbf{D\left(0,r\right)}$
and $N\in\ww N$, there exists $A_{S',K,N}>0$ such that: 
\[
\abs{f\left(x,\mathbf{y}\right)-\sum_{k\geq0}^{N}f_{k}\left(\mathbf{y}\right)x^{k}}\leq A_{S',K,N}\abs x^{N+1}
\]
for all $\left(x,\mathbf{y}\right)\in S'\times K$.
\item An analytic function $f$ in a sectorial domain ${\displaystyle \sect r{\alpha}{\beta}\times\mathbf{D\left(0,r\right)}}$
admits $\hat{f}$ as \emph{Gevrey-1 asymptotic expansion} in this
domain, if for all $k\in\ww N$, $f_{k}\left(\mathbf{y}\right)$ is
analytic in $\mathbf{D\left(0,r\right)}$, and if for all closed sub-sector
$S'\subset\sect r{\alpha}{\beta}$, there exists $A,C>0$ such that:
\[
\abs{f\left(x,\mathbf{y}\right)-\sum_{k=0}^{N-1}f_{k}\left(\mathbf{y}\right)x^{k}}\leq AC^{N}\left(N!\right)\abs x^{N}
\]
for all $N\in\ww N$ and $\left(x,\mathbf{y}\right)\in S'\times\mathbf{D}\left(\mathbf{0},\mathbf{r}\right)$.
\end{itemize}
\end{defn}
\begin{rem}
~
\begin{enumerate}
\item If a function admits $\hat{f}$ as Gevrey-1 asymptotic expansion in
${\displaystyle \sect r{\alpha}{\beta}\times\mathbf{D\left(0,r\right)}}$,
then it also admits $\hat{f}$ as asymptotic expansion. 
\item If a function admits $\hat{f}$ as asymptotic expansion in ${\displaystyle \sect r{\alpha}{\beta}\times\mathbf{D\left(0,r\right)}}$,
then it also admits $\hat{f}$ as asymptotic expansion in the the
sense of Gérard-Sibuya. 
\item An asymptotic expansion (in any of the different senses described
above) is unique. 
\end{enumerate}
\end{rem}
As a consequence of Stirling formula, we have the following characterization
for functions admitting $0$ as Gevrey-1 asymptotic expansion.
\begin{prop}
\label{prop: dev asympt nul expo plat}The set of analytic functions
admitting $0$ as Gevrey-1 asymptotic expansion at the origin in a
sectorial domain ${\displaystyle \sect r{\alpha}{\beta}\times\mathbf{D\left(0,r\right)}}$
is exactly the set of of analytic functions $f$ in ${\displaystyle \sect r{\alpha}{\beta}\times\mathbf{D\left(0,r\right)}}$
such that for all closed sub-sector $S'\subset\sect r{\alpha}{\beta}$
and all compact $\mathbf{K}\subset\mathbf{D\left(0,r\right)}$, there
exist $A_{S',K},B_{S',K}>0$ such that:
\[
\abs{f\left(x,\mathbf{y}\right)}\leq A_{S',K}\exp\left(-\frac{B_{S',K}}{\abs x}\right)\,\,.
\]
We say that such a function is exponentially flat at the origin in
the corresponding domain.
\end{prop}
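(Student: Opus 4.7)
The plan is to prove the two inclusions separately; both rest on the classical optimization trick using Stirling's formula, and the dependent variable $\mathbf{y}$ plays a passive role because the relevant estimates are required to hold uniformly on the compact sets entering each definition. Thus the proof reduces, up to uniformity, to the scalar case treated in \cite{malgrange1995sommation,ramis1993divergent}.

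\textbf{Exponentially flat $\Rightarrow$ Gevrey-1 asymptotic to $0$.} Fix a closed sub-sector $S'\subset\sect r{\alpha}{\beta}$ and a compact $\mathbf{K}\subset\mathbf{D}(\mathbf{0},\mathbf{r})$. Setting $u:=1/|x|$, elementary calculus gives that the function $u\mapsto u^{N}e^{-B_{S',K}u}$ reaches its maximum on $\mathbb{R}_{>0}$ at $u=N/B_{S',K}$ with value $(N/B_{S',K})^{N}e^{-N}$. Stirling's formula then produces a constant $C>0$, depending only on $B_{S',K}$, such that $(N/B_{S',K})^{N}e^{-N}\leq C^{N}N!$ for every $N\in\ww N$. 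Multiplying by $|x|^{N}$ and invoking the exponential flatness hypothesis gives
\[
|f(x,\mathbf{y})|\leq A_{S',K}\,C^{N}\,N!\,|x|^{N}
\]
on $S'\times\mathbf{K}$ for every $N$, which is exactly the Gevrey-1 estimate with all Taylor coefficients $f_{j_{0},\mathbf{j}}$ equal to zero.

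\textbf{Gevrey-1 asymptotic to $0$ $\Rightarrow$ exponential flatness.} Suppose $|f(x,\mathbf{y})|\leq A C^{N} N!\,|x|^{N}$ for every $N\in\ww N$ on $S'\times\mathbf{D}(\mathbf{0},\mathbf{r})$. For fixed $x$, we optimize the right-hand side over $N$. Stirling's formula shows that the bound behaves like $A\sqrt{2\pi N}\bigl(CN|x|/e\bigr)^{N}$; a logarithmic derivative in $N$ locates the minimum near $N^{*}\approx 1/(C|x|)$, and evaluating at the nearest integer to $N^{*}$ yields an estimate of the form
\[
|f(x,\mathbf{y})|\leq A'\exp\!\left(-\frac{B}{|x|}\right)
\]
on $S'\times\mathbf{D}(\mathbf{0},\mathbf{r})$, for suitable constants $A',B>0$ depending only on $A$ and $C$, the polynomial factor $\sqrt{N^{*}}$ being absorbed into $A'$. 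Restricting to any compact $\mathbf{K}\subset\mathbf{D}(\mathbf{0},\mathbf{r})$ gives the claimed exponential flatness.

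There is no genuine obstacle in this proof: the argument is standard, and the only care needed is in the bookkeeping around the nearest-integer choice for $N^{*}$ and the precise form of Stirling's asymptotics. The uniformity in the dependent multi-variable $\mathbf{y}$ comes for free, because the constants appearing in both definitions are already prescribed to be uniform with respect to $\mathbf{y}$ on the relevant compact sets (respectively, on the full polydisc $\mathbf{D}(\mathbf{0},\mathbf{r})$).
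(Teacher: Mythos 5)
Your proof is correct in substance and follows exactly the Stirling-type optimization that the paper itself invokes (the paper states this proposition without proof, referring the reader to the classical literature, so there is no in-paper argument to diverge from). Two small points deserve attention. First, in the second direction the factor $\sqrt{N^{*}}\approx |x|^{-1/2}$ is unbounded as $x\to 0$, so it cannot literally be ``absorbed into $A'$''; you must instead absorb it by slightly decreasing the exponent, e.g. via $\sqrt{u}\,e^{-u}\leq C\,e^{-u/2}$, or avoid it altogether by using the cruder bound $N!\leq N^{N}$ together with the choice $N=\lfloor 1/(eC|x|)\rfloor$. Second, the two conditions are not quantified over the same sets in $\mathbf{y}$: the paper's Gevrey-1 estimate is required on the full polydisc $\mathbf{D}(\mathbf{0},\mathbf{r})$, while exponential flatness is only required on compacts $\mathbf{K}$. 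This mismatch is inherited from the statement itself, but your remark that ``uniformity comes for free'' glosses over it: a function such as $(y_{1}-r_{1})^{-1}e^{-1/x}$ is exponentially flat on every $S'\times\mathbf{K}$ yet fails the Gevrey-1 bound on the full polydisc. The equivalence should be read with both conditions carrying the same uniformity in $\mathbf{y}$ (both on compact subsets, or both after shrinking the polydisc); with that convention your argument goes through verbatim.
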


\subsubsection{Borel transform and Gevrey-1 power series}

~
\begin{defn}
~\label{def:Borel_transform}
\begin{itemize}
\item We define the Borel transform $\cal B\left(\hat{f}\right)$ of $\hat{f}$
as: 
\[
\cal B\left(\hat{f}\right)\left(t,\mathbf{y}\right):=\sum_{k\geq0}\frac{f_{k}\left(\mathbf{y}\right)}{k!}t^{k}\,\,.
\]

\item We say that $\hat{f}$ is Gevrey-1 if $\cal B\left(\hat{f}\right)$
is convergent in a neighborhood of the origin in $\ww C\times\ww C^{n}$.
Notice that in this case the $f_{k}\left(\mathbf{y}\right),\, k\geq0$,
are all analytic in a same polydisc $\mathbf{D}\left(\mathbf{0},\mathbf{r}\right)$,
of poly-radius ${\displaystyle \mathbf{r}=\left(r_{n}\dots,r_{n}\right)\in\left(\ww R_{>0}\right)^{n}}$,
so that $\cal B\left(\hat{f}\right)$ is analytic in $\mbox{D}\left(0,\rho\right)\times\mathbf{D}\left(\mathbf{0},\mathbf{r}\right)$,
for some $\rho>0$. Possibly by reducing $\rho,r_{1},\dots,r_{n}>0$,
we can assume that $\cal B\left(\hat{f}\right)$ is bounded in $\mbox{D}\left(0,\rho\right)\times\mathbf{D}\left(\mathbf{0},\mathbf{r}\right)$.
\end{itemize}
\end{defn}
\begin{rem}
~
\begin{enumerate}
\item If a sectorial function $f$ admits $\hat{f}$ for Gevrey-$1$ asymptotic
expansion as in Definition~\ref{def:Gevrey-1_expansion} then $\hat{f}$
is a Gevrey-1 formal power series.
\item The set of all Gevrey-$1$ formal power series is an algebra closed
under partial derivatives $\pp x,\pp{y_{1}},\dots,\pp{y_{n}}$.
\end{enumerate}
\end{rem}
\bigskip{}

\begin{rem}
\label{rem: deifinition bis transformee de Borel}For technical reasons
we will sometimes need to use another definition of the Borel transform,
that is:
\[
\widetilde{\cal B}\left(\hat{f}\right)\left(t,\mathbf{y}\right):=\sum_{k\geq0}f_{k+1}\left(\mathbf{y}\right)\frac{t^{k}}{k!}\,\,.
\]
The first definition we gave has the advantage of being ``directly''
invertible (\emph{via }the Laplace transform) for all 1-summable formal
power series (see next subsection), but behaves not so good with respect
to the product. On the contrary, the second definition will be ``directly''
invertible only for 1-summable formal power series with null constant
term (otherwise a translation is needed). However, the advantage of
the second Borel transform is that it changes a product into a convolution
product: 
\[
\widetilde{\cal B}\left(\hat{f}\hat{g}\right)=\left(\widetilde{{\cal B}}\left(\hat{f}\right)\ast\widetilde{{\cal B}}\left(\hat{g}\right)\right)\,\,,
\]
where the convolution product of two analytic functions $h_{1}h_{2}$
is defined by 
\[
\left(h_{1}\ast h_{2}\right)\left(t,\mathbf{y}\right):=\int_{0}^{t}h_{1}\left(s\right)h_{2}\left(s-t\right)\dd[s]\,\,.
\]
The property of being Gevrey-1 or not does not depend on the choice
of the definition we take for the Borel transform.
\end{rem}

\subsubsection{Directional 1-summability and Borel-Laplace summation}

~
\begin{defn}
\label{def: 1_summability}Given $\theta\in\ww R$ and $\delta>0$,
we define the infinite sector in the direction $\theta$ with opening
$\delta$ as the set 
\[
\cal A_{\theta,\delta}^{\infty}:=\acc{t\in\ww C^{*}\mid\abs{\arg\left(t\right)-\theta}<\frac{\delta}{2}}\,\,.
\]
We say that $\hat{f}$ is \emph{1-summable in the direction} $\theta\in\ww R$,
if the following three conditions holds:
\begin{itemize}
\item $\hat{f}$ is a Gevrey-1 formal power series;
\item $\cal B\left(\hat{f}\right)$ can be analytically continued to a domain
of the form $\cal A_{\theta,\delta}^{\infty}\times\mathbf{D}\left(\mathbf{0},\mathbf{r}\right)$;
\item there exists $\lambda>0,M>0$ such that:
\[
\forall\left(t,\mathbf{y}\right)\in\cal A_{\theta,\delta}^{\infty}\times\mathbf{D}\left(\mathbf{0},\mathbf{r}\right),\,\abs{\cal B\left(\hat{f}\right)\left(t,\mathbf{y}\right)}\leq M\exp\left(\lambda\abs t\right)\,\,\,\,.
\]

\end{itemize}
In this case we set ${\displaystyle \Delta_{\theta,\delta,\rho}:=\cal A_{\theta,\delta}^{\infty}\cup\mbox{D}\left(0,\rho\right)}$
and 
\[
\norm{\hat{f}}_{\lambda,\theta,\delta,\rho,\mathbf{r}}:=\underset{\left(t,\mathbf{y}\right)\in\Delta_{\theta,\delta,\rho}\times\mathbf{D}\left(\mathbf{0},\mathbf{r}\right)}{\sup}\abs{\cal B\left(\hat{f}\right)\left(t,\mathbf{y}\right)\exp\left(-\lambda\abs t\right)}\quad.
\]
If the domain is clear from the context we will simply write ${\displaystyle \norm{\hat{f}}_{\lambda}}$.\end{defn}
\begin{rem}
\label{rem: norme bis borel}~
\begin{enumerate}
\item For fixed $\left(\lambda,\theta,\delta,\rho,\mathbf{r}\right)$ as
above, the set $\mathfrak{B}_{\lambda,\theta,\delta,\rho,\mathbf{r}}$
of formal power series $\hat{f}$ 1-summable in the direction $\theta$
and such that ${\displaystyle \norm{\hat{f}}_{\lambda,\theta,\delta,\rho,\mathbf{r}}<+\infty}$
is a Banach vector space for the norm $\norm{\cdot}_{\lambda,\theta,\delta,\rho,\mathbf{r}}$.
We simply write ${\displaystyle \left(\mathfrak{B}_{\lambda},\norm{\cdot}_{\lambda}\right)}$
when there is no ambiguity.
\item We will also need a norm well-adapted to the second Borel transform
$\widetilde{B}$ (\emph{cf. }Remark \ref{rem: deifinition bis transformee de Borel}),
that is:
\[
\norm{\hat{f}}_{\lambda,\theta,\delta,\rho,\mathbf{r}}^{\tx{bis}}:=\underset{\left(t,\mathbf{y}\right)\in\Delta_{\theta,\delta,\rho}\times\mathbf{D}\left(\mathbf{0},\mathbf{r}\right)}{\sup}\abs{\cal B\left(\hat{f}\right)\left(t,\mathbf{y}\right)\left(1+\lambda^{2}\abs t^{2}\right)\exp\left(-\lambda\abs t\right)}\,\,.
\]
We write then $\mathfrak{B}_{\lambda,\theta,\delta,\rho,\mathbf{r}}^{\tx{bis}}$
the set space of formal power series $\hat{f}$ which are 1-summable
in the direction $\theta$ and such that ${\displaystyle \norm{\hat{f}}_{\lambda,\theta,\delta,\rho,\mathbf{r}}^{\tx{bis}}<+\infty}$.
\item If $\lambda'\geq\lambda$, then ${\displaystyle \mathfrak{B}_{\lambda,\theta,\delta,\rho,\mathbf{r}}}\subset\mathfrak{B}_{\lambda',\theta,\delta,\rho,\mathbf{r}}$
and ${\displaystyle \mathfrak{B}_{\lambda,\theta,\delta,\rho,\mathbf{r}}^{\tx{bis}}}\subset\mathfrak{B}_{\lambda',\theta,\delta,\rho,\mathbf{r}}^{\tx{bis}}$.
\end{enumerate}
\end{rem}
\begin{prop}[{\foreignlanguage{french}{\cite[Proposition 4.]{DeMaesschalck}}}]
\label{prop: norme d'algebre} If $\hat{f},\hat{g}\in\mathfrak{B}_{\lambda,\theta,\delta,\rho,\mathbf{r}}^{\tx{bis}}$,
then $\hat{f}\hat{g}\in\mathfrak{B}_{\lambda,\theta,\delta,\rho,\mathbf{r}}^{\tx{bis}}$
and:
\[
\norm{\hat{f}\hat{g}}_{\lambda,\theta,\delta,\rho,\mathbf{r}}^{\tx{bis}}\leq\frac{4\pi}{\lambda}\norm{\hat{f}}_{\lambda,\theta,\delta,\rho,\mathbf{r}}^{\tx{bis}}\norm{\hat{g}}_{\lambda,\theta,\delta,\rho,\mathbf{r}}^{\tx{bis}}\,\,.
\]
\end{prop}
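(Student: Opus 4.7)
My plan is to transfer the product $\hat{f}\hat{g}$ into a convolution on the Borel side and then estimate the resulting convolution against the two weighted sup-norms. Concretely, I would use (or re-derive) an identity of the form
\[
\cal B(\hat{f}\hat{g})(t, \mathbf{y}) = g_0(\mathbf{y})\, \cal B(\hat{f})(t,\mathbf{y}) + f_0(\mathbf{y})\, \cal B(\hat{g})(t,\mathbf{y}) - f_0(\mathbf{y}) g_0(\mathbf{y}) + \bigl(\widetilde{\cal B}(\hat{f}_*) \ast \widetilde{\cal B}(\hat{g}_*)\bigr)(t,\mathbf{y}),
\]
where $\hat{f}_* := (\hat{f} - f_0)/x$ and $\hat{g}_* := (\hat{g} - g_0)/x$, and $\ast$ denotes convolution along the straight segment from $0$ to $t$. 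The first three ``scalar-multiplier'' terms on the right are trivially controlled by the bis-norm (using only $1 + \lambda^2\abs t^2 \geq 1$), so the bulk of the work consists in estimating the convolution piece.

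For that piece, I would parametrize the integration segment by $s = ut$, $u \in [0,1]$; since $\Delta_{\theta,\delta,\rho}$ is star-shaped about the origin, the path stays inside the domain. The key observation is that $\abs s + \abs{t-s} = \abs t$ along the segment, so the exponential factors in the norm bounds on $\widetilde{\cal B}(\hat{f}_*)$ and $\widetilde{\cal B}(\hat{g}_*)$ combine into a single $\exp(\lambda \abs t)$ factor which factors cleanly out of the integral. What remains is the purely kernel integral
\[
\abs t \int_0^1 \frac{\dd[u]}{(1 + \lambda^2 u^2 \abs t^2)(1 + \lambda^2 (1-u)^2 \abs t^2)}.
\]

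Setting $\tau := \lambda\abs t$ and rescaling $v = \tau u$, the integrand is positive so the integral is bounded above by the corresponding one taken over $\ww R$; closing the contour in the upper half-plane and summing residues at the simple poles $v = i$ and $v = \tau + i$ evaluates the extended integral explicitly to $\frac{2\pi}{\tau^2 + 4}$. Multiplying back by the weight $(1 + \lambda^2 \abs t^2)\exp(-\lambda\abs t)$ and using $(1+\tau^2)/(\tau^2 + 4) \leq 1$ yields a bound of the form $\frac{C}{\lambda} \norm{\hat{f}}_{\lambda,\theta,\delta,\rho,\mathbf r}^{\tx{bis}}\norm{\hat{g}}_{\lambda,\theta,\delta,\rho,\mathbf r}^{\tx{bis}}$ with an explicit $C \leq 4\pi$.

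The main technical subtlety, beyond this residue computation, lies in the bookkeeping around the constant-in-$x$ terms $f_0$ and $g_0$: the clean convolution identity is available only on series with vanishing constant term in $x$, so the cross contributions from $f_0$ and $g_0$ must be added back in and individually controlled. The factor of $2$ of slack between the pure-convolution bound ($2\pi/\lambda$) and the stated constant $4\pi/\lambda$ is precisely there to absorb these auxiliary terms via elementary sup-norm inequalities, and it is this accounting that I expect to require the most care.
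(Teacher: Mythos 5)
The paper offers no proof of this statement---it is quoted from \cite{DeMaesschalck}---so there is no internal argument to compare yours against; it must be judged on its own. Your central computation is correct and is indeed the standard route: parametrising the convolution segment by $s=ut$ (legitimate, since $\Delta_{\theta,\delta,\rho}$ is star-shaped about $0$), using $\abs s+\abs{t-s}=\abs t$ to pull out $\exp\left(\lambda\abs t\right)$, and evaluating $\int_{\ww R}\frac{\dd[v]}{\left(1+v^{2}\right)\left(1+\left(\tau-v\right)^{2}\right)}=\frac{2\pi}{\tau^{2}+4}$ by residues at $v=i$ and $v=\tau+i$ does give, after restoring the weight and using $\left(1+\tau^{2}\right)/\left(\tau^{2}+4\right)\leq1$, the bound $\frac{2\pi}{\lambda}\norm{\hat{f}}_{\lambda}^{\tx{bis}}\norm{\hat{g}}_{\lambda}^{\tx{bis}}$ for the pure convolution piece. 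That is the analytic heart of the proposition, and you have it.

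The gap is precisely where you yourself located the difficulty, and it is not mere bookkeeping. First, your displayed identity mixes the two Borel transforms inconsistently: since the bis-norm is the one built on $\widetilde{\cal B}$ (this is how it is used in the proof of Proposition \ref{prop: solution borel sommable precise}), the relevant identity is $\widetilde{\cal B}\left(\hat{f}\hat{g}\right)=f_{0}\,\widetilde{\cal B}\left(\hat{g}\right)+g_{0}\,\widetilde{\cal B}\left(\hat{f}\right)+\widetilde{\cal B}\left(\hat{f}\right)\ast\widetilde{\cal B}\left(\hat{g}\right)$, whereas your version (first transform on the left, shifted series inside the convolution) is off by one power of $x$. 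Second, and more seriously, the cross terms cannot be ``absorbed by the factor of $2$ of slack''. The quantity $\sup_{\mathbf{y}}\abs{f_{0}\left(\mathbf{y}\right)}$ is not controlled by $\norm{\hat{f}}_{\lambda}^{\tx{bis}}$ at all, because $\widetilde{\cal B}$ discards the constant term in $x$; and even if one augments the norm so that it is, the cross terms contribute an amount of order $\sup\abs{f_{0}}\cdot\norm{\hat{g}}_{\lambda}^{\tx{bis}}$ which is independent of $\lambda$ and hence cannot be dominated by $\frac{4\pi}{\lambda}\norm{\hat{f}}_{\lambda}^{\tx{bis}}\norm{\hat{g}}_{\lambda}^{\tx{bis}}$ once $\lambda$ is large (already $\hat{f}=1$, $\hat{g}=x$ defeats the stated inequality for the $\widetilde{\cal B}$-based seminorm). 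The statement is really about series with no constant term in $x$; on that subspace the cross terms vanish identically, your convolution estimate alone finishes the proof, and it even yields the sharper constant $\frac{2\pi}{\lambda}$. You should either restrict to that subspace explicitly or exhibit a correct mechanism for the constant terms, rather than relying on the slack between $2\pi$ and $4\pi$.
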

\begin{rem}
If $\lambda\geq4\pi$, then $\norm{\cdot}_{\lambda,\theta,\delta,\rho,\mathbf{r}}^{\tx{bis}}$
is a sub-multiplicative norm, \emph{i.e. 
\[
\norm{\hat{f}\hat{g}}_{\lambda,\theta,\delta,\rho,\mathbf{r}}^{\tx{bis}}\leq\norm{\hat{f}}_{\lambda,\theta,\delta,\rho,\mathbf{r}}^{\tx{bis}}\norm{\hat{g}}_{\lambda,\theta,\delta,\rho,\mathbf{r}}^{\tx{bis}}\,\,.
\]
}\end{rem}
\begin{defn}
\label{def:Laplace}Let $g$ be analytic in a domain and $\mbox{\ensuremath{\cal A_{\theta,\delta}^{\infty}\times\mathbf{D}\left(\mathbf{0},\mathbf{r}\right)} and let }\lambda>0,M>0$
such that
\[
\forall\left(t,\mathbf{y}\right)\in\cal A_{\theta,\delta}^{\infty}\times\mathbf{D}\left(\mathbf{0},\mathbf{r}\right),\,\abs{g\left(t,\mathbf{y}\right)}\leq M\exp\left(\lambda\abs t\right)\,\,\,\,.
\]
We define the \emph{Laplace transform} of $g$ in the direction $\theta$
as: 
\[
\cal L_{\theta}\left(g\right)\left(x,\mathbf{y}\right):=\int_{e^{i\theta}\ww R_{>0}}g\left(t,\mathbf{y}\right)\exp\left(-\frac{t}{x}\right)\frac{\mbox{d}t}{x}\,,
\]
which is absolutely convergent for all $x\in\ww C$ with $\Re\left(\frac{e^{i\theta}}{x}\right)>\lambda$
and $\mathbf{y\in\mathbf{D}\left(\mathbf{0},\mathbf{r}\right)}$,
and analytic with respect to $\left(x,\mathbf{y}\right)$ in this
domain.\end{defn}
\begin{rem}
As for the Borel transform, there also exists another definition of
the Laplace transform, that is:
\[
\widetilde{\cal L}_{\theta}\left(g\right)\left(x,\mathbf{y}\right):=\int_{e^{i\theta}\ww R_{>0}}g\left(t,\mathbf{y}\right)\exp\left(-\frac{t}{x}\right)\dd[t]\,\,.
\]
\end{rem}
\begin{prop}
\label{prop: Borel_Laplace_summation}A formal power series $\hat{f}\in\form{x,\mathbf{y}}$
is 1-summable in the direction $\theta$ if and only if there exists
a germ of a sectorial holomorphic function $f_{\theta}\in\cal O\left(\cal S_{\theta,\pi}\right)$
which admits $\hat{f}$ as Gevrey-1 asymptotic expansion in some $\cal S\in\cal S_{\theta,\pi}$.
Moreover, $f_{\theta}$ is unique $\big($as a germ in ${\displaystyle \cal O\left(\cal S_{\theta,\pi}\right)}$$\big)$
and in particular 
\[
f_{\theta}=\cal L_{\theta}\left(\cal B\left(\hat{f}\right)\right)\,\,.
\]
The function (germ) $f_{\theta}$ is called the \emph{1-sum of $\hat{f}$
in the direction $\theta$}.\end{prop}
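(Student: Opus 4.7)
My plan is to prove the two implications separately and deduce uniqueness as a corollary of Watson's lemma via Proposition~\ref{prop: dev asympt nul expo plat}. The pivotal elementary tool is the identity $\cal L_{\theta}\!\left(\frac{t^{k}}{k!}\right)(x)=x^{k}$, valid whenever $\Re\!\left(e^{i\theta}/x\right)>0$, which turns a Taylor truncation in the Borel plane into a Taylor truncation in the physical plane.

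\textbf{Sufficiency.} Assuming $\hat{f}$ is $1$-summable in direction $\theta$, I set $g:=\cal B(\hat{f})$ and define $f_{\theta}:=\cal L_{\theta}(g)$. The exponential bound $\abs{g(t,\mathbf{y})}\leq M\exp(\lambda\abs t)$ on $\cal A_{\theta,\delta}^{\infty}\times\mathbf{D}(\mathbf{0},\mathbf{r})$ makes the Laplace integral absolutely convergent on $\{x:\Re(e^{i\theta}/x)>\lambda,\,\abs x<R\}$, whose geometry is precisely that of a sectorial neighborhood in direction $\theta$ of opening $\pi+\delta$, hence an element of $\cal S_{\theta,\pi}$. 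For the Gevrey-$1$ asymptotic, I split
\[
g(t,\mathbf{y})=\sum_{k=0}^{N-1}\frac{f_{k}(\mathbf{y})}{k!}\,t^{k}+R_{N}(t,\mathbf{y})
\]
and apply $\cal L_{\theta}$ termwise. The remainder is controlled by $\abs{R_{N}(t,\mathbf{y})}\leq(C\abs t)^{N}\norm{\hat{f}}_{\lambda}/N!$ on the Borel disc (Cauchy estimates on $g$) and by $\abs{R_{N}}\leq M'\exp(\lambda\abs t)$ along the rest of the ray $e^{i\theta}\mathbb{R}_{>0}$. Splitting the Laplace integral at $\abs t\sim N/\lambda$ and using Stirling's formula yields the standard Gevrey-$1$ remainder bound $AC^{N}N!\abs x^{N}$ on every closed subsector of $\cal S_{\theta,\pi}$, uniformly in $\mathbf{y}$.

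\textbf{Necessity and uniqueness.} Conversely, given $f\in\cal O(\cal S_{\theta,\pi})$ admitting $\hat{f}$ as Gevrey-$1$ asymptotic expansion, the Gevrey-$1$ bound on the coefficients ensures that $\cal B(\hat{f})$ is holomorphic near the origin. To extend it with exponential growth I would invoke Nevanlinna's integral representation: on a small disc define
\[
\tilde{g}(t,\mathbf{y}):=\frac{1}{2\pi i}\int_{\gamma}f(x,\mathbf{y})\exp\!\left(\frac{t}{x}\right)\frac{\dd x}{x^{2}},
\]
where $\gamma$ is a Hankel-type contour inside a subsector of opening strictly greater than $\pi$ on which $f$ is defined. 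Contour deformation combined with the Gevrey-$1$ remainder estimate on $f$ shows that $\tilde{g}$ continues analytically to some $\cal A_{\theta,\delta'}^{\infty}$ with exponential growth of type $\exp(\lambda'\abs t)$, and that it coincides with $\cal B(\hat{f})$ near the origin (contract $\gamma$ to a small circle, expand $f$ to order $N$, and let $N\to\infty$). The inversion $\cal L_{\theta}(\tilde{g})=f$, verified by Fubini plus the scalar identity above, then closes the loop and identifies $f$ as $f_{\theta}$. For uniqueness, if $f_{1},f_{2}\in\cal O(\cal S_{\theta,\pi})$ share the asymptotic $\hat{f}$, their difference has zero Gevrey-$1$ asymptotic expansion on a sector of opening strictly greater than $\pi$, hence is exponentially flat by Proposition~\ref{prop: dev asympt nul expo plat}; Watson's lemma (Phragm\'en--Lindel\"of applied to two overlapping half-planes) forces $f_{1}\equiv f_{2}$.

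The main obstacle is the necessity step: upgrading the mere sectorial datum $f$ to an analytic continuation of $\cal B(\hat{f})$ on a full infinite sector with uniform exponential bounds. One must trade the Gevrey-$1$ decay of the Taylor remainder against the growth of $\exp(t/x)$ as $\gamma$ is pushed outward, which is precisely the content of Nevanlinna's theorem and the only place where the opening $\pi$ (rather than any smaller opening) is crucial.
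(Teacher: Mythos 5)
The paper does not prove Proposition~\ref{prop: Borel_Laplace_summation}: it is recalled as a classical result, with the proof delegated to the references on summability (Malgrange, Ramis, Martinet--Ramis). Your sketch follows exactly the standard Borel--Laplace/Nevanlinna argument found there, and its architecture is sound: termwise Laplace transform of the truncated Borel sum for sufficiency (with the rotation of the integration ray inside $\cal A_{\theta,\delta}^{\infty}$ supplying the opening $\pi+\delta$, a point you should make explicit since a single ray only yields a disc), Nevanlinna's integral representation for necessity, and exponential flatness via Proposition~\ref{prop: dev asympt nul expo plat} combined with a Phragm\'en--Lindel\"of argument on a sector of opening greater than $\pi$ for uniqueness. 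You correctly identify the necessity step as the delicate one. Two bookkeeping slips are worth fixing. First, the remainder bound on the Borel disc should read $\abs{R_{N}\left(t,\mathbf{y}\right)}\leq A\left(C\abs t\right)^{N}$ with no $1/N!$: Cauchy estimates on the bounded function $\cal B\left(\hat{f}\right)$ control its Taylor coefficients $f_{k}/k!$ by $M/\rho^{k}$, and it is the subsequent Laplace integration $\int_{0}^{\infty}t^{N}e^{-t\Re\left(e^{i\theta}/x\right)}\,\tx dt$ that produces the factor $N!$ in the final Gevrey-1 estimate; with the extra $1/N!$ as written you would wrongly conclude convergence of $\hat{f}$. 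Second, with the paper's normalizations $\cal B\left(\hat{f}\right)=\sum f_{k}t^{k}/k!$ and $\cal L_{\theta}\left(g\right)=\int g\left(t,\mathbf{y}\right)e^{-t/x}\,\tx dt/x$, the Nevanlinna kernel must be $\tx dx/x$ rather than $\tx dx/x^{2}$; the kernel $\tx dx/x^{2}$ reproduces the second Borel transform $\widetilde{\cal B}$ of Remark \ref{rem: deifinition bis transformee de Borel} and would force you to subtract the constant term before inverting. Neither slip affects the strategy, which is the classical one.
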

\begin{rem}
With the second definitions of Borel and Laplace transforms given
above, we have a similar result for formal power series of the form
${\displaystyle \hat{f}\left(x,\mathbf{y}\right)=\sum_{k}f_{k}\left(\mathbf{y}\right)x^{k}}$
with: 
\[
f_{\theta}=\widetilde{\cal L}_{\theta}\left(\widetilde{\cal B}\right)\left(\hat{f}\right)+\hat{f}\left(0,\mathbf{y}\right)\,\,.
\]

\end{rem}
We recall the following well-known result.
\begin{lem}
\label{lem:diff_alg_and_summability}The set $\Sigma_{\theta}\subset\form{x,\mathbf{y}}$
of $1$-summable power series in the direction $\theta$ is an algebra
closed under partial derivatives. Moreover the map
\begin{eqnarray*}
\Sigma_{\theta} & \longrightarrow & \cal O\left(\cal S_{\theta,\pi}\right)\\
\hat{f} & \longmapsto & f_{\theta}
\end{eqnarray*}
is an injective morphism of differential algebras.\end{lem}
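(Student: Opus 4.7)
My plan is to leverage the two equivalent descriptions of $\Sigma_\theta$: on the Borel side (the analytic continuation of $\cal B(\hat f)$ to an infinite sector with exponential growth) for the algebraic closure properties, and on the Laplace side (Proposition~\ref{prop: Borel_Laplace_summation}, giving a sectorial realization $f_\theta\in\cal O(\cal S)$ on some $\cal S\in\cal S_{\theta,\pi}$ admitting $\hat f$ as Gevrey-$1$ asymptotic expansion) for the closure under derivatives and for the morphism property. The argument then reduces to checking four points in turn: linearity, products, differentiation, and compatibility with the 1-sum map.

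\textbf{Algebra structure and derivatives.} Linearity of $\Sigma_\theta$ is immediate from the linearity of $\cal B$, restricting to the intersection of the two infinite sectors and adding the exponential bounds. For products I would work with the second Borel transform $\widetilde{\cal B}$ of Remark~\ref{rem: deifinition bis transformee de Borel}, which sends products to convolutions. After writing $\hat f=\hat f(0,\mathbf y)+\hat f_+$ and similarly for $\hat g$, the cross terms involving the $\mathbf y$-analytic constants are trivially in $\Sigma_\theta$; for $\hat f_+\hat g_+$, after possibly enlarging $\lambda$ and shrinking $\delta,\rho,\mathbf r$ so that both series lie in a common space $\mathfrak B^{\mathrm{bis}}_{\lambda,\theta,\delta,\rho,\mathbf r}$, Proposition~\ref{prop: norme d'algebre} yields
\[
\|\hat f_+\hat g_+\|^{\mathrm{bis}}_{\lambda}\le\frac{4\pi}{\lambda}\|\hat f_+\|^{\mathrm{bis}}_{\lambda}\|\hat g_+\|^{\mathrm{bis}}_{\lambda}<+\infty,
\]
so $\hat f\hat g\in\Sigma_\theta$. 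To handle derivatives, I would use Proposition~\ref{prop: Borel_Laplace_summation} to realize $\hat f$ as the Gevrey-$1$ asymptotic expansion of $f_\theta\in\cal O(\cal S)$. Cauchy estimates in each $y_i$ on a slightly reduced polydisc immediately show that $\partial_{y_i}f_\theta$ admits $\partial_{y_i}\hat f$ as Gevrey-$1$ asymptotic expansion on the same $\cal S$. For $\partial_x$, the one-dimensional Cauchy formula in $x$ on a sub-sector whose opening is reduced by an arbitrarily small $\eta>0$ produces the analogous statement with Gevrey constants inflated by only a bounded factor, so by the reverse direction of Proposition~\ref{prop: Borel_Laplace_summation} we conclude that $\partial_\bullet\hat f\in\Sigma_\theta$ for every $\bullet\in\{x,y_1,\dots,y_n\}$.

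\textbf{Morphism, injectivity, and main obstacle.} The fact that $\hat f\mapsto f_\theta$ is a morphism of differential algebras then follows from the uniqueness statement in Proposition~\ref{prop: Borel_Laplace_summation}: the sectorial germs $f_\theta g_\theta$ and the 1-sum of $\hat f\hat g$ both belong to $\cal O(\cal S_{\theta,\pi})$ and share $\hat f\hat g$ as Gevrey-$1$ asymptotic expansion on a common sub-sector, so they coincide as germs; the same uniqueness argument gives $\partial_\bullet f_\theta=(\partial_\bullet\hat f)_\theta$. Injectivity is immediate from the uniqueness of asymptotic expansions: if $f_\theta=0$ as a germ, then every coefficient of its expansion $\hat f$ must vanish. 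The only step requiring some care is the $\partial_x$ closure, where naive differentiation threatens to shrink the sector below the critical opening $\pi$; the key observation is that Proposition~\ref{prop: Borel_Laplace_summation} actually produces a sectorial domain of opening \emph{strictly} greater than $\pi$ (per Definition~\ref{def: sectorial germs}), so there is enough room to shrink the opening by a small $\eta$ and still remain inside an element of $\cal S_{\theta,\pi}$. Everything else reduces to packaging the Borel--Laplace estimates and uniqueness statements already recorded in the excerpt.
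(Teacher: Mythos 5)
The paper offers no proof of this lemma at all: it is introduced with ``We recall the following well-known result'' and deferred to the summability literature cited in Section 2. Your write-up is therefore not competing with an argument in the text but reconstructing one, and it does so correctly from exactly the ingredients the paper records: linearity on the Borel side; products via the convolution-compatible transform $\widetilde{\cal B}$ together with the sub-multiplicativity estimate of Proposition \ref{prop: norme d'algebre} (your preliminary splitting $\hat f=\hat f(0,\mathbf y)+\hat f_+$ is genuinely needed there, since $\widetilde{\cal B}$ discards the $x$-constant term, and your remark that one may enlarge $\lambda$ and shrink $\delta,\rho,\mathbf r$ to place both factors in a common Banach space $\mathfrak B^{\tx{bis}}_{\lambda,\theta,\delta,\rho,\mathbf r}$ is the right way to make the two norms comparable); closure under derivatives via Cauchy estimates on the Laplace side; and the morphism and injectivity statements from the uniqueness of Gevrey-1 asymptotic expansions on sectors of opening greater than $\pi$. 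You also correctly identify and dispose of the one genuinely delicate point, namely that differentiating in $x$ costs an arbitrarily small amount of opening, which is harmless precisely because membership in $\cal S_{\theta,\pi}$ (Definition \ref{def: sectorial diff}) already guarantees opening $\pi+2\epsilon$ for some $\epsilon>0$. The only cosmetic remark is that the $\partial_{y_i}$ case could be done even more directly on the Borel side, since $\cal B$ commutes with $\partial_{y_i}$ and Cauchy estimates on a smaller polydisc preserve the exponential bound; but your route through the sectorial realization is equally valid and uniform with the $\partial_x$ case. No gap.
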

\begin{defn}
A formal power series $\hat{f}\in\form{x,\mathbf{y}}$ is called \emph{1-summable}
if it is 1-summable in all but a finite number of directions, called
\emph{Stokes directions}. In this case, if ${\displaystyle \theta_{1},\dots,\theta_{k}\in\nicefrac{\ww R}{2\pi\ww Z}}$
are the possible Stokes directions, we say that $\hat{f}$ is 1-summable
except for\textbf{ }$\theta_{1},\dots,\theta_{k}$.

More generally, we say that an $m-$uple $\left(f_{1},\dots,f_{m}\right)\in\form{x,\mathbf{y}}^{m}$
is Gevrey-1 (\emph{resp. }1-summable in direction $\theta$) if this
property holds for each component $f_{j},j=1,\dots,m$. Similarly,
a formal vector field (or\emph{ }diffeomorphism) is said to be Gevrey-1
(\emph{resp. }1-summable in direction $\theta$) if each one of its
components has this property.
\end{defn}
The following classical result deals with composition of 1-summable
power series (an elegant way to prove it is to use an important theorem
of Ramis-Sibuya).
\begin{prop}
\label{prop: compositon summable}Let $\hat{\Phi}\left(x,\mathbf{y}\right)\in\form{x,\mathbf{y}}$
be 1-summable in directions $\theta$ and $\theta-\pi$, and let $\Phi_{+}\left(x,\mathbf{y}\right)$
and $\Phi_{-}\left(x,\mathbf{y}\right)$ be its 1-sums directions
$\theta$ and $\theta-\pi$ respectively. Let also ${\displaystyle \hat{f}_{1}\left(x,\mathbf{z}\right),\dots,\hat{f}_{n}\left(x,\mathbf{z}\right)}$
be 1-summable in directions $\theta$, $\theta-\pi$, and $f_{1,+},\dots,f_{n,+}$,
and $f_{1,-},\dots,f_{n,-}$ be their 1-sums in directions $\theta$
and $\theta-\pi$ respectively. Assume that 
\begin{equation}
\hat{f}_{j}\left(0,\mathbf{0}\right)=0\mbox{, for all }j=1,\dots,n\,\,.\label{eq: condition composition sommable}
\end{equation}
 Then 
\[
\hat{\Psi}\left(x,\mathbf{z}\right):=\hat{\Phi}\left(x,\hat{f}_{1}\left(x,\mathbf{z}\right),\dots,\hat{f}_{n}\left(x,\mathbf{z}\right)\right)
\]
 is 1-summable in directions $\theta,\theta-\pi$, and its 1-sum in
the corresponding direction is 
\[
\Psi_{\pm}\left(x,\mathbf{z}\right):=\Phi_{\pm}\left(x,f_{1,\pm}\left(x,\mathbf{z}\right),\dots,f_{n,\pm}\left(x,\mathbf{z}\right)\right)\,\,\,,
\]
which is a germ of a sectorial holomorphic function in this direction.
\end{prop}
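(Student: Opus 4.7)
The plan is to invoke the Ramis--Sibuya theorem, which characterizes $1$-summability of a formal power series in a pair of opposite directions $\theta, \theta-\pi$ through the existence of analytic realizations on a covering of a punctured neighborhood of $\{x=0\}$ whose pairwise differences on overlaps are exponentially flat and whose common Gevrey-1 asymptotic expansion is the prescribed series.

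First, I would check that $\hat{\Psi}$ is well-defined in $\form{x,\mathbf{z}}$: since $\hat{f}_j(0,\mathbf{0})=0$, each $\hat{f}_j$ lies in the maximal ideal, so substitution in $\hat{\Phi}$ converges in the $(x,\mathbf{z})$-adic topology. The same vanishing condition ensures that each $1$-sum $f_{j,\pm}$ tends to $0$ as $(x,\mathbf{z}) \to (0,\mathbf{0})$ in its sectorial neighborhood; after shrinking the base sectors and the $\mathbf{z}$-polydisc we may therefore assume that $\bigl(x,f_{1,\pm}(x,\mathbf{z}),\ldots,f_{n,\pm}(x,\mathbf{z})\bigr)$ lies in the domain of holomorphy of $\Phi_{\pm}$, so that $\Psi_{\pm}$ is well-defined and analytic on a representative of $\cal S_{\theta,\pi}$ (resp.\ $\cal S_{\theta-\pi,\pi}$). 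Truncating both $\hat{\Phi}$ and the $\hat{f}_j$ at a common order $N$ and estimating the remainders via Cauchy bounds on the ambient polydisc then shows that $\Psi_{\pm}$ admits $\hat{\Psi}$ as Gevrey-1 asymptotic expansion.

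The crux of the argument is to prove that on the narrow overlap $\cal S_{\theta,\pi}\cap\cal S_{\theta-\pi,\pi}$ the difference $\Psi_+ - \Psi_-$ is exponentially flat. I would use the telescoping
\[
\Psi_+ - \Psi_- = \bigl(\Phi_+(x,\mathbf{f}_+) - \Phi_+(x,\mathbf{f}_-)\bigr) + \bigl(\Phi_+(x,\mathbf{f}_-) - \Phi_-(x,\mathbf{f}_-)\bigr),
\]
where $\mathbf{f}_{\pm} := (f_{1,\pm},\ldots,f_{n,\pm})$. The second bracket is exponentially flat by Proposition~\ref{prop: dev asympt nul expo plat} applied to $\hat{\Phi}$, whose two $1$-sums differ by a flat term on the overlap. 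The first bracket is controlled by $C\sum_{j=1}^{n}\abs{f_{j,+}-f_{j,-}}$ via the mean value inequality, since $\Phi_+$ is analytic in $\mathbf{y}$ on a fixed polydisc and its $\mathbf{y}$-Jacobian is uniformly bounded there; each $f_{j,+}-f_{j,-}$ is exponentially flat by the same Proposition~\ref{prop: dev asympt nul expo plat} applied to $\hat{f}_j$. Ramis--Sibuya applied to the good covering $\{\cal S_{\theta,\pi},\cal S_{\theta-\pi,\pi}\}$ then yields the $1$-summability of $\hat{\Psi}$ in both directions, with $1$-sums $\Psi_{\pm}$ as claimed.

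The main obstacle is the purely domain-theoretic coordination of the shrinkings, so that the compositions are simultaneously defined and the flatness estimate is valid on a common sectorial polydisc. The hypothesis $\hat{f}_j(0,\mathbf{0})=0$ is indispensable here: without it, $f_{j,\pm}$ would not tend to $0$ in its sector and the image of $(\tx{Id},\mathbf{f}_{\pm})$ could escape the polydisc of holomorphy of $\Phi_{\pm}$, so that $\Psi_{\pm}$ could not even be defined.
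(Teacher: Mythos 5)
Your proposal is correct and follows exactly the route the paper itself indicates: the statement is presented there as a classical result with no written proof, only the remark that ``an elegant way to prove it is to use an important theorem of Ramis--Sibuya,'' and your argument (well-definedness of the compositions on shrunken sectorial polydiscs, the telescoping estimate showing $\Psi_+-\Psi_-$ is exponentially flat on the overlaps, then Ramis--Sibuya on the two-sector covering) is the standard way to carry that out. The only remark worth making is a mild redundancy: once Ramis--Sibuya is invoked, you only need the plain asymptotic expansion of $\Psi_{\pm}$ to identify the resulting Gevrey-1 series with $\hat{\Psi}$, so the direct truncation-plus-Cauchy-bounds verification of the Gevrey-1 asymptotics can be weakened (or, conversely, if you do prove the Gevrey-1 asymptotics directly on the wide sectors, Proposition \ref{prop: Borel_Laplace_summation} already concludes without the flatness argument).
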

Consider $\hat{Y}$ a formal singular vector field at the origin and
a formal fibered diffeomorphism $\hat{\varphi}:\left(x,\mathbf{y}\right)\mapsto\left(x,\hat{\phi}\left(x,\mathbf{y}\right)\right)$.
Assume that both $\hat{Y}$ and $\hat{\varphi}$ are 1-summable in
directions $\theta$ and $\theta-\pi$, for some $\theta\in\ww R$,
and denote by $Y_{+},Y_{-}$ (\emph{resp. $\varphi_{+},\varphi_{-}$})
their 1-sums in directions $\theta$ and $\theta-\pi$ respectively.
As a consequence of Proposition \ref{prop: compositon summable} and
Lemma \ref{lem:diff_alg_and_summability}, we can state the following:
\begin{cor}
\label{cor: summability push-forward}Under the assumptions above,
$\hat{\varphi}_{*}\left(\hat{Y}\right)$ is 1-summable in both directions
$\theta$ and $\theta-\pi$, and its 1-sums in these directions are
$\varphi_{+}\left(Y_{+}\right)$ and $\varphi_{-}\left(Y_{-}\right)$
respectively.
\end{cor}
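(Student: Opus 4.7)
The plan is to decompose the push-forward
\[
\hat{\varphi}_{*}(\hat{Y})=\pare{\mbox{D}\hat{\varphi}\cdot\hat{Y}}\circ\hat{\varphi}^{-1}
\]
into elementary operations (partial differentiation, product, formal inversion, composition) and to check that each preserves $1$-summability in both directions $\theta$ and $\theta-\pi$, with $1$-sums given by the corresponding operations applied to $\varphi_{\pm}$ and $Y_{\pm}$.

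The first two operations are immediate from Lemma \ref{lem:diff_alg_and_summability}: $\Sigma_{\theta}$ and $\Sigma_{\theta-\pi}$ are differential algebras, and the $1$-sum maps are injective morphisms of differential algebras. Consequently the entries of the Jacobian $\mbox{D}\hat{\varphi}$ are $1$-summable in both directions, with $1$-sums given by the corresponding entries of $\mbox{D}\varphi_{\pm}$; multiplying by the (already $1$-summable) components of $\hat{Y}$ yields $1$-summable power series whose $1$-sums are the components of $\mbox{D}\varphi_{\pm}\cdot Y_{\pm}$.

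The main step is to verify that $\hat{\varphi}^{-1}$ is $1$-summable in directions $\theta$ and $\theta-\pi$, with $1$-sums $\varphi_{\pm}^{-1}$. Writing $\hat{\varphi}^{-1}(x,\mathbf{z})=(x,\hat{\psi}(x,\mathbf{z}))$, the series $\hat{\psi}$ is obtained from $\hat{\phi}$ by formal inversion in the $\mathbf{y}$-variables. On the analytic side, $\varphi_{\pm}$ are germs of sectorial fibered diffeomorphisms defined on sectorial neighborhoods of opening greater than $\pi$, so the analytic fibered implicit function theorem (applied with uniform estimates on closed sub-sectors of $S_{\pm}$) produces sectorial holomorphic inverses $\varphi_{\pm}^{-1}$ on possibly smaller such neighborhoods. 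A standard argument via the chain rule applied to $\hat{\varphi}\circ\hat{\varphi}^{-1}=\tx{Id}$, combined with the stability of Gevrey-$1$ estimates under products and composition, shows that $\varphi_{\pm}^{-1}$ admits $\hat{\varphi}^{-1}$ as Gevrey-$1$ asymptotic expansion on these sectorial neighborhoods. Proposition \ref{prop: Borel_Laplace_summation} then yields the claim. This is the step where the bulk of the technical work will lie, although everything involved is classical.

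It remains to compose the $1$-summable power series obtained in the second paragraph with $\hat{\varphi}^{-1}$ \emph{via} Proposition \ref{prop: compositon summable}. The hypothesis $\hat{f}_{j}(0,\mathbf{0})=0$ of that proposition is satisfied because $\hat{\varphi}$ is fibered and fixes the origin, so the non-$x$ components $\hat{\psi}_{j}$ of $\hat{\varphi}^{-1}$ vanish at the origin. The resulting composition is therefore $1$-summable in both directions with $1$-sums
\[
\pare{\mbox{D}\varphi_{\pm}\cdot Y_{\pm}}\circ\varphi_{\pm}^{-1}=\left(\varphi_{\pm}\right)_{*}\left(Y_{\pm}\right)\,\,,
\]
which is the desired identification of the $1$-sums of $\hat{\varphi}_{*}(\hat{Y})$.
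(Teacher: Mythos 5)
Your proposal is correct and follows essentially the same route as the paper, which states the corollary without proof as a direct consequence of Proposition \ref{prop: compositon summable} and Lemma \ref{lem:diff_alg_and_summability}. You rightly single out the $1$-summability of $\hat{\varphi}^{-1}$ (with $1$-sums $\varphi_{\pm}^{-1}$, obtained via Proposition \ref{prop: Borel_Laplace_summation}) as the one step the paper leaves implicit, and your treatment of it, together with the verification of the hypothesis $\hat{f}_{j}\left(0,\mathbf{0}\right)=0$ for the final composition, is sound.
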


\subsection{\label{sub:Weak-Gevrey-1-power}Weak Gevrey-1 power series and weak
1-summability}

~

We present here a weaker notion of 1-summability that we will also
need. Any function $f\left(x,\mathbf{y}\right)$ analytic in a domain
$\cal U\times\mathbf{D}\left(\mathbf{0},\mathbf{r}\right)$, where
$\cal U\subset\ww C$ is open, and bounded in any domain $\cal U\times\overline{\mathbf{D}}\left(\mathbf{0},\mathbf{r'}\right)$
with $r'_{1}<r_{1},\dots,r'_{n}<r_{n}$, can be written 
\begin{equation}
f\left(x,\mathbf{y}\right)=\sum_{\mathbf{j}\in\ww N^{n}}F_{\mathbf{j}}\left(x\right)\mathbf{y}^{\mathbf{j}}\qquad,\label{eq: developpement selon y-1}
\end{equation}
where for all $\mathbf{j}\in\ww N^{n}$, $F_{\mathbf{j}}$ is analytic
and bounded on $\cal U$, and defined \emph{via }the Cauchy formula:
\[
F_{\mathbf{j}}\left(x\right)=\frac{1}{\left(2i\pi\right)^{n}}\int_{\abs{z_{1}}=r'_{1}}\dots\int_{\abs{z_{n}}=r'_{n}}\frac{f\left(x,\mathbf{z}\right)}{\left(z_{1}\right)^{j_{1}+1}\dots\left(z_{n}\right)^{j_{n}+1}}\mbox{d}z_{n}\dots\mbox{d}z_{1}\qquad.
\]
Notice that the convergence of the function series above is uniform
in every compact with respect to $x$ and $\mathbf{y}$.

In the same way, any formal power series $\hat{f}\left(x,\mathbf{y}\right)\in\form{x,\mathbf{y}}$
can be written as 
\[
{\displaystyle \hat{f}\left(x,\mathbf{y}\right)=\sum_{\mathbf{j}\in\ww N^{n}}\hat{F}_{\mathbf{j}}}\left(x\right)\mathbf{y}^{\mathbf{j}}\,\,.
\]

\begin{defn}
~
\begin{itemize}
\item The formal power series $\hat{f}$ is said to be \textbf{weakly Gevrey-1}
if for all $\mathbf{j}\in\ww N^{n}$, $\hat{F}_{\mathbf{j}}\left(x\right)\in\form x$
is a Gevrey-1 formal power series.
\item A function 
\[
{\displaystyle f\left(x,\mathbf{y}\right)=\sum_{\mathbf{j}\in\ww N^{n}}F_{\mathbf{j}}\left(x\right)\mathbf{y^{j}}}
\]
analytic and bounded in a domain $\sect r{\alpha}{\beta}\times\mathbf{D}\left(\mathbf{0},\mathbf{r}\right)$,
admits $\hat{f}$ as \textbf{weak Gevrey-1 asymptotic expansion} in
$x\in\sect r{\alpha}{\beta}$, if for all $\mathbf{j}\in\ww N^{n}$,
$F_{\mathbf{j}}$ admits $\hat{F}_{\mathbf{j}}$ as Gevrey-1 asymptotic
expansion in $\sect r{\alpha}{\beta}$.
\item The formal power series $\hat{f}$ is said to be \textbf{weakly 1-summable
in the direction $\theta\in\ww R$}, if the following conditions hold:

\begin{itemize}
\item for all $\mathbf{j}\in\ww N^{n}$, $\hat{F}_{\mathbf{j}}\left(x\right)\in\form x$
is 1-summable in the direction $\theta$, whose 1-sum in the direction
$\theta$ is denoted by $F_{\mathbf{j},\theta}$;
\item the series ${\displaystyle f_{\theta}\left(x,\mathbf{y}\right):=\sum_{\mathbf{j}\in\ww N^{n}}F_{\mathbf{j},\theta}\left(x\right)\mathbf{y^{j}}}$
defines a germ of a sectorial holomorphic function in a sectorial
neighborhood attached to the origin in the direction $\theta$ with
opening greater than $\pi$.
\end{itemize}

In this case, $f_{\theta}\left(x,\mathbf{y}\right)$ is called \textbf{the
weak 1-sum of $\hat{f}$ in the direction $\theta$}.

\end{itemize}
\end{defn}
As a consequence to the classical theory of summability and Gevrey
asymptotic expansions, we immediately have the following:
\begin{lem}
\label{lem: proprietes faibles}
\begin{enumerate}
\item The weak Gevrey-1 asymptotic expansion of an analytic function in
a domain $\sect r{\alpha}{\beta}\times\mathbf{D}\left(\mathbf{0},\mathbf{r}\right)$
is unique.
\item The weak 1-sum of a weak 1-summable formal power series in the direction
$\theta$, is unique as a germ in ${\displaystyle \cal O\left(\cal S_{\theta,\pi}\right)}$.
\item \label{enu: derivation faible}The set $\Sigma_{\theta}^{\left(\tx{weak}\right)}\subset\form{x,\mathbf{y}}$
of weakly $1$-summable power series in the direction $\theta$ is
an algebra closed under partial derivatives. Moreover the map
\begin{eqnarray*}
\Sigma_{\theta}^{\left(\tx{weak}\right)} & \longrightarrow & \cal O\left(\cal S_{\theta,\pi}\right)\\
\hat{f} & \longmapsto & f_{\theta}
\end{eqnarray*}
is an injective morphism of differential algebras.
\end{enumerate}
\end{lem}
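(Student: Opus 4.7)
The strategy throughout is to reduce each assertion to the corresponding classical (one-variable, Subsection \ref{sub: Strong-Gevrey-1-power}) statement by means of the coefficient extraction (\ref{eq: developpement selon y-1}). All three items rely on the fact that for an analytic bounded function on $\cal U\times\mathbf{D}(\mathbf{0},\mathbf{r})$, the coefficients $F_{\mathbf{j}}(x)$ are recovered by the Cauchy integral, hence are themselves analytic and bounded on $\cal U$, uniformly in $\mathbf{j}$ after a slight shrinking of the polydisc.

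For item (1), suppose $f$ admits two weak Gevrey-1 asymptotic expansions $\hat{f}$ and $\hat{g}$. Expanding $f$ as in (\ref{eq: developpement selon y-1}), the coefficient functions $F_{\mathbf{j}}$ admit simultaneously $\hat{F}_{\mathbf{j}}$ and $\hat{G}_{\mathbf{j}}$ as (one-variable) Gevrey-1 asymptotic expansions on the sector $\sect{r}{\alpha}{\beta}$, so the classical uniqueness forces $\hat{F}_{\mathbf{j}} = \hat{G}_{\mathbf{j}}$ for every $\mathbf{j}$. Item (2) follows in the same manner: two weak 1-sums of the same $\hat{f}$ in direction $\theta$ coincide after expansion in $\mathbf{y}$ because, via the Cauchy formula, their $\mathbf{j}$-th coefficients are both the classical 1-sum $F_{\mathbf{j},\theta}$ on a common sectorial neighborhood of opening strictly greater than $\pi$, and Proposition \ref{prop: Borel_Laplace_summation} yields uniqueness coefficient-wise.

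For item (3), closure under sum is immediate from linearity of classical 1-summation applied coefficient-wise. For the product of $\hat{f},\hat{g}\in\Sigma_\theta^{(\tx{weak})}$, I would introduce $\hat{H}_{\mathbf{j}} = \sum_{\mathbf{k}\leq\mathbf{j}} \hat{F}_{\mathbf{k}}\hat{G}_{\mathbf{j}-\mathbf{k}}$, which is a finite sum of products of classical 1-summable series in direction $\theta$ and hence 1-summable by Lemma \ref{lem:diff_alg_and_summability}, with 1-sum $H_{\mathbf{j},\theta} = \sum_{\mathbf{k}\leq\mathbf{j}} F_{\mathbf{k},\theta}G_{\mathbf{j}-\mathbf{k},\theta}$. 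To obtain the compatibility condition (convergence of $\sum_{\mathbf{j}} H_{\mathbf{j},\theta}(x)\mathbf{y}^{\mathbf{j}}$ on a sectorial neighborhood of opening $>\pi$), I observe that $f_\theta\cdot g_\theta$ is holomorphic on the intersection of the sectorial domains of $f_\theta$ and $g_\theta$, which is itself a sectorial neighborhood of opening $>\pi$; by Cauchy extraction, the coefficients of its Taylor series in $\mathbf{y}$ are precisely the $H_{\mathbf{j},\theta}$, so the series $\sum_{\mathbf{j}} H_{\mathbf{j},\theta}\mathbf{y}^{\mathbf{j}}$ converges to $f_\theta g_\theta$ there. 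For the partial derivatives, the case $\partial/\partial y_i$ is immediate since the coefficient of $\mathbf{y}^{\mathbf{j}}$ is $(j_i+1)\hat{F}_{\mathbf{j}+\mathbf{e}_i}$, and the sum over $\mathbf{j}$ defines $\partial f_\theta/\partial y_i$ on the same sectorial neighborhood. For $\partial/\partial x$, the coefficient becomes $\hat{F}'_{\mathbf{j}}$, which is 1-summable with 1-sum $F'_{\mathbf{j},\theta}$ by the classical result, and the resulting function series is $\partial f_\theta/\partial x$, holomorphic on the same sectorial domain.

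Finally, injectivity of the map $\hat{f}\mapsto f_\theta$ follows by the same coefficient argument as in item (2): if $f_\theta\equiv 0$ on a sectorial neighborhood, Cauchy extraction gives $F_{\mathbf{j},\theta}\equiv 0$ for every $\mathbf{j}$, and classical injectivity (Lemma \ref{lem:diff_alg_and_summability}) yields $\hat{F}_{\mathbf{j}} = 0$, hence $\hat{f} = 0$. The only mildly delicate point in the whole argument is ensuring that, in the product and derivative cases, the formally defined series of 1-sums actually converges to a sectorial holomorphic function of the two variables. This is handled not by estimating the tail of the series directly (which would require uniform Gevrey bounds on the $F_{\mathbf{j},\theta}$), but by identifying the sum globally with the product (respectively the derivative) of the weak 1-sums already known to exist.
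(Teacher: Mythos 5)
Your proof is correct and is exactly the argument the paper has in mind: the paper omits the proof as an immediate consequence of the classical one-variable theory, and your coefficient-wise reduction via the Cauchy extraction of the $F_{\mathbf{j}}$, together with the identification of $\sum_{\mathbf{j}}H_{\mathbf{j},\theta}\mathbf{y}^{\mathbf{j}}$ with $f_{\theta}g_{\theta}$ (rather than re-proving uniform Gevrey bounds), is the right way to spell it out. No gaps.
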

The following proposition is an analogue of Proposition \ref{prop: compositon summable}
for weak 1-summable formal power series, with the a stronger condition
instead of (\ref{eq: condition composition sommable}).
\begin{prop}
\label{prop: composition faible}Let 
\[
{\displaystyle \hat{\Phi}\left(x,\mathbf{y}\right)=\sum_{\mathbf{j}\in\ww N^{n}}\hat{\Phi}_{\mathbf{j}}\left(x\right)\mathbf{y^{j}}\in\form{x,\mathbf{y}}}
\]
and 
\[
{\displaystyle \hat{f}^{\left(k\right)}\left(x,\mathbf{z}\right)=\sum_{\mathbf{j}\in\ww N^{n}}\hat{F}_{\mathbf{j}}^{\left(k\right)}\left(x\right)\mathbf{z^{j}}\in\form{x,\mathbf{z}}}\,\,,
\]
for $k=1,\dots,n$, be n+1 formal power series which are weakly 1-summable
in directions $\theta$ and $\theta-\pi$. Let us denote by $\Phi_{+},f_{+}^{\left(1\right)},\dots,f_{+}^{\left(n\right)}$
$\big($\emph{resp. }$\Phi_{-},f_{-}^{\left(1\right)},\dots,f_{-}^{\left(n\right)}$$\big)$
their respective weak 1-sums in the direction $\theta$ (\emph{resp.}
$\theta-\pi$). Assume that $\hat{F}_{\mathbf{0}}^{\left(k\right)}=0$
for all $k=1,\dots,n$. Then, 
\[
\hat{\Psi}\left(x,\mathbf{z}\right):=\hat{\Phi}\left(x,\hat{f}^{\left(1\right)}\left(x,\mathbf{z}\right),\dots,\hat{f}^{\left(n\right)}\left(x,\mathbf{z}\right)\right)
\]
 is weakly 1-summable directions $\theta$ and $\theta-\pi$, and
its 1-sum in the corresponding direction is 
\[
\Psi_{\pm}\left(x,\mathbf{z}\right)=\Phi_{\pm}\left(x,f_{\pm}^{\left(1\right)}\left(x,\mathbf{z}\right),\dots,f_{\pm}^{\left(n\right)}\left(x,\mathbf{z}\right)\right)\,\,\,,
\]
which is a germ of a sectorial holomorphic function in this direction
with opening $\pi$.\end{prop}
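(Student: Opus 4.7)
The plan is to reduce the weak 1-summability of $\hat{\Psi}$ to ordinary 1-summability coefficient-by-coefficient in the variable $\mathbf{z}$, using Lemma \ref{lem:diff_alg_and_summability}, and then to identify the weak 1-sums with the natural analytic compositions using the hypothesis $\hat{F}_{\mathbf{0}}^{(k)}=0$.

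First I would use the vanishing assumption to make sense of the formal composition. Since $\hat{F}_{\mathbf{0}}^{(k)}=0$ for all $k$, each $\hat{f}^{(k)}$ lies in the ideal $(z_{1},\dots,z_{n})\subset\form{x,\mathbf{z}}$, hence for every $\mathbf{j}\in\ww N^{n}$ the product $\prod_{k}(\hat{f}^{(k)})^{j_{k}}$ belongs to $(z_{1},\dots,z_{n})^{|\mathbf{j}|}$. Therefore $\hat{\Psi}$ is a well-defined element of $\form{x,\mathbf{z}}$, and its $\mathbf{z}^{\mathbf{m}}$-coefficient is a \emph{finite} polynomial expression
\[
\hat{\Psi}_{\mathbf{m}}(x)=\sum_{|\mathbf{j}|\leq|\mathbf{m}|}\hat{\Phi}_{\mathbf{j}}(x)\cdot[\mathbf{z}^{\mathbf{m}}]\!\left(\prod_{k=1}^{n}\hat{f}^{(k)}(x,\mathbf{z})^{j_{k}}\right)
\]
in the formal power series $\hat{\Phi}_{\mathbf{j}}(x)$ and $\hat{F}_{\mathbf{k}}^{(l)}(x)$, all of which are 1-summable in the directions $\theta$ and $\theta-\pi$ by the weak 1-summability hypothesis. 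Applying Lemma~\ref{lem:diff_alg_and_summability}, each $\hat{\Psi}_{\mathbf{m}}$ is itself 1-summable in these directions, and its 1-sum $\Psi_{\pm,\mathbf{m}}(x)$ is obtained by substituting each factor by its respective 1-sum.

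Next I would produce the candidate weak 1-sum as the analytic composition $\Psi_{\pm}(x,\mathbf{z}):=\Phi_{\pm}(x,f_{\pm}^{(1)}(x,\mathbf{z}),\dots,f_{\pm}^{(n)}(x,\mathbf{z}))$ and argue it defines a germ of a sectorial holomorphic function in a neighborhood of opening greater than $\pi$. Because $\hat{F}_{\mathbf{0}}^{(k)}=0$, the uniqueness of the weak 1-sum (Lemma~\ref{lem: proprietes faibles}) gives $f_{\pm}^{(k)}(x,\mathbf{0})\equiv0$ on the relevant sectorial domain; by continuity, shrinking the polydisc $\mathbf{D}(\mathbf{0},\boldsymbol{\rho})$ and restricting to a closed subsector if necessary, the image $(x,f_{\pm}^{(1)}(x,\mathbf{z}),\dots,f_{\pm}^{(n)}(x,\mathbf{z}))$ stays inside the domain of $\Phi_{\pm}$, uniformly. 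Thus $\Psi_{\pm}$ is well defined and analytic.

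Finally I would match the two constructions. Expanding $\Psi_{\pm}$ in Taylor series with respect to $\mathbf{z}$ via the Cauchy integral formula, its $\mathbf{z}^{\mathbf{m}}$-coefficient coincides with the polynomial expression
\[
\sum_{|\mathbf{j}|\leq|\mathbf{m}|}\Phi_{\pm,\mathbf{j}}(x)\cdot[\mathbf{z}^{\mathbf{m}}]\!\left(\prod_{k=1}^{n}f_{\pm}^{(k)}(x,\mathbf{z})^{j_{k}}\right)=\Psi_{\pm,\mathbf{m}}(x),
\]
which is the 1-sum of $\hat{\Psi}_{\mathbf{m}}$ found above. The series $\sum_{\mathbf{m}}\Psi_{\pm,\mathbf{m}}(x)\mathbf{z}^{\mathbf{m}}$ therefore converges to $\Psi_{\pm}(x,\mathbf{z})$ on a sectorial neighborhood of opening greater than $\pi$, which is precisely the definition of weak 1-summability of $\hat{\Psi}$ with weak 1-sum $\Psi_{\pm}$. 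The main obstacle is the analytic-composition step: controlling the images of the inner maps so that the composition lands inside the polydisc of analyticity of $\Phi_{\pm}$ uniformly on a closed subsector. This is exactly where the hypothesis $\hat{F}_{\mathbf{0}}^{(k)}=0$ -- stronger than the pointwise condition $\hat{f}^{(k)}(0,\mathbf{0})=0$ that suffices in the classical setting of Proposition~\ref{prop: compositon summable} -- becomes indispensable, since weak 1-sums are only controlled on compact subsets of the polydisc, not at the single point $(0,\mathbf{0})$.
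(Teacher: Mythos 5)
Your proposal is correct and follows essentially the same route as the paper: both arguments use $\hat{F}_{\mathbf{0}}^{(k)}=0$ to make the formal composition well defined and to ensure $f_{\pm}^{(k)}(x,\mathbf{0})=0$ so the analytic composition $\Psi_{\pm}$ exists on a sectorial domain, and both then identify the $\mathbf{z}$-coefficients of $\Psi_{\pm}$ with finite sums and products of the 1-sums $\left(\Phi_{\mathbf{j}}\right)_{\pm}$ and $\left(F_{\mathbf{k}}^{(l)}\right)_{\pm}$, invoking the differential-algebra property of 1-summable series to conclude coefficientwise Gevrey-1 asymptotics. Your version is somewhat more explicit than the paper's about why the image of the inner maps stays in the polydisc of analyticity of $\Phi_{\pm}$, but this is a refinement of detail rather than a different method.
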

\begin{proof}
First of all, 
\[
\hat{\Psi}\left(x,\mathbf{z}\right):=\hat{\Phi}\left(x,\hat{f}^{\left(1\right)}\left(x,\mathbf{z}\right),\dots,\hat{f}^{\left(n\right)}\left(x,\mathbf{z}\right)\right)
\]
 is well defined as formal power series since for all $k=1,\dots,n$,
$\hat{F}_{\mathbf{0}}^{\left(k\right)}=0$. It is also clear that
\[
\Psi_{\pm}\left(x,\mathbf{z}\right):={\displaystyle \Phi_{\pm}\left(x,f_{\pm}^{\left(1\right)}\left(x,\mathbf{z}\right),\dots,f_{\pm}^{\left(n\right)}\left(x,\mathbf{z}\right)\right)}
\]
 is an analytic in a domain $\cal S_{+}\in\cal S_{\theta,\pi}$ (\emph{resp.
}$\cal S_{-}\in\cal S_{\theta-\pi,\pi}$), because $f_{\pm}^{\left(k\right)}\left(x,\mathbf{0}\right)=0$
for all $k=1,\dots,n$. Finally, we check that $\Psi_{\pm}$ admits
$\hat{\Psi}$ as weak Gevrey-1 asymptotic expansion in $\cal S_{\pm}$.
Indeed:
\begin{eqnarray*}
\Psi_{\pm}\left(x,\mathbf{z}\right) & = & \Phi_{\pm}\left(x,f_{\pm}^{\left(1\right)}\left(x,\mathbf{z}\right),\dots,f_{\pm}^{\left(n\right)}\left(x,\mathbf{z}\right)\right)\\
 & = & \sum_{\mathbf{j}\in\ww N^{n}}\left(\Phi_{\mathbf{j}}\right)_{\pm}\left(x\right)\left(f_{\pm}^{\left(1\right)}\left(x,\mathbf{z}\right)\right)^{j_{1}}\dots\left(f_{\pm}^{\left(1\right)}\left(x,\mathbf{z}\right)\right)^{j_{n}}\\
 & = & \sum_{\mathbf{j}\in\ww N^{n}}\left(\Phi_{\mathbf{j}}\right)_{\pm}\left(x\right)\left(\sum_{\abs{\mathbf{l}}\geq1}\left(F_{\mathbf{l}}^{\left(1\right)}\right)_{\pm}\left(x\right)\mathbf{z^{l}}\right)^{j_{1}}\dots\\
 &  & \,\,\,\,\,\,\dots\left(\sum_{\abs{\mathbf{l}}\geq1}\left(F_{\mathbf{l}}^{\left(n\right)}\right)_{\pm}\left(x\right)\mathbf{z^{l}}\right)^{j_{n}}\\
 & = & \sum_{\mathbf{j}\in\ww N^{n}}\left(\Psi_{\mathbf{j}}\right)_{\pm}\left(x\right)\mathbf{y^{j}}
\end{eqnarray*}
where for all $\mathbf{j}\in\ww N^{n}$, $\left(\Psi_{\mathbf{j}}\right)_{\pm}\left(x\right)$
is obtained as a finite number of additions and products of the $\left(\Phi_{\mathbf{k}}\right)_{\pm}$,$\left(F_{\mathbf{k}}^{\left(1\right)}\right)_{\pm}$,$\dots$,$\left(F_{\mathbf{k}}^{\left(n\right)}\right)_{\pm}$,
$\abs{\mathbf{k}}\leq\abs{\mathbf{l}}$. The same computation is valid
for the associated formal power series, and allows us to define the
$\hat{\Psi}_{\mathbf{j}}\left(x\right)$, for all $\mathbf{j}\in\ww N^{n}$.
Then, each $\left(\Psi_{\mathbf{j}}\right)_{\pm}$ has $\hat{\Psi}_{\mathbf{j}}$
as Gevrey-1 asymptotic expansion in $\cal S_{\pm}$.
\end{proof}
As a consequence of Proposition \ref{prop: composition faible} and
Lemma \ref{lem: proprietes faibles}, we have an analogue version
of Corollary (\ref{cor: summability push-forward}) in the weak 1-summable
case. Consider $\hat{Y}$ a formal singular vector field at the origin
and a formal fibered diffeomorphism $\hat{\varphi}:\left(x,\mathbf{y}\right)\mapsto\left(x,\hat{\phi}\left(x,\mathbf{y}\right)\right)$
such that $\hat{\phi}\left(x,\mathbf{0}\right)=\mathbf{0}$. Assume
that both $\hat{Y}$ and $\hat{\varphi}$ are weakly 1-summable in
directions $\theta$ and $\theta-\pi$, for some $\theta\in\ww R$,
and denote by $Y_{+},Y_{-}$ (\emph{resp. $\varphi_{+},\varphi_{-}$})
their weak 1-sums in directions $\theta$ and $\theta-\pi$ respectively.
\begin{cor}
\label{cor: summability push-forward faible} Under the assumptions
above, $\hat{\varphi}_{*}\left(\hat{Y}\right)$ is weakly 1-summable
in both directions $\theta$ and $\theta-\pi$, and its 1-sums in
these directions are $\varphi_{+}\left(Y_{+}\right)$ and $\varphi_{-}\left(Y_{-}\right)$
respectively.
\end{cor}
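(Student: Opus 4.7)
The plan is to mimic the proof of Corollary \ref{cor: summability push-forward} in the strong-summable case, substituting Proposition \ref{prop: composition faible} for Proposition \ref{prop: compositon summable} and Lemma \ref{lem: proprietes faibles} for Lemma \ref{lem:diff_alg_and_summability}. Recall that the push-forward is given by
\[
\hat{\varphi}_{*}(\hat{Y}) = \bigl(\mathrm{D}\hat{\varphi}\cdot\hat{Y}\bigr)\circ\hat{\varphi}^{-1},
\]
so three operations must be controlled: partial differentiation, products/sums, and composition with $\hat{\varphi}^{-1}$.

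First, by item \ref{enu: derivation faible} of Lemma \ref{lem: proprietes faibles}, the sets $\Sigma_{\theta}^{(\mathrm{weak})}$ and $\Sigma_{\theta-\pi}^{(\mathrm{weak})}$ are differential algebras on which the weak 1-summation is an injective morphism of differential algebras. Applying this to the entries of $\hat{\varphi}$ and to the components of $\hat{Y}$, the entries of the Jacobian matrix $\mathrm{D}\hat{\varphi}$ are weakly 1-summable in both directions with weak 1-sums given by the entries of $\mathrm{D}\varphi_{\pm}$, and therefore the vector $\hat{Z}:=\mathrm{D}\hat{\varphi}\cdot\hat{Y}$ is weakly 1-summable in both directions with weak 1-sums $Z_{\pm}:=\mathrm{D}\varphi_{\pm}\cdot Y_{\pm}$.

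Second, I would establish that $\hat{\varphi}^{-1}$ is itself weakly 1-summable in directions $\theta$ and $\theta-\pi$, with weak 1-sums $\varphi_{+}^{-1}$ and $\varphi_{-}^{-1}$ respectively. The assumption $\hat{\phi}(x,\mathbf{0})=\mathbf{0}$ ensures that $\hat{\varphi}$ is formally invertible as a fibered map, whose inverse has the same fibered form and still satisfies $\hat{\varphi}^{-1}(x,\mathbf{0})=(x,\mathbf{0})$. On the sectorial side, each $\varphi_{\pm}$ is an analytic sectorial fibered diffeomorphism that fixes $\{\mathbf{y}=\mathbf{0}\}$, hence admits an analytic sectorial fibered inverse $\varphi_{\pm}^{-1}$ with the same property. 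Expanding both $\hat{\varphi}^{-1}$ and $\varphi_{\pm}^{-1}$ as series in $\mathbf{y}$ whose coefficients depend on $x$, the recursive relations deduced from $\hat{\varphi}\circ\hat{\varphi}^{-1}=\mathrm{Id}$ identify each $x$-coefficient of $\hat{\varphi}^{-1}$ with a finite sum of products of $x$-coefficients of $\hat{\varphi}$, and an inductive application of the algebra structure (item \ref{enu: derivation faible} of Lemma \ref{lem: proprietes faibles}) together with the uniqueness of the 1-sum shows that each such coefficient is 1-summable in the corresponding direction with 1-sum equal to the corresponding coefficient of $\varphi_{\pm}^{-1}$. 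This yields the desired weak 1-summability of $\hat{\varphi}^{-1}$.

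Finally, I apply Proposition \ref{prop: composition faible} to the composition $\hat{Z}\circ\hat{\varphi}^{-1}$; its hypothesis (that the inner components vanish at $\mathbf{y}=\mathbf{0}$) holds because $\hat{\varphi}^{-1}(x,\mathbf{0})=(x,\mathbf{0})$. This gives the weak 1-summability of $\hat{\varphi}_{*}(\hat{Y})$ in directions $\theta$ and $\theta-\pi$, with weak 1-sums
\[
Z_{\pm}\circ\varphi_{\pm}^{-1} = \bigl(\mathrm{D}\varphi_{\pm}\cdot Y_{\pm}\bigr)\circ\varphi_{\pm}^{-1} = (\varphi_{\pm})_{*}(Y_{\pm}),
\]
which is the announced conclusion. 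The main obstacle is the weak 1-summability of $\hat{\varphi}^{-1}$, since Proposition \ref{prop: composition faible} does not directly handle inversion; this is precisely where the assumption $\hat{\phi}(x,\mathbf{0})=\mathbf{0}$ is essential, because it guarantees that both the formal and the sectorial inverses exist and have the fibered form required to feed back into the composition proposition.
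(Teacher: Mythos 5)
Your proposal is correct and follows essentially the route the paper intends: the paper states this corollary without proof as an immediate consequence of Proposition \ref{prop: composition faible} and Lemma \ref{lem: proprietes faibles}, which is exactly the combination you use (differential-algebra closure for $\mathrm{D}\hat{\varphi}\cdot\hat{Y}$, then the weak composition proposition). Your extra care with the weak 1-summability of $\hat{\varphi}^{-1}$ fills in a detail the paper leaves implicit; the only point worth stating explicitly there is that inverting the linear part in $\mathbf{y}$ requires dividing by the unit $\det\bigl(\mathrm{D}_{\mathbf{y}}\hat{\phi}(x,\mathbf{0})\bigr)$, so one also uses the (classical) fact that the reciprocal of a $1$-summable unit in $x$ is again $1$-summable.
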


\subsection{Weak 1-summability \emph{versus} 1-summability}

~

As in the previous subsection, let a formal power series $\hat{f}\left(x,\mathbf{y}\right)\in\form{x,\mathbf{y}}$
which is written as 
\[
{\displaystyle \hat{f}\left(x,\mathbf{y}\right)=\sum_{\mathbf{j}\in\ww N^{n}}\hat{F}_{\mathbf{j}}}\left(x\right)\mathbf{y}^{\mathbf{j}}\,\,,
\]
so that its Borel transform is 
\[
{\displaystyle \cal B\left(\hat{f}\right)\left(t,\mathbf{y}\right)=\sum_{\mathbf{j}\in\ww N^{n}}\cal B\left(\hat{F}_{\mathbf{j}}\right)\left(t\right)\mathbf{y^{j}}\,\,.}
\]
The next lemma is immediate.
\begin{lem}
\label{lem: defition equivalent 1-summable}~
\begin{enumerate}
\item The power series $\cal B\left(\hat{f}\right)\left(t,\mathbf{y}\right)$
is convergent in a neighborhood of the origin in $\ww C^{n+1}$ if
and only if the $\cal B\left(\hat{F}_{\mathbf{j}}\right),\,\mathbf{j}\in\ww N^{n}$,
are all analytic and bounded in a same disc $\mbox{D}\left(0,\rho\right)$,
$\rho>0$, and if there exists $B,L>0$ such that for all $\mathbf{j}\in\ww N^{n}$,
${\displaystyle \underset{t\in\mbox{D}\left(0,\rho\right)}{\sup}\abs{\cal B\left(\hat{F}_{\mathbf{j}}\right)\left(t\right)}\leq L.B^{\abs{\mathbf{j}}}}$. 
\item If $1.$ is satisfied, then $\cal B\left(\hat{f}\right)$ can be analytically
continued to a domain $\cal A_{\theta,\delta}^{\infty}\times\mathbf{D}\left(\mathbf{0},\mathbf{r}\right)$
if and only if for all $\mathbf{j}\in\ww N^{n}$, $\cal B\left(\hat{F}_{\mathbf{j}}\right)$
can be analytically continued to $\cal A_{\theta,\delta}^{\infty}$
and if for all compact $K\subset\cal A_{\theta,\delta}^{\infty}$,
there exists $B,L>0$ such that for all $\mathbf{j}\in\ww N^{n}$,
${\displaystyle \underset{t\in K}{\sup}\abs{\cal B\left(\hat{F}_{\mathbf{j}}\right)\left(t\right)}\leq L.B^{\abs{\mathbf{j}}}}$.
\item If and $1.$ and $2.$ are satisfied, then there exists $\lambda,M>0$
such that:
\[
\forall\left(t,\mathbf{y}\right)\in\cal A_{\theta,\delta}^{\infty}\times\mathbf{D}\left(\mathbf{0},\mathbf{r}\right),\,\abs{\cal B\left(\hat{f}\right)\left(t,\mathbf{y}\right)}\leq M.\exp\left(\lambda\abs t\right)
\]
if and only if there exists $\lambda,L,B>0$ such that for all $\mathbf{j}\in\ww N^{n}$,
\[
\forall t\in\cal A_{\theta,\delta}^{\infty},\,\abs{\cal B\left(\hat{F}_{\mathbf{j}}\right)\left(t\right)}\leq L.B^{\abs{\mathbf{j}}}\exp\left(\lambda\abs t\right)\qquad.
\]

\end{enumerate}
\end{lem}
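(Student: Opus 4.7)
The three equivalences all have the same underlying mechanism: extracting the coefficients $\cal B(\hat{F}_{\mathbf{j}})$ from $\cal B(\hat{f})$ via the Cauchy integral formula in the $\mathbf{y}$-variables, and, conversely, recovering $\cal B(\hat{f})$ from the $\cal B(\hat{F}_{\mathbf{j}})$ by summing a geometric series. The plan is to prove the $(\Rightarrow)$ direction in each item by a Cauchy estimate and the $(\Leftarrow)$ direction by a uniform majorization.

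For item 1, suppose $\cal B(\hat{f})$ is analytic and bounded by some constant $M$ in $\mbox{D}(0,\rho)\times \mathbf{D}(\mathbf{0},\mathbf{r})$. Each $\cal B(\hat{F}_{\mathbf{j}})(t)$ is the coefficient of $\mathbf{y}^{\mathbf{j}}$ in the Taylor expansion, so the Cauchy integral formula (written for instance with $|z_k| = r_k/2$) yields
\[
\bigl|\cal B(\hat{F}_{\mathbf{j}})(t)\bigr| \;\leq\; M \prod_{k=1}^n \left(\frac{2}{r_k}\right)^{j_k} \;\leq\; M\, B^{|\mathbf{j}|}
\]
for all $t\in\mbox{D}(0,\rho)$, with $B := 2/\min_k r_k$. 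Conversely, if the $\cal B(\hat{F}_{\mathbf{j}})$ are all analytic in $\mbox{D}(0,\rho)$ with $\sup_{|t|<\rho}|\cal B(\hat{F}_{\mathbf{j}})(t)| \leq L B^{|\mathbf{j}|}$, then for any $\mathbf{r}'$ with $B\max_k r'_k < 1$ the series $\sum_{\mathbf{j}} \cal B(\hat{F}_{\mathbf{j}})(t)\,\mathbf{y}^{\mathbf{j}}$ is dominated by the convergent geometric series $L\sum_{\mathbf{j}}\prod_k(Br'_k)^{j_k}$, hence converges normally on $\mbox{D}(0,\rho)\times \mathbf{D}(\mathbf{0},\mathbf{r}')$ to an analytic function which must coincide with $\cal B(\hat{f})$.

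For item 2, the same Cauchy formula applied pointwise in $t$ transfers analytic continuation and compact bounds from $\cal B(\hat{f})$ to each $\cal B(\hat{F}_{\mathbf{j}})$, with the bound on a compact $K \subset \cal A_{\theta,\delta}^\infty$ controlled by $\sup_{K\times \mathbf{D}(\mathbf{0},\mathbf{r}/2)}|\cal B(\hat{f})|$ times $B^{|\mathbf{j}|}$. In the other direction, the functions defined by the partial sums $\sum_{|\mathbf{j}|\leq N}\cal B(\hat{F}_{\mathbf{j}})(t)\,\mathbf{y}^{\mathbf{j}}$ are already holomorphic on $\cal A_{\theta,\delta}^\infty\times\mathbf{D}(\mathbf{0},\mathbf{r}')$; the uniform bounds $L B^{|\mathbf{j}|}$ on compact subsets guarantee normal convergence to a holomorphic function on $\cal A_{\theta,\delta}^\infty\times\mathbf{D}(\mathbf{0},\mathbf{r}')$ (for $\mathbf{r}'$ small enough), which is necessarily the analytic continuation of $\cal B(\hat{f})$ by the identity principle.

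For item 3, the only addition is keeping track of the weight $\exp(\lambda|t|)$. If $|\cal B(\hat{f})(t,\mathbf{y})| \leq M\exp(\lambda|t|)$ on $\cal A_{\theta,\delta}^\infty\times\mathbf{D}(\mathbf{0},\mathbf{r})$, then Cauchy's formula at fixed $t$ produces
\[
\bigl|\cal B(\hat{F}_{\mathbf{j}})(t)\bigr| \;\leq\; M\,B^{|\mathbf{j}|}\exp(\lambda|t|),
\]
while the converse follows by majorizing the series at each $t$ by the geometric sum, reducing $\mathbf{r}$ if necessary so that $B\max_k r_k < 1$. The main technical point in all three parts is to choose slightly smaller poly-radii so that the geometric series in $\mathbf{y}$ converges, but no genuine obstacle appears: the whole lemma is a packaging of the standard equivalence between uniform bicomplex bounds and exponentially controlled coefficient bounds.
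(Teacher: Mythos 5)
Your proof is correct: the paper itself offers no argument for this lemma (it is introduced with ``The next lemma is immediate''), and your combination of Cauchy estimates in the $\mathbf{y}$-variables for the forward implications with geometric-series majorization for the converses is exactly the standard reasoning that justifies that claim. No gap to report.
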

\begin{rem}
~
\begin{enumerate}
\item Condition $1.$ above states that the formal power series $\hat{f}$
is Gevrey-1.
\item As usual, there exists an equivalent lemma for the second definitions
of the Borel transform (see Remark \ref{rem: deifinition bis transformee de Borel}).
\end{enumerate}
\end{rem}
The following corollary gives a link between 1-summability and weak
1-summability.
\begin{cor}
\label{cor: faible et forte sommabilite}Let 
\begin{eqnarray*}
{\displaystyle \hat{f}\left(x,\mathbf{y}\right)} & = & \sum_{\mathbf{j}\in\ww N^{n}}\hat{F}_{\mathbf{j}}\left(x\right)\mathbf{y^{j}}\in\form{x,\mathbf{y}}
\end{eqnarray*}
 be a formal power series. Then, $\hat{f}$ is 1-summable in the direction
$\theta\in\ww R$, of 1-sum ${\displaystyle f\in\cal O\left(\cal S_{\theta,\pi}\right)}$,
if and only if the following two conditions hold:
\begin{itemize}
\item $\hat{f}$ is weakly 1-summable in the direction $\theta$;
\item there exists $\lambda,\delta,\rho$ such that for all $\mathbf{j}\in\ww N^{n}$,
${\displaystyle \norm{\hat{F}_{\mathbf{j}}}_{\lambda,\theta,\delta,\rho}<\infty}$
and the power series ${\displaystyle \sum_{\mathbf{j}\in\ww N^{n}}\norm{\hat{F}_{\mathbf{j}}}_{\lambda,\theta,\delta,\rho}\mathbf{y}^{\mathbf{j}}}$\emph{
}\textup{\emph{is convergent near the origin of $\ww C^{n}$.}}
\end{itemize}
\end{cor}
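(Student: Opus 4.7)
The claim is essentially a coefficient-wise reformulation of the definition of 1-summability, using the expansion $\hat{f}(x,\mathbf{y}) = \sum_{\mathbf{j}} \hat{F}_{\mathbf{j}}(x) \mathbf{y}^{\mathbf{j}}$ and the fact that the Borel transform acts term-by-term in $\mathbf{y}$, namely $\mathcal{B}(\hat{f})(t,\mathbf{y}) = \sum_{\mathbf{j}} \mathcal{B}(\hat{F}_{\mathbf{j}})(t) \mathbf{y}^{\mathbf{j}}$. The plan is to apply Lemma~\ref{lem: defition equivalent 1-summable} to translate the three conditions defining $1$-summability of $\hat{f}$ (Gevrey-$1$ character, analytic continuation of $\mathcal{B}(\hat{f})$ to an infinite sector, exponential bound there) into uniform estimates on the family $\{\mathcal{B}(\hat{F}_{\mathbf{j}})\}_{\mathbf{j}}$, and then recognize these estimates as the finiteness and summability of the norms $\|\hat{F}_{\mathbf{j}}\|_{\lambda,\theta,\delta,\rho}$.

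For the direct implication, I assume $\hat{f}$ is $1$-summable in direction $\theta$. By Lemma~\ref{lem: defition equivalent 1-summable}, there exist $\delta,\rho,\lambda,L,B>0$ such that each $\mathcal{B}(\hat{F}_{\mathbf{j}})$ extends analytically to $\Delta_{\theta,\delta,\rho}$ with
\[
\sup_{t \in \Delta_{\theta,\delta,\rho}} \bigl|\mathcal{B}(\hat{F}_{\mathbf{j}})(t)\bigr| \exp(-\lambda|t|) \leq L \cdot B^{|\mathbf{j}|}.
\]
This is exactly $\|\hat{F}_{\mathbf{j}}\|_{\lambda,\theta,\delta,\rho} \leq L \cdot B^{|\mathbf{j}|}$, so each $\hat{F}_{\mathbf{j}}$ is $1$-summable in direction $\theta$ and the series $\sum_{\mathbf{j}} \|\hat{F}_{\mathbf{j}}\|_{\lambda,\theta,\delta,\rho} \mathbf{y}^{\mathbf{j}}$ is majorized by $L \sum_{\mathbf{j}} (B\mathbf{y})^{\mathbf{j}}$, hence converges for $\mathbf{y}$ in a polydisc. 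Weak $1$-summability then follows because the termwise Laplace transforms $\mathcal{L}_\theta(\mathcal{B}(\hat{F}_{\mathbf{j}}))$ coincide with the partial $1$-sums of $f_\theta := \mathcal{L}_\theta(\mathcal{B}(\hat{f}))$ in the sectorial neighborhood provided by Proposition~\ref{prop: Borel_Laplace_summation}.

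For the converse, assume the two conditions hold. Convergence of $\sum_{\mathbf{j}} \|\hat{F}_{\mathbf{j}}\|_{\lambda,\theta,\delta,\rho} \mathbf{y}^{\mathbf{j}}$ in some polydisc $\mathbf{D}(\mathbf{0},\mathbf{r})$ gives, by Cauchy estimates, constants $L,B>0$ with $\|\hat{F}_{\mathbf{j}}\|_{\lambda,\theta,\delta,\rho} \leq L \cdot B^{|\mathbf{j}|}$; in fact, one may simply take $\mathbf{r}$ slightly smaller and use that the tail $\sum_{\mathbf{j}} \|\hat{F}_{\mathbf{j}}\|_{\lambda,\theta,\delta,\rho} \mathbf{r}^{\mathbf{j}}$ is finite. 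Weak $1$-summability of $\hat{f}$ ensures each $\mathcal{B}(\hat{F}_{\mathbf{j}})$ is analytic on $\Delta_{\theta,\delta,\rho}$, and the estimate above then yields
\[
\bigl|\mathcal{B}(\hat{f})(t,\mathbf{y})\bigr| \leq \sum_{\mathbf{j}} \bigl|\mathcal{B}(\hat{F}_{\mathbf{j}})(t)\bigr| |\mathbf{y}|^{\mathbf{j}} \leq L \exp(\lambda|t|) \sum_{\mathbf{j}} (B|\mathbf{y}|)^{\mathbf{j}},
\]
which converges and is exponentially controlled on $\Delta_{\theta,\delta,\rho} \times \mathbf{D}(\mathbf{0},\mathbf{r}')$ for $\mathbf{r}'$ with $B r'_k < 1$. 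The three items of Lemma~\ref{lem: defition equivalent 1-summable} are therefore satisfied, proving that $\hat{f}$ is $1$-summable in direction $\theta$ in the strong sense.

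No real obstacle is expected: the argument is a bookkeeping translation between the norm $\|\cdot\|_{\lambda,\theta,\delta,\rho}$ on the $\mathbf{y}$-coefficients and the single exponential bound on $\mathcal{B}(\hat{f})$ in all $n+1$ variables. The only point requiring mild care is the passage from summability of $\sum \|\hat{F}_{\mathbf{j}}\|_{\lambda} \mathbf{y}^{\mathbf{j}}$ near $\mathbf{0}$ to the geometric majorant $L \cdot B^{|\mathbf{j}|}$, which amounts to the standard Cauchy-type estimate for convergent multivariate series.
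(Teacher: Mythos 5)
Your proof is correct and follows exactly the route the paper intends: the paper's own proof is the single line ``immediate consequence of Lemma \ref{lem: defition equivalent 1-summable}'', and your argument is precisely the bookkeeping that makes this consequence explicit, translating the three conditions of that lemma into the finiteness and convergence of the coefficient norms $\norm{\hat{F}_{\mathbf{j}}}_{\lambda,\theta,\delta,\rho}$ in both directions.
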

\begin{proof}
This is an immediate consequence of Lemma \ref{lem: defition equivalent 1-summable}.\end{proof}
\begin{rem}
We can replace the norm $\norm{\cdot}_{\lambda,\theta,\delta,\rho}$
by $\norm{\cdot}_{\lambda,\theta,\delta,\rho}^{\tx{bis}}$in the second
point of the above corollary.
\end{rem}
Notice that there exists formal power series which are weakly 1-summable
in some direction but which are not Gevrey-1: for instance, the series
\[
{\displaystyle \hat{f}:=\sum_{j}\hat{F}_{j}\left(x\right)y^{j}}\,\,,
\]
where for all $j\in\ww N$, $\hat{F}_{j}\left(x\right)$ is such that
${\displaystyle \cal B\left(\hat{F}_{j}\right)\left(t\right)=\frac{1}{t+\frac{1}{j}}}$,
is weakly 1-summable in the direction $0\in\ww R$, but is \emph{not}
Gevrey-1, since $\cal B\left(\hat{F}_{j}\right)$ has a pole in every
${\displaystyle -\frac{1}{j}\underset{j\rightarrow+\infty}{\longrightarrow}0}$.

\subsection{Some useful tools on 1-summability of solutions of singular linear
differential equations}

~

For future reuse, we give here two results on the 1-summability of
formal solutions to some singular linear differential equations with
1-summable right hand side, which generalize (and precise) a similar
result proved in \cite{MR82} (Proposition p. 126). The authors use
a norm $\norm{\cdot}_{\beta}$, but we will need to use a norm $\norm{\cdot}_{\beta}^{\tx{bis}}$
later (in the proof of Proposition \ref{prop: preparation}).
\begin{prop}
\label{prop: solution borel sommable precise}Let $\hat{b}$ be a
formal power series 1-summable in the direction $\theta$; consider
a domain $\Delta_{\theta,\delta,\rho}$ as in Definition~\ref{def: 1_summability}.
Let use denote by $b_{\theta}$ its 1-sum in this direction $\theta$.
Let us also fix $\alpha,k\in\ww C$.
\begin{enumerate}
\item \label{enu:Assume--and}Assume $\norm{\hat{b}}_{\beta}^{\tx{bis}}<+\infty$
and that $k\in\ww C\backslash\acc 0$ is such that $d_{k}:=\mbox{dist}\left(-k,\Delta_{\theta,\delta,\rho}\right)>0$
and 
\[
\beta d_{k}>C\abs{\alpha k}\qquad,
\]
where $C>0$ is a constant large enough, independent from parameters
$k,\beta,\theta,\delta,\rho$ (for instance, one can take $C=\frac{2\exp\left(2\right)}{5}+5$).
Then, the irregular singular differential equation 
\begin{equation}
x^{2}\ddd ax\left(x\right)+\left(1+\alpha x\right)ka\left(x\right)=\hat{b}\left(x\right)\label{eq: equation borel sommable}
\end{equation}
has a unique formal solution $\hat{a}$ such that $\hat{a}\left(0\right)=\frac{1}{k}\hat{b}\left(0\right)$.
Moreover, $\hat{a}$ is 1-summable in the direction $\theta$, and
\begin{equation}
\norm{\hat{a}}_{\beta}\leq\frac{\beta}{\beta d_{k}-C\abs{\alpha k}}\norm{\hat{b}}_{\beta}\qquad.\label{eq: inegalite norme borel}
\end{equation}
Finally, the 1-sum $a_{\theta}$ of $\hat{a}$ in the direction $\theta$
is the only solution to
\[
x^{2}\ddd{a_{\theta}}x\left(x\right)+\left(1+\alpha x\right)ka_{\theta}\left(x\right)=b_{\theta}\left(x\right)
\]
which is bounded in some $S_{\theta,\pi}\in\germsect{\theta}{\pi}$.
\item \label{enu:Assume--and-1}Assume $\norm{\hat{b}}_{\beta}<+\infty$
and that $\Re\left(k\right)>0$. Then the regular singular differential
equation 
\begin{equation}
x\ddd ax\left(x\right)+ka\left(x\right)=\hat{b}\left(x\right)\label{eq: regular diff equation}
\end{equation}
admits a unique formal solution $\hat{a}$ which is also 1-summable
in the direction $\theta$, of 1-sum $a_{\theta}$. Moreover, $a_{\theta}$
is the only germ of solution to the differential equation 
\[
x\ddd ax\left(x\right)+ka\left(x\right)=b_{\theta}\left(x\right)
\]
which is bounded in some $S_{\theta,\pi}\in\germsect{\theta}{\pi}$. 
\end{enumerate}
\end{prop}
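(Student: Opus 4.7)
The plan is to follow the classical Borel--Laplace strategy, handling each assertion by the same scheme: first derive the formal solution by a coefficient recursion, then solve a transformed equation in the Borel plane inside a suitable weighted Banach space, and finally apply the Laplace transform in the direction $\theta$ to recover the sectorial $1$-sum. Sectorial uniqueness in both cases will follow from the explicit form of the homogeneous solution.

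\emph{Assertion \ref{enu:Assume--and}.} Writing $\hat a(x)=\sum_{n\geq 0}a_n x^n$ and identifying coefficients in (\ref{eq: equation borel sommable}) gives the unique recursion $ka_0=b_0$ and $ka_n=b_n-(n-1+\alpha k)a_{n-1}$ for $n\geq 1$, which is well-defined since $k\neq 0$. Applying the second Borel transform $\widetilde{\cal B}$ of Remark~\ref{rem: deifinition bis transformee de Borel}, which turns $x^2\partial_x$ into multiplication by $t$ and satisfies $\widetilde{\cal B}(x\hat a)(t)=a_0+\int_0^t\check a(s)\,\dd[s]$ with $\check a=\widetilde{\cal B}(\hat a)$, equation (\ref{eq: equation borel sommable}) becomes the Volterra-type integral equation
\[
(t+k)\,\check a(t)=\check b(t)-\alpha b_0-\alpha k\int_0^t\check a(s)\,\dd[s]
\]
on $\Delta_{\theta,\delta,\rho}$, where $\check b=\widetilde{\cal B}(\hat b)$. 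Since $|t+k|\geq d_k>0$ on this domain, division by $t+k$ rewrites the equation as a fixed-point problem $\check a=T(\check a)$ for an affine operator $T$ with linear part $T_0(u)(t):=-\alpha k(t+k)^{-1}\int_0^t u(s)\,\dd[s]$. I would then show that $T$ is a contraction on $(\mathfrak B^{\tx{bis}}_{\beta,\theta,\delta,\rho},\norm{\cdot}^{\tx{bis}}_\beta)$ under the hypothesis $\beta d_k>C\,|\alpha k|$, so that Banach's fixed-point theorem yields a unique solution $\check a$, and via Corollary~\ref{cor: faible et forte sommabilite} a $1$-summable formal power series $\hat a$ in direction $\theta$.

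The main obstacle is controlling the operator norm of $T_0$ sharply enough to produce the explicit universal constant $C=\frac{2\exp(2)}{5}+5$. Since $\Delta_{\theta,\delta,\rho}$ is star-shaped at $0$ one may parametrize the integration along the straight segment $s=re^{i\arg t}$, reducing the estimate to a one-variable integral of the form $\int_0^{|t|}e^{\beta r}(1+\beta^2r^2)^{-1}\dd[r]$; the factor $(1+\beta^2|t|^2)$ in the bis-norm is precisely what allows the corresponding universal supremum to be bounded, after a careful split of the integration range, by the stated constant. The quantitative estimate (\ref{eq: inegalite norme borel}) follows from the geometric-series bound $\norm{(I-T_0)^{-1}}\leq(1-\norm{T_0})^{-1}$ applied to $(\check b-\alpha b_0)/(t+k)$, together with the comparison $\norm{\cdot}_\beta\leq\norm{\cdot}_\beta^{\tx{bis}}$ on the image of $\widetilde{\cal B}$. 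Setting then $a_\theta:=\cal L_\theta\circ\cal B(\hat a)$ and invoking Proposition~\ref{prop: Borel_Laplace_summation} together with Lemma~\ref{lem:diff_alg_and_summability} produces a sectorial analytic function on some element of $\germsect\theta\pi$ admitting $\hat a$ as Gevrey-$1$ asymptotic expansion and solving the original equation with right-hand side $b_\theta$. For sectorial uniqueness, any two bounded solutions in some $S\in\germsect\theta\pi$ differ by a bounded solution of the homogeneous equation $x^2g'+(1+\alpha x)kg=0$; integration gives $g(x)=Cx^{-\alpha k}\exp(k/x)$, and since the opening of $S$ strictly exceeds $\pi$ the image of $S$ under $x\mapsto 1/x$ contains a closed half-plane in which $\Re(k/x)$ is unbounded above, forcing $C=0$.

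\emph{Assertion \ref{enu:Assume--and-1}.} This case is structurally simpler: the Borel transform sends $x\partial_x$ to $t\partial_t$, so (\ref{eq: regular diff equation}) becomes the regular linear ODE $t\check a'(t)+k\check a(t)=\check b(t)$, whose unique solution analytic at the origin is given by the explicit Euler formula
\[
\check a(t)=t^{-k}\int_0^t s^{k-1}\,\check b(s)\,\dd[s],
\]
the integrability at $0$ being ensured by $\Re(k)>0$. A straight-segment estimate yields $|\check a(t)|\leq\norm{\hat b}_\beta\,e^{\beta|t|}/\Re(k)$ on $\Delta_{\theta,\delta,\rho}$, whence $\hat a\in\mathfrak B_{\beta,\theta,\delta,\rho}$ and $\hat a$ is $1$-summable in direction $\theta$. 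Sectorial uniqueness is immediate from the homogeneous solution $Cx^{-k}$, which is unbounded near the origin when $\Re(k)>0$ unless $C=0$.
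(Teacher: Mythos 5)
Your proposal is correct and follows essentially the same route as the paper: apply the second Borel transform to turn (\ref{eq: equation borel sommable}) into a Volterra-type integral equation on $\Delta_{\theta,\delta,\rho}$, control the integral term by the elementary inequality $\int_0^t e^{\beta u}(1+\beta^2u^2)^{-1}\,\tx du\le\frac{C}{\beta}\,e^{\beta t}(1+\beta^2t^2)^{-1}$ (this is exactly the paper's auxiliary Fact, with the same constant $C=\frac{2\exp\left(2\right)}{5}+5$), and Laplace back; assertion (\ref{enu:Assume--and-1}) is handled by the identical Euler integral formula and the identical estimate. The one organizational difference is in assertion (\ref{enu:Assume--and}): you get existence, analytic continuation to $\Delta_{\theta,\delta,\rho}$ and the bound (\ref{eq: inegalite norme borel}) in one stroke from a contraction in $\bigl(\mathfrak{B}^{\tx{bis}}_{\beta,\theta,\delta,\rho},\norm{\cdot}^{\tx{bis}}_{\beta}\bigr)$, whereas the paper first continues $\widetilde{\cal B}(\hat a)$ by solving explicitly the first-order linear ODE obtained by differentiating the integral equation (whose only singularity $t=-k$ lies outside $\Delta_{\theta,\delta,\rho}$) and then runs the same estimate as an a priori bound on the truncated norms $\norm{\cdot}^{\tx{bis}}_{\beta,R}$; the two devices are interchangeable and yield the same constant, your packaging being marginally cleaner provided you also note that the fixed point agrees near $t=0$ with the convergent Borel sum of the formal solution. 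Two minor remarks: your Borel-plane equation correctly retains the constant $-\alpha b_{0}$ coming from $\widetilde{\cal B}(\alpha kx\hat a)=\alpha k\bigl(a_{0}+\int_{0}^{t}\widetilde{\cal B}(\hat a)\bigr)$, a term the paper silently drops, so strictly your argument (like the paper's) yields (\ref{eq: inegalite norme borel}) with $\norm{\hat b}_{\beta}$ replaced by $\norm{\hat b}_{\beta}+\abs{\alpha b_{0}}$; and your explicit treatment of sectorial uniqueness via the homogeneous solutions $x^{-\alpha k}\exp(k/x)$ and $x^{-k}$ on a sector of opening greater than $\pi$ supplies a detail the paper only asserts.
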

\begin{proof}
~

(\ref{enu:Assume--and}) Since $\hat{b}$ is 1-summable in the direction
$\theta$, we can choose $\rho>0$ and $\delta>0$ such that $\widetilde{\cal B}\left(\hat{b}\right)$
can be analytically continued to (and is bounded in) any domain of
the form $\Delta_{\theta,\delta,\rho}\cap\overline{\mbox{D}}\left(0,R\right)$,
$R>0$.\\
Let us apply the Borel transform $\widetilde{\cal B}$ to equation
(\ref{eq: equation borel sommable}): we obtain 
\begin{equation}
\left(t+k\right)\widetilde{\cal B}\left(\hat{a}\right)\left(t\right)+\alpha k\int_{0}^{t}\widetilde{\cal B}\left(\hat{a}\right)\left(s\right)\mbox{d}s=\widetilde{\cal B}\left(\hat{b}\right)\left(t\right)\qquad.\label{eq: borel equa diff}
\end{equation}
The derivative with respect to $t$ of this equation shows that $\widetilde{\cal B}\left(\hat{a}\right)$
is solution of a linear differential equation, with only one (regular)
singularity at $t=-k$ (but this singularity is not in $\Delta_{\theta,\delta,\rho}$
by assumption):
\[
\left(t+k\right)\ddd{\widetilde{\cal B}\left(\hat{a}\right)}t\left(t\right)+\left(1+\alpha k\right)\widetilde{\cal B}\left(\hat{a}\right)\left(t\right)=\ddd{\widetilde{\cal B}\left(\hat{b}\right)}t\left(t\right)\qquad.
\]
 Since $\widetilde{\cal B}\left(\hat{b}\right)$ can be analytically
continued to $\Delta_{\theta,\delta,\rho}$, the same goes for ${\displaystyle \ddd{\widetilde{\cal B}\left(\hat{b}\right)}t\left(t\right)}$
and then for $\widetilde{\cal B}\left(\hat{a}\right)$. Since ${\displaystyle \widetilde{\cal B}\left(\hat{a}\right)\left(0\right)=\frac{\widetilde{\cal B}\left(\hat{b}\right)\left(0\right)}{k}=\frac{\hat{b}'\left(0\right)}{k}}$,
we can write: {\small{}
\begin{eqnarray*}
\widetilde{\cal B}\left(\hat{a}\right)\left(t\right) & = & \left(t+k\right)^{-1-\alpha k}\left(\hat{b}'\left(0\right).k^{\alpha k}+\int_{0}^{t}\ddd{\widetilde{\cal B}\left(\hat{b}\right)}s\left(s\right).\left(s+k\right)^{\alpha k}\mbox{d}s\right)\\
 & = & \left(t+k\right)^{-1-\alpha k}\Bigg(\hat{b}'\left(0\right).k^{\alpha k}+\widetilde{\cal B}\left(\hat{b}\right)\left(t\right).\left(t+k\right)^{\alpha k}-\widetilde{\cal B}\left(\hat{b}\right)\left(0\right).k^{\alpha k}\\
 &  & \,\,\,\,\,\,-\alpha k\int_{0}^{t}\widetilde{\cal B}\left(\hat{b}\right)\left(s\right).\left(s+k\right)^{\alpha k-1}\mbox{d}s\Bigg)\\
 & = & \left(t+k\right)^{-1-\alpha k}\Bigg(\widetilde{\cal B}\left(\hat{b}\right)\left(t\right).\left(t+k\right)^{\alpha k}\\
 &  & \,\,\,\,\,\,-\alpha k\int_{0}^{t}\widetilde{\cal B}\left(\hat{b}\right)\left(s\right).\left(s+k\right)^{\alpha k-1}\mbox{d}s\Bigg)\\
\widetilde{\cal B}\left(\hat{a}\right) & = & \frac{\widetilde{\cal B}\left(\hat{b}\right)\left(t\right)}{\left(t+k\right)}-\alpha k.\left(t+k\right)^{-1-\alpha k}\int_{0}^{t}\widetilde{\cal B}\left(\hat{b}\right)\left(s\right).\left(s+k\right)^{\alpha k-1}\mbox{d}s\,\,.
\end{eqnarray*}
}The fact that $\widetilde{\cal B}\left(\hat{b}\right)$ is bounded
in any domain of the form $\Delta_{\theta,\delta,\rho}\cap\overline{\mbox{D}}\left(0,R\right)$,
$R>0$, implies that the same goes for $\widetilde{\cal B}\left(\hat{a}\right)$.
Let us prove inequality (\ref{eq: inegalite norme borel}). For all
$R>0$, for all Gevrey-1 series $\hat{f}\in\form{x,\mathbf{y}}$ such
that $\cal B\left(\hat{f}\right)$ can be analytically continued to
$\Delta_{\theta,\delta,r}$, we set: 
\[
\norm{\hat{f}}_{\beta,R}^{\tx{bis}}:=\underset{t\in\Delta_{\theta,\delta,\rho}\cap\mbox{\ensuremath{\overline{\mbox{D}}\left(0,R\right)}}}{\sup}\acc{\abs{\widetilde{\cal B}\left(\hat{f}\right)\left(t\right)\left(1+\beta^{2}\abs t^{2}\right)\exp\left(-\beta\abs t\right)}}\in\ww R\cup\acc{\infty}\,\,.
\]
Notice that ${\displaystyle \norm{\hat{f}}_{\beta}^{\tx{bis}}=\underset{R>0}{\sup}\acc{\norm{\hat{f}}_{\beta,R}^{\tx{bis}}}}$
for all $\hat{f}$ as above, and that for all $R>0$, $\norm{\hat{a}}_{\beta,R}^{\tx{bis}}<+\infty$,
since $\widetilde{\cal B}\left(\hat{a}\right)$ is bounded in any
domain of the form $\Delta_{\theta,\delta,\rho}\cap\overline{\mbox{D}}\left(0,R\right)$.
Fix some $R>0$, and let $t\in\Delta_{\theta,\delta,\rho}\cap\mbox{\ensuremath{\overline{\mbox{D}}\left(0,R\right)}}$.
From equation (\ref{eq: borel equa diff}) we obtain 
\begin{eqnarray*}
\widetilde{\cal B}\left(\hat{a}\right)\left(t\right) & = & \frac{1}{\left(t+k\right)}\left(\widetilde{{\cal B}}\left(\hat{b}\right)\left(t\right)-\alpha k\int_{0}^{t}\widetilde{{\cal B}}\left(\hat{a}\right)\left(s\right)\mbox{d}s\right)
\end{eqnarray*}
an then
\begin{eqnarray*}
\abs{\widetilde{\cal B}\left(\hat{a}\right)\left(t\right)} & \leq & \frac{1}{\abs{t+k}}\cro{\norm{\hat{b}}_{\beta}^{\tx{bis}}\frac{\exp\left(\beta\abs t\right)}{1+\beta^{2}\abs t^{2}}+\abs{\alpha k}.\norm{\hat{a}}_{\beta,R}^{\tx{bis}}\int_{0}^{\abs t}\frac{\exp\left(\beta u\right)}{1+\beta^{2}u^{2}}\dd[u]}\\
 & \leq & \frac{1}{d_{k}}\frac{\exp\left(\beta\abs t\right)}{1+\beta^{2}\abs t^{2}}\cro{\norm{\hat{b}}_{\beta}^{\tx{bis}}+\abs{\alpha k}\norm{\hat{a}}_{\beta,R}^{\tx{bis}}\frac{C}{\beta}}\,\,,
\end{eqnarray*}
with $C=\frac{2\exp\left(2\right)}{5}+5$. Here we use the following:
\begin{fact}
There exists a constant $C>0$ (\emph{e.g.} $C=\frac{2\exp\left(2\right)}{5}+5$),
such that for all $\beta>0$, we have:
\[
\forall t\geq0,\int_{0}^{t}\frac{\exp\left(\beta u\right)}{1+\beta^{2}u^{2}}\tx du\leq\frac{C}{\beta}\frac{\exp\left(\beta t\right)}{1+\beta^{2}t^{2}}\,\,.
\]
\end{fact}
\begin{proof}
Let $F:u\mapsto\frac{\exp\left(\beta u\right)}{1+\beta^{2}u^{2}}$,
for $u\geq0$. For $t\in\cro{0,\frac{2}{\beta}}$, we have:
\[
\int_{0}^{t}F\left(u\right)\tx du\leq\frac{\exp\left(2\right)}{5}.\frac{2}{\beta}\,\,,
\]
since $F$ is an increasing function over $\wr_{+}$: 
\[
F'\left(u\right)=\beta F\left(u\right).\frac{\left(1-\beta u\right)^{2}}{1+\beta^{2}u^{2}}\geq0\,\,.
\]
Moreover for all $t\geq0$, we have $F\left(t\right)\geq F\left(0\right)=1$.
Hence for all $t\in\cro{0,\frac{2}{\beta}}$:
\[
\int_{0}^{t}F\left(u\right)\tx du\leq\frac{\exp\left(2\right)}{5}.\frac{2}{\beta}.F\left(t\right)\,\,.
\]
For $t\geq\frac{2}{\beta}$, the following inequality holds:
\begin{eqnarray*}
\int_{0}^{t}F\left(u\right)\tx du & \leq & \frac{\exp\left(2\right)}{5}.\frac{2}{\beta}F\left(t\right)+\int_{\frac{2}{\beta}}^{t}F\left(u\right)\tx du\,\,.
\end{eqnarray*}
In addition, if $u\geq\frac{\beta}{2}$, then: 
\begin{eqnarray*}
\frac{\left(1-\beta u\right)^{2}}{1+\beta^{2}u^{2}} & \geq & \frac{1}{5}\,\,,
\end{eqnarray*}
Therefore, for all $u\geq\frac{\beta}{2}$:
\[
F'\left(u\right)=\beta F\left(u\right).\frac{\left(1-\beta u\right)^{2}}{1+\beta^{2}u^{2}}\geq\frac{\beta}{5}F\left(u\right)\,\,.
\]
Hence:
\begin{eqnarray*}
\int_{0}^{t}F\left(u\right)\tx du & \leq & \int_{0}^{\frac{2}{\beta}}F\left(u\right)\tx du+\int_{\frac{2}{\beta}}^{t}F\left(u\right)\tx du\,\,.\\
 & \leq & \frac{\exp\left(2\right)}{5}.\frac{2}{\beta}F\left(t\right)+\frac{5}{\beta}\int_{\frac{2}{\beta}}^{t}F'\left(u\right)\tx du\\
 & \leq & \frac{F\left(t\right)}{5\beta}.\left(2\exp\left(2\right)+25\right)\,\,.
\end{eqnarray*}

\end{proof}
Let us go back to the proof of the lemma. Finally, we have: 
\begin{eqnarray*}
\norm{\hat{a}}_{\beta,R}^{\tx{bis}} & \leq & \frac{1}{d_{k}}\cro{\norm{\hat{b}}_{\beta}^{\tx{bis}}+\frac{C.\abs{\alpha k}.\norm{\hat{a}}_{\beta,R}^{\tx{bis}}}{\beta}}\,\,,
\end{eqnarray*}
and consequently:
\[
\norm{\hat{a}}_{\beta,R}^{\tx{bis}}\leq\frac{\beta}{\beta d_{k}-C\abs{\alpha k}}\norm{\hat{b}}_{\beta}^{\tx{bis}}\,\,.
\]
As a conclusion:
\[
\norm{\hat{a}}_{\beta}^{\tx{bis}}\leq\frac{\beta}{\beta d_{k}-C\abs{\alpha k}}\norm{\hat{b}}_{\beta}^{\tx{bis}}\,\,,
\]
and $a_{\theta}$ is the 1-sum of $\hat{a}$ in the direction $\theta$.\bigskip{}
(\ref{enu:Assume--and-1}) Let us write ${\displaystyle \hat{b}\left(x\right)=\sum_{j\geq0}b_{j}x^{j}}$.
A direct computation shows that the only formal solution to equation
$\left(\mbox{\ref{eq: regular diff equation}}\right)$ is ${\displaystyle \hat{a}\left(x\right)=\sum_{j\geq0}a_{j}x^{j}}$
where for all $j\in\ww N$, ${\displaystyle a_{j}=\frac{b_{j}}{j+k}}$:
it exists since $k\notin\ww Z_{\leq0}$, and then $k+j\neq0$. In
particular, we see immediately that $\hat{a}$ is Gevrey-1, because
the same goes for $\hat{b}$. In other words, the Borel transform
$\cal B\left(\hat{a}\right)$ is analytic in some disc $D\left(0,\rho\right)$,
$\rho>0$. In $D\left(0,\rho\right)$, $\cal B\left(\hat{a}\right)$
satisfies:
\begin{eqnarray*}
t\ddd{\cal B\left(\hat{a}\right)}t\left(t\right)+k\cal B\left(\hat{a}\right)\left(t\right) & = & \cal B\left(\hat{b}\right)\left(t\right)\,\,.
\end{eqnarray*}
The general solution near the origin to this equation is 
\[
y\left(t\right)=\frac{c}{t^{k}}+\frac{1}{t^{k}}\int_{0}^{t}\cal B\left(\hat{b}\right)\left(s\right)s^{k-1}\mbox{d}s\,\,,\, c\in\ww C.
\]
In particular, the only solution analytic in $D\left(0,\rho\right)$
is the one with $c=0$, \emph{i.e.}
\[
\cal B\left(\hat{a}\right)\left(t\right)=\frac{1}{t^{k}}\int_{0}^{t}\cal B\left(\hat{b}\right)\left(s\right)s^{k-1}\mbox{d}s\,\,.
\]
 Since $\cal B\left(\hat{b}\right)$ can be analytically continued
to an infinite domain that have denoted by $\Delta_{\theta,\delta,\rho}$
bisected by $\ww R_{+}e^{i\theta}$ (because $\hat{b}$ is 1-summable
in the direction $\theta$), $\cal B\left(\hat{a}\right)$ can also
be analytically continued to the same domain. Moreover, there exists
$\beta>0$ such that $\norm{\hat{b}}_{\beta}<+\infty$, \emph{i.e.
$\forall t\in\Delta_{\theta,\delta,\rho}$}:\emph{
\[
\abs{\cal B\left(\hat{b}\right)\left(t\right)}\leq\norm{\hat{b}}_{\beta}\exp\left(\beta\abs t\right)\,\,.
\]
}Thus, for all $t\in\Delta_{\theta,\delta,\rho}$, we have:
\begin{eqnarray*}
\abs{\exp\left(-\beta\abs t\right)\cal B\left(\hat{a}\right)\left(t\right)} & \leq & \frac{1}{\abs{t^{k}}}\int_{0}^{\abs t}\abs{\exp\left(-\beta\abs t\right)}\abs{\cal B\left(\hat{b}\right)\left(s\mbox{e}^{i\arg\left(t\right)}\right)}\abs{s^{k-1}\mbox{e}^{i\left(k-1\right)\arg\left(t\right)}}\mbox{d}s\\
 & \leq & \frac{1}{\abs t^{\Re\left(k\right)}}\int_{0}^{\abs t}\abs{\exp\left(-\beta s\right)}\abs{\cal B\left(\hat{b}\right)\left(s\mbox{e}^{i\arg\left(t\right)}\right)}s^{\Re\left(k\right)-1}\mbox{d}s\\
 & \leq & \frac{\norm{\hat{b}}_{\beta}}{\abs t^{\Re\left(k\right)}}\int_{0}^{\abs t}s^{\Re\left(k\right)-1}\mbox{d}s\\
 & = & \frac{\norm{\hat{b}}_{\beta}}{\Re\left(k\right)}\,\,.
\end{eqnarray*}
Thus, $\hat{a}$ is 1-summable in the direction $\theta$.
\end{proof}

\section{1-summable preparation up to any order $N$}

The aim of this section is to prove that we can always formally conjugate
a non-degenerate doubly-resonant saddle-node, which is also div-integrable,
to its normal form up to a remainder of order $\tx O\left(x^{N}\right)$
for every $N\in\ww N_{>0}$. Moreover, we prove that this conjugacy
is in fact 1-summable in every direction $\theta\neq\arg\left(\pm\lambda\right)$,
hence analytic over sectorial domains of opening at least $\pi$.
\begin{prop}
\label{prop: forme pr=0000E9par=0000E9e ordre N}Let $Y\in\sndiag$
be a non-degenerate diagonal doubly-resonant saddle-node which is
div-integrable, with $\tx D_{0}Y=\tx{diag}\left(0,-\lambda,\lambda\right)$,
$\lambda\neq0$. Then, for all $N\in\ww N_{>0}$, there exists a formal
fibered diffeomorphism $\Psi^{\left(N\right)}\in\fdiffformid$ tangent
to the identity and 1-summable in every direction $\theta\neq\arg\left(\pm\lambda\right)$
such that:
\begin{eqnarray*}
\left(\Psi^{\left(N\right)}\right)_{*}\left(Y\right) & = & x^{2}\pp x+\left(\left(-\left(\lambda+d^{\left(N\right)}\left(y_{1}y_{2}\right)\right)+a_{1}x\right)+x^{N}F_{1}^{\left(N\right)}\left(x,\mathbf{y}\right)\right)y_{1}\pp{y_{1}}\\
 &  & +\left(\left(\lambda+d^{\left(N\right)}\left(y_{1}y_{2}\right)+a_{2}x\right)+x^{N}F_{2}^{\left(N\right)}\left(x,\mathbf{y}\right)\right)y_{2}\pp{y_{2}}\\
 & =: & Y^{\left(N\right)}\,\,\,,
\end{eqnarray*}
where $\lambda\in\ww C^{*}$, ${\displaystyle \left(a_{1}+a_{2}\right)=\tx{res}\left(Y\right)\in\ww C\backslash\ww Q_{\leq0}}$,
$d^{\left(N\right)}\left(v\right)\in v\germ v$ is an analytic germ
vanishing at the origin, and $F_{1}^{\left(N\right)},F_{2}^{\left(N\right)}\in\form{x,\mathbf{y}}$
are 1-summable in the direction $\theta$, and of order at least one
with respect to $\mathbf{y}$. Moreover, one can choose $d^{\left(2\right)}=\dots=d^{\left(N\right)}$
for all $N\geq2$. \end{prop}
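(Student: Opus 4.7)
The plan is to build $\Psi^{(N)}$ as a finite composition $\Psi_{N-1} \circ \cdots \circ \Psi_1 \circ \Psi_0$ of formal fibered diffeomorphisms tangent to the identity, where $\Psi_0$ normalizes the restriction of $Y$ to $\{x=0\}$, and each $\Psi_k$ for $1 \leq k \leq N-1$ has the form $(x,\mathbf{y}) \mapsto (x,\mathbf{y}+x^k \psi^{(k)}(x,\mathbf{y}))$ and kills the non-normal part of the $x^k$-coefficient of the partially prepared vector field. Since 1-summability is preserved under composition (Corollary \ref{cor: summability push-forward}), it is enough to verify that each $\Psi_k$ is 1-summable in every direction $\theta \notin \{\arg\lambda, \arg(-\lambda)\}$.

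For $\Psi_0$: the div-integrability assumption says that $Y|_{\{x=0\}}$ is analytically orbitally linearizable, hence analytically conjugate to a vector field of the form $U(\mathbf{y})(-\lambda y_1 \partial_{y_1} + \lambda y_2 \partial_{y_2})$ with $U \in \germ{\mathbf{y}}^{\times}$. A further analytic $\mathbf{y}$-conjugacy tangent to the identity, exploiting that the scalar resonant invariant of the linear part is $v = y_1 y_2$, reduces $U$ to the form $\lambda^{-1}(\lambda + d(v))$ with $d \in v\germ{v}$ analytic; equivalently, it brings $Y|_{\{x=0\}}$ to $-(\lambda + d(v))y_1\partial_{y_1} + (\lambda + d(v))y_2\partial_{y_2}$. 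Being independent of $x$, this $\Psi_0$ is trivially 1-summable, and I set $d^{(N)} := d$ (independent of $N$).

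For the inductive step, assume $(\Psi_{k-1} \circ \cdots \circ \Psi_0)_* Y$ agrees with $\ynorm$ modulo $x^k \cdot \tx O(\norm{\mathbf{y}})$. A direct computation of the push-forward $(\Psi_k)_*$ shows that $\psi^{(k)} = (\psi_1^{(k)}, \psi_2^{(k)})$ must satisfy a cohomological equation whose leading part (at $x=0$) is the Lie bracket with the restriction of $\ynorm$ to $\{x=0\}$. Expanding $\psi_i^{(k)}(x,\mathbf{y}) = \sum_{\mathbf{m}} c_{i,\mathbf{m}}(x)\mathbf{y}^{\mathbf{m}}$ and grouping monomials by the weight $n := m_2 - m_1$, the equation decouples across weights; for each fixed $n$ it reduces to an irregular singular ODE of the form (\ref{eq: equation borel sommable}) with $k = \ell_{i,n}$, where $\ell_{1,n} = \lambda(n+1)$ and $\ell_{2,n} = \lambda(n-1)$, together with a coupling in $v$ through $d(v)$ and $v d'(v)$ that only mixes monomials within the same weight stratum. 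The existence of the formal normal form from Theorem \ref{thm: forme normalel formelle} guarantees that the resonant part ($\ell_{i,n}=0$) of the right-hand side vanishes, once the contributions $a_1 x, a_2 x$ are absorbed at step $k = 1$; for $\ell_{i,n} \neq 0$, item (\ref{enu:Assume--and}) of Proposition \ref{prop: solution borel sommable precise} produces a unique 1-summable solution in every direction avoiding $\arg(-\ell_{i,n})$, which always lies in $\{\arg\lambda, \arg(-\lambda)\}$.

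The main obstacle is upgrading this coefficient-by-coefficient 1-summability to \emph{strong} 1-summability of the full series $\psi^{(k)}(x,\mathbf{y})$, i.e., controlling the dependence on $\mathbf{m}$ uniformly. To do this, I would work in the Banach algebra $\mathfrak{B}_{\beta}^{\tx{bis}}$ with the sub-multiplicative norm from Proposition \ref{prop: norme d'algebre}, and use the explicit estimate from Proposition \ref{prop: solution borel sommable precise}: since the distance $d_{\ell_{i,n}}$ grows linearly in $|n|$ as soon as the summation direction avoids $\{\arg(\pm\lambda)\}$, the resulting bound on each $\norm{c_{i,\mathbf{m}}}_{\beta}^{\tx{bis}}$ is uniform in $\mathbf{m}$. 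The $v$-coupling within each weight stratum is absorbed by a Banach contraction argument for $\beta$ large enough, and Corollary \ref{cor: faible et forte sommabilite} then promotes these uniform Borel-norm bounds to strong 1-summability of $\psi^{(k)}$. The 1-summability of $\Psi^{(N)}$ finally follows from Corollary \ref{cor: summability push-forward}.
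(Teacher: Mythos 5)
Your overall architecture (normalize on $\acc{x=0}$ first, then correct order by order in $x$, then upgrade coefficient-wise summability to genuine $1$-summability via uniform Borel-norm estimates in the Banach algebra $\mathfrak{B}_{\beta}^{\tx{bis}}$) is reasonable, and your treatment of the non-resonant monomials ($\ell_{i,n}\neq 0$) via Proposition \ref{prop: solution borel sommable precise} together with the linear growth of $d_{\ell_{i,n}}$ in $\abs n$ matches in spirit what the paper does in Proposition \ref{prop: preparation}. But there is a genuine gap at the resonant monomials. You assert that ``the existence of the formal normal form guarantees that the resonant part ($\ell_{i,n}=0$) of the right-hand side vanishes'' once $a_1x,a_2x$ are absorbed. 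This is not true, and Theorem \ref{thm: forme normalel formelle} does not imply it: that theorem guarantees a formal conjugacy exists, not that the resonant obstruction vanishes at each step of \emph{your} particular scheme. A term such as $x^k e(y_1y_2)\,\overrightarrow{\cal C}$ or $x^k r(y_1y_2)\,\overrightarrow{\cal R}$ with $k\geq 1$, $e,r=\tx O(v)$, is generically present and cannot be removed by any transformation of your restricted form $\tx{Id}+x^k\psi^{(k)}$, since it lies in the cokernel of the bracket with $(\lambda+d(v))\overrightarrow{\cal C}$. Such terms are removable only because the $x^2\pp x$ component shifts the $x$-degree: one must use a generator carrying a resonant component at order $x^{k-1}$ (in the paper, the flows $\Lambda_\tau,\Gamma_\sigma$ with $\tau=x^{N-1}\tau_0(v)+x^N\tau_1(\mathbf{y})$). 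The coefficient $\tau_0$ then solves a \emph{regular singular ODE in the variable $v$}, namely $v\left(a_1+a_2+\cdots\right)\tau_0'(v)+(N-1)\tau_0(v)=-R^{(N)}_{\tx{res}}(0,v)$, and it is precisely here that the non-degeneracy hypothesis $\tx{res}(Y)=a_1+a_2\notin\ww Q_{\leq 0}$ is used and that the analyticity of the germ $d^{(N)}(v)$ is obtained. Your proposal never invokes this hypothesis, which is a strong sign the resonant terms have been lost.

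A related symptom: you set $d^{(N)}:=d$ independent of $N$, with $d$ read off from the restriction to $\acc{x=0}$. But the resonant generator at step $k=1$ acts at order $x^0$ (it is $\tau_0(v)$ with no $x$ factor), so it modifies the restriction to $\acc{x=0}$ and in general $d^{(2)}\neq d^{(1)}=d$; this is exactly why the statement only claims $d^{(2)}=\dots=d^{(N)}$ for $N\geq 2$. To repair the argument you would need to enlarge your class of transformations at each step to include these lower-order resonant generators, prove solvability and analyticity of the resulting ODE in $v$ using $a_1+a_2\notin\ww Q_{\leq 0}$, and only then run your Borel-norm estimates on the remaining non-resonant part. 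A secondary (fixable) omission is the affine and linear preparation in $x$ (straightening the formal center manifold and reducing the linear part to $-\lambda+a_1x$, $\lambda+a_2x$), which the paper obtains from the Ramis--Sibuya theorem and explicit integrations; your weight-$n$ bookkeeping covers the non-resonant pieces of these but the diagonal linear terms at orders $x^k$, $k\geq 2$, are again resonant and fall under the same missing mechanism.
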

\begin{defn}
A vector field $Y^{\left(N\right)}$ as is the proposition above is
said to be \emph{normalized up to order $N$}.\end{defn}
\begin{rem}
~
\begin{enumerate}
\item Observe that this result does not require the more restrictive assumption
of being ``strictly non-degenerate'' (\emph{i.e.} $\Re\left(a_{1}+a_{2}\right)>0$)
.
\item As a consequence of Corollary \ref{cor: summability push-forward},
the 1-sum $\Psi_{\theta}^{\left(N\right)}$ of $\Psi^{\left(N\right)}$
in the direction $\theta$ is a germ of sectorial fibered diffeomorphism
tangent to the identity, \emph{i.e.} ${\displaystyle \Psi_{\theta}^{\left(N\right)}\in\diffsect[\theta][\pi]}$,
which conjugates $Y$ to the 1-sum $Y_{\theta}^{\left(N\right)}$
of $Y^{\left(N\right)}$ in the direction $\theta$.
\end{enumerate}
\end{rem}
In order to prove this result we will proceed in several steps and
use after each step Proposition \ref{prop: compositon summable} and
Corollary \ref{cor: summability push-forward} in order to prove the
1-summability in every direction $\theta\neq\arg\left(\pm\lambda\right)$
of the different objects. First, we will normalize analytically the
vector field restricted to $\acc{x=0}$. Then, we will straighten
the formal separatrix to $\acc{y_{1}=y_{2}=0}$ in suitable coordinates.
Next, we will simplify the linear terms with respect to $\mathbf{y}$.
After that, we will straighten two invariant hypersurfaces to $\acc{y_{1}=0}$
and $\acc{y_{2}=0}$. Finally, we will conjugate the vector field
to its final normal form up to remaining terms of order $\tx O\left(x^{N}\right)$.

\subsection{Analytic normalization on the hyperplane $\protect\acc{x=0}$}

~

\subsubsection{Transversally Hamiltonian \emph{versus} div-integrable}

~

We start by proving that an element of $\sndiag$ which is transversally
Hamiltonian is necessarily div-integrable.
\begin{prop}
If $Y\in\sndiag$ is transversally Hamiltonian, then $Y$ is div-integrable. \end{prop}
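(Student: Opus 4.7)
The plan is to convert the transversally Hamiltonian hypothesis into an explicit vanishing condition for the $\mathbf y$-divergence of $Y$ along $\acc{x=0}$, and then exploit the resulting 2D Hamiltonian structure of $Y_{\mid\acc{x=0}}$ to produce an analytic orbital linearization via the holomorphic Morse lemma. I would proceed in three steps.

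First I compute $\cal L_Y(\omega)$ explicitly for $\omega = \frac{\tx dy_1\wedge\tx dy_2}{x}$. Writing $Y = x^{2}\pp x + Y_1\pp{y_1} + Y_2\pp{y_2}$ with $Y_1 = -\lambda y_1 + F_1$ and $Y_2 = \lambda y_2 + F_2$, and using $Y(1/x) = -1$ together with the Leibniz rule, one gets
\[
\cal L_Y(\omega) = \pare{\frac{\partial_{y_1}Y_1 + \partial_{y_2}Y_2 - x}{x}} \tx dy_1\wedge \tx dy_2 \;+\; \frac{1}{x}\pare{\partial_x Y_1\,\tx dx\wedge\tx dy_2 \;-\; \partial_x Y_2\,\tx dx\wedge\tx dy_1}.
\]
The hypothesis $\cal L_Y(\omega)\in\ps{\tx dx}$ forces the coefficient of $\tx dy_1\wedge\tx dy_2$ to vanish, yielding $\partial_{y_1}Y_1 + \partial_{y_2}Y_2 = x$. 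Restricting to $x=0$, the two-dimensional divergence of $X := Y_{\mid\acc{x=0}}$ vanishes identically.

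Second, $X$ is an analytic divergence-free germ in $\left(\ww C^{2},0\right)$ with linear part $-\lambda y_1\pp{y_1} + \lambda y_2\pp{y_2}$. By the Poincaré lemma the 1-form $Y_1(0,\mathbf y)\,\tx dy_2 - Y_2(0,\mathbf y)\,\tx dy_1$ is exact, so $X = X_H$ is Hamiltonian for some $H\in\germ{\mathbf y}$ whose quadratic part equals $-\lambda y_1 y_2$; in particular $H$ has a Morse critical point at the origin. The holomorphic Morse lemma then yields analytic coordinates $(u,v)=\Phi(\mathbf y)$ in which $H = -\lambda uv$. In these coordinates the push-forward $\widetilde X := \Phi_{*}(X)$ still admits $H$ as first integral, so $\widetilde X(uv)=0$, and any analytic germ annihilating $uv$ is necessarily of the form $g(u,v)\pare{u\pp u - v\pp v}$. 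Since the spectrum of $\tx D_0\widetilde X$ equals that of $\tx D_0 X$, namely $\acc{-\lambda,\lambda}$, one must have $g(\mathbf 0)\neq 0$, i.e.\ $g\in\germinv{u,v}$. Hence $\widetilde X$ is orbitally linear in the sense of Definition~\ref{def: asympt hamil}, so $Y_{\mid\acc{x=0}}$ is analytically orbitally linearizable and $Y$ is div-integrable.

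The main obstacle is the bookkeeping in the first step—verifying that the transversally Hamiltonian hypothesis is exactly equivalent to the stated divergence identity. Once that identity is in hand, the rest reduces to classical tools. As a variant, one could bypass Morse by combining the formal Birkhoff normalization of $H$ into a series in $y_1 y_2$ (making $X_H$ formally orbitally linear) with the Brjuno-type result cited in the remark following Definition~\ref{def: asympt hamil}, yielding the same analytic conclusion.
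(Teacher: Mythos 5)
Your proof is correct and follows essentially the same route as the paper: restrict to $\acc{x=0}$, recognize a Hamiltonian vector field with Morse Hamiltonian $H=-\lambda y_1y_2+O\left(\norm{\mathbf y}^3\right)$, apply the holomorphic Morse lemma to normalize $H$ to $y_1y_2$, and conclude orbital linearity. The only (harmless) variations are that you make explicit the computation showing $\cal L_Y\left(\omega\right)\in\ps{\tx dx}$ forces the restricted divergence to vanish (which the paper asserts parenthetically), and that you conclude via the first-integral/coprimality argument for $uv$ rather than the paper's transformation law $\varphi_*\left(J\nabla H\right)=\det\left(\tx D\varphi\circ\varphi^{-1}\right)J\nabla\widetilde H$.
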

\begin{proof}
Let us consider more generally a diagonal doubly-resonant saddle-node
$Y\in\sndiag$ such that $Y_{\mid\acc{x=0}}$ is a Hamiltonian vector
field with respect to $\mbox{d}y_{1}\wedge\mbox{d}y_{2}$ (this is
the case if $Y$ is transversally Hamiltonian): there exists a Hamiltonian
${\displaystyle H\left(\mathbf{y}\right)=\lambda y_{1}y_{2}+O\left(\norm{\mathbf{y}}^{3}\right)\in\germ{\mathbf{y}}}$,
such that 
\[
Y=x^{2}\pp x+\left(\left(-\ppp H{y_{2}}+xF_{1}\left(x,\mathbf{y}\right)\right)\pp{y_{1}}+\left(\ppp H{y_{1}}+xF_{2}\left(x,\mathbf{y}\right)\right)\pp{y_{2}}\right)\,\,\,,
\]
where $F_{1},F_{2}\in\germ{x,\mathbf{y}}$ are vanishing at the origin.
If we define ${\displaystyle J:=\left(\begin{array}{cc}
0 & -1\\
1 & 0
\end{array}\right)\in M_{2}\left(\ww C\right)}$ and $\nabla H:=\,^{t}\left(\mbox{D}H\right)$, then ${\displaystyle Y_{\mid\acc{x=0}}=J\nabla H}$.
According to the Morse lemma for holomorphic functions, there exists
a germ of an analytic change of coordinates $\varphi\in\diff[\ww C^{2},0]$
given by
\begin{eqnarray*}
\mathbf{y}=\left(y_{1},y_{2}\right) & \mapsto & \varphi\left(\mathbf{y}\right)=\left(y_{1}+O\left(\left\Vert \mathbf{y}\right\Vert ^{2}\right),y_{2}+O\left(\left\Vert \mathbf{y}\right\Vert ^{2}\right)\right)\,\,\,\,\,,
\end{eqnarray*}
such that ${\displaystyle {\displaystyle \widetilde{H}\left(\mathbf{y}\right):=H\left(\varphi^{-1}\left(\mathbf{y}\right)\right)=y_{1}y_{2}}}$.
Let us now recall a trivial result from linear algebra.
\begin{fact*}
Let ${\displaystyle J:=\left(\begin{array}{cc}
0 & -1\\
1 & 0
\end{array}\right)\in M_{2}\left(\ww C\right)}$, and $P\in M_{2}\left(\ww C\right)$. Then, ${\displaystyle PJ\,^{t}P=\mbox{det}\left(P\right)J}$.
\end{fact*}
As a consequence we have: 
\begin{cor}
\label{cor: hamiltonian system}Let $H\in\ww C\acc{\mathbf{y}}$ be
a germ of an analytic function at $0$, $Y_{0}:=J\nabla H$ the associated
Hamiltonian vector field in $\ww C^{2}$ (for the usual symplectic
form $dy_{1}\wedge dy_{2}$), and an analytic diffeomorphism near
the origin denoted by $\varphi$. Then: 
\begin{eqnarray*}
\varphi_{*}\left(Y_{0}\right) & := & \left(\tx D\varphi\circ\varphi^{-1}\right)\cdot\left(Y_{0}\circ\varphi^{-1}\right)=\det\left(\tx D\varphi\circ\varphi^{-1}\right)J\nabla\widetilde{H}\,\,\,\,,
\end{eqnarray*}
where $\widetilde{H}:=H\circ\varphi^{-1}$.
\end{cor}
As a conclusion we have proved that $Y$ is div-integrable.
\end{proof}

\subsubsection{General case}

~

Now we prove how to normalize the restriction to $\acc{x=0}$ of a
div-integrable element of $\sndiag$.
\begin{prop}
\label{prop: preparation sur hypersurface invariante}Let $Y\in\sndiag$
be div-integrable. Then, there exists $\psi\in\fdiffid$ of the form
\[
\psi:\left(x,\mathbf{y}\right)\mapsto\left(x,y_{1}+O\left(\norm{\mathbf{y}}^{2}\right),y_{2}+O\left(\norm{\mathbf{y}}^{2}\right)\right)
\]
such that 
\[
\psi_{*}\left(Y\right)=x^{2}\pp x+\left(-\left(\lambda+d\left(v\right)\right)y_{1}+xT_{1}\left(x,\mathbf{y}\right)\right)\pp{y_{1}}+\left(\left(\lambda+d\left(v\right)\right)y_{2}+xT_{2}\left(x,\mathbf{y}\right)\right)\pp{y_{2}}\,\,\,,
\]
with $v:=y_{1}y_{2}$, $d\left(v\right)\in v\germ v$ and $T_{1},T_{2}\in\germ{x,\mathbf{y}}$
vanishing at the origin.\end{prop}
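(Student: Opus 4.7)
The strategy is to first perform the normalization purely in $(\ww C^{2},0)$ on the invariant hyperplane $\left\{ x=0\right\}$ and then lift the result to a fibered diffeomorphism of $\left(\ww C^{3},0\right)$. The div-integrability assumption gives that $Y_{\mid\left\{ x=0\right\} }$ is analytically orbitally linearizable; up to pre-composing by the linear diagonalization bringing $\mbox{D}_{0}Y_{\mid\left\{ x=0\right\} }$ into Jordan form, we obtain a germ $\varphi_{0}\in\diff[\ww C^{2},0]$ tangent to the identity such that
\[
\left(\varphi_{0}\right)_{*}\left(Y_{\mid\left\{ x=0\right\} }\right)=U\left(\mathbf{y}\right)X_{L}\,,\qquad X_{L}:=-\lambda y_{1}\pp{y_{1}}+\lambda y_{2}\pp{y_{2}}\,,
\]
with $U\in\germ{\mathbf{y}}^{\times}$, $U\left(0\right)=1$.

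The next step is to further reduce $UX_{L}$ so that the unit depends only on the first integral $v=y_{1}y_{2}$ of $X_{L}$. For the resonant saddle $X_{L}$ of ratio $-1$, the only resonance between the eigenvalues $\pm\lambda$ is $\lambda_{1}+\lambda_{2}=0$, and the resonant monomials for the action of $X_{L}$ on germs are exactly the powers of $v$. Writing the Taylor expansion $U=\sum_{i,j}U_{i,j}y_{1}^{i}y_{2}^{j}$, one obtains the analytic decomposition
\[
U\left(\mathbf{y}\right)=\tilde{U}\left(v\right)+X_{L}\left(G\right)\left(\mathbf{y}\right)\,,
\]
where $\tilde{U}\left(v\right):=\sum_{k\geq0}U_{k,k}v^{k}\in\germ v$ and $G\in\germ{\mathbf{y}}$ collects the primitives $U_{i,j}y_{1}^{i}y_{2}^{j}/\left(\lambda\left(j-i\right)\right)$ for $i\neq j$ (convergent together with $U$ by Cauchy estimates). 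The classical analytic orbital normal form theorem for resonant saddles of ratio $-1$ in $\left(\ww C^{2},0\right)$ (due to Martinet–Ramis, see also Brjuno) then yields a germ $\varphi_{1}\in\diff[\ww C^{2},0]$ tangent to the identity satisfying
\[
\left(\varphi_{1}\right)_{*}\left(UX_{L}\right)=\tilde{U}\left(v\right)X_{L}\,.
\]

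Finally, define the fibered diffeomorphism $\psi\left(x,\mathbf{y}\right):=\left(x,\left(\varphi_{1}\circ\varphi_{0}\right)\left(\mathbf{y}\right)\right)\in\fdiffid$, which is tangent to the identity since both $\varphi_{0}$ and $\varphi_{1}$ are. Because $\psi$ does not act on $x$, the component along $\pp x$ of $\psi_{*}\left(Y\right)$ remains equal to $x^{2}$, and at $x=0$ the remaining components give $\tilde{U}\left(v\right)X_{L}$. Setting $d\left(v\right):=\lambda\left(\tilde{U}\left(v\right)-1\right)\in v\germ v$, the transported vector field reads
\[
\psi_{*}\left(Y\right)=x^{2}\pp x+\left(-\left(\lambda+d\left(v\right)\right)y_{1}+xT_{1}\right)\pp{y_{1}}+\left(\left(\lambda+d\left(v\right)\right)y_{2}+xT_{2}\right)\pp{y_{2}}\,,
\]
with $T_{1},T_{2}\in\germ{x,\mathbf{y}}$. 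The vanishing $T_{i}\left(0,0\right)=0$ is checked directly: since $F_{i}$ has order $\geq 2$ one has $F_{i}\left(x,\mathbf{0}\right)=\tx O\left(x^{2}\right)$, and $\mbox{D}_{0}\psi=\tx{Id}$ prevents the creation of any pure $x$-linear term in the $\pp{y_{i}}$-components. The main obstacle here is the second paragraph: the passage from a generic analytic orbital linearization $UX_{L}$ to the refined form $\tilde{U}\left(v\right)X_{L}$ relies on the cohomological decomposition together with the classical analytic normal form for resonant saddles of ratio $-1$; the first and third steps amount to direct bookkeeping.
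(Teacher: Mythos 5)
Your first and third steps are sound and match the paper's own proof (Brjuno's theorem reduces $Y_{\mid\acc{x=0}}$ to $U\cdot L$ with $L=-y_{1}\pp{y_{1}}+y_{2}\pp{y_{2}}$, and the lift to a fibered map of $\wccc$ is routine), but the second step has a genuine gap. The theorem you invoke --- the Martinet--Ramis analytic orbital classification of resonant saddles --- is a statement about \emph{foliations}; here the foliation of $UX_{L}$ is already the linear one because $U$ is a unit, and what remains to be normalized is the temporal part $U$, i.e.\ one must conjugate the \emph{vector field} $UX_{L}$ to $D(v)X_{L}$, which Martinet--Ramis does not provide. More seriously, your identification of the normal-form unit with the additive resonant part $\tilde{U}(v)=\sum_{k}U_{k,k}v^{k}$ is false in general. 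A conjugacy preserving the linear foliation acts on the unit \emph{multiplicatively}: for $\varphi=(y_{1}e^{-\gamma},y_{2}e^{\gamma})$ one computes $\varphi_{*}(UL)=\big(\big[U(1+\mathcal{L}_{L}(\gamma))\big]\circ\varphi^{-1}\big)L$, and the conjugacy invariant attached to the leaf $\acc{y_{1}y_{2}=v}$ is the period $\oint\tau_{L}/U=2\pi i\,(U^{-1})_{\mathrm{res}}(v)$, i.e.\ the resonant part of $1/U$, not of $U$. Concretely, for $U=1+y_{1}+y_{2}$ one has $\tilde{U}\equiv1$ while $(U^{-1})_{\mathrm{res}}(v)=\sum_{k}\binom{2k}{k}v^{k}=(1-4v)^{-1/2}$ is non-constant, so $UX_{L}$ is \emph{not} conjugate to $\tilde{U}X_{L}=X_{L}$. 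The additive splitting $U=\tilde{U}(v)+X_{L}(G)$ only solves the linearization of the conjugacy equation and cannot by itself be promoted to the full statement.

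The paper's Lemma \ref{lem: preparation sur hypersurface invariante} supplies the missing mechanism: one looks for $\varphi_{1}$ of the special form $(y_{1}e^{-\gamma(\mathbf{y})},y_{2}e^{\gamma(\mathbf{y})})$ (the flow of $L$ at time $\gamma$), which preserves $v$, transforms the unit as above, and one chooses $\gamma$ so that $U(1\pm\mathcal{L}_{L}(\gamma))$ becomes resonant. This amounts to the cohomological equation $\mathcal{L}_{L}(\gamma)=\pm(D/U-1)$ with $D:=1/(U^{-1})_{\mathrm{res}}$, whose right-hand side has vanishing resonant part precisely for this choice of $D$ (the product of a function of $v$ with a non-resonant series is non-resonant), hence is solvable in $\germ{\mathbf{y}}$. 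Since the proposition only asks for \emph{some} $d\in v\germ v$, your argument is repaired by replacing $\tilde{U}$ with $\lambda/(U^{-1})_{\mathrm{res}}$ and by justifying the conjugacy through this multiplicative computation rather than through the orbital classification of resonant saddles.
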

\begin{proof}
By assumption, and according to a theorem due to Brjuno (\emph{cf
}\cite{Martinet}), up to a first transformation analytic at the origin
in $\ww C^{2}$, we can suppose that 
\[
Y_{\mid\acc{x=0}}=\left(\lambda+h\left(\mathbf{y}\right)\right)\left(-y_{1}\pp{y_{1}}+y_{2}\pp{y_{2}}\right)\,\,\,.
\]
Then, it remains to apply the following lemma to $Y_{\mid\acc{x=0}}$.\end{proof}
\begin{lem}
\label{lem: preparation sur hypersurface invariante}Let $Y_{0}$
be a germ of analytic vector field in $\left(\ww C^{2},0\right)$
of the form
\begin{eqnarray*}
Y_{0} & = & \left(\lambda+h\left(\mathbf{y}\right)\right)\left(-y_{1}\pp{y_{1}}+y_{2}\pp{y_{2}}\right)\,\,\,,
\end{eqnarray*}
with $h\in\germ{\mathbf{y}}$ vanishing at the origin. Then there
exists ${\displaystyle \phi\in\diff[\ww C^{2},0,\tx{Id}]}$ such that
\begin{eqnarray*}
\phi_{*}\left(Y_{0}\right) & = & \left(\lambda+d\left(v\right)\right)\left(-y_{1}\pp{y_{1}}+y_{2}\pp{y_{2}}\right)\,\,\,\,,
\end{eqnarray*}
with $v:=y_{1}y_{2}$ and $d\in v\ww C\acc v$.\end{lem}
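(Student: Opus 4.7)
The plan is to look for $\phi$ in the specific form
$$\phi\colon(y_1,y_2) \longmapsto \Big(y_1\exp(g(\mathbf{y})),\;y_2\exp(-g(\mathbf{y}))\Big),$$
with $g \in \germ{\mathbf{y}}$ satisfying $g(\mathbf{0})=0$. Such a $\phi$ automatically preserves the common first integral $v := y_1y_2$ of $Y_0$ and the target vector field, and is tangent to the identity because $\phi(\mathbf{y})-\mathbf{y}=\big(y_1(e^{g}-1),\,y_2(e^{-g}-1)\big)=\tx O(\norm{\mathbf{y}}^{2})$. Setting $X:= -y_1\pp{y_1}+y_2\pp{y_2}$ and $U:=\lambda+h$ so that $Y_0 = U X$, a direct computation of $\tx D\phi\cdot Y_0$ yields
$$\phi_{*}(Y_0) = \Big(U\cdot\big(1-X(g)\big)\Big)\circ\phi^{-1}\,\cdot\,\big(-\tilde y_1\pp{\tilde y_1}+\tilde y_2\pp{\tilde y_2}\big),$$
so that the lemma reduces to finding $g\in\germ{\mathbf{y}}$ and $d\in v\germ{v}$ solving the cohomological equation
$$\big(\lambda+h(\mathbf{y})\big)\big(1 - X(g)(\mathbf{y})\big) = \lambda + d(y_1y_2).$$

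To isolate $d$, I introduce the \emph{resonant projector} $\pi\colon \germ{\mathbf{y}} \to \germ v$ sending the monomial $y_1^iy_2^j$ to $v^i$ if $i=j$ and to $0$ otherwise; equivalently,
$$\pi(f)(v) = \frac{1}{2i\pi}\oint_{|y_1|=\rho} f\!\left(y_1,\frac{v}{y_1}\right)\frac{\dd y_1}{y_1},$$
showing that $\pi$ is a well-defined analytic operator on $\germ{\mathbf{y}}$ whose kernel coincides with the image of $X$ (whose eigenvalue on $y_1^iy_2^j$ is the nonzero integer $j-i$ when $i\neq j$). Applying $\pi$ to the cohomological equation above, using $\pi(X(g))=0$ together with $\pi(c(v)f)=c(v)\pi(f)$ for $c\in\germ v$, I obtain
$$\big(\lambda+d(v)\big)\cdot\pi\!\left(\frac{1}{\lambda+h}\right)(v) = 1,$$
which forces $d(v) := 1/\pi\big(1/(\lambda+h)\big)(v)-\lambda$. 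Since $\lambda+h$ is a unit with $\pi\big(1/(\lambda+h)\big)(0)=1/\lambda\neq 0$, this indeed defines an analytic germ $d \in v\germ{v}$.

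With $d$ chosen in this way, the function $F(\mathbf{y}) := 1 - (\lambda+d(v))/(\lambda+h(\mathbf{y}))$ is analytic, vanishes at $\mathbf{0}$, and satisfies $\pi(F)=0$ by construction. Writing $F = \sum_{i\neq j} F_{ij}\, y_1^iy_2^j$, the unique solution with $g(\mathbf{0})=0$ of the equation $X(g)=F$ is
$$g(\mathbf{y}) := \sum_{i\neq j} \frac{F_{ij}}{j-i}\,y_1^iy_2^j,$$
and since $|j-i|\geq 1$, Cauchy's estimates on $F$ transfer verbatim to $g$, giving convergence on the same polydisc as $F$. The induced $\phi$ thus lies in $\diff[\ww C^{2},0,\tx{Id}]$ and solves the lemma.

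The main point---really a key observation rather than an obstacle---is the analyticity of the resonant projector $\pi$, which makes $d$ analytic as soon as $\lambda+h$ is a unit. No small-divisor issue ever arises, because the eigenvalues $j-i$ of $X$ on off-diagonal monomials are nonzero integers, so the cohomological equation is solved without any loss of convergence radius; this is the distinctive simplification afforded by the $(-1,+1)$ resonance structure of $X$.
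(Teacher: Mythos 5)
Your proposal is correct, and it follows the same overall strategy as the paper's own proof: the same ansatz $\phi=(y_{1}e^{g},y_{2}e^{-g})$ along the flow of $L:=-y_{1}\pp{y_{1}}+y_{2}\pp{y_{2}}$, the same computation $\phi_{*}(Y_{0})=\big(\cro{U(1-\cal L_{L}(g))}\circ\phi^{-1}\big)\, L$ using that $\phi$ preserves $v=y_{1}y_{2}$, and the same reduction to a cohomological equation for $\cal L_{L}(g)$. Where you genuinely differ is in how $d$ is pinned down. The paper splits $U=U_{\tx{res}}+U_{\tx{n-res}}$ and asks to solve $\cal L_{L}(\gamma)=U_{\tx{n-res}}/U$, asserting immediate solvability; but the image of $\cal L_{L}$ is exactly the set of series with no resonant monomials, and $U_{\tx{n-res}}/U$ can contain resonant terms (for $U=\lambda+y_{1}+y_{2}$ the coefficient of $y_{1}y_{2}$ in $U_{\tx{n-res}}/U$ is $-2/\lambda^{2}\neq0$), so that equation is in general unsolvable as written, and the implicit output $d=h_{\tx{res}}$ is not the right one. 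You instead impose the necessary and sufficient compatibility condition by applying the resonant projector $\pi$ to $(\lambda+h)(1-X(g))=\lambda+d(v)$, which forces $d=1/\pi\big(1/(\lambda+h)\big)-\lambda$, and then solve $X(g)=F$ with $\pi(F)=0$ by dividing coefficients by the nonzero integers $j-i$, with no loss of convergence radius; the contour-integral formula for $\pi$ gives the analyticity of $d$ for free. So your argument is not merely a valid alternative: it supplies the compatibility check that the paper's version states too strongly, and it reaches the same conclusion.
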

\begin{rem}
In other words, we have removed every non-resonant term in $h\left(\mathbf{y}\right)$.
In fact, we re-obtain here a particular case (with one vector field
in dimension 2) of the principal result in \cite{StoloChampsCommutants}
(which is itself inspired of Vey's works).\end{rem}
\begin{proof}
We claim that $\phi$ can be chosen of the form 
\[
{\displaystyle \phi\left(\mathbf{y}\right)=\left(y_{1}e^{-\gamma\left(\mathbf{y}\right)},y_{2}e^{\gamma\left(\mathbf{y}\right)}\right)}\,\,,
\]
for a conveniently chosen $\gamma\in\germ{\mathbf{y}}$. Indeed, let
us study how such a diffeomorphism acts on $Y_{0}$. Let us write
$U:=\left(\lambda+h\left(v\right)\right)$ and $L:=\left(-y_{1}\pp{y_{1}}+y_{2}\pp{y_{2}}\right)$,
such that $Y_{0}=UL$. An easy computation shows:
\begin{eqnarray*}
\phi_{*}\left(Y_{0}\right) & = & \phi_{*}\left(U.L\right)\\
 & = & \left(\cro{U\cdot\left(1-{\cal L_{L}\left(\gamma\right)}\right)}\circ\phi^{-1}\right)L\,\,,
\end{eqnarray*}
where $\cal L_{L}$ is the Lie derivative of associate to $L$. We
want to to find $\gamma$ such that the unit 
\[
D:=\cro{U\left(1-{\cal L_{L}\left(\gamma\right)}\right)}\circ\phi^{-1}
\]
 is free from \emph{non-resonant} terms, \emph{i.e.} is of the form
\[
D=\lambda+d\left(y_{1}y_{2}\right)=\lambda+\sum_{k\geq1}d{}_{k}\left(y_{1}y_{2}\right)^{k}\,\,.
\]
Notice that if a unit ${\displaystyle W=\sum_{k\geq0}W_{k}\left(y_{1}y_{2}\right)^{k}\germ{\mathbf{y}}}^{\times}$
is free from non-resonant terms, then: 
\[
\begin{cases}
W\circ\phi^{-1}=W\\
\cal L_{L}\left(W\right)=0 & .
\end{cases}
\]
Thus, let us split both $U$ and $\gamma$ in a ``resonant'' and
a ``non-resonant'' part:
\[
\begin{cases}
U=U_{\tx{res}}+U_{\tx{n-res}}\\
\gamma=\gamma_{\tx{res}}+\gamma_{\tx{n-res}}
\end{cases}
\]
where 
\[
\begin{cases}
U_{\tx{n-res}}=\underset{\substack{k_{1}\neq k_{2}}
}{\sum}U_{k_{1},k_{2}}y_{1}^{k_{1}}y_{2}^{k_{2}}\\
U_{\tx{res}}=\underset{\substack{k}
}{\sum}U_{k,k}\left(y_{1}y_{2}\right)^{k}\\
\gamma_{\tx{n-res}}=\underset{\substack{k_{1}\neq k_{2}}
}{\sum}\gamma_{k_{1},k_{2}}y_{1}^{k_{1}}y_{2}^{k_{2}}\\
\gamma_{\tx{res}}=\underset{k}{\sum}\gamma_{k,k}\left(y_{1}y_{2}\right)^{k} & .
\end{cases}
\]
Then the non-resonant terms of $U\left(1-{\cal L_{L}\left(\gamma\right)}\right)$
are 
\[
\left(U_{\tx{n-res}}-\left(U_{\tx{n-res}}+U_{\tx{res}}\right){\cal L_{L}\left(\gamma_{\tx{n-res}}\right)}\right)\circ\phi^{-1}\,\,.
\]
Hence, the partial differential equation we want to solve is: 
\[
\cal L_{L}\left(\gamma\right)=\frac{U_{\tx{n-res}}}{U_{\tx{res}}+U_{\tx{n-res}}}\,\,.
\]
One sees immediately that this equation admit an analytic solution
(and even infinitely many solutions) $\gamma\in\germ{\mathbf{y}}$,
since the unit $U\in\germ{\mathbf{y}}$ is analytic. 
\end{proof}

\subsection{1-summable simplification of the ``dependent'' affine part}

~

We are concerned by studying vector fields of the form 
\begin{equation}
Y=x^{2}\pp x+\left(-\lambda y_{1}+f_{1}\left(x,\mathbf{y}\right)\right)\pp{y_{1}}+\left(\lambda y_{2}+f_{2}\left(x,\mathbf{y}\right)\right)\pp{y_{2}}\,\,\,\,,\label{eq: Y asympt hamilt}
\end{equation}
with
\[
\begin{cases}
{\displaystyle f_{1}\left(x,\mathbf{y}\right)=-d\left(y_{1}y_{2}\right)y_{1}+xT_{1}\left(x,\mathbf{y}\right)}\\
{\displaystyle f_{2}\left(x,\mathbf{y}\right)=d\left(y_{1}y_{2}\right)y_{2}+xT_{2}\left(x,\mathbf{y}\right)} & ,
\end{cases}
\]
where ${\displaystyle d\left(v\right)\in v\germ v}$ and ${\displaystyle T_{1},T_{2}\in\germ{x,\mathbf{y}}}$
are of order at least one.
\begin{prop}
\label{prop: diag prep}Let $Y\in\sndiag$ be a doubly-resonant saddle-node
of the form 
\[
Y=x^{2}\pp x+\left(-\lambda y_{1}+f_{1}\left(x,\mathbf{y}\right)\right)\pp{y_{1}}+\left(\lambda y_{2}+f_{2}\left(x,\mathbf{y}\right)\right)\pp{y_{2}}\,\,\,\,,
\]
where $f_{1},f_{2}\in\germ{x,\mathbf{y}}$ are such that ${\displaystyle f_{1}\left(x,\mathbf{y}\right),f_{2}\left(x,\mathbf{y}\right)=\mbox{\ensuremath{\tx O}}\left(\norm{\left(x,\mathbf{y}\right)}^{2}\right)}$.
Then there exist formal power series $\hat{y}_{1}$, $\hat{y}_{2}$,
$\hat{\alpha}_{1}$, $\hat{\alpha}_{2}$, $\hat{\beta}_{1}$, $\hat{\beta}_{2}\in x\form x$
which are 1-summable in every direction $\theta\neq\arg\left(\pm\lambda\right)$,
such that the formal fibered diffeomorphism 
\[
\hat{\Phi}:\left(x,y_{1},y_{2}\right)\mapsto\left(x,\hat{y}_{1}\left(x\right)+\left(1+\hat{\alpha}_{1}\left(x\right)\right)y_{1}+\hat{\beta}_{1}\left(x\right)y_{2},\hat{y}_{2}\left(x\right)+\hat{\alpha}_{2}\left(x\right)y_{1}+\left(1+\hat{\beta}_{1}\left(x\right)\right)y_{2}\right)\,\,\,,
\]
(which is tangent to the identity and 1- summable in every direction
$\theta\neq\arg\left(\pm\lambda\right)$) conjugates $Y$ to 
\[
\hat{\Phi}_{*}\left(Y\right)=x^{2}\pp x+\left(\left(-\lambda+a_{1}x\right)y_{1}+\hat{F}_{1}\left(x,\mathbf{y}\right)\right)\pp{y_{1}}+\left(\left(\lambda+a_{2}x\right)y_{2}+\hat{F}_{2}\left(x,\mathbf{y}\right)\right)\pp{y_{2}}\,\,\,,
\]
where $a_{1},a_{2}\in\ww C$ and $\hat{F}_{1},\hat{F}_{2}\in\form{x,\mathbf{y}}$
are of order at least 2 with respect to $\mathbf{y}$, and 1-summable
in every direction $\theta\neq\arg\left(\pm\lambda\right)$.\end{prop}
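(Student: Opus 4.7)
The plan is to decompose $\hat{\Phi}$ as a composition of two fibered changes of coordinates, handling first the ``constant'' (in $\mathbf{y}$) part of the nonlinearity, and then the ``linear'' part. The first change straightens the formal invariant curve tangent to $\acc{y_1 = y_2 = 0}$ via a fibered translation $\mathbf{y} \mapsto \mathbf{y} + \left(\hat{y}_1(x), \hat{y}_2(x)\right)$, where $\hat{y}_1, \hat{y}_2 \in x\form{x}$ are formal solutions of the irregular singular system
\begin{align*}
x^2 \hat{y}_1'(x) &= -\lambda \hat{y}_1(x) + f_1\!\left(x, \hat{y}_1(x), \hat{y}_2(x)\right) \\
x^2 \hat{y}_2'(x) &= \phantom{-}\lambda \hat{y}_2(x) + f_2\!\left(x, \hat{y}_1(x), \hat{y}_2(x)\right).
\end{align*}
Since the linearization at $x=0$ has non-zero eigenvalues $\mp\lambda$, one shows by identification that there is a unique formal solution with $\hat{y}_i(0)=0$; the $1$-summability in every direction $\theta \neq \arg(\pm\lambda)$ follows from Proposition \ref{prop: solution borel sommable precise}\,\eqref{enu:Assume--and} applied coordinate-wise (after decoupling the two equations at leading order and treating the coupling through a fixed-point argument in the Banach algebra $\mathfrak{B}_{\lambda,\theta,\delta,\rho}^{\tx{bis}}$, using Proposition \ref{prop: norme d'algebre}).

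After this translation, we may assume $f_i(x,\mathbf{0}) = 0$, so the vector field has linear part $A(x)\mathbf{y}$ in $\mathbf{y}$ with $A(0) = \tx{diag}(-\lambda,\lambda)$. I would then look for a fibered linear gauge $\mathbf{y} \mapsto M(x)\mathbf{y} = (I + N(x))\mathbf{y}$ with $N(x) = \tx{O}(x)$ and set $a_i := A'_{ii}(0)$, so that $D(x) := \tx{diag}(-\lambda + a_1 x, \lambda + a_2 x)$ is the target linear part. A standard computation shows $\hat{\Phi}_*(Y)$ has the desired linear part exactly when $N$ satisfies the matrix ODE
\[
x^2 N'(x) = \bigl(A(x) - D(x)\bigr) + A(x) N(x) - N(x) D(x)\,.
\]
Working component-wise in the basis where $A(0)$ and $D(0)$ are diagonal, the diagonal entries satisfy regular-type equations (since $[A(0),N_{\tx{diag}}]=0$ and $A(x)-D(x)$ has $\tx{O}(x^2)$ diagonal entries by the choice of the $a_i$), solvable uniquely as formal series and $1$-summable by Proposition \ref{prop: solution borel sommable precise}\,\eqref{enu:Assume--and-1}; whereas the off-diagonal entries satisfy irregular equations of the form $x^2 N_{12}'(x) + 2\lambda N_{12}(x) = A_{12}(x) + (\text{coupling})$ and symmetrically for $N_{21}$ with $k = -2\lambda$, falling under Proposition \ref{prop: solution borel sommable precise}\,\eqref{enu:Assume--and} with Stokes directions exactly $\arg(\pm 2\lambda) = \arg(\pm\lambda)$.

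The main technical obstacle is that the four scalar equations for the entries of $N$ are in fact coupled through lower-order terms, so one cannot invoke Proposition \ref{prop: solution borel sommable precise} directly. I would handle this by recasting the matrix equation as a fixed-point problem $N = \mathcal{T}(N)$ in the Banach space $\bigl(\mathfrak{B}_{\beta,\theta,\delta,\rho}^{\tx{bis}}\bigr)^4$, where $\mathcal{T}$ is built by using the explicit inversion furnished by Proposition \ref{prop: solution borel sommable precise} componentwise, and treating the coupling as a perturbation. The norm estimate \eqref{eq: inegalite norme borel} gives a factor $\beta/(\beta d_k - C|\alpha k|)$ which can be made arbitrarily small by choosing $\beta$ large enough while keeping $\theta$ fixed away from $\arg(\pm\lambda)$; this makes $\mathcal{T}$ a contraction on a small ball, yielding both existence and $1$-summability of the unique formal solution. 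Finally, assembling the translation and the gauge transformation gives $\hat{\Phi}$ of the announced form, and Corollary \ref{cor: summability push-forward} together with Lemma \ref{lem:diff_alg_and_summability} guarantees that $\hat{\Phi}_*(Y)$ is itself $1$-summable in every direction $\theta \neq \arg(\pm\lambda)$ with the required shape, the remainder $\hat{F}_i$ being of order at least $2$ in $\mathbf{y}$ by construction.
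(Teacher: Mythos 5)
Your overall architecture is the same as the paper's: a fibered translation $\mathbf{y}\mapsto\mathbf{y}-(\hat{y}_{1}(x),\hat{y}_{2}(x))$ straightening the formal center manifold, followed by a linear gauge transformation normalizing the linear part in $\mathbf{y}$. The one structural difference is that you do the gauge step in a single stroke, solving the affine-linear matrix ODE $x^{2}N'=(A-D)+AN-ND$ for a full matrix $N$, whereas the paper splits it into an off-diagonal conjugation by $\hat{\mathbf{P}}(x)$ with entries $1,x\hat{p}_{1},x\hat{p}_{2},1$ --- which, because it insists on exact diagonalization of the intermediate linear part, produces Riccati equations for $\hat{p}_{1},\hat{p}_{2}$ --- and then an explicit diagonal rescaling $\hat{q}_{j}(x)=\exp\bigl(\int_{0}^{x}\frac{\hat{a}_{j}(s)-a_{j}}{s}\,\mbox{d}s\bigr)$. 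Your linear variant is arguably cleaner (no quadratic terms), and your identification of the $a_{i}$ and of the Stokes directions $\arg(\pm\lambda)$ via $k=\pm2\lambda$ agrees with the paper.

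Where you diverge is in how $1$-summability is obtained, and there the proposal has concrete gaps. The paper does not prove summability of $\hat{y}_{1},\hat{y}_{2}$ or of $\hat{p}_{1},\hat{p}_{2}$ by hand: it invokes the Ramis--Sibuya theorem on $1$-summability of formal solutions of nonlinear irregular systems, once for the center-manifold system and once for the Riccati system. Your substitute --- Proposition \ref{prop: solution borel sommable precise} plus a contraction in $\bigl(\mathfrak{B}_{\beta,\theta,\delta,\rho}^{\tx{bis}}\bigr)^{4}$ --- stumbles on three points. First, $\beta/(\beta d_{k}-C\abs{\alpha k})\to 1/d_{k}>0$ as $\beta\to\infty$, so this factor cannot be made arbitrarily small and does not by itself make $\mathcal{T}$ a contraction; any smallness must instead come from the coupling coefficients being $\tx O(x)$ combined with the $4\pi/\beta$ factor of Proposition \ref{prop: norme d'algebre}, which you would have to make explicit. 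Second, the center-manifold system is genuinely nonlinear in $(\hat{y}_{1},\hat{y}_{2})$ (the $f_{i}$ contain all powers of $\mathbf{y}$), so ``decoupling at leading order plus a fixed point'' is in effect a proof of the Ramis--Sibuya/Braaksma theorem, not a corollary of the linear scalar Proposition \ref{prop: solution borel sommable precise}. Third, the diagonal entries of your matrix equation reduce, after division by $x$, to $xN_{ii}'+k(x)N_{ii}=\tx O(x)$ with $k(0)=0$; this is outside the scope of the second item of Proposition \ref{prop: solution borel sommable precise} (which requires $\Re(k)>0$), and the solution is unique only after normalizing $N_{ii}(0)=0$. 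The paper sidesteps exactly this issue by writing the diagonal gauge explicitly as an exponential of an anti-derivative. None of these gaps is fatal, but each requires a genuine argument that the proposal currently asserts rather than supplies.
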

\begin{rem}
Notice that $\hat{\Phi}_{\mid\acc{x=0}}=\mbox{Id}$, so that $\hat{F}_{i}\left(0,\mathbf{y}\right)=f_{i}\left(0,\mathbf{y}\right)$
for $i=1,2$. Moreover, the residue of $\hat{\Phi}_{*}\left(Y\right)$
is $a_{1}+a_{2}$.
\end{rem}
The proof of Proposition \ref{prop: diag prep} is postponed to subsection
\ref{sub:Proof-of-Proposition}.

\subsubsection{Technical lemmas on irregular differential systems}

~
\begin{lem}
There exists a pair of formal power series $\left(\hat{y}_{1}\left(x\right),\hat{y}_{2}\left(x\right)\right)\in\left(x\form x\right)^{2}$
which are 1-summable in every direction $\theta\neq\arg\left(\pm\lambda\right)$,
such that the formal diffeomorphism given by 
\[
{\displaystyle \hat{\Phi}_{1}\left(x,y_{1},y_{2}\right)=\left(x,y_{1}-\hat{y}_{1}\left(x\right),y_{2}-\hat{y}_{2}\left(x\right)\right)},
\]
(which is 1-summable in every direction $\theta\neq\arg\left(\pm\lambda\right)$)
conjugates $Y$ in (\ref{eq: Y asympt hamilt}) to
\begin{equation}
\hat{Y}_{1}\left(x,\mathbf{y}\right)=x^{2}\pp x+\left(-\lambda y_{1}+\hat{g}_{1}\left(x,\mathbf{y}\right)\right)\pp{y_{1}}+\left(\lambda y_{2}+\hat{g}_{2}\left(x,\mathbf{y}\right)\right)\pp{y_{2}}\qquad,\label{eq: redressement separatrice}
\end{equation}
where $\hat{g}_{1},\hat{g}_{2}$ are formal power series of order
at least $2$ such that $\hat{g}_{1}\left(x,\mathbf{0}\right)=\hat{g}_{2}\left(x,\mathbf{0}\right)=0$,
and are 1-summable in every direction $\theta\neq\arg\left(\pm\lambda\right)$.
\end{lem}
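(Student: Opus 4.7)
The plan is to take $(\hat{y}_1, \hat{y}_2)$ to be the formal invariant curve of $Y$ through the origin: a formal solution of the irregular singular system
\[
x^2 \hat{y}_1' = -\lambda \hat{y}_1 + f_1(x, \hat{y}_1, \hat{y}_2), \qquad x^2 \hat{y}_2' = \lambda \hat{y}_2 + f_2(x, \hat{y}_1, \hat{y}_2).
\]
Once such a pair is built, the translation $\hat{\Phi}_1\colon (x, y_1, y_2) \mapsto (x, y_1 - \hat{y}_1(x), y_2 - \hat{y}_2(x))$ straightens this curve onto $\acc{y_1 = y_2 = 0}$, and a direct computation of the push-forward yields the shape $(\ref{eq: redressement separatrice})$ with $\hat{g}_i(x, \mathbf{0}) = 0$ precisely because of the invariance equation above.

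To settle formal existence and uniqueness, I would expand $f_i(x, \mathbf{y}) = g_i(x) + A_i(x) \cdot \mathbf{y} + h_i(x, \mathbf{y})$, where $g_i \in x^2 \germ{x}$ (because $f_i = \tx O(\norm{(x,\mathbf{y})}^2)$), $A_i(x) \in x \germ{x}^{1 \times 2}$, and $h_i \in \germ{x,\mathbf{y}}$ satisfies $h_i = \tx O(\norm{\mathbf{y}}^2)$. Writing $\hat{y}_i = \sum_{k \geq 1} y_{i,k} x^k$ and identifying coefficients of $x^k$, the equation reduces to $\pm \lambda y_{i,k} = P_{i,k}\bigl((y_{j,\ell})_{\ell < k}\bigr)$; since $\lambda \neq 0$, this recursion determines $y_{i,k}$ uniquely, and one also finds $y_{i,1} = 0$ (so in fact $\hat{y}_i \in x^2 \form{x}$).

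The 1-summability is then obtained by a Borel-plane fixed-point argument. Fix $\theta \notin \acc{\arg(\lambda), \arg(-\lambda)}$ and choose $\delta, \rho > 0$ so small that $d := \min(\tx{dist}(\lambda, \Delta_{\theta, \delta, \rho}), \tx{dist}(-\lambda, \Delta_{\theta, \delta, \rho})) > 0$. By Proposition \ref{prop: solution borel sommable precise}(\ref{enu:Assume--and}) applied with $\alpha = 0$ and $k = \pm \lambda$, the linear operators $\Psi_\pm$ inverting $a \mapsto x^2 a' \pm \lambda a$ are bounded on $\mathfrak{B}_\beta^{\tx{bis}}$ with operator norm at most $1/d$, for every $\beta > 0$. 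The system is then equivalent to the fixed-point equation $\hat{\mathbf{y}} = T(\hat{\mathbf{y}})$, where
\[
T(\hat{\mathbf{y}}) := \Bigl( \Psi_+\bigl(g_1 + A_1 \cdot \hat{\mathbf{y}} + h_1(x, \hat{\mathbf{y}})\bigr),\ \Psi_-\bigl(g_2 + A_2 \cdot \hat{\mathbf{y}} + h_2(x, \hat{\mathbf{y}})\bigr) \Bigr).
\]
Using the sub-multiplicativity of $\norm{\cdot}_\beta^{\tx{bis}}$ (Proposition \ref{prop: norme d'algebre}, valid for $\beta \geq 4\pi$) and expanding $h_i$ as a convergent series in $\mathbf{y}$ with holomorphic coefficients in $x$, I will verify that $T$ stabilizes a small ball of $(\mathfrak{B}_\beta^{\tx{bis}})^2$ and acts as a contraction there, provided $\beta$ is chosen large enough. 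By uniqueness of the formal solution, the resulting fixed point coincides with $(\hat{y}_1, \hat{y}_2)$, proving their 1-summability in direction $\theta$. The 1-summability of the components of $\hat{Y}_1 = \hat{\Phi}_{1*}(Y)$ in all directions $\theta \neq \arg(\pm \lambda)$ then follows from Corollary \ref{cor: summability push-forward}.

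The main technical obstacle is the control of the nonlinear term $h_i(x, \hat{\mathbf{y}})$ in the Borel plane: substitution of a formal series into an analytic function has to be carried out via the Taylor expansion of $h_i$ in $\mathbf{y}$, producing an infinite sum of multiple convolution products that must be estimated with Proposition \ref{prop: norme d'algebre}. The convergence of this sum forces the fixed-point argument to live in a ball whose radius is smaller than the convergence radius of $h_i$ in $\mathbf{y}$, and reconciling this with the need to make the Lipschitz constant of $T$ strictly smaller than $1$ is what dictates the detailed choice of $\beta$ and the ball radius.
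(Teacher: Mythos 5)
Your proposal is correct, but it follows a genuinely different route from the paper. The paper's own proof is essentially a two-line citation: it defines $\left(\hat{y}_{1},\hat{y}_{2}\right)$ as the unique formal solution of the associated irregular system vanishing at $0$, invokes the Ramis--Sibuya theorem on summability of formal solutions of (nonlinear) irregular differential systems as a black box to get existence and 1-summability in every direction $\theta\neq\arg\left(\pm\lambda\right)$, and then gets the 1-summability of $\hat{g}_{1},\hat{g}_{2}$ from Proposition \ref{prop: compositon summable} exactly as you do via Corollary \ref{cor: summability push-forward}. You instead reprove the needed special case of that theorem by hand: a contraction argument in the Banach algebra $\left(\mathfrak{B}_{\beta}^{\tx{bis}},\norm{\cdot}_{\beta}^{\tx{bis}}\right)$, where the linear solution operators are controlled by Proposition \ref{prop: solution borel sommable precise} with $\alpha=0$, $k=\pm\lambda$ (giving operator norm $1/d$ with $d=\min\bigl(\tx{dist}\left(\lambda,\Delta_{\theta,\delta,\rho}\right),\tx{dist}\left(-\lambda,\Delta_{\theta,\delta,\rho}\right)\bigr)>0$), and the nonlinearity is tamed by the factor $4\pi/\beta$ in Proposition \ref{prop: norme d'algebre}, which shrinks the Lipschitz constant for $\beta$ large. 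This is heavier but self-contained: it makes explicit where the hypothesis $\theta\neq\arg\left(\pm\lambda\right)$ enters (positivity of $d$), and it is in the same spirit as the paper's own later argument in Proposition \ref{prop: preparation}, which works in the same Banach algebra (there with a majorant-series argument rather than a fixed point). The only points worth watching in your write-up are routine: the Borel transforms of the convergent germs $g_{i}$, $A_{i}$, $h_{i,\mathbf{n}}$ are entire of exponential type, so they do lie in $\mathfrak{B}_{\beta}^{\tx{bis}}$ with $\norm{h_{i,\mathbf{n}}}_{\beta}^{\tx{bis}}\leq L\cdot B^{\abs{\mathbf{n}}}$ uniformly (Cauchy estimates), which is exactly what your geometric-series control of the substituted nonlinearity needs; and your identification of the fixed point with the unique formal solution is legitimate since the recursion $\pm\lambda y_{i,k}=P_{i,k}$ pins the coefficients down. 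Both proofs are valid; the paper's buys brevity at the cost of an external reference, yours buys transparency at the cost of the technical estimates you flag at the end.
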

In other words, in the new coordinates, the curve given by $\left(y_{1},y_{2}\right)=\left(0,0\right)$
is invariant by the flow of the vector field, and contains the origin
in its closure: it is usually called the (formal, 1-summable) \emph{center
manifold.}
\begin{proof}
This is an immediate consequence of an important theorem by Ramis
and Sibuya on the summability of formal solutions to irregular differential
systems \cite{ramis1989hukuhara}. This theorem proves the existence
and the 1-summability in every direction $\theta\neq\arg\left(\pm\lambda\right)$,
of $\hat{y}_{1}$ and $\hat{y}_{2}$: $\left(\hat{y}_{1}\left(x\right),\hat{y}_{2}\left(x\right)\right)$
is defined as the unique formal solution to 
\[
\begin{cases}
{\displaystyle x^{2}\ddd{y_{1}}x=-\lambda y_{1}\left(x\right)+f_{1}\left(x,y_{1}\left(x\right),y_{2}\left(x\right)\right)}\\
{\displaystyle x^{2}\ddd{y_{2}}x=\lambda y_{2}\left(x\right)+f_{2}\left(x,y_{1}\left(x\right),y_{2}\left(x\right)\right)}
\end{cases}\,\,,
\]
such that $\left(\hat{y}_{1}\left(0\right),\hat{y}_{2}\left(0\right)\right)=\left(0,0\right)$.
The 1-summability of $\hat{g}_{1}$ and $\hat{g}_{2}$ comes from
Proposition \ref{prop: compositon summable}.
\end{proof}
The next step is aimed at changing to linear terms with respect to
$\mathbf{y}$ in ``diagonal'' form.
\begin{lem}
There exists a pair of formal power series $\left(\hat{p}_{1},\hat{p}_{2}\right)\in\left(\form x\right)^{2}$
which are 1-summable in every direction $\theta\neq\arg\left(\pm\lambda\right)$,
such that the formal fibered diffeomorphism given by 
\[
\hat{\Phi}_{2}\left(x,y_{1},y_{2}\right)=\left(x,y_{1}+x\hat{p}_{2}\left(x\right)y_{2},y_{2}+x\hat{p}_{1}\left(x\right)y_{1}\right)\,\,,
\]
(which is tangent to the identity and 1-summable in every direction
$\theta\neq\arg\left(\pm\lambda\right)$) conjugates $\hat{Y}_{1}$
in (\ref{eq: redressement separatrice}), to
\begin{eqnarray}
\hat{Y}_{2}\left(x,\mathbf{y}\right) & = & x^{2}\pp x+\left(\left(-\lambda+x\hat{a}_{1}\left(x\right)\right)y_{1}+\hat{H}_{1}\left(x,\mathbf{y}\right)\right)\pp{y_{1}}\nonumber \\
 &  & +\left(\left(\lambda+x\hat{a}_{2}\left(x\right)\right)y_{2}+\hat{H}_{2}\left(x,\mathbf{y}\right)\right)\pp{y_{2}}\qquad,\label{eq: Y semi-diag}
\end{eqnarray}
where $\hat{a}_{1},\hat{a}_{2},\hat{H}_{1},\hat{H}_{2}$ are formal
power series which are 1-summable in every direction $\theta\neq\arg\left(\pm\lambda\right)$
and $\hat{H}_{1},\hat{H}_{2}$ are of order at least $2$ with respect
to $\mathbf{y}$. \end{lem}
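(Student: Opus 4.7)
The plan is to translate the conjugacy requirement $(\hat{\Phi}_2)_*(\hat{Y}_1) = \hat{Y}_2$ into a singular nonlinear ODE system for $(\hat{p}_1,\hat{p}_2)$ that fits the Ramis--Sibuya framework, so that 1-summability away from the two Stokes directions $\arg(\pm\lambda)$ follows from exactly the same analytic input as in the preceding lemma.

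First I would expand the linear-in-$\mathbf{y}$ part of $\hat{Y}_1$ as $L(x)\mathbf{y}$, with $L(x) = L_0 + \hat{L}_1(x)$, $L_0 = \tx{diag}(-\lambda,\lambda)$, and $\hat{L}_1(x) = (\hat{b}_{ij}(x))$ a $2\times 2$ matrix whose entries lie in $x\form x$; this is possible because each $\hat{g}_i$ has order at least $2$ and satisfies $\hat{g}_i(x,\mathbf{0}) = 0$. Writing $\hat{\Phi}_2$ as the fibered map $\mathbf{z} = P(x)\mathbf{y}$ with $P(x) := I + xQ(x)$ and $Q$ the off-diagonal matrix having entries $\hat{p}_1,\hat{p}_2$, the linear-in-$\mathbf{y}$ content of the desired conjugacy reduces to the matrix identity
\[
x^2 P'(x) + P(x)L(x) \,=\, \tilde{L}(x)\, P(x),
\]
where $\tilde{L}(x) := \tx{diag}(-\lambda + x\hat{a}_1(x),\,\lambda + x\hat{a}_2(x))$, while the nonlinear-in-$\mathbf{y}$ part merely defines $\hat{H}_1,\hat{H}_2$ in terms of previously computed data and of $\hat{\Phi}_2$.

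Next I would match entries of this identity. The diagonal equations furnish the closed formulas $x\hat{a}_1 = \hat{b}_{11} + x\hat{p}_2\hat{b}_{21}$ and $x\hat{a}_2 = \hat{b}_{22} + x\hat{p}_1\hat{b}_{12}$, so that $\hat{a}_1,\hat{a}_2 \in \form{x}$ are determined by $\hat{p}_1,\hat{p}_2$. Substituting these into the two off-diagonal equations and dividing through by $x$ (licit since $\hat{b}_{12},\hat{b}_{21} \in x\form x$) produces a coupled singular system
\begin{eqnarray*}
x^2\,\hat{p}_1'(x) & = & 2\lambda\,\hat{p}_1(x) + R_1\bigl(x,\hat{p}_1(x),\hat{p}_2(x)\bigr), \\
x^2\,\hat{p}_2'(x) & = & -2\lambda\,\hat{p}_2(x) + R_2\bigl(x,\hat{p}_1(x),\hat{p}_2(x)\bigr),
\end{eqnarray*}
where $R_1,R_2$ are polynomial in $(\hat{p}_1,\hat{p}_2)$ with coefficients built from the $\hat{b}_{ij}/x$, and in particular are 1-summable in every direction $\theta \neq \arg(\pm\lambda)$ by Lemma \ref{lem:diff_alg_and_summability}. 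A straightforward order-by-order recursion in powers of $x$ shows that this system admits a unique formal solution $(\hat{p}_1,\hat{p}_2) \in (\form{x})^2$: the initial values $\hat{p}_i(0)$ are uniquely fixed by the algebraic equations obtained at $x=0$, thanks to the invertibility of $L_0$, and the higher-order coefficients follow from a triangular scheme governed by the nonzero numbers $\pm 2\lambda$.

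The final step is to invoke the Ramis--Sibuya theorem on the 1-summability of formal solutions to irregular differential systems \cite{ramis1989hukuhara}, exactly as in the proof of the preceding lemma. After translating each $\hat{p}_i$ by its initial value, the system above becomes a non-resonant Poincar\'e-rank-one system whose linear part at the origin is $\tx{diag}(2\lambda,-2\lambda)$; the Stokes directions are therefore $\arg(\pm 2\lambda) = \arg(\pm\lambda)$, and the formal solution is 1-summable in every other direction. The closed formulas for $\hat{a}_1,\hat{a}_2$ then yield their 1-summability in the same directions, hence that of $\hat{\Phi}_2$, and the 1-summability of $\hat{H}_1,\hat{H}_2$ follows from Corollary \ref{cor: summability push-forward}. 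The main (and essentially the only nontrivial) obstacle is the bookkeeping that brings the conjugacy equation into Ramis--Sibuya standard form; once that reduction is carried out, the summability conclusion uses exactly the same analytic input as the previous lemma.
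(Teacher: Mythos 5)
Your proposal is correct and follows essentially the same route as the paper: both reduce the conjugacy to a matrix identity for $P(x)=I+xQ(x)$, read off $\hat{a}_{1},\hat{a}_{2}$ from the diagonal entries, extract a Riccati-type rank-one irregular system for $\left(\hat{p}_{1},\hat{p}_{2}\right)$ with linear part $\pm2\lambda$, and invoke the Ramis--Sibuya theorem of \cite{ramis1989hukuhara} to get 1-summability away from $\arg\left(\pm\lambda\right)$, with the summability of $\hat{H}_{1},\hat{H}_{2}$ obtained by composition. The only (immaterial) difference is the direction in which you write the change of variables, which swaps some indices in the closed formulas for $\hat{a}_{1},\hat{a}_{2}$ relative to the paper's.
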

\begin{proof}
Let us write
\[
\begin{cases}
{\displaystyle \hat{g}_{1}\left(x,\mathbf{y}\right)=x\hat{b}_{1}\left(x\right)y_{1}+x\hat{c}_{1}\left(x\right)y_{2}+\hat{G}_{1}\left(x,\mathbf{y}\right)}\\
{\displaystyle \hat{g}_{2}\left(x,\mathbf{y}\right)=x\hat{c}_{2}\left(x\right)y_{1}+x\hat{b}_{2}\left(x\right)y_{2}+\hat{G}_{2}\left(x,\mathbf{y}\right)}
\end{cases}\qquad,
\]
where $\hat{b}_{1},\hat{b}_{2},\hat{c}_{1},\hat{c}_{2},\hat{G}_{1},\hat{G}_{2}$
are formal power series 1-summable in the direction $\theta\neq\arg\left(\pm\lambda\right)$,
such that $\hat{G}_{1}$ and $\hat{G}_{2}$ are of order at least
$2$ with respect to $\mathbf{y}$. Let us consider the following
irregular differential system naturally associated to $\hat{Y}_{1}$: 

\begin{equation}
x^{2}\ddd{\mathbf{z}}x\left(x\right)=\mathbf{\hat{B}}\left(x\right)\mathbf{z}\left(x\right)+\mathbf{\hat{G}}\left(x,\mathbf{z}\left(x\right)\right)\qquad,\label{eq: systeme diff irreg}
\end{equation}
where 
\[
\mathbf{\hat{B}}\left(x\right)=\left(\begin{array}{cc}
-\lambda+x\hat{b}_{1}\left(x\right) & x\hat{c}_{1}\left(x\right)\\
x\hat{c}_{2}\left(x\right) & \lambda+x\hat{b}_{2}\left(x\right)
\end{array}\right),\quad\mathbf{\hat{G}}\left(x,\mathbf{z}\left(x\right)\right)=\left(\begin{array}{c}
\hat{G}_{1}\left(x,\mathbf{z}\left(x\right)\right)\\
\hat{G}_{2}\left(x,\mathbf{z}\left(x\right)\right)
\end{array}\right)\qquad.
\]
We are looking for 
\[
{\displaystyle \mathbf{\hat{P}}\left(x\right)=\left(\begin{array}{cc}
1 & x\hat{p}_{2}\left(x\right)\\
x\hat{p}_{1}\left(x\right) & 1
\end{array}\right)\in\mbox{GL}_{2}\left(\form x\right)}\,\,,
\]
where $\hat{p}_{1},\hat{p}_{2}$ are 1-summable formal power series
in $x$ every direction $\theta\neq\arg\left(\pm\lambda\right)$,
such that the linear transformation given by $\mathbf{z}\left(x\right)=\hat{\mathbf{P}}\left(x\right)\mathbf{y}\left(x\right)$
changes equation (\ref{eq: systeme diff irreg}) to 
\[
x^{2}\ddd{\mathbf{y}}x\left(x\right)=\mathbf{\hat{A}}\left(x\right)\mathbf{y}\left(x\right)+\mathbf{\hat{H}}\left(x,\mathbf{y}\left(x\right)\right)\qquad,
\]
with 
\[
\hat{\mathbf{A}}\left(x\right)=\left(\begin{array}{cc}
-\lambda+x\hat{a}_{1}\left(x\right) & 0\\
0 & \lambda+x\hat{a}_{2}\left(x\right)
\end{array}\right),\quad\hat{\mathbf{H}}\left(x,\mathbf{y}\left(x\right)\right)=\left(\begin{array}{c}
\hat{H}_{1}\left(x,\mathbf{y}\left(x\right)\right)\\
\hat{H}_{2}\left(x,\mathbf{y}\left(x\right)\right)
\end{array}\right)\quad,
\]
where $\hat{a}_{1},\hat{a}_{2},\hat{H}_{1},\hat{H}_{2}$ are 1-summable
formal power series in $x$ every direction $\theta\neq\arg\left(\pm\lambda\right)$. 

We have

\[
x^{2}\ddd{\mathbf{y}}x\left(x\right)=\hat{\mathbf{P}}\left(x\right)^{-1}\left(\mathbf{\hat{B}}(x)\hat{\mathbf{P}}\left(x\right)-x^{2}\ddd{\hat{\mathbf{P}}}x\left(x\right)\right)\mathbf{y}\left(x\right)+\hat{\mathbf{P}}\left(x\right)^{-1}\mathbf{\hat{G}}\left(x,\hat{\mathbf{P}}\left(x\right)\mathbf{y}\left(x\right)\right)
\]
and we want to determine $\mathbf{\hat{A}}\left(x\right)$ and $\hat{\mathbf{P}}\left(x\right)$
as above so that 
\[
\mathbf{\hat{B}}(x)\hat{\mathbf{P}}\left(x\right)-x^{2}\ddd{\hat{\mathbf{P}}}x\left(x\right)=\mathbf{\hat{A}}\left(x\right)\qquad.
\]
This gives four equations: 
\begin{equation}
\begin{cases}
{\displaystyle \hat{a}_{1}\left(x\right)=\hat{b}_{1}\left(x\right)+x\hat{c}_{1}\left(x\right)\hat{p}_{1}\left(x\right)}\\
{\displaystyle \hat{a}_{2}\left(x\right)=\hat{b}_{2}\left(x\right)+x\hat{c}_{2}\left(x\right)\hat{p}_{2}\left(x\right)}\\
{\displaystyle x^{2}\ddd{\hat{p}_{1}}x\left(x\right)=\left(2\lambda+x\hat{b}_{2}\left(x\right)-x-x\hat{b}_{1}\left(x\right)\right)\hat{p}_{1}\left(x\right)+\hat{c}_{2}\left(x\right)-x^{2}\hat{c}_{1}\left(x\right)\hat{p}_{1}\left(x\right)^{2}}\\
{\displaystyle x^{2}\ddd{\hat{p}_{2}}x\left(x\right)=\left(-2\lambda+x\hat{b}_{1}\left(x\right)-x-x\hat{b}_{2}\left(x\right)\right)\hat{p}_{2}\left(x\right)+\hat{c}_{1}\left(x\right)-x^{2}\hat{c}_{2}\left(x\right)\hat{p}_{2}\left(x\right)^{2}}
\end{cases}\qquad.\label{eq: system 4 eq}
\end{equation}
Thanks to the theorem by Ramis and Sibuya on the summability of formal
solutions to irregular systems \cite{ramis1989hukuhara}, we have
the existence and the 1-summability in every direction $\theta\neq\arg\left(\pm\lambda\right)$,
of $\hat{p}_{1}$ and $\hat{p}_{2}$: $\left(\hat{p}_{1}\left(x\right),\hat{p}_{2}\left(x\right)\right)$
is defined as the unique formal solution to 
\[
\begin{cases}
{\displaystyle x^{2}\ddd{\hat{p}_{1}}x\left(x\right)=\left(2\lambda+x\hat{b}_{2}\left(x\right)-x-x\hat{b}_{1}\left(x\right)\right)\hat{p}_{1}\left(x\right)+\hat{c}_{2}\left(x\right)-x^{2}\hat{c}_{1}\left(x\right)\hat{p}_{1}\left(x\right)^{2}}\\
{\displaystyle x^{2}\ddd{\hat{p}_{2}}x\left(x\right)=\left(-2\lambda+x\hat{b}_{1}\left(x\right)-x-x\hat{b}_{2}\left(x\right)\right)\hat{p}_{2}\left(x\right)+\hat{c}_{1}\left(x\right)-x^{2}\hat{c}_{2}\left(x\right)\hat{p}_{2}\left(x\right)^{2}}
\end{cases}
\]
such that 
\[
{\displaystyle \left(\hat{p}_{1}\left(0\right),\hat{p}_{2}\left(0\right)\right)=\left(\frac{-\hat{c}_{2}\left(0\right)}{2\lambda},\frac{\hat{c}_{1}\left(0\right)}{2\lambda}\right)}\,\,.
\]
Notice that $\hat{a}_{1}$ and $\hat{a}_{2}$ are defined by the first
two equations in (\ref{eq: system 4 eq}). Finally, $\hat{\mathbf{H}}$
is defined by 
\[
{\displaystyle \hat{\mathbf{H}}\left(x,\mathbf{y}\right):=\hat{\mathbf{P}}\left(x\right)^{-1}\mathbf{\hat{G}}\left(x,\hat{\mathbf{P}}\left(x\right)\mathbf{y}\right)}\,\,,
\]
and it is 1-summable in every direction $\theta\neq\arg\left(\pm\lambda\right)$,
by Proposition \ref{prop: compositon summable}.
\end{proof}
The goal of the last following lemma is to transform $\hat{a}_{1}\left(x\right)$
and $\hat{a}_{2}\left(x\right)$ in (\ref{eq: Y semi-diag}) to constant
terms.
\begin{lem}
There exists a pair of formal power series $\left(\hat{q}_{1},\hat{q}_{2}\right)\in\left(\form x\right)^{2}$
with $\hat{q}_{1}\left(0\right)=\hat{q}_{2}\left(0\right)=1$, which
are 1-summable in every direction $\theta\neq\arg\left(\pm\lambda\right)$,
such that the formal fibered diffeomorphism of the form 
\[
\hat{\Phi}_{3}\left(x,y_{1},y_{2}\right)=\left(x,\hat{q}_{1}\left(x\right)y_{1},\hat{q}_{2}\left(x\right)y_{2}\right)\,\,,
\]
(which is tangent to the identity and 1-summable in every direction
$\theta\neq\arg\left(\pm\lambda\right)$) conjugates $\hat{Y}_{2}$
in (\ref{eq: Y semi-diag}), to
\begin{eqnarray*}
\hat{Y}_{3}\left(x,\mathbf{y}\right) & = & x^{2}\pp x+\left(\left(-\lambda+a_{1}x\right)y_{1}+\hat{F}_{1}\left(x,\mathbf{y}\right)\right)\pp{y_{1}}\\
 &  & +\left(\left(\lambda+a_{2}x\right)y_{2}+\hat{F}_{2}\left(x,\mathbf{y}\right)\right)\pp{y_{2}}\qquad,
\end{eqnarray*}
where $\hat{F}_{1},\hat{F}_{2}$ are formal power series of order
at least $2$ with respect to $\mathbf{y}$ which are 1-summable in
every direction $\theta\neq\arg\left(\pm\lambda\right)$ and $\left(a_{1},a_{2}\right)=\left(\hat{a}_{1}\left(0\right),\hat{a}_{2}\left(0\right)\right)$.\end{lem}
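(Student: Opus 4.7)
The plan is to derive the ODE that each $\hat{q}_i$ must satisfy by directly computing $(\hat{\Phi}_3)_*(\hat{Y}_2)$, then to observe that with the choice $a_i := \hat{a}_i(0)$ this ODE becomes \emph{regular} rather than irregular at $x = 0$, and finally to verify 1-summability from the closure properties recalled in Section~\ref{sub: Strong-Gevrey-1-power}. A direct calculation of the push-forward shows that the $y_i$-component of $(\hat{\Phi}_3)_*(\hat{Y}_2)$ equals
\[
\left( \frac{x^2 \hat{q}_i'(x)}{\hat{q}_i(x)} + \epsilon_i \lambda + x \hat{a}_i(x) \right) y_i \;+\; \hat{q}_i(x)\, \hat{H}_i\!\left(x, \tfrac{y_1}{\hat{q}_1(x)}, \tfrac{y_2}{\hat{q}_2(x)}\right),
\]
with $\epsilon_1 = -1$ and $\epsilon_2 = +1$. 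Matching this against $(\epsilon_i \lambda + a_i x) y_i + \hat{F}_i(x, \mathbf{y})$ with $\hat{F}_i$ of order $\geq 2$ in $\mathbf{y}$ forces both $a_i := \hat{a}_i(0)$ and the relation $x\, \hat{q}_i'(x) = (a_i - \hat{a}_i(x))\, \hat{q}_i(x)$ with $\hat{q}_i(0) = 1$.

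Because $a_i - \hat{a}_i(x)$ vanishes at $x = 0$, it factors as $x\, \hat{g}_i(x)$ with $\hat{g}_i \in \form{x}$, so the equation reduces to the regular first-order linear ODE $\hat{q}_i'(x) = \hat{g}_i(x)\, \hat{q}_i(x)$ with the unique formal solution
\[
\hat{q}_i(x) = \exp\!\left(\int_0^x \hat{g}_i(s)\, \mathrm{d}s\right).
\]
Now fix a direction $\theta \neq \arg(\pm\lambda)$. The 1-summability of $\hat{a}_i$ in direction $\theta$ carries over to $\hat{g}_i = (\hat{a}_i(0) - \hat{a}_i)/x$ (the operation \emph{``subtract the constant term and divide by $x$''} preserves 1-summability, as seen directly at the Borel level through the identity $\widetilde{\cal B}(\hat{f}/x) = \widetilde{\cal B}(\hat{f})$ for $\hat{f}(0) = 0$), and then to its primitive $\hat{G}_i(x) := \int_0^x \hat{g}_i(s)\, \mathrm{d}s$, whose 1-sum is obtained by integrating the 1-sum of $\hat{g}_i$ along a path in $\cal S_{\theta, \pi}$ starting at $0$. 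Since $\hat{G}_i(0) = 0$, Proposition~\ref{prop: compositon summable} applied to the entire function $u \mapsto e^u$ yields the 1-summability of $\hat{q}_i = \exp(\hat{G}_i)$; since $\hat{q}_i(0) = 1$, the reciprocal $1/\hat{q}_i$ is 1-summable too, and one further application of Proposition~\ref{prop: compositon summable} with the inner substitutions $y_j \mapsto y_j / \hat{q}_j(x)$ (each vanishing at the origin) gives the 1-summability of $\hat{F}_i = \hat{q}_i \cdot \hat{H}_i(\cdot, y_1/\hat{q}_1, y_2/\hat{q}_2)$. The order $\geq 2$ of $\hat{F}_i$ in $\mathbf{y}$ is inherited directly from $\hat{H}_i$, since each monomial $y_1^{k_1} y_2^{k_2}$ of $\hat{H}_i$ with $k_1 + k_2 \geq 2$ is sent to $y_1^{k_1} y_2^{k_2} / (\hat{q}_1^{k_1} \hat{q}_2^{k_2})$, of the same homogeneous degree in $\mathbf{y}$.

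The only conceivable obstacle would have been the appearance of an irregular singularity in the ODE for $\hat{q}_i$, which would have required the more delicate machinery of Proposition~\ref{prop: solution borel sommable precise}; this obstacle vanishes precisely because the constant $a_i$ in the target normal form is a free parameter, and the unique choice $a_i = \hat{a}_i(0)$ that kills the $\tx O(1)$ term of the coefficient of $y_i$ simultaneously turns the ODE into a regular one admitting the explicit exponential solution above.
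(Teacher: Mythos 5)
Your proposal is correct and follows essentially the same route as the paper: matching the linear part forces $a_i=\hat a_i(0)$ and the regular ODE $x\hat q_i'=(a_i-\hat a_i(x))\hat q_i$, solved by the exponential of a primitive, with 1-summability of $\hat q_i$ and $\hat F_i$ obtained from the closure of 1-summable series under the algebra operations and composition (the paper performs the identical computation in the gauge-transformation form $x^{2}\hat{\mathbf{Q}}'=\hat{\mathbf{A}}\hat{\mathbf{Q}}-\hat{\mathbf{Q}}\mathbf{A}$, so your $\hat q_i$ is simply the reciprocal of the paper's, which changes nothing). One small slip in an aside: the correct Borel-level identity for a series $\hat f$ with $\hat f(0)=0$ is $\cal B\big(\hat f/x\big)=\widetilde{\cal B}\big(\hat f\big)$, not $\widetilde{\cal B}\big(\hat f/x\big)=\widetilde{\cal B}\big(\hat f\big)$; the intended conclusion, that dividing by $x$ preserves 1-summability, is nonetheless true.
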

\begin{proof}
We can associate to $\hat{Y}_{2}$ the following irregular differential
system: 
\[
x^{2}\ddd{\mathbf{z}}x\left(x\right)=\mathbf{\hat{A}}\left(x\right)\mathbf{z}\left(x\right)+\mathbf{\hat{H}}\left(x,\mathbf{z}\left(x\right)\right)\qquad,
\]
and we are looking for a change of coordinates of the form ${\displaystyle \mathbf{z}\left(x\right)=\hat{\mathbf{Q}}\left(x\right)\mathbf{y}\left(x\right)}$,
where 
\[
{\displaystyle \mathbf{\hat{Q}}\left(x\right)=\left(\begin{array}{cc}
\hat{q}_{1}\left(x\right) & 0\\
0 & \hat{q}_{2}\left(x\right)
\end{array}\right)\in\mbox{GL}_{2}\left(\form x\right)}
\]
 with $\hat{q}_{1}\left(0\right)=\hat{q}_{2}\left(0\right)=1$, such
that the new system is 
\[
x^{2}\ddd{\mathbf{y}}x\left(x\right)=\mathbf{A}\left(x\right)\mathbf{y}\left(x\right)+\mathbf{\hat{F}}\left(x,\mathbf{y}\left(x\right)\right)\qquad,
\]
with 
\[
\mathbf{A}\left(x\right)=\left(\begin{array}{cc}
-\lambda+a_{1}x & 0\\
0 & \lambda+a_{2}x
\end{array}\right),\quad\hat{\mathbf{F}}\left(x,\mathbf{y}\left(x\right)\right)=\left(\begin{array}{c}
\hat{F}_{1}\left(x,\mathbf{y}\left(x\right)\right)\\
\hat{F}_{2}\left(x,\mathbf{y}\left(x\right)\right)
\end{array}\right)\quad,
\]
and $\left(a_{1},a_{2}\right)=\left(\hat{a}_{1}\left(0\right),\hat{a}_{2}\left(0\right)\right)$. 

We have
\begin{eqnarray*}
x^{2}\ddd{\mathbf{y}}x\left(x\right) & = & \underbrace{\hat{\mathbf{Q}}\left(x\right){}^{-1}\left(\hat{\mathbf{A}}\left(x\right)\hat{\mathbf{Q}}\left(x\right)-x^{2}\ddd{\hat{\mathbf{Q}}}x\left(x\right)\right)}\mathbf{y}(x)+\hat{\mathbf{Q}}(x)^{-1}\hat{\mathbf{H}}\left(x,\hat{\mathbf{Q}}(x)\mathbf{y}(x)\right)\\
 &  & \qquad\,\qquad\qquad\quad=\\
 &  & \qquad\quad\left(\begin{array}{cc}
-\lambda+a_{1}x & 0\\
0 & \lambda+a_{2}x
\end{array}\right)
\end{eqnarray*}
so that

\[
x^{2}\ddd{\hat{\mathbf{Q}}}x(x)=\hat{\mathbf{A}}(x)\hat{\mathbf{Q}}(x)-\hat{\mathbf{Q}}(x)\left(\begin{array}{cc}
-\lambda+a_{1}x & 0\\
0 & \lambda+a_{2}x
\end{array}\right)\qquad
\]
and we obtain:

\begin{eqnarray*}
 & \begin{cases}
{\displaystyle x^{2}\ddd{\hat{q}_{1}}x(x)=x\hat{q}_{1}(x)\left(\hat{a}_{1}\left(x\right)-a_{1}\right)}\\
{\displaystyle x^{2}\ddd{\hat{q}_{2}}x(x)=x\hat{q}_{2}(x)\left(\hat{a}_{2}\left(x\right)-a_{2}\right)}
\end{cases}\\
\Longleftrightarrow & \begin{cases}
{\displaystyle \ddd{\hat{q}_{1}}x(x)=\hat{q}_{1}(x)\left(\frac{\hat{a}_{1}\left(x\right)-a_{1}}{x}\right)}\\
{\displaystyle \ddd{\hat{q}_{2}}x(x)=\hat{q}_{2}(x)\left(\frac{\hat{a}_{2}\left(x\right)-a_{2}}{x}\right)}
\end{cases}\\
\Longleftrightarrow & \begin{cases}
{\displaystyle \hat{q}_{1}(x)=\mbox{exp}\left(\int_{0}^{x}\frac{\hat{a}_{1}\left(s\right)-a_{1}}{s}\mbox{d}s\right)}\\
{\displaystyle \hat{q}_{2}(x)=\mbox{exp}\left(\int_{0}^{x}\frac{\hat{a}_{2}\left(s\right)-a_{2}}{s}\mbox{d}s\right)}
\end{cases} & ,\,\mbox{if we set }\hat{q}_{1}\left(0\right)=\hat{q}_{2}\left(0\right)=1\,\,,
\end{eqnarray*}
and the expression ${\displaystyle \int_{0}^{x}\frac{\hat{a}_{j}\left(s\right)-a_{j}}{s}\mbox{d}s}$,
for $j=1,2$, means the only anti-derivative of ${\displaystyle \frac{\hat{a}_{j}\left(s\right)-a_{j}}{s}}$
without constant term. Since $\hat{a}_{1}$ and $\hat{a}_{2}$ are
1-summable in every direction $\theta\neq\arg\left(\pm\lambda\right)$,
the same goes for $\hat{q}_{1}$ and $\hat{q}_{2}$, and then for
$\hat{F}_{1}$ and $\hat{F}_{2}$ by Proposition \ref{prop: compositon summable}.
\end{proof}

\subsubsection{\label{sub:Proof-of-Proposition}Proof of Proposition \ref{prop: diag prep}.}

~

We are now able to prove Proposition \ref{prop: diag prep}.
\begin{proof}[Proof of Proposition \ref{prop: diag prep}\emph{.}]

We have to use successively Lemma \ref{lem: preparation sur hypersurface invariante}
(to $Y_{0}:=Y_{\mid\acc{x=0}}$), followed by Proposition \ref{prop: diag prep},
then Proposition \ref{prop: preparation} and finally Proposition
\ref{prop: ordre N avec eq homologique}, using at each step Corollary
\ref{cor: summability push-forward} to obtain the 1-summability.
\end{proof}

\subsection{1-summable straightening of two invariant hypersurfaces}

~

For any $\theta\in\ww R$, we recall that we denote by $F_{\theta}$
the 1-sum of a 1-summable series $\hat{F}$ in the direction $\theta$.

Let $\theta\in\ww R$ with $\theta\neq\arg\left(\pm\lambda\right)$
and consider a formal vector field $\hat{Y}$, 1-summable in the direction
$\theta$ of 1-sum $Y_{\theta}$, of the form 
\begin{equation}
\hat{Y}=x^{2}\pp x+\left(\lambda_{1}\left(x\right)y_{1}+\hat{F}_{1}\left(x,\mathbf{y}\right)\right)\pp{y_{1}}+\left(\lambda_{2}\left(x\right)y_{2}+\hat{F}_{2}\left(x,\mathbf{y}\right)\right)\pp{y_{2}}\qquad,\label{eq: Y semi prep}
\end{equation}
where:
\begin{itemize}
\item $\lambda_{1}\left(x\right)=-\lambda+a_{1}x$
\item $\lambda_{2}\left(x\right)=\lambda+a_{2}x$
\item $\lambda\neq0$
\item $a_{1},a_{2}\in\ww C$
\item for $j=1,2$, 
\[
{\displaystyle \hat{F}_{j}\left(x,\mathbf{y}\right)=\sum_{\substack{\mathbf{n}\in\ww N^{2}\\
\abs{\mathbf{n}}\geq2
}
}\hat{F}_{\mathbf{n}}^{\left(j\right)}\left(x\right)\mathbf{y^{n}}}\in\form{x,\mathbf{y}}
\]
 is 1-summable in the direction $\theta$ of 1-sum 
\[
{\displaystyle F_{j,\theta}\left(x,\mathbf{y}\right)=\sum_{\substack{\mathbf{n}\in\ww N^{2}\\
\abs{\mathbf{n}}\geq2
}
}F_{j,\mathbf{n},\theta}\left(x\right)\mathbf{y^{n}}}\,\,.
\]
In particular, there exists $A,B,\mu>0$ such that for all $\mathbf{n}\in\ww N^{2}$,
$\abs n\geq2$, for $j=1,2$: 
\[
\forall t\in\Delta_{\theta,\epsilon,\rho},\,\abs{\widetilde{\cal B}\left(\hat{F}_{j,\mathbf{n}}\right)\left(t\right)}\leq A.B^{\abs{\mathbf{n}}}\frac{\exp\left(\mu\abs t\right)}{1+\mu^{2}\abs t^{2}}\qquad,
\]
for some $\rho>0$ and $\epsilon>0$ such that $\left(\ww R.\lambda\right)\cap\cal A_{\theta,\epsilon}=\emptyset$
(see Definition \ref{def: 1_summability} and Remark \ref{rem: norme bis borel}
for the notations). Notice that $F_{j,\theta}$ is analytic and bounded
in some sectorial neighborhood $\cal S\in{\cal S}_{\theta,\pi}$ of
the origin. For technical reasons, we use in this subsection the alternative
definition of the Borel transform $\widetilde{\cal B}$, with its
associate norm $\norm{\cdot}_{\mu}^{\tx{bis}}$ (see Remarks \ref{rem: deifinition bis transformee de Borel}
and \ref{rem: norme bis borel} and Proposition \ref{prop: norme d'algebre}\end{itemize}
\begin{prop}
\label{prop: preparation}Under the assumptions above, there exists
a pair of formal power series $\left(\hat{\phi}_{1},\hat{\phi}_{2}\right)\in\left(\form{x,\mathbf{y}}\right)^{2}$
of order at least two with respect to $\mathbf{y}$ which are 1-summable
in every direction $\theta\neq\arg\left(\pm\lambda\right)$, such
that the formal fibered diffeomorphism 
\[
\hat{\Phi}\left(x,\mathbf{y}\right)=\left(x,y_{1}+\hat{\phi}_{1}\left(x,\mathbf{y}\right),y_{2}+\hat{\phi}_{2}\left(x,\mathbf{y}\right)\right)\qquad,
\]
(which is tangent to the identity and 1-summable in every direction
$\theta\neq\arg\left(\pm\lambda\right)$) conjugates $\hat{Y}$ in
(\ref{eq: Y semi prep}) to 
\[
\hat{Y}_{\tx{prep}}=x^{2}\pp x+\left(\left(-\lambda+a_{1}x\right)+y_{2}\hat{R}_{1}\left(x,\mathbf{y}\right)\right)y_{1}\pp{y_{1}}+\left(\left(\lambda+a_{2}x\right)+y_{1}\hat{R}_{2}\left(x,\mathbf{y}\right)\right)y_{2}\pp{y_{2}}\qquad,
\]
where ${\displaystyle \hat{R}_{1},\hat{R}_{2}\in\form{x,\mathbf{y}}}$
are 1-summable in every direction $\theta\neq\arg\left(\pm\lambda\right)$.\end{prop}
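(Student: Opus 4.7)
The plan is to determine $\hat\phi_1,\hat\phi_2$ together with $\hat R_1,\hat R_2$ by induction on the total degree $N=\abs{\mathbf{n}}$ in $\mathbf{y}$, solving the conjugation equation $\tx D\hat\Phi\cdot\hat Y = \hat Y_{\tx{prep}}\circ\hat\Phi$ coefficient by coefficient, and then to promote the coefficient-wise 1-summability supplied by Proposition~\ref{prop: solution borel sommable precise}(\ref{enu:Assume--and}) to 1-summability of $\hat\phi_j$ using Corollary~\ref{cor: faible et forte sommabilite}. Expanding the $y_1$-component gives
\begin{eqnarray*}
\cal D_1 \hat\phi_1 & = & -\hat F_1 - \pp{y_1}\hat\phi_1\cdot\hat F_1 - \pp{y_2}\hat\phi_1\cdot\hat F_2 + (y_1+\hat\phi_1)(y_2+\hat\phi_2)\,\hat R_1\circ\hat\Phi,
\end{eqnarray*}
where $\cal D_1 := x^2\pp x + \lambda_1(x)y_1\pp{y_1} + \lambda_2(x)y_2\pp{y_2} - \lambda_1(x)$, and an analogous equation for $\hat\phi_2$. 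Projecting onto a \emph{mixed} monomial $\mathbf{y^n}$ (with $n_1,n_2\geq 1$) determines one coefficient of $\hat R_1\circ\hat\Phi$, and hence of $\hat R_1$ by triangularity, provided one declares $\phi_{1,\mathbf{n}}=0$ at these indices; projecting onto a \emph{pure} monomial ($n_1=0$ or $n_2=0$, $\abs{\mathbf{n}}\geq 2$) yields the scalar irregular ODE
\begin{eqnarray*}
x^2\phi'_{1,\mathbf{n}}(x) + k_{1,\mathbf{n}}(x)\phi_{1,\mathbf{n}}(x) & = & G_{1,\mathbf{n}}(x),
\end{eqnarray*}
with $k_{1,\mathbf{n}}(x) := n_1\lambda_1(x)+n_2\lambda_2(x)-\lambda_1(x) = k_{1,\mathbf{n}}(0)(1+\alpha_{1,\mathbf{n}}x)$ and $G_{1,\mathbf{n}}$ a polynomial expression in (Taylor coefficients of) $\hat F_1,\hat F_2,\hat R_1$ and in the $\phi_{j',\mathbf{n}'}$ with $\abs{\mathbf{n}'}<\abs{\mathbf{n}}$.

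The crucial observation is that on pure indices $k_{j,\mathbf{n}}(0)$ is always a nonzero integer multiple of $\lambda$: namely $k_{1,(0,n_2)}(0)=(n_2+1)\lambda$, $k_{1,(n_1,0)}(0)=(1-n_1)\lambda$, $k_{2,(0,n_2)}(0)=(n_2-1)\lambda$ and $k_{2,(n_1,0)}(0)=-(n_1+1)\lambda$, so that $-k_{j,\mathbf{n}}(0)$ always lies on the ray $\ww R_{>0}\cdot(\pm\lambda)$. Under the standing hypothesis $\theta\neq\arg(\pm\lambda)$, the distance $d_{k_{j,\mathbf{n}}} = \tx{dist}(-k_{j,\mathbf{n}}(0),\Delta_{\theta,\delta,\rho})$ grows linearly in $\abs{\mathbf{n}}$, whereas $\abs{\alpha_{j,\mathbf{n}}k_{j,\mathbf{n}}(0)} = \abs{n_1a_1+n_2a_2-a_j}$ is itself only $O(\abs{\mathbf{n}})$. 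Choosing $\beta$ large enough so that $\beta d_{k_{j,\mathbf{n}}} > 2C\abs{\alpha_{j,\mathbf{n}}k_{j,\mathbf{n}}(0)}$ holds uniformly in $\mathbf{n}$, Proposition~\ref{prop: solution borel sommable precise}(\ref{enu:Assume--and}) produces a unique 1-summable solution $\phi_{j,\mathbf{n}}$ in the direction $\theta$, with uniform bound
\begin{eqnarray*}
\norm{\phi_{j,\mathbf{n}}}_{\beta}^{\tx{bis}} & \leq & \frac{2}{d_{k_{j,\mathbf{n}}}}\norm{G_{j,\mathbf{n}}}_{\beta}^{\tx{bis}}.
\end{eqnarray*}

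To promote this to 1-summability of $\hat\phi_j$ in $\mathbf{y}$, we invoke Corollary~\ref{cor: faible et forte sommabilite}: it suffices that the majorant series $\sum_{\mathbf{n}}\norm{\phi_{j,\mathbf{n}}}_\beta^{\tx{bis}}\mathbf{y^n}$ converge near the origin of $\ww C^2$. The sub-multiplicativity of $\norm{\cdot}_\beta^{\tx{bis}}$ from Proposition~\ref{prop: norme d'algebre} controls the products appearing in $G_{j,\mathbf{n}}$ by finite combinations of norms of lower-order terms, while the $O(\abs{\mathbf{n}}^{-1})$ prefactor in the above display supplies the gain needed to close a classical majorant-series induction, producing geometric bounds $\norm{\phi_{j,\mathbf{n}}}_\beta^{\tx{bis}} \leq KL^{\abs{\mathbf{n}}}$ and similarly for the coefficients of $\hat R_j$.

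The main obstacle — and where the bulk of the technical work lies — is the simultaneous book-keeping of the four unknowns $\hat\phi_1,\hat\phi_2,\hat R_1,\hat R_2$ along the induction: the mixed-monomial projection at degree $N$ determines the degree-$(N-2)$ coefficients of $\hat R_j$ in terms of $\phi_{j,\mathbf{n}'}$ with $\abs{\mathbf{n}'}\leq N-1$ and $\hat R_j$ of lower degree, whereas the pure-monomial projection at the same degree $N$ feeds these newly-determined $\hat R_j$-coefficients back into $G_{j,\mathbf{n}}$ to produce $\phi_{j,\mathbf{n}}$ with $\abs{\mathbf{n}}=N$. Showing that the $\norm\cdot_\beta^{\tx{bis}}$-estimates propagate through this coupled recursion uniformly in $\mathbf{n}$ — and that the unit $(y_1+\hat\phi_1)(y_2+\hat\phi_2)/(y_1y_2) = 1 + O(\norm{(x,\mathbf{y})})$ remains invertible order by order with controlled norms when inverting to recover $\hat R_j$ from $\hat R_j\circ\hat\Phi$ — is the heart of the argument.
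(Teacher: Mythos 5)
Your strategy coincides with the paper's: the same mixed/pure dichotomy (kill $\hat{\phi}_{j,\mathbf{n}}$ on mixed indices and the new nonlinear term on pure indices), the same coefficient-wise reduction to the irregular ODEs of Proposition \ref{prop: solution borel sommable precise}(\ref{enu:Assume--and}) with $k$ a nonzero integer multiple of $\lambda$, a uniform $\norm{\cdot}_{\beta}^{\tx{bis}}$ bound, a majorant-series closure, and Corollary \ref{cor: faible et forte sommabilite} to pass from coefficient-wise to genuine 1-summability. The one structural difference is the direction of the conjugacy, and it matters for where the work lies. You solve $\tx D\hat{\Phi}\cdot\hat{Y}=\hat{Y}_{\tx{prep}}\circ\hat{\Phi}$, so the unknown $\hat{R}_{j}$ appears composed with the unknown $\hat{\Phi}$, and you correctly identify the resulting inversion and bookkeeping as ``the heart of the argument.'' The paper instead constructs $\hat{\Psi}=\hat{\Phi}^{-1}$ from the equation $\tx D\hat{\Psi}\cdot\hat{Y}_{\tx{prep}}=\hat{Y}\circ\hat{\Psi}$: there the composition falls on the \emph{known} data $\hat{F}_{j}\left(x,y_{1}+\hat{\psi}_{1},y_{2}+\hat{\psi}_{2}\right)$, while the unknown $\hat{T}_{j}=y_{1}y_{2}\hat{R}_{j}$ enters linearly and uncomposed, so the coupled recursion you describe collapses to reading off $\hat{T}_{j,\mathbf{n}}=\zeta_{j,\mathbf{n}}$ on mixed indices; 1-summability of $\hat{\Phi}=\hat{\Psi}^{-1}$ then comes for free from Proposition \ref{prop: compositon summable}. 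Your route is workable (the triangularity you invoke does let you recover $\hat{R}_{j}$ from $\hat{R}_{j}\circ\hat{\Phi}$ order by order), but it buys you nothing and costs the extra inversion estimates. One smaller remark: the $O\left(\abs{\mathbf{n}}^{-1}\right)$ gain from $d_{k}\sim\abs{\mathbf{n}}\abs{\lambda}$ is not actually needed to close the majorant induction; a uniform constant $M$ suffices, because the right-hand sides have order at least two in $\mathbf{y}$ and the standard majorant lemma (the paper cites \cite{rebelo_reis}, with coefficients in the Banach algebra $\left(\mathfrak{B}_{\beta}^{\tx{bis}},\norm{\cdot}_{\beta}^{\tx{bis}}\right)$ of Proposition \ref{prop: norme d'algebre}) already yields convergence under $\sigma\overline{f}\prec\overline{g}\left(y_{1}+\overline{f_{1}},y_{2}+\overline{f_{2}}\right)$.
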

\begin{proof}
We follow and adapt the proof of analytic straightening of invariant
curves for resonant saddles in two dimensions in \cite{MatteiMoussu}.

We are looking for 
\[
\hat{\Psi}\left(x,\mathbf{y}\right)=\left(x,y_{1}+\hat{\psi}_{1}\left(x,\mathbf{y}\right),y_{2}+\hat{\psi}_{2}\left(x,\mathbf{y}\right)\right)\quad,
\]
with $\hat{\psi}_{1},\hat{\psi}_{2}$ of order at least 2, and $\hat{R}_{1},\hat{R}_{2}$
as above such that: 
\[
\hat{\Psi}_{*}\left(\hat{Y}_{\tx{prep}}\right)=\hat{Y}\qquad,
\]
\emph{i.e. 
\begin{eqnarray}
\mbox{D}\hat{\Psi}\cdot\hat{Y}_{\tx{prep}} & = & \hat{Y}\circ\hat{\Psi}\qquad.\label{eq: conjugaison forme prep}
\end{eqnarray}
}Then, we will set $\Phi:=\Psi^{-1}$. Let us write 
\begin{eqnarray*}
\hat{T}_{1} & := & y_{1}y_{2}\hat{R}_{1}=\sum_{\abs{\mathbf{n}}\geq2}\hat{T}_{1,\mathbf{n}}\left(x\right)\mathbf{y^{n}}\\
\hat{T}_{2} & := & y_{1}y_{2}\hat{R}_{2}=\sum_{\abs{\mathbf{n}}\geq2}\hat{T}_{2,\mathbf{n}}\left(x\right)\mathbf{y^{n}}\qquad\\
\hat{\psi}_{1} & = & \sum_{\abs{\mathbf{n}}\geq2}\hat{\psi}_{1,\mathbf{n}}\left(x\right)\mathbf{y^{n}}\qquad\\
\hat{\psi}_{2} & = & \sum_{\abs{\mathbf{n}}\geq2}\hat{\psi}_{2,\mathbf{n}}\left(x\right)\mathbf{y^{n}}\qquad,
\end{eqnarray*}
so that equation $\left(\mbox{\ref{eq: conjugaison forme prep}}\right)$
becomes:
\begin{eqnarray*}
 &  & x^{2}\ppp{\hat{\psi}_{1}}{x^{2}}+\left(1+\ppp{\hat{\psi}_{1}}{y_{1}}\right)\left(\lambda_{1}\left(x\right)y_{1}+\hat{T}_{1}\right)+\ppp{\hat{\psi}_{1}}{y_{2}}\left(\lambda_{2}\left(x\right)y_{2}+\hat{T}_{2}\right)\\
 &  & =\lambda_{1}\left(x\right)\left(y_{1}+\hat{\psi}_{1}\right)+\hat{F}_{1}\left(x,y_{1}+\hat{\psi}_{1},y_{2}+\hat{\psi}_{2}\right)
\end{eqnarray*}
and 
\begin{eqnarray*}
 &  & x^{2}\ppp{\hat{\psi}_{2}}{x^{2}}+\ppp{\hat{\psi}_{2}}{y_{1}}\left(\lambda_{1}\left(x\right)y_{1}+\hat{T}_{1}\right)+\left(1+\ppp{\hat{\psi}_{2}}{y_{2}}\right)\left(\lambda_{2}\left(x\right)y_{2}+\hat{T}_{2}\right)\\
 &  & =\lambda_{2}\left(x\right)\left(y_{2}+\hat{\psi}_{2}\right)+\hat{F}_{2}\left(x,y_{1}+\hat{\psi}_{1},y_{2}+\hat{\psi}_{2}\right)\,\,\,.
\end{eqnarray*}
These equations can be written as:
\begin{equation}
\begin{cases}
{\displaystyle \underset{\left|\mathbf{n}\right|\geq2}{\sum}\left(\delta_{1,\mathbf{n}}(x)\hat{\psi}_{1,\mathbf{n}}\left(x\right)+x^{2}\ddd{\hat{\psi}_{1,\mathbf{n}}}x(x)+\hat{T}_{1,\mathbf{n}}\left(x\right)\right)\mathbf{y}^{\mathbf{n}}}\\
{\displaystyle =\hat{F}_{1}\left(x,y_{1}+\hat{\psi}_{1}\left(x,\mathbf{y}\right),y_{2}+\hat{\psi}_{2}\left(x,\mathbf{y}\right)\right)-\hat{T}_{1}(x)\frac{\partial\hat{\psi}_{1}}{\partial y_{1}}(x,\mathbf{y})-\hat{T}_{2}(x)\frac{\partial\hat{\psi}_{1}}{\partial y_{2}}(x,\mathbf{y})}\\
{\displaystyle =:\underset{\left|\mathbf{n}\right|\geq2}{\sum}\zeta_{1,\mathbf{n}}(x)\mathbf{y}^{\mathbf{n}}}\\
{\displaystyle \underset{\left|\mathbf{n}\right|\geq2}{\sum}\left(\delta_{2,\mathbf{n}}(x)\hat{\psi}_{2,\mathbf{n}}\left(x\right)+x^{2}\ddd{\hat{\psi}_{2,\mathbf{n}}}x(x)+\hat{T}_{2,\mathbf{n}}\left(x\right)\right)\mathbf{y}^{\mathbf{n}}}\\
{\displaystyle =\hat{F}_{2}\left(x,y_{1}+\hat{\psi}_{1}\left(x,\mathbf{y}\right),y_{2}+\hat{\psi}_{2}\left(x,\mathbf{y}\right)\right)-\hat{T}_{1}(x)\frac{\partial\hat{\psi}_{2}}{\partial y_{1}}(x,\mathbf{y})-\hat{T}_{2}(x)\frac{\partial\hat{\psi}_{2}}{\partial y_{2}}(x,\mathbf{y})}\\
{\displaystyle =:\underset{\left|\mathbf{n}\right|\geq2}{\sum}\zeta_{2,\mathbf{n}}(x)\mathbf{y}^{\mathbf{n}}}
\end{cases}\label{eq: =0000E9quation conjugaison serie}
\end{equation}
where ${\displaystyle \delta_{j,\mathbf{n}}(x)=\lambda_{1}(x)n_{1}+\lambda_{2}(x)n_{2}-\lambda_{j}(x)}$,
$j=1,2$. We are looking for $\hat{T}_{1},\hat{T}_{2}$ such that
\[
\begin{cases}
\hat{T}_{1,\mathbf{n}}=0 & \mbox{, if }n_{1}=0\mbox{ or }n_{2}=0\\
\hat{T}_{2,\mathbf{n}}=0 & \mbox{, if }n_{1}=0\mbox{ or }n_{2}=0
\end{cases}\mbox{ }\qquad.
\]
Notice that $\zeta_{j,\mathbf{n}}$, for $j=1,2$ and $\abs{\mathbf{n}}\geq2$,
depends only on the $\hat{\psi}_{i,\mathbf{k}}\mbox{'s}$ and the
$\hat{F}_{i,\mathbf{k}}\mbox{'s}$, for $i=1,2$, $\abs{\mathbf{k}}<\mathbf{n}$.
We can then determine the coefficients $\hat{\psi}_{j,\mathbf{n}}$
and $\hat{T}_{j,\mathbf{n}}$, $j=1,2$, $\abs{\mathbf{n}}\geq2$,
by induction on $\abs{\mathbf{n}}$, setting 
\[
\begin{cases}
\hat{T}_{1,\mathbf{n}}=0 & \mbox{, if }n_{1}=0\mbox{ or }n_{2}=0\\
\hat{T}_{2,\mathbf{n}}=0 & \mbox{, if }n_{1}=0\mbox{ or }n_{2}=0\\
\hat{\psi}_{1,\mathbf{n}}=0 & ,\mbox{ if }n_{1}\geq1\mbox{ and }n_{2}\geq1\\
\hat{\psi}_{2,\mathbf{n}}=0 & ,\mbox{ if }n_{1}\geq1\mbox{ and }n_{2}\geq1
\end{cases}\qquad,
\]
and solving for each $\mathbf{n}=\left(n_{1},n_{2}\right)\in\ww N^{2}$
with $\abs{\mathbf{n}}\geq2$, the equations
\[
\begin{cases}
{\displaystyle \delta_{1,\mathbf{n}}(x)\hat{\psi}_{1,\mathbf{n}}\left(x\right)+x^{2}\ddd{\hat{\psi}_{1,\mathbf{n}}}x(x)=\zeta_{1,\mathbf{n}}\left(x\right)} & \mbox{, if }n_{1}=0\mbox{ or }n_{2}=0\\
{\displaystyle \delta_{2,\mathbf{n}}(x)\hat{\psi}_{2,\mathbf{n}}\left(x\right)+x^{2}\ddd{\hat{\psi}_{2,\mathbf{n}}}x(x)=\zeta_{2,\mathbf{n}}\left(x\right)} & \mbox{, if }n_{1}=0\mbox{ or }n_{2}=0
\end{cases}\,\,\,\,\,.
\]

\begin{lem}
There exists $\beta>4\pi,M>0$ such that for all $\mathbf{n}\in\ww N^{2}$
with $\abs{\mathbf{n}}\geq2$, and for $j=1,2$, $\norm{\zeta_{j,\mathbf{n}}}_{\beta}^{\tx{bis}}<+\infty$
and: 
\[
\norm{\hat{\psi}_{j,\mathbf{n}}}_{\beta}^{\tx{bis}}\leq M.\norm{\zeta_{j,\mathbf{n}}}_{\beta}^{\tx{bis}}\qquad,
\]
where the norm corresponds to the domain $\triangle_{\theta,\epsilon,\rho}$
(see Definition \ref{def: 1_summability}).\end{lem}
\begin{proof}
For $\mathbf{n}=\left(n_{1},n_{2}\right)\in\ww N^{2}$ with $n_{1}+n_{2}\geq2$
we want to solve: 
\[
\begin{cases}
{\displaystyle \delta_{1,\mathbf{n}}(x)=\lambda_{1}(x)\left(n_{1}-1\right)+\lambda_{2}(x)n_{2}=} & \begin{cases}
\lambda\left(n_{2}+1\right)+x\left(-a_{1}+a_{2}n_{2}\right) & \mbox{ , if }n_{1}=0\\
-\lambda\left(n_{1}-1\right)+a_{1}x\left(n_{1}-1\right) & \mbox{ , if }n_{2}=0
\end{cases}\\
{\displaystyle \delta_{2,\mathbf{n}}(x)=\lambda_{2}(x)\left(n_{2}-1\right)+\lambda_{1}(x)n_{1}=} & \begin{cases}
\lambda\left(n_{2}-1\right)+a_{2}x\left(n_{2}-1\right) & \mbox{ , if }n_{1}=0\\
-\lambda\left(n_{1}+1\right)+x\left(-a_{2}+a_{1}n_{1}\right) & \mbox{ , if }n_{2}=0\,\,.
\end{cases}
\end{cases}
\]
We will only deal with $\delta_{1,\mathbf{n}}(x)$ (the case of $\delta_{2,\mathbf{n}}(x)$
being similar). Notice that we are exactly in the situation of Proposition
\ref{prop: solution borel sommable precise}. In particular, using
notation in this definition, we respectively have:

\begin{eqnarray*}
 &  & \begin{cases}
{\displaystyle k=\lambda\left(n_{2}+1\right),\,\alpha=\frac{\left(-a_{1}+a_{2}n_{2}\right)}{\lambda\left(n_{2}+1\right)},}\\
{\displaystyle d_{k}=\min\acc{\abs{\lambda\left(n_{2}+1\right)}-\rho,\abs{\lambda\left(n_{2}+1\right)}\abs{\sin\left(\theta+\epsilon\right)},\abs{\lambda\left(n_{2}+1\right)}\abs{\sin\left(\theta-\epsilon\right)}}}
\end{cases}\\
 &  & (\mbox{when }n_{1}=0)
\end{eqnarray*}
and

\begin{eqnarray*}
 &  & \begin{cases}
{\displaystyle k=-\lambda\left(n_{1}+1\right),\,\alpha=\frac{\left(-a_{2}+a_{1}n_{1}\right)}{-\lambda\left(n_{1}+1\right)},}\\
{\displaystyle d_{k}=\min\acc{\abs{\lambda\left(n_{1}+1\right)}-\rho,\abs{\lambda\left(n_{1}+1\right)}\abs{\sin\left(\theta+\epsilon\right)},\abs{\lambda\left(n_{1}+1\right)}\abs{\sin\left(\theta-\epsilon\right)}}}
\end{cases}\\
 &  & (\mbox{when }n_{2}=0).
\end{eqnarray*}
We can chose the domain $\Delta_{\theta,\epsilon,\rho}$ corresponding
to the 1-summability of $\hat{F}_{1}$ and $\hat{F}_{2}$ with $0<\rho<\abs{\lambda}$,
so that $d_{k}>0$, since $\epsilon>0$ is such that $\left(\ww R.\lambda\right)\cap\cal A_{\theta,\epsilon}=\emptyset$.
Finally, we chose 
\[
\beta>\frac{C\left(\abs{a_{1}}+\abs{a_{2}}\right)}{\min\acc{\abs{\lambda}-\rho,\abs{\lambda\sin\left(\theta+\epsilon\right)},\abs{\lambda\sin\left(\theta-\epsilon\right)}}}>0\,\,,
\]
(with $C=\frac{2\exp\left(2\right)}{5}+5)$, so that $\norm{\hat{F}_{1}}_{\beta}^{\tx{bis}}<+\infty$.
This choice of $\beta$ implies $\beta d_{k}>C\abs{\alpha k}$ as
needed in Proposition \ref{prop: solution borel sommable precise},
in both considered situations, namely $n_{1}=0$ and $n_{2}=0$ respectively.
Since for $j=1,2$ and $\abs{\mathbf{n}}\geq2$, $\zeta_{j,\mathbf{n}}$
depends only on the $\hat{\psi}_{i,\mathbf{k}}$'s and the $\hat{F}_{i,\mathbf{k}}$'s,
for $i=1,2$, $\abs{\mathbf{k}}<\mathbf{n}$, we deduce by induction
that 
\[
\begin{cases}
{\displaystyle \norm{\zeta_{1,\mathbf{n}}}_{\beta}^{\tx{bis}}<+\infty} & \mbox{ , if }n_{1}=0\mbox{ or }n_{2}=0\\
{\displaystyle \norm{\zeta_{2,\mathbf{n}}}_{\beta}^{\tx{bis}}<+\infty} & \mbox{ , if }n_{1}=0\mbox{ or }n_{2}=0
\end{cases}
\]
and then, thanks to Proposition \ref{prop: solution borel sommable precise}:
\[
{\displaystyle \norm{\hat{\psi}_{j,\mathbf{n}}}_{\beta}^{\tx{bis}}\leq\left(\frac{\beta}{\beta\left(\abs{\lambda}-\rho\right)-C\left(\abs{a_{1}}+\abs{a_{2}}\right)}\right).\norm{\zeta_{j,\mathbf{n}}}_{\beta}^{\tx{bis}}}\,\,,\mbox{ for \ensuremath{j=1,2}.}
\]
The lemma is proved, with 
\[
M=\left(\frac{\beta}{\beta\min\acc{\abs{\lambda}-\rho,\abs{\lambda\sin\left(\theta+\epsilon\right)},\abs{\lambda\sin\left(\theta-\epsilon\right)}}-C\left(\abs{a_{1}}+\abs{a_{2}}\right)}\right)\,\,.
\]

\end{proof}
In order to finish the proof of Proposition \ref{prop: preparation},
we have to prove that for $j=1,2$, the series ${\displaystyle \overline{\hat{\psi}_{j}}:=\sum_{\mathbf{n}\in\ww N^{2}}\norm{\hat{\psi}_{j,\mathbf{n}}}_{\beta}^{\tx{bis}}\mathbf{y}^{\mathbf{n}}}$
is convergent in a poly-disc $\mathbf{D\left(0,r\right)}$, with ${\displaystyle \mathbf{r}=\left(r_{1},r_{2}\right)\in\left(\ww R_{>0}\right)^{2}}$
(then, Corollary \ref{cor: faible et forte sommabilite} gives 1-summability).
We will prove this by using a method of dominant series. Let us introduce
some useful notations. If $\left(\mathfrak{B},\norm{\cdot}\right)$
is a Banach algebra, for any formal power series ${\displaystyle f\left(\mathbf{y}\right)=\sum_{\mathbf{n}}f_{\mathbf{n}}\mathbf{y^{n}}}\in\mathfrak{B}\left\llbracket \mathbf{y}\right\rrbracket $,
we define ${\displaystyle \overline{f}:=\sum_{\mathbf{n}}\norm{f_{\mathbf{n}}}\mathbf{y^{n}}},$
and $\overline{\overline{f}}\left(y\right):=\overline{f}\left(y,y\right)$.
If ${\displaystyle g=\sum_{\mathbf{n}}g_{\mathbf{n}}\mathbf{y^{n}}}\in\mathfrak{B}\left\llbracket \mathbf{y}\right\rrbracket $
is another formal power series, we write $\overline{f}\prec\overline{g}$
if for all $\mathbf{n}\in\ww N^{2}$, we have $\norm{f_{\mathbf{n}}}\leq\norm{g_{\mathbf{n}}}$.
We remind the following classical result (the proof is performed in
\cite{rebelo_reis} when $\left(\mathfrak{B},\norm{\cdot}\right)=\left(\ww C,\abs{\cdot}\right)$
, but the same proof works for any Banach algebra).
\begin{lem}
\cite[Theorem 2.2 p.48]{rebelo_reis} For $j=1,2$, let ${\displaystyle f_{j}=\sum_{\abs{\mathbf{n}}\geq2}f_{j,\mathbf{n}}\mathbf{y^{n}}}\in\mathfrak{B}\left\llbracket \mathbf{y}\right\rrbracket $
be two formal power series with coefficients in a Banach algebra $\left(\mathfrak{B},\norm{\cdot}\right)$,
and of order at least two. Consider also two other series ${\displaystyle g_{j}=\sum_{\abs{\mathbf{n}}\geq2}g_{j,\mathbf{n}}\mathbf{y^{n}}}\in\mathfrak{B}\acc{\ww{\mathbf{y}}}$,
$j=1,2$, of order at least two, which have a non-zero radius of convergence
at the origin. Assume that there exists $\sigma>0$ such that for
$j=1,2$: 
\[
\sigma\overline{f_{j}}\prec\overline{g_{j}}\left(y_{1}+\overline{f_{1}},y_{2}+\overline{f_{2}}\right)\qquad.
\]
 Then, $f_{1}$ and $f_{2}$ have a non-zero radius of convergence.
\end{lem}
Taking $\beta>4\pi$, according to Proposition \ref{prop: norme d'algebre},
for all $\hat{f},\hat{g}\in\mathfrak{B}_{\beta}^{\tx{bis}}$, we have:
\[
\norm{\hat{f}\hat{g}}_{\beta}^{\tx{bis}}\leq\norm{\hat{f}}_{\beta}^{\tx{bis}}\norm{\hat{g}}_{\beta}^{\tx{bis}}\,\,.
\]
This implies that $\left(\mathfrak{B}_{\beta}^{\tx{bis}},\norm{\cdot}_{\beta}^{\tx{bis}}\right)$
is a Banach algebra as needed in the above lemma. It remains to prove
that there exists $\sigma>0$ such that for $j=1,2$:
\[
\sigma\overline{\hat{\psi}_{j}}\prec\overline{\hat{F}_{j}}\left(y_{1}+\overline{\hat{\psi}_{1}},y_{2}+\overline{\hat{\psi}_{2}}\right)\qquad.
\]
Remember that there exists $M>0$ such that for $j=1,2$:
\[
\norm{\hat{\psi}_{j,\mathbf{n}}}_{\beta}^{\tx{bis}}\leq M.\norm{\zeta_{j,\mathbf{n}}}_{\beta}^{\tx{bis}}\qquad
\]
where
\[
\begin{cases}
{\displaystyle \zeta_{1}:=\underset{\left|\mathbf{n}\right|\geq2}{\sum}\zeta_{1,\mathbf{n}}(x)\mathbf{y}^{\mathbf{n}}}\\
{\displaystyle =\hat{F}_{1}\left(x,y_{1}+\hat{\psi}_{1}\left(x,\mathbf{y}\right),y_{2}+\hat{\psi}_{2}\left(x,\mathbf{y}\right)\right)-\hat{T}_{1}(x)\frac{\partial\hat{\psi}_{1}}{\partial y_{1}}(x,\mathbf{y})-\hat{T}_{2}(x)\frac{\partial\hat{\psi}_{1}}{\partial y_{2}}(x,\mathbf{y})}\\
{\displaystyle \zeta_{2}:=\underset{\left|\mathbf{n}\right|\geq2}{\sum}\zeta_{2,\mathbf{n}}(x)\mathbf{y}^{\mathbf{n}}}\\
{\displaystyle =\hat{F}_{2}\left(x,y_{1}+\hat{\psi}_{1}\left(x,\mathbf{y}\right),y_{2}+\hat{\psi}_{2}\left(x,\mathbf{y}\right)\right)-\hat{T}_{1}(x)\frac{\partial\hat{\psi}_{2}}{\partial y_{1}}(x,\mathbf{y})-\hat{T}_{2}(x)\frac{\partial\hat{\psi}_{2}}{\partial y_{2}}(x,\mathbf{y})} & \,\,\,.
\end{cases}
\]
If we set $\sigma:=\frac{1}{M}$, then we have
\[
\begin{cases}
{\displaystyle \sigma\overline{\hat{\psi}_{1}}\prec\overline{\zeta_{1}}\prec\overline{\hat{F}}_{1}\left(x,y_{1}+\overline{\hat{\psi}_{1}}\left(x,\mathbf{y}\right),y_{2}+\overline{\hat{\psi}_{2}}\left(x,\mathbf{y}\right)\right)+\overline{\hat{T}_{1}}(x)\frac{\partial\overline{\hat{\psi}_{1}}}{\partial y_{1}}(x,\mathbf{y})+\overline{\hat{T}_{2}}(x)\frac{\partial\overline{\hat{\psi}_{1}}}{\partial y_{2}}(x,\mathbf{y})}\\
{\displaystyle \sigma\overline{\hat{\psi}_{2}}\prec\overline{\zeta_{2}}\prec\overline{\hat{F}}_{2}\left(x,y_{1}+\overline{\hat{\psi}_{1}}\left(x,\mathbf{y}\right),y_{2}+\overline{\hat{\psi}_{2}}\left(x,\mathbf{y}\right)\right)+\overline{\hat{T}_{1}}(x)\frac{\partial\overline{\hat{\psi}_{2}}}{\partial y_{1}}(x,\mathbf{y})+\overline{\hat{T}_{2}}(x)\frac{\partial\overline{\hat{\psi}_{2}}}{\partial y_{2}}(x,\mathbf{y})}
\end{cases}\qquad.
\]
Moreover, we recall that 
\[
\begin{cases}
\hat{T}_{1,\mathbf{n}}=0 & \mbox{, if }n_{1}=0\mbox{ or }n_{2}=0\\
\hat{T}_{2,\mathbf{n}}=0 & \mbox{, if }n_{1}=0\mbox{ or }n_{2}=0\\
\hat{\psi}_{1,\mathbf{n}}=0 & ,\mbox{ if }n_{1}\geq1\mbox{ and }n_{2}\geq1\\
\hat{\psi}_{2,\mathbf{n}}=0 & ,\mbox{ if }n_{1}\geq1\mbox{ and }n_{2}\geq1
\end{cases}\qquad,
\]
so that we have in fact more precise dominant relations: 
\[
\begin{cases}
{\displaystyle \sigma\overline{\hat{\psi}_{1}}\prec\overline{\zeta_{1}}\prec\overline{\hat{F}}_{1}\left(x,y_{1}+\overline{\hat{\psi}_{1}}\left(x,\mathbf{y}\right),y_{2}+\overline{\hat{\psi}_{2}}\left(x,\mathbf{y}\right)\right)}\\
{\displaystyle \sigma\overline{\hat{\psi}_{2}}\prec\overline{\zeta_{2}}\prec\overline{\hat{F}}_{2}\left(x,y_{1}+\overline{\hat{\psi}_{1}}\left(x,\mathbf{y}\right),y_{2}+\overline{\hat{\psi}_{2}}\left(x,\mathbf{y}\right)\right)}
\end{cases}\qquad.
\]
It remains the apply the lemma above to conclude.\end{proof}
\begin{rem}
\label{rem: redressement hypersurfaces cas ham}In the previous proposition,
assume that for $j=1,2$, 
\[
{\displaystyle \hat{F}_{j}\left(x,\mathbf{y}\right)=\sum_{\mathbf{n}\in\ww N^{2},\,\abs{\mathbf{n}}\geq2}\hat{F}_{\mathbf{n}}^{\left(j\right)}\left(x\right)\mathbf{y^{n}}}
\]
 in the expression of $\hat{Y}$ satisfies
\[
\begin{cases}
\hat{F}_{\mathbf{n}}^{\left(1\right)}\left(0\right)=0 & ,\,\forall\mathbf{n}=\left(n_{1},n_{2}\right)\mid n_{1}+n_{2}\geq2\mbox{ and }\big(n_{1}=0\mbox{ or }n_{2}=0\big)\\
\hat{F}_{\mathbf{n}}^{\left(2\right)}\left(0\right)=0 & ,\,\forall\mathbf{n}=\left(n_{1},n_{2}\right)\mid n_{1}+n_{2}\geq2\mbox{ and }\big(n_{1}=0\mbox{ or }n_{2}=0
\end{cases}\,\,\,.
\]
Then, the diffeomorphism $\hat{\Phi}$ in the proposition can be chosen
to be the identity on $\acc{x=0}$, so that
\[
\begin{cases}
y_{1}y_{2}\hat{R}_{1}\left(x,\mathbf{y}\right) & =\hat{F}_{1}\left(0,\mathbf{y}\right)+x\hat{S}_{1}\left(x,\mathbf{y}\right)\\
y_{1}y_{2}\hat{R}_{2}\left(x,\mathbf{y}\right) & =\hat{F}_{2}\left(0,\mathbf{y}\right)+x\hat{S}_{2}\left(x,\mathbf{y}\right)\qquad,
\end{cases}
\]
 where $\hat{S}_{1},\hat{S}_{2}$ are 1-summable in the direction
$\theta\neq\arg\left(\pm\lambda\right)$ and ${\displaystyle \hat{F}_{1}\left(0,\mathbf{y}\right),\hat{F}_{2}\left(0,\mathbf{y}\right)\in\germ{\mathbf{y}}}$
are convergent in neighborhood of the origin in $\ww C^{2}$. Indeed,
we easily see by induction on $\abs{\mathbf{n}}=n_{1}+n_{2}\geq2$
that $\hat{\psi}_{1}$ and $\hat{\psi}_{2}$ can be chosen ``divisible''
by $x$, and that $\zeta_{1},\zeta_{2}$ are such that $\zeta_{j,\mathbf{n}}\left(x\right)$
is also ``divisible'' by $x$ if $n_{1}=0$ or $n_{2}=0$.
\end{rem}

\subsection{\label{sub:Summable-normal-form}1-summable normal form up to arbitrary
order $N$}

~

We consider now a (formal) non-degenerate diagonal doubly-resonant
saddle node, which is supposed to be div-integrable and 1-summable
in every direction $\theta\neq\arg\left(\pm\lambda\right)$, of the
form 
\begin{eqnarray*}
\hat{Y}_{\tx{prep}} & = & x^{2}\pp x+\left(-\lambda+a_{1}x-d\left(y_{1}y_{2}\right)+x\hat{S}_{1}\left(x,\mathbf{y}\right)\right)y_{1}\pp{y_{1}}\\
 &  & +\left(\lambda+a_{2}x+d\left(y_{1}y_{2}\right)+x\hat{S}_{2}\left(x,\mathbf{y}\right)\right)y_{2}\pp{y_{2}}\qquad,
\end{eqnarray*}
where:
\begin{itemize}
\item $\lambda\in\ww C\backslash\acc 0$;
\item $\hat{S}_{1},\hat{S}_{2}\in\form{x,\mathbf{y}}$ are of order at least
one with respect to $\mathbf{y}$ and 1-summable in every direction
$\theta\in\ww R$ with $\theta\neq\arg\left(\pm\lambda\right)$;
\item $a:=\tx{res}\left(\hat{Y}_{\tx{prep}}\right)=a_{1}+a_{2}\notin\ww Q_{\leq0}$
;
\item $d\left(v\right)\in v\ww C\acc v$ is the germ of an analytic function
in $v:=y_{1}y_{2}$ vanishing at the origin.
\end{itemize}
As usual, we denote by $Y_{\tx{prep},\theta},S_{1,\theta},S_{2,\theta}$
the respective 1-sums of $\hat{Y},\hat{S}_{1},\hat{S}_{2}$ in the
direction $\theta$. Let us introduce some useful notations:

\begin{eqnarray*}
\hat{Y}_{\tx{prep}} & = & Y_{0}+D\overrightarrow{\mathcal{C}}+R\overrightarrow{\mathcal{R}}\qquad,
\end{eqnarray*}
where
\begin{itemize}
\item $\overrightarrow{\cal C}:=-y_{1}\pp{y_{1}}+y_{2}\pp{y_{2}}$
\item $\overrightarrow{\cal R}:=y_{1}\pp{y1}+y_{2}\pp{y_{2}}$
\item $Y_{0}:=\lambda\overrightarrow{\cal C}+x\left(x\pp x+a_{1}y_{1}\pp{y_{1}}+a_{2}y_{2}\pp{y2}\right)$ 
\item ${\displaystyle D\left(x,\mathbf{y}\right)=d\left(y_{1}y_{2}\right)+xD^{\left(1\right)}\left(x,\mathbf{y}\right)=d\left(y_{1}y_{2}\right)+x\left(\frac{\hat{S}_{2}-\hat{S}_{1}}{2}\right)}$
is 1-summable in the direction $\theta$ of 1-sum $D_{\theta}:$ it
is called the ``\emph{tangential}'' part. $D_{\theta}$ is also
dominated by $\norm{\mathbf{y}}=\max\left(\abs{y_{1}},\abs{y_{2}}\right)$
($D$ is of order at least one with respect to $\mathbf{y}$).
\item ${\displaystyle R\left(x,\mathbf{y}\right)=xR^{\left(1\right)}\left(x,\mathbf{y}\right)=x\left(\frac{\hat{S}_{2}+\hat{S}_{1}}{2}\right)}$
is 1-summable in the direction $\theta$ of 1-sum $R_{\theta}$: it
is called the ``\emph{radial}'' part. $R_{\theta}$ is also dominated
by $\norm{\mathbf{y}}_{\infty}=\max\left(\abs{y_{1}},\abs{y_{2}}\right)$
($R$ is of order at least one with respect to $\mathbf{y}$).
\end{itemize}
The following proposition gives the existence of a 1-summable normalizing
map, up to any order $N\in\ww N_{>0}$, with respect to $x$.
\begin{prop}
\label{prop: ordre N avec eq homologique}Let 
\begin{eqnarray*}
\hat{Y}_{\tx{prep}} & = & Y_{0}+D\overrightarrow{\mathcal{C}}+R\overrightarrow{\mathcal{R}}\qquad
\end{eqnarray*}
be as above. 

Then for all $N\in\ww N_{>0}$ there exist $d^{\left(N\right)}\left(v\right)\in\ww C\acc v$
of order at least one and $\Phi^{\left(N\right)}\in\fdiff[\ww C^{3},0,\tx{Id}]$
which conjugates $\hat{Y}_{\tx{prep}}$ (\emph{resp. }its 1-sums $Y_{\tx{prep},\theta}$
in the direction $\theta$) to 
\begin{eqnarray*}
Y^{\left(N\right)} & = & Y_{0}+\left(d^{\left(N\right)}\left(y_{1}y_{2}\right)+x^{N}D^{\left(N\right)}\left(x,\mathbf{y}\right)\right)\overrightarrow{\cal C}+x^{N}R^{\left(N\right)}\left(x,\mathbf{y}\right)\overrightarrow{\cal R}\\
\bigg(\emph{resp. }\, Y_{\theta}^{\left(N\right)} & = & Y_{0}+\left(d^{\left(N\right)}\left(y_{1}y_{2}\right)+x^{N}D_{\pm}^{\left(N\right)}\left(x,\mathbf{y}\right)\right)\overrightarrow{\cal C}+x^{N}R_{\theta}^{\left(N\right)}\left(x,\mathbf{y}\right)\overrightarrow{\cal R}\bigg)\qquad,
\end{eqnarray*}
where $D^{\left(N\right)},R^{\left(N\right)}$ are 1-summable in the
direction $\theta$, of order at least one with respect to $\mathbf{y}$,
of 1-sums $D_{\theta}^{\left(N\right)},R_{\theta}^{\left(N\right)}$
in the direction $\theta$. Moreover, one can choose $d^{\left(2\right)}=\dots=d^{\left(N\right)}$
for all $N\geq2$, and $d^{\left(1\right)}=d$.\end{prop}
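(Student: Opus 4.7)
The plan is to proceed by induction on $N \geq 1$. The base case is immediate: set $\Phi^{(1)} := \mathrm{Id}$ and $d^{(1)} := d$, so that $Y^{(1)} = \hat{Y}_{\tx{prep}}$ already has the required shape. For the inductive step, given $Y^{(N)}$ in the stated form, I would construct a fibered diffeomorphism $\psi_N = \exp(Z_N) \in \fdiffformid$, tangent to the identity and 1-summable in every direction $\theta \neq \arg(\pm\lambda)$, such that $(\psi_N)_*(Y^{(N)}) = Y^{(N+1)}$ (with $d^{(N+1)} = d^{(N)}$ for $N \geq 2$), and then take $\Phi^{(N+1)} := \psi_N \circ \Phi^{(N)}$, whose properties are inherited via Corollary \ref{cor: summability push-forward}.

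The construction of $Z_N$ rests on solving a homological equation at order $x^N$. I would seek $Z_N$ as a fibered formal vector field of order $\geq 2$ in $(x,\mathbf{y})$, decomposed as
\[
Z_N := x^N W_N(\mathbf{y}) + x^{N-1}\bigl(g_N(v)\overrightarrow{\mathcal{C}} + h_N(v)\overrightarrow{\mathcal{R}}\bigr),
\]
where $W_N$ involves only non-resonant monomials $\mathbf{y}^{\mathbf{k}}\partial_{y_j}$ (i.e.\ $k_2 - k_1 + (-1)^{j+1} \neq 0$) and $g_N, h_N \in v\germ v$. A direct computation of $[Y_0, Z_N]$, using $[\overrightarrow{\mathcal{C}}, \overrightarrow{\mathcal{R}}] = 0$ together with the actions of $x^2\partial_x$ and $a_1 x y_1 \partial_{y_1} + a_2 x y_2 \partial_{y_2}$ on $v^m$, yields to leading order in $x$
\[
[Y_0, Z_N] \equiv x^N\Bigl(\lambda\,\mathrm{ad}(\overrightarrow{\mathcal{C}})(W_N) + \mathcal{D}_N(g_N)\overrightarrow{\mathcal{C}} + \mathcal{D}_N(h_N)\overrightarrow{\mathcal{R}}\Bigr) \pmod{x^{N+1}},
\]
where $\mathcal{D}_N := (N-1)\,\mathrm{id} + a v \partial_v$ acts on $\germ v$ and $a := a_1 + a_2 \notin \ww Q_{\leq 0}$. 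Matching this against the obstruction $-x^N(D^{(N)}(0,\mathbf{y})\overrightarrow{\mathcal{C}} + R^{(N)}(0,\mathbf{y})\overrightarrow{\mathcal{R}})$ and splitting it according to the non-resonant / $\overrightarrow{\mathcal{C}}$-resonant / $\overrightarrow{\mathcal{R}}$-resonant decomposition yields three decoupled subequations, each solvable: $\mathrm{ad}(\overrightarrow{\mathcal{C}})$ acts on non-resonant monomials by multiplication by nonzero integers bounded below by $1$ in absolute value, and $\mathcal{D}_N$ is invertible on $v\germ v$ since $\inf_{m\geq 1}|N-1+am| > 0$ (which follows from $a \notin \ww Q_{\leq 0}$ by a direct case-by-case analysis on $\Re(a)$, $\Im(a)$). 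The fact that $D^{(N)}, R^{(N)}$ are of $\mathbf{y}$-order $\geq 1$ ensures that the resonant parts lie in $v\germ v$ and so pose no obstruction to $\overrightarrow{\mathcal{C}}$-resonant division; this is why one can arrange $d^{(N+1)} = d^{(N)}$ at every step.

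The hard part will be twofold. First, one must verify convergence of the inverse operators: that $W_N, g_N, h_N$ are genuine analytic germs (and not merely formal series). This should follow from a dominant-series argument patterned on the one in the proof of Proposition~\ref{prop: preparation}, using Cauchy estimates on $D^{(N)}(0,\mathbf{y}), R^{(N)}(0,\mathbf{y})$ in a fixed polydisc together with the uniform lower bounds on the relevant eigenvalues. Second, and more delicate, one must control the residual corrections — coming from iterated brackets $\mathrm{ad}(Z_N)^k$, $k\geq 2$, from the perturbation $d(v)\overrightarrow{\mathcal{C}}$ (whose bracket with the $\overrightarrow{\mathcal{R}}$-correction introduces an extra contribution of order $x^{N-1}$ in the $\overrightarrow{\mathcal{C}}$-direction, proportional to $v d'(v) h_N(v)$), and from the $[x^N(D^{(N)}\overrightarrow{\mathcal{C}} + R^{(N)}\overrightarrow{\mathcal{R}}), Z_N]$ brackets — by refining the ansatz, namely allowing $W_N, g_N, h_N$ to carry $x$-dependence determined by a subsidiary inversion of $\mathcal{D}_k$, $k \leq N-1$. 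Since all these inverse operators are coefficient-wise divisions by scalars (respectively germs in $v$) that are independent of $x$, 1-summability in directions $\theta \neq \arg(\pm\lambda)$ is preserved throughout: $\psi_N$ is 1-summable, and so are the resulting $D^{(N+1)}, R^{(N+1)}$ (inherited from $D^{(N)}, R^{(N)}$ via Corollary~\ref{cor: summability push-forward}), closing the induction.
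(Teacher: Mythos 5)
Your skeleton is the right one and matches the paper's: induction on $N$, a correction whose resonant part sits at order $x^{N-1}$ and whose non-resonant part sits at order $x^{N}$, inversion of $\mathrm{ad}(\overrightarrow{\mathcal{C}})$ on non-resonant monomials, and inversion of the operator $(N-1)\,\mathrm{id}+a\,v\partial_{v}$ on $v\ww C\{v\}$ using $a\notin\ww Q_{\leq0}$. But the step you yourself flag as ``the hard part'' is a genuine gap, not a technicality, and your proposed fix (``refining the ansatz'' by giving $W_N,g_N,h_N$ extra $x$-dependence) is not worked out and is exactly where the difficulty lives. Two concrete failures of the leading-order equation $[Y_0,Z_N]\equiv(\text{obstruction})\pmod{x^{N+1}}$: first, for $N=1$ the bracket of $x^{N}R^{(N)}\overrightarrow{\mathcal{R}}$ with the resonant radial correction $x^{N-1}h_N(v)\overrightarrow{\mathcal{R}}$ lands at order $x^{2N-1}=x^{N}$, so the resonant equation is not $\mathcal{D}_1(h_1)=-R^{(1)}_{\mathrm{res}}$ but $v\bigl(a+2R^{(1)}_{\mathrm{res}}(0,v)\bigr)h_1'(v)=-R^{(1)}_{\mathrm{res}}(0,v)$ (the paper's $\delta_{N,1}$ term); your stated operator is wrong in that case. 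Second, the term $x^{N-1}v\,(d^{(N)})'(v)h_N(v)\overrightarrow{\mathcal{C}}$ that you notice sits \emph{below} the order being normalized, so it cannot be swept into $D^{(N+1)}$ and it breaks the inductive form unless it is accounted for in the equation determining $h_N$ itself. Deferring these to iterated brackets of $\exp(Z_N)$ does not close the argument.

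The paper's proof avoids all of this by a different device which you should adopt: it does not exponentiate a general $Z_N$, but performs two successive steps, each using the \emph{exact} time-$\tau$ flow of a fixed linear field, $\Lambda_{\tau}(x,\mathbf{y})=\bigl(x,y_1e^{\tau},y_2e^{\tau}\bigr)$ for the radial part and then $\Gamma_{\sigma}$ for the tangential part, with $\tau(x,\mathbf{y})=x^{N-1}\tau_0(y_1y_2)+x^{N}\tau_1(\mathbf{y})$. Because the conjugacy $\mathrm{D}\Lambda_{\tau}\cdot Y^{(N)}$ is then computed in closed form, the homological equation is posed for the \emph{full} field,
\begin{equation*}
\mathcal{L}_{Y^{(N)}}(\tau)=-x^{N}R^{(N)}+\bigl(x^{N+1}\tilde{R}^{(N+1)}\bigr)\circ\Lambda_{\tau}\,,
\end{equation*}
and the new remainder is simply \emph{defined} as $\tilde{R}^{(N+1)}:=\bigl((\mathcal{L}_{Y^{(N)}}(\tau)+x^{N}R^{(N)})/x^{N+1}\bigr)\circ\Lambda_{\tau}^{-1}$, the degree-$N$ identity guaranteeing divisibility by $x^{N+1}$. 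No Baker--Campbell--Hausdorff corrections ever appear, and the parasitic terms you worry about are automatically built into the equations for $\tau_0$ and $\tau_1$. Moreover, convergence of $\tau_0$ comes for free from the classical theory of first-order regular singular ODEs in $v$ (no majorant argument is needed there), and $\tau_1$ is obtained by explicit division by the nonzero integers $\lambda(j_2-j_1)$ times the unit $\lambda+d^{(N)}(v)$. The 1-summability bookkeeping via Corollary \ref{cor: summability push-forward} and Proposition \ref{prop: compositon summable} is as you describe.
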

\begin{proof}
The proof is performed by induction on $N$.
\begin{itemize}
\item The case $N=1$ is the initial situation here, and is already proved
with $\hat{Y}_{\tx{prep}}=Y^{\left(1\right)}$.
\item Assume that the result holds for $N\in\ww N_{>0}$. 

\begin{enumerate}
\item We start with the radial part. Let us write
\[
R^{\left(N\right)}\left(x,\mathbf{y}\right)=\sum_{n_{1}+n_{2}\geq1}R_{n_{1},n_{2}}^{\left(N\right)}\left(x\right)y_{1}^{n_{1}}y_{2}^{n_{2}}
\]
and
\[
R_{\mbox{res}}^{\left(N\right)}\left(0,v\right)=\sum_{k\geq1}R_{k,k}^{\left(N\right)}\left(0\right)v^{k}\quad.
\]
We are looking for an analytic solution $\tau$ to the equations:
\begin{eqnarray}
\cal L_{Y^{\left(N\right)}}\left(\tau\right) & = & -x^{N}R^{\left(N\right)}+\left(x^{N+1}\tilde{R}^{\left(N+1\right)}\right)\circ\Lambda_{\tau}\label{eq: eq homologic rec}\\
\cal L_{Y_{\theta}^{\left(N\right)}}\left(\tau\right) & = & -x^{N}R_{\theta}^{\left(N\right)}+\left(x^{N+1}\tilde{R}_{\theta}^{\left(N+1\right)}\right)\circ\Lambda_{\tau}\qquad,\nonumber 
\end{eqnarray}
for a convenient choice of $\tilde{R}^{\left(N+1\right)},\tilde{R}_{\theta}^{\left(N+1\right)}$,
with 
\[
\Lambda_{\tau}\left(x,\mathbf{y}\right):=\left(x,y_{1}\exp\left(\tau\left(x,\mathbf{y}\right)\right),y_{2}\exp\left(\tau\left(x,\mathbf{y}\right)\right)\right)\qquad,
\]
and 
\[
\tau\left(x,\mathbf{y}\right)=x^{N-1}\tau_{0}\left(y_{1}y_{2}\right)+x^{N}\tau_{1}\left(\mathbf{y}\right)\quad,
\]
where ${\displaystyle \tau_{1}\left(\mathbf{y}\right)=\sum_{j_{1}\neq j_{2}}\tau_{1,j_{1}j_{2}}y_{1}^{j_{1}}y_{2}^{j_{2}}}$.
More concretely, $\Lambda_{\tau}$ is the formal flow of $\overrightarrow{\cal R}$
at ``time'' $\tau\left(x,\mathbf{y}\right)$. \\
If we admit for a moment that such an analytic solution $\tau$ exists,
then $\Lambda_{\tau}\in\fdiffid$ and therefore $\Lambda_{\tau}^{-1}\in\fdiffid$.
If we consider $d^{\left(N\right)}$ and $\tilde{D}^{\left(N\right)}$
such that
\begin{eqnarray*}
 & d^{\left(N+1\right)}\left(z_{1}z_{2}\right)+x^{N}\tilde{D}^{\left(N\right)}\left(x,\mathbf{z}\right):=\left(d^{\left(N\right)}\left(y_{1}y_{2}\right)+x^{N}D^{\left(N\right)}\left(x,\mathbf{y}\right)\right)\circ\Lambda_{\tau}^{-1}\left(x,\mathbf{z}\right)\\
 & d^{\left(N+1\right)}\left(z_{1}z_{2}\right)+x^{N}\tilde{D_{\theta}}^{\left(N\right)}\left(x,\mathbf{z}\right):=\left(d^{\left(N\right)}\left(y_{1}y_{2}\right)+x^{N}D_{\theta}^{\left(N\right)}\left(x,\mathbf{y}\right)\right)\circ\Lambda_{\tau}^{-1}\left(x,\mathbf{z}\right),
\end{eqnarray*}
then the two equations given in $\left(\mbox{\ref{eq: eq homologic rec}}\right)$
imply that 
\begin{eqnarray*}
\left(\Lambda_{\tau}\right)_{*}\left(Y^{\left(N\right)}\right) & = & Y_{0}+\left(d^{\left(N+1\right)}\left(z_{1}z_{2}\right)+x^{N}\tilde{D}^{\left(N\right)}\left(x,\mathbf{z}\right)\right)\overrightarrow{\cal C}\\
 &  & +x^{N+1}\tilde{R}^{\left(N+1\right)}\left(x,\mathbf{z}\right)\overrightarrow{\cal R}\\
\left(\Lambda_{\tau}\right)_{*}\left(Y_{\theta}^{\left(N\right)}\right) & = & Y_{0}+\left(d^{\left(N+1\right)}\left(z_{1}z_{2}\right)+x^{N}\tilde{D}_{\theta}^{\left(N\right)}\left(x,\mathbf{z}\right)\right)\overrightarrow{\cal C}\\
 &  & +x^{N+1}\tilde{R}_{\theta}^{\left(N+1\right)}\left(x,\mathbf{z}\right)\overrightarrow{\cal R}\qquad.
\end{eqnarray*}
Indeed: 
\begin{eqnarray*}
 &  & \mbox{D}\Lambda_{\tau}\cdot Y^{\left(N\right)}=\left(\begin{array}{c}
\cal L_{Y^{\left(N\right)}}\left(x\right)\\
{\cal L}_{Y^{\left(N\right)}}\left(y_{1}\exp\left(\tau\left(x,\mathbf{y}\right)\right)\right)\\
\cal L_{Y^{\left(N\right)}}\left(y_{2}\exp\left(\tau\left(x,\mathbf{y}\right)\right)\right)
\end{array}\right)\\
 &  & =\left(\begin{array}{c}
x^{2}\\
\left(\cal L_{Y^{\left(N\right)}}\left(y_{1}\right)+y_{1}\left(\cal L_{Y^{\left(N\right)}}\left(\tau\right)\right)\right)\exp\left(\tau\left(x,\mathbf{y}\right)\right)\\
\left(\cal L_{Y^{\left(N\right)}}\left(y_{2}\right)+y_{2}\left(\cal L_{Y^{\left(N\right)}}\left(\tau\right)\right)\right)\exp\left(\tau\left(x,\mathbf{y}\right)\right)
\end{array}\right)\\
 &  & =\left(Y_{0}+\left(d^{\left(N+1\right)}+x^{N}\tilde{D}^{\left(N\right)}\right)\overrightarrow{\cal C}+x^{N+1}\tilde{R}^{\left(N+1\right)}\overrightarrow{\cal R}\right)\circ\Lambda_{\tau}\left(x,\mathbf{y}\right)\,\,\,.
\end{eqnarray*}
These computations are also true with the corresponding 1-sums of
formal objects considered here, \emph{i.e.} with $Y_{\theta}^{\left(N\right)},D_{\theta}^{\left(N\right)},\tilde{D}_{\theta}^{\left(N\right)},\tilde{R}_{\theta}^{\left(N+1\right)}$
instead of $Y^{\left(N\right)},D^{\left(N\right)},\tilde{D}^{\left(N\right)},\tilde{R}^{\left(N+1\right)}$
respectively. We use Proposition \ref{prop: compositon summable}
to obtain the 1-summability of the objects defined by compositions.\\
Let us prove that there exists a germ of analytic function of the
form 
\[
\tau\left(x,\mathbf{y}\right)=x^{N-1}\tau_{0}\left(y_{1}y_{2}\right)+x^{N}\tau_{1}\left(\mathbf{y}\right)\quad,
\]
of order ate least one with respect to $\mathbf{y}$ in the origin,
with 
\[
{\displaystyle \tau_{1}\left(\mathbf{y}\right)=\sum_{j_{1}\neq j_{2}}\tau_{1,j_{1}j_{2}}y_{1}^{j_{1}}y_{2}^{j_{2}}}
\]
satisfying equation $\left(\mbox{\ref{eq: eq homologic rec}}\right)$.
This equation can be written 
\begin{eqnarray*}
 &  & x^{2}\ppp{\tau}x+\left(-\lambda+a_{1}x-d^{\left(N\right)}\left(y_{1}y_{2}\right)-x^{N}D^{\left(N\right)}\left(x,\mathbf{y}\right)+x^{N}R^{\left(N\right)}\left(x,\mathbf{y}\right)\right)y_{1}\ppp{\tau}{y_{1}}\\
 &  & +\left(\lambda+a_{2}x+d^{\left(N\right)}\left(y_{1}y_{2}\right)+x^{N}D^{\left(N\right)}\left(x,\mathbf{y}\right)+x^{N}R^{\left(N\right)}\left(x,\mathbf{y}\right)\right)y_{2}\ppp{\tau}{y_{2}}\\
 &  & =-x^{N}R^{\left(N\right)}+\left(x^{N+1}\tilde{R}^{\left(N+1\right)}\right)\circ\Lambda_{\tau}\quad,
\end{eqnarray*}
or equivalently
\begin{eqnarray*}
 & x^{2}\ppp{\tau}x+a_{1}xy_{1}\ppp{\tau}{y_{1}}+a_{2}xy_{2}\ppp{\tau}{y_{2}}+\left(\lambda+d^{\left(N\right)}\left(y_{1}y_{2}\right)+x^{N}D^{\left(N\right)}\left(x,\mathbf{y}\right)\right)\cal L_{\overrightarrow{\cal C}}\left(\tau\right)\\
 & +\left(x^{N}R^{\left(N\right)}\left(x,\mathbf{y}\right)\right)\cal L_{\overrightarrow{\cal R}}\left(\tau\right)=-x^{N}R^{\left(N\right)}+\left(x^{N+1}\tilde{R}^{\left(N+1\right)}\right)\circ\Lambda_{\tau}\quad.
\end{eqnarray*}
Let us consider terms of degree $N$ with respect to $x$:
\begin{eqnarray}
 & \left(N-1\right)\tau_{0}\left(y_{1}y_{2}\right)+\left(a_{1}+a_{2}+2\delta_{N,1}R^{\left(N\right)}\left(0,\mathbf{y}\right)\right)y_{1}y_{2}\ppp{\tau_{0}}v\left(y_{1}y_{2}\right)\nonumber \\
 & +\left(\lambda+d^{\left(N\right)}\left(y_{1}y_{2}\right)\right)\cal L_{\overrightarrow{\cal C}}\left(\tau_{1}\right)=-R^{\left(N\right)}\left(0,\mathbf{y}\right)\label{eq: eq homo rec degre 1 wrt x}
\end{eqnarray}
(here $\delta_{N,1}$ is the Kronecker notation). We use now the fact
that $\tx{Im}\left(\lie{\overrightarrow{\cal C}}\right)\varoplus\tx{Ker}\left(\lie{\overrightarrow{\cal C}}\right)$
is a direct sum, and that $\tx{Ker}\left(\lie{\overrightarrow{\cal C}}\right)$
is the set of formal power series in the resonant monomial $v=y_{1}y_{2}$.
Isolating the term $\lie{\overrightarrow{\cal C}}\left(\tau_{1}\right)$
on the one hand, and the others on the other hand, the direct sum
above gives us: 
\[
\begin{cases}
{\displaystyle v\left(a_{1}+a_{2}+2\delta_{N,1}R_{\mbox{res}}^{\left(N\right)}\left(0,v\right)\right)\ddd{\tau_{0}}v\left(v\right)+\left(N-1\right)\tau_{0}\left(v\right)=-R_{\mbox{res}}^{\left(N\right)}\left(0,v\right)}\\
{\displaystyle \tau_{0}\left(0\right)=0}
\end{cases}
\]
 and
\[
\begin{cases}
{\displaystyle \cal L_{\overrightarrow{\cal C}}\left(\tau_{1}\right)=\frac{-1}{\lambda+d^{\left(N\right)}\left(y_{1}y_{2}\right)}\Bigg(\left(2\delta_{N,1}\left(R^{\left(N\right)}\left(0,\mathbf{y}\right)-R_{\mbox{res}}^{\left(N\right)}\left(0,v\right)\right)\right)y_{1}y_{2}\ddd{\tau_{0}}v\left(y_{1}y_{2}\right)}\\
{\displaystyle \qquad\,\,+R^{\left(N\right)}\left(0,\mathbf{y}\right)-R_{\mbox{res}}^{\left(N\right)}\left(0,v\right)\Bigg)}\\
{\displaystyle \tau_{1}\left(0\right)=0\,\,.}
\end{cases}
\]
Since $R^{\left(N\right)}$ is analytic with respect to $\mathbf{y}$,
$R_{\mbox{res}}^{\left(N\right)}\left(0,v\right)$ is analytic near
$v=0$. Furthermore, as $R_{\mbox{res}}^{\left(N\right)}\left(0,0\right)=0$
and $a_{1}+a_{2}\notin\ww Q_{\leq0}$, the first of the two equation
above has a unique formal solution $\tau_{0}$ with $\tau_{0}\left(0\right)$,
and this solution is convergent in a neighborhood of the origin. Once
$\tau_{0}$ is determined, there exists a unique formal solution $\tau_{1}$
to the second equation satisfying ${\displaystyle \tau_{1}\left(\mathbf{y}\right)=\sum_{j_{1}\neq j_{2}}\tau_{1,j_{1}j_{2}}y_{1}^{j_{1}}y_{2}^{j_{2}}}$,
which is moreover convergent in a neighborhood of the origin of $\ww C^{2}$
.\\
Therefore $\Lambda_{\tau}$ is a germ of analytic diffeomorphism fixing
the origin, fibered, tangent to the identity and conjugates $Y^{\left(N\right)}$
\emph{$\big($resp.} $Y_{\theta}^{\left(N\right)}$$\big)$ to ${\displaystyle \tilde{Y}^{\left(N\right)}:=\left(\Lambda_{\tau}\right)_{*}\left(Y^{\left(N\right)}\right)}$
$\big($\emph{resp.} ${\displaystyle \tilde{Y}_{\theta}^{\left(N\right)}:=\left(\Lambda_{\tau}\right)_{*}\left(Y_{\theta}^{\left(N\right)}\right)}$$\big)$.\\
Equation $\left(\mbox{\ref{eq: eq homo rec degre 1 wrt x}}\right)$
implies that ${\displaystyle \left(\cal L_{Y^{\left(N\right)}}\left(\tau\right)+x^{N}R^{\left(N\right)}\right)}$
and ${\displaystyle \left(\cal L_{Y_{\theta}^{\left(N\right)}}\left(\tau\right)+x^{N}R_{\theta}^{\left(N\right)}\right)}$
are divisible by $x^{N+1}$, so that we can define:
\begin{eqnarray*}
\tilde{R}^{\left(N+1\right)}\left(x,\mathbf{z}\right) & := & \left(\frac{\cal L_{Y^{\left(N\right)}}\left(\tau\right)+x^{N}R^{\left(N\right)}}{x^{N+1}}\right)\circ\Lambda_{\tau}^{-1}\left(x,\mathbf{z}\right)\\
\tilde{R}_{\theta}^{\left(N+1\right)}\left(x,\mathbf{z}\right) & := & \left(\frac{\cal L_{Y_{\theta}^{\left(N\right)}}\left(\tau\right)+x^{N}R_{\theta}^{\left(N\right)}}{x^{N+1}}\right)\circ\Lambda_{\tau}^{-1}\left(x,\mathbf{z}\right)\quad.
\end{eqnarray*}
By Proposition \ref{prop: compositon summable}, $\tilde{R}^{\left(N+1\right)}$
$\big($\emph{resp.} $\tilde{D}^{\left(N\right)}$$\big)$ is 1-summable
in the direction $\theta$, of 1-sum $\tilde{R}_{\theta}^{\left(N+1\right)}$
$\big($\emph{resp.} $\tilde{D}_{\theta}^{\left(N\right)}$ $\big)$.\\
Finally, notice that ${\displaystyle d^{\left(N+1\right)}\circ\Lambda_{\tau}\left(0,\mathbf{y}\right)=d^{\left(N\right)}\left(y_{1},y_{2}\right)}$,
$\tau\left(0,\mathbf{y}\right)=0$ and then $\Lambda_{\tau}\left(0,\mathbf{y}\right)=\left(0,y_{1},y_{2}\right)$
if $N>1$, so that $d^{\left(N+1\right)}=d^{\left(N\right)}$ when
$N>1$.
\item No we deal with the tangential part. Let us write
\[
\tilde{D}^{\left(N\right)}\left(x,\mathbf{z}\right)=\sum_{n_{1}+n_{2}\geq1}\tilde{D}_{n_{1},n_{2}}^{\left(N\right)}\left(x\right)z_{1}^{n_{1}}z_{2}^{n_{2}}
\]
and
\[
\tilde{D}_{\mbox{res}}^{\left(N\right)}\left(0,v\right)=\sum_{k\geq1}\tilde{D}_{k,k}^{\left(N\right)}\left(0\right)v^{k}\quad.
\]
Exactly as in the previous case which dealt with the ``radial part''
(in fact the computations are even easier here), we can prove the
existence of a germ of an analytic function $\sigma$, solution to
the equation: 
\begin{eqnarray}
\cal L_{\tilde{Y}^{\left(N\right)}}\left(\sigma\right) & = & -x^{N}\tilde{D}^{\left(N\right)}+\left(x^{N+1}D^{\left(N+1\right)}\right)\circ\Gamma_{\sigma}\nonumber \\
\cal L_{\tilde{Y}_{\theta}^{\left(N\right)}}\left(\sigma\right) & = & -x^{N}\tilde{D}_{\theta}^{\left(N\right)}+\left(x^{N+1}D_{\theta}^{\left(N+1\right)}\right)\circ\Gamma_{\sigma}\qquad,\label{eq: eq homologic rec-1}
\end{eqnarray}
for a good choice of $D^{\left(N+1\right)},D_{\theta}^{\left(N+1\right)}$,
with 
\[
\Gamma_{\sigma}\left(x,\mathbf{z}\right):=\left(x,y_{1}\exp\left(-\sigma\left(x,\mathbf{z}\right)\right),y_{2}\exp\left(\sigma\left(x,\mathbf{z}\right)\right)\right)\qquad
\]
and 
\[
\sigma\left(x,\mathbf{z}\right)=x^{N-1}\sigma_{0}\left(z_{1}z_{2}\right)+x^{N}\sigma_{1}\left(\mathbf{z}\right)\qquad,
\]
where ${\displaystyle \sigma_{1}\left(\mathbf{z}\right)=\sum_{j_{1}\neq j_{2}}\sigma_{1,j_{1},j_{2}}z_{1}^{j_{1}}z_{2}^{j_{2}}}$.
Notice that $\Gamma_{\sigma}$ is the formal flow of $\overrightarrow{\cal C}$
at ``time'' $\sigma\left(x,\mathbf{z}\right)$.\\
Again, as in the first case with the ``radial part'', we have on
a $\Gamma_{\sigma}\in\fdiffid$ and then also $\Gamma_{\sigma}^{-1}\in\fdiffid$.
If we consider $R^{\left(N+1\right)}$ and $R_{\theta}^{\left(N+1\right)}$
such that
\begin{eqnarray*}
R^{\left(N+1\right)}\left(x,\mathbf{y}\right) & := & \tilde{R}^{\left(N+1\right)}\circ\Gamma_{\sigma}^{-1}\left(x,\mathbf{y}\right)\\
R_{\theta}^{\left(N+1\right)}\left(x,\mathbf{z}\right) & := & \tilde{R}_{\theta}^{\left(N+1\right)}\circ\Gamma_{\sigma}^{-1}\left(x,\mathbf{y}\right)\qquad,
\end{eqnarray*}
then it follows from $\left(\mbox{\ref{eq: eq homologic rec-1}}\right)$
that 
\begin{eqnarray*}
\left(\Gamma_{\sigma}\right)_{*}\left(\tilde{Y}^{\left(N\right)}\right) & = & Y_{0}+\left(d^{\left(N+1\right)}\left(y_{1}y_{2}\right)+x^{N+1}D^{\left(N+1\right)}\left(x,\mathbf{y}\right)\right)\overrightarrow{\cal C}\\
 &  & +x^{N+1}R^{\left(N+1\right)}\left(x,\mathbf{y}\right)\overrightarrow{\cal R}\\
\left(\Gamma_{\sigma}\right)_{*}\left(\tilde{Y}_{\theta}^{\left(N\right)}\right) & = & Y_{0}+\left(d^{\left(N+1\right)}\left(y_{1}y_{2}\right)+x^{N+1}D_{\theta}^{\left(N+1\right)}\left(x,\mathbf{y}\right)\right)\overrightarrow{\cal C}\\
 &  & +x^{N+1}R_{\theta}^{\left(N+1\right)}\left(x,\mathbf{y}\right)\overrightarrow{\cal R}\qquad.
\end{eqnarray*}
In fact, we choose:
\begin{eqnarray*}
D^{\left(N+1\right)}\left(x,\mathbf{y}\right) & := & \left(\frac{\cal L_{\tilde{Y}^{\left(N\right)}}\left(\sigma\right)+x^{N}\tilde{D}^{\left(N\right)}}{x^{N+1}}\right)\circ\Gamma_{\sigma}^{-1}\left(x,\mathbf{y}\right)\\
D_{\theta}^{\left(N+1\right)}\left(x,\mathbf{y}\right) & := & \left(\frac{\cal L_{\tilde{Y}_{\theta}^{\left(N\right)}}\left(\sigma\right)+x^{N}\tilde{D}_{\theta}^{\left(N\right)}}{x^{N+1}}\right)\circ\Gamma_{\sigma}^{-1}\left(x,\mathbf{y}\right)\quad.
\end{eqnarray*}
By Proposition \ref{prop: compositon summable}, $D^{\left(N+1\right)}$
$\big($\emph{resp.} $R^{\left(N+1\right)}$$\big)$ is 1-summable
in the direction $\theta$, of 1-sum $D_{\theta}^{\left(N+1\right)}$
$\big($\emph{resp.} $R_{\theta}^{\left(N+1\right)}$ $\big)$.
\end{enumerate}
\end{itemize}
\end{proof}

\subsection{Proof of Proposition \ref{prop: forme pr=0000E9par=0000E9e ordre N}}

~

We now give a short proof of Proposition \ref{prop: forme pr=0000E9par=0000E9e ordre N},
using the different results proved in this section.
\begin{proof}[\emph{Proof of Proposition \ref{prop: forme pr=0000E9par=0000E9e ordre N}.}]

We just have to use consecutively Proposition \ref{prop: preparation sur hypersurface invariante}
(applied to $Y_{0}:=Y_{\mid\acc{x=0}}$), Proposition \ref{prop: diag prep},
Proposition \ref{prop: preparation} and finally Proposition \ref{prop: ordre N avec eq homologique},
using at each time Corollary \ref{cor: summability push-forward}
in order to obtain the directional 1-summability.
\end{proof}

\section{\label{sec:Sectorial-analytic-normalization}Sectorial analytic normalization}

The aim of this section is to prove that for any $Y\in\snodiag$ and
for any ${\displaystyle \eta\in\big[\pi,2\pi\big[}$, there exists
a pair 
\[
\left(\Phi_{+},\Phi_{-}\right)\in\diffsect[\arg\left(i\lambda\right)][\eta]\times\diffsect[\arg\left(-i\lambda\right)][\eta]
\]
whose elements analytically conjugate $Y$ to its normal form $\ynorm$
(given by Theorem \ref{thm: forme normalel formelle}) in sectorial
neighborhoods of the origin with wide opening. The uniqueness of $\Phi_{+}$
and $\Phi_{-}$ will be proved in the next section. The existence
of sectorial normalizing maps $\Phi_{+}$ and $\Phi_{-}$ in domains
of the form $\cal S_{+}\in\cal S_{\arg\left(i\lambda\right),\eta}$
and $\cal S_{-}\in\cal S_{\arg\left(-i\lambda\right),\eta}$ for all
${\displaystyle \eta\in\big[\pi,2\pi\big[}$, is equivalent to the
existence of a sectorial normalizing map $\Phi_{\theta}$ in domains
$\cal S\in{\cal S}_{\theta,\pi}$, for all $\theta\in\ww R$ such
that $\theta\neq\arg\left(\pm\lambda\right)$. In the next section
we will also prove that $\Phi_{+}$ and $\Phi_{-}$ both admit the
unique formal normalizing map $\hat{\Phi}$ (given by Theorem \ref{thm: forme normalel formelle})
as weak Gevrey-1 asymptotic expansion in domains $\cal S_{+}\in\cal S_{\arg\left(i\lambda\right),\eta}$
and $\cal S_{-}\in\cal S_{\arg\left(-i\lambda\right),\eta}$ respectively.
In particular, this will prove that $\hat{\Phi}$ is weakly 1-summable
in every direction $\theta\neq\arg\left(\pm\lambda\right)$.

We start with a vector field $Y^{\left(N\right)}$ normalized up to
order $N\geq2$ as in Proposition \ref{prop: forme pr=0000E9par=0000E9e ordre N}.
First of all, we prove the existence of germs of sectorial analytic
functions $\alpha_{+}\in\cal O\left(\cal S_{+}\right),\alpha_{-}\in\cal O\left(\cal S_{-}\right)$,
which are solutions to homological equations of the form: 
\[
\cal L_{Y^{\left(N\right)}}\left(\alpha_{\pm}\right)=x^{M+1}A_{\pm}\left(x,\mathbf{y}\right)\qquad,
\]
where $M\in\ww N_{>0}$ and $A_{\pm}\in{\cal O}\left(\cal S_{\pm}\right)$
is analytic in $\cal S_{\pm}$ (see Lemma \ref{lem: solution eq homo}).
In order to construct such solutions, we will integrate some appropriate
meromorphic 1-form on asymptotic paths (see subsection \ref{sub: proof lemma}).
Once we have these solutions $\alpha_{+},\alpha_{-}$, we will construct
the desired germs of sectorial diffeomorphisms as the flows of some
elementary linear vector fields at ``time'' $\alpha_{\pm}\left(x,\mathbf{y}\right)$.
After that, we will prove in subsection \ref{sub :Sectorial isotropies in big sectors}
that there exist unique germs of sectorial fibered diffeomorphisms
tangent to the identity which conjugate $Y\in\snofib$ to its normal
form, by studying the sectorial isotropies in sectorial domains with
wide opening.

We go on using the notations introduced in subsection \ref{sub:Summable-normal-form},
\emph{i.e.}
\begin{itemize}
\item $\lambda\in\ww C^{*}$
\item $a_{1}+a_{2}\notin\ww Q_{\leq0}$
\item $\overrightarrow{\cal C}:=-y_{1}\pp{y_{1}}+y_{2}\pp{y_{2}}$
\item $\overrightarrow{\cal R}:=y_{1}\pp{y1}+y_{2}\pp{y_{2}}$
\item $Y_{0}:=\lambda\overrightarrow{\cal C}+x\left(x\pp x+a_{1}y_{1}\pp{y_{1}}+a_{2}y_{2}\pp{y2}\right)$.
\end{itemize}
For ${\displaystyle \epsilon\in\left]0,\frac{\pi}{2}\right[}$ and
$r>0$, we will consider two sectors, namely 
\[
S_{+}\left(r,\epsilon\right):=S\left(r,\arg\left(i\lambda\right)-\frac{\pi}{2}-\epsilon,\arg\left(i\lambda\right)+\frac{\pi}{2}+\epsilon\right)
\]
 and 
\[
S_{-}\left(r,\epsilon\right)=S\left(r,\arg\left(-i\lambda\right)-\frac{\pi}{2}-\epsilon,\arg\left(-i\lambda\right)+\frac{\pi}{2}+\epsilon\right).
\]

Let us consider a (weakly) 1-summable non-degenerate div-integrable
doubly-resonant saddle-node normalized up to an order $N+2$, with
$N>0$: 
\begin{eqnarray*}
Y^{\left(N+2\right)} & = & Y_{0}+\left(c\left(y_{1}y_{2}\right)+x^{N+2}D^{\left(N+2\right)}\left(x,\mathbf{y}\right)\right)\overrightarrow{\cal C}+x^{N+2}R^{\left(N+2\right)}\left(x,\mathbf{y}\right)\overrightarrow{\cal R}\\
 &  & \mbox{(formal)}\\
Y_{\pm}^{\left(N+2\right)} & = & Y_{0}+\left(c\left(y_{1}y_{2}\right)+x^{N+2}D_{\pm}^{\left(N+2\right)}\left(x,\mathbf{y}\right)\right)\overrightarrow{\cal C}+x^{N+2}R_{\pm}^{\left(N+2\right)}\left(x,\mathbf{y}\right)\overrightarrow{\cal R}\\
 &  & \mbox{(analytic in }S_{\pm}\left(r,\epsilon\right)\times\mathbf{D\left(0,r\right)})
\end{eqnarray*}
where $D^{\left(N+2\right)},R^{\left(N+2\right)}$ are of order at
least one with respect to $\mathbf{y}$, and (weak) 1-summable in
every direction $\theta\in\ww R$ with $\theta\neq\arg\left(\pm\lambda\right)$:
their respective (weak) 1-sums in the direction $\arg\left(\pm i\lambda\right)$
are $D_{\pm}^{\left(N+2\right)},R_{\pm}^{\left(N+2\right)}$, which
can be analytically extended in $S_{\pm}\left(r,\epsilon\right)\times\left(\ww C^{2},0\right)$.
In order to have the complete sectorial normalizing map, we have to
assume now that our vector field is \textbf{\emph{strictly non-degenerate}},
\emph{i.e.}

\[
\fbox{\ensuremath{\Re\left(a_{1}+a_{2}\right)}>0}\,\,\,.
\]

\begin{prop}
\label{prop: norm sect}Under the assumptions above, for all $\eta\in\left]\pi,2\pi\right[$,
there exist two germs of sectorial fibered diffeomorphisms
\[
\begin{cases}
\Psi_{+}\in\diffsect[\arg\left(i\lambda\right)][\eta]\\
\Psi_{-}\in\diffsect[\arg\left(-i\lambda\right)][\eta]
\end{cases}
\]
 of the form 
\begin{eqnarray*}
\Psi_{\pm}:\left(x,\mathbf{y}\right) & \mapsto & \left(x,\mathbf{y}+\tx O\left(\norm{\mathbf{y}}^{2}\right)\right)\,\,,
\end{eqnarray*}
 which conjugate $Y_{\pm}^{\left(N+2\right)}$ to its formal normal
form 
\[
\ynorm=x^{2}\pp x+\left(-\lambda+a_{1}x-c\left(y_{1}y_{2}\right)\right)y_{1}\pp{y_{1}}+\left(\lambda+a_{2}x+c\left(y_{1}y_{2}\right)\right)y_{2}\pp{y_{2}}\qquad,
\]
where $c\left(v\right)\in v\ww C\acc v$ is the germ of an analytic
function in $v:=y_{1}y_{2}$ vanishing at the origin. Moreover, we
can choose $\Psi_{\pm}$ above such that 
\[
\Psi_{\pm}\left(x,\mathbf{y}\right)=\tx{Id}\left(x,\mathbf{y}\right)+x^{N}\mathbf{P}_{\pm}^{\left(N\right)}\left(x,\mathbf{y}\right)\,\,,
\]
where $\mathbf{P}_{\pm}^{\left(N\right)}=\left(0,P_{1,\pm},P_{2,\pm}\right)$
is analytic in $S_{\pm}\left(r,\epsilon\right)\times\left(\ww C^{2},0\right)$
(for some $r>0$ and $\epsilon>\frac{\eta}{2}$) and of order at least
two with respect to $\mathbf{y}$.
\end{prop}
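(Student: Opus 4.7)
The plan is to adapt the formal inductive scheme of Proposition \ref{prop: ordre N avec eq homologique} to the analytic sectorial category, but crucially to solve the resulting homological equations in a single pass on the whole sector---rather than iterating order by order, which would only yield formal convergence. Concretely, one seeks $\Psi_\pm$ as a composition $\Gamma_{\sigma_\pm} \circ \Lambda_{\tau_\pm}$, where $\Lambda_{\tau}(x,\mathbf{y}) := (x, y_1 e^{\tau}, y_2 e^{\tau})$ is the flow at time $\tau$ of $\overrightarrow{\mathcal{R}}$ and $\Gamma_{\sigma}(x,\mathbf{y}) := (x, y_1 e^{-\sigma}, y_2 e^{\sigma})$ is the flow at time $\sigma$ of $\overrightarrow{\mathcal{C}}$. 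Choosing $\tau_\pm, \sigma_\pm = \tx O(x^N)$ analytic in the sectorial domain automatically produces $\Psi_\pm$ of the required form $\tx{Id} + x^N\mathbf{P}_\pm^{(N)}$.

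Expanding the conjugacy equation $(\Psi_\pm)_*(Y_\pm^{(N+2)}) = \ynorm$ yields a coupled nonlinear system of the shape
\begin{align*}
\cal L_{Y_\pm^{(N+2)}}(\tau_\pm) &= -x^{N+2}R_\pm^{(N+2)} + x^{2N+2}U_\pm(x,\mathbf{y},\tau_\pm,\sigma_\pm),\\
\cal L_{Y_\pm^{(N+2)}}(\sigma_\pm) &= -x^{N+2}D_\pm^{(N+2)} + x^{2N+2}V_\pm(x,\mathbf{y},\tau_\pm,\sigma_\pm),
\end{align*}
with $U_\pm,V_\pm$ analytic in their arguments and of order $\geq 0$ in the last two. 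A fixed-point argument on an appropriate Banach space of sectorial functions then reduces everything to a single \textbf{key lemma}: for every $M\geq N$ and every $A$ analytic and bounded on $S_\pm(r,\epsilon)\times\mathbf{D}(\mathbf{0},\mathbf{r})$, there exists an analytic $\alpha_\pm$ on a slightly smaller sectorial domain solving the \emph{linear} homological equation $\cal L_{Y_\pm^{(N+2)}}(\alpha_\pm) = x^{M+1}A_\pm$, with $\alpha_\pm = \tx O(x^M)$ and explicit estimates in the sup-norm.

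To prove this key lemma I would follow Teyssier's geometric approach: pick a meromorphic $1$-form $\omega_A$ with $i_{Y_\pm^{(N+2)}}(\omega_A) = x^{M+1}A_\pm$, and set
$$\alpha_\pm(x,\mathbf{y}) := \int_{\gamma_{(x,\mathbf{y})}} \omega_A\,,$$
where $\gamma_{(x,\mathbf{y})}$ is an integral curve of $Y_\pm^{(N+2)}$ joining the origin asymptotically to $(x,\mathbf{y})$. Since the $x$-component of $Y_\pm^{(N+2)}$ is exactly $x^2\pp x$, trajectories are naturally parameterized by $u := -1/x$ (along which $u$ evolves linearly); the opening of $S_\pm$ being strictly greater than $\pi$ and centered at $\arg(\pm i\lambda)$ ensures that for every $(x,\mathbf{y})$ in the domain one can choose a ray $u \mapsto u_0 + s v$ going to infinity that stays in the $u$-image of $S_\pm$ while avoiding the Stokes directions $\arg(\pm \lambda)$. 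The evolution of $\mathbf{y}$ along this path is analyzed by a subsidiary contraction argument: the dominant linear part $\tx{diag}(-\lambda,\lambda)/x^2$ contributes only an oscillating factor on such rays, while the affine perturbation $\tx{diag}(a_1,a_2)/x$ produces a net magnitude factor $|x|^{\Re(a_1+a_2)}$. Combined with the $x^{M+1}$ prefactor of $\omega_A$, this makes the integral absolutely convergent and of order $\tx O(x^M)$.

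The main obstacle will be this key lemma, where two technical difficulties must be handled together: (i) the construction of the asymptotic paths must be carried out coherently over the \emph{entire} wide sector $S_\pm$, so that the resulting $\alpha_\pm$ is single-valued and analytic---the only obstructions to such a choice would come from the Stokes directions $\arg(\pm\lambda)$, which by design lie precisely on the boundary of $S_\pm$; and (ii) one needs uniform control on $\mathbf{y}(u)$ along the path up to the boundary of the polydisc $\mathbf{D}(\mathbf{0},\mathbf{r})$, which is where the strict non-degeneracy hypothesis $\Re(a_1+a_2) > 0$ becomes indispensable, as it prevents any ``resonant'' blow-up of the $\mathbf{y}$-component as the trajectory approaches the origin. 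Once the lemma is established, the assembly of $\tau_\pm,\sigma_\pm$ and then of $\Psi_\pm$ by the contraction mentioned above is essentially routine.
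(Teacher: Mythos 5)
Your overall strategy is the right one, and for the key lemma it coincides with the paper's: both reduce the proposition to linear homological equations $\cal L_{Z_{\pm}}\left(\alpha_{\pm}\right)=x^{M+1}A_{\pm}$ solved by integrating a meromorphic $1$-form along paths asymptotic to the origin and tangent to the foliation, with $\Re\left(a_{1}+a_{2}\right)>0$ providing the decay needed for convergence and for the bound $\alpha_{\pm}=\tx O\left(x^{M}\right)$. Where you diverge is in the assembly. The paper needs neither a coupled system nor a fixed point: it exploits the triangular structure of the problem. One first conjugates by the flow of $\overrightarrow{\cal R}$ at a time $\rho_{\pm}$ solving the \emph{single} linear equation $\cal L_{Y_{\pm}^{\left(N+2\right)}}\left(\rho_{\pm}\right)=-x^{N+2}R_{\pm}^{\left(N+2\right)}$; since $\cal L_{Y}\left(y_{i}e^{\rho}\right)=e^{\rho}\left(\cal L_{Y}\left(y_{i}\right)+y_{i}\cal L_{Y}\left(\rho\right)\right)$, this kills the radial part \emph{exactly}, not merely to higher order, leaving $Y_{0}+\left(D_{\pm}\circ\varphi_{\pm}^{-1}\right)\overrightarrow{\cal C}$. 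A second, independent linear equation for the time of the flow of $\overrightarrow{\cal C}$ then removes the residual tangential perturbation. Your coupled system with remainders $x^{2N+2}U_{\pm},x^{2N+2}V_{\pm}$ and a contraction could be made to work, but it is strictly heavier, and it runs into a point you do not address: your key lemma produces a solution only on a \emph{slightly smaller} sectorial domain, so any iteration (even packaged as a Banach fixed point) must be arranged so that the domain does not shrink at each step. The exact two-step scheme sidesteps this, losing domain only twice.

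Two further inaccuracies are worth flagging. First, the directions $\arg\left(\pm\lambda\right)$ do \emph{not} lie on the boundary of $S_{\pm}$: the sector is bisected by $\arg\left(\pm i\lambda\right)$ with half-opening $\frac{\pi}{2}+\epsilon>\frac{\pi}{2}$, so they lie in its interior. The real constraint on the asymptotic paths is not to "avoid" these directions but to force each path to approach $0$ tangentially to $\arg\left(\pm i\lambda\right)$, so that $\Re\left(\lambda/x\right)$ stays bounded and the factors $e^{\pm\lambda/x}$ governing $y_{1},y_{2}$ remain of moderate modulus; for base points near the edges of the wide sector the path must first travel angularly, during which $\norm{\mathbf{y}}$ may grow, and controlling that growth is precisely what forces the stable domain $\Omega_{\pm}$ to be a proper, spiraling subdomain of $S_{\pm}\left(r,\epsilon\right)\times\mathbf{D}\left(\mathbf{0},\mathbf{r}\right)$. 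Second, obtaining decay of both $\abs{y_{1}}$ and $\abs{y_{2}}$ from the single hypothesis $\Re\left(a_{1}+a_{2}\right)>0$ (rather than $\Re\left(a_{1}\right),\Re\left(a_{2}\right)>0$ separately) requires the paper's time reparameterization by $1+\frac{a_{2}-a_{1}}{2}x+C_{\pm}$, which equalizes the radial growth rates of the two components at $\frac{a_{1}+a_{2}}{2}$; your "net magnitude factor $\abs x^{\Re\left(a_{1}+a_{2}\right)}$" is correct for the product $y_{1}y_{2}$ but needs this symmetrization to apply to each component individually.
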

By combining Propositions \ref{prop: forme pr=0000E9par=0000E9e ordre N}
and \ref{prop: norm sect} we immediately obtain the following result.
\begin{cor}
\label{cor: existence normalisations sectorielles}Let $Y\in\snofib$
be a strictly non-degenerate diagonal doubly-resonant saddle-node
which is div-integrable. Then, for all $\eta\in\left]\pi,2\pi\right[$,
there exist two germs of sectorial fibered diffeomorphisms
\[
\begin{cases}
\Phi_{+}\in\diffsect[\arg\left(i\lambda\right)][\eta]\\
\Phi_{-}\in\diffsect[\arg\left(-i\lambda\right)][\eta]
\end{cases}
\]
 tangent to the identity such that:
\begin{eqnarray*}
\left(\Phi_{\pm}\right)_{*}\left(Y\right) & = & x^{2}\pp x+\left(-\lambda+a_{1}x-c\left(y_{1}y_{2}\right)\right)y_{1}\pp{y_{1}}+\left(\lambda+a_{2}x+c\left(y_{1}y_{2}\right)\right)y_{2}\pp{y_{2}}\\
 & =: & \ynorm\,\,,
\end{eqnarray*}
where $\lambda\in\ww C^{*}$, $\Re\left(a_{1}+a_{2}\right)>0$, and
$c\left(v\right)\in v\ww C\acc v$ is the germ of an analytic function
in $v:=y_{1}y_{2}$ vanishing at the origin.
\end{cor}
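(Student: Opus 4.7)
The plan is to deduce this corollary by a two-step assembly, combining the 1-summable formal preparation of Proposition~\ref{prop: forme pr=0000E9par=0000E9e ordre N} with the sectorial normalization of Proposition~\ref{prop: norm sect}. First I would apply Proposition~\ref{prop: forme pr=0000E9par=0000E9e ordre N} to $Y$ with the integer $N$ replaced by some $N+2$, $N\geq 1$; this produces a formal fibered diffeomorphism $\Psi^{(N+2)}\in\fdiffformid$, tangent to the identity, 1-summable in every direction $\theta\neq\arg(\pm\lambda)$, such that $(\Psi^{(N+2)})_*(Y)=Y^{(N+2)}$ is normalized up to order $N+2$ with analytic resonant tangential term $d^{(N+2)}(y_1y_2)$. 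By Corollary~\ref{cor: summability push-forward}, the 1-sums $\Psi^{(N+2)}_\pm$ of $\Psi^{(N+2)}$ in the directions $\arg(\pm i\lambda)$ are germs of sectorial fibered diffeomorphisms tangent to the identity, defined on sectorial neighborhoods with opening $\eta\in\,]\pi,2\pi[$ arbitrarily close to $2\pi$, and they conjugate $Y$ to the 1-sum $Y^{(N+2)}_\pm$ of $Y^{(N+2)}$ in the corresponding direction.

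Next I would verify that $Y^{(N+2)}_\pm$ meets the hypotheses of Proposition~\ref{prop: norm sect}: the explicit shape of the normal form produced by Proposition~\ref{prop: forme pr=0000E9par=0000E9e ordre N} carries the resonant tangential part symmetrically as $\mp d^{(N+2)}(y_1y_2)$, so $Y^{(N+2)}$ is automatically div-integrable; the residue $\tx{res}(Y^{(N+2)})=a_1+a_2=\tx{res}(Y)$ still satisfies $\Re(a_1+a_2)>0$ since $Y\in\snodiag$; and the sectorial remainders $D^{(N+2)}_\pm$, $R^{(N+2)}_\pm$ are analytic on the required sectors by 1-summability. Proposition~\ref{prop: norm sect} then supplies sectorial fibered diffeomorphisms
\[
\Psi_\pm\in\diffsect[\arg(\pm i\lambda)][\eta], \qquad \Psi_\pm(x,\mathbf{y})=\tx{Id}(x,\mathbf{y})+x^N\mathbf{P}^{(N)}_\pm(x,\mathbf{y}),
\]
conjugating $Y^{(N+2)}_\pm$ to the final normal form $\ynorm$, with the div-integrability of $Y^{(N+2)}$ forcing the conclusion $c_1=-c_2=:-c$ in the notation of the corollary.

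Finally, the desired maps are produced by composition:
\[
\Phi_\pm := \Psi_\pm \circ \Psi^{(N+2)}_\pm \in \diffsect[\arg(\pm i\lambda)][\eta].
\]
Both factors are fibered and tangent to the identity, so by the group structure on $\diffsect$ given in Definition~\ref{def: sectorial diff}, the composite $\Phi_\pm$ is a germ of sectorial fibered diffeomorphism tangent to the identity, and $(\Phi_\pm)_*(Y)=\ynorm$ by construction. There is no real obstacle in this argument since both ingredients have already been established; the only point that warrants explicit mention is the propagation of div-integrability through Proposition~\ref{prop: forme pr=0000E9par=0000E9e ordre N}, which is visible on the explicit form of the intermediate model and which is what allows Proposition~\ref{prop: norm sect} to be invoked in its present form.
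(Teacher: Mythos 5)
Your proposal is correct and follows exactly the route the paper takes: the paper states that the corollary is obtained immediately by combining Proposition \ref{prop: forme pr=0000E9par=0000E9e ordre N} (applied at order $N+2$, with the $1$-sums taken via Corollary \ref{cor: summability push-forward}) with Proposition \ref{prop: norm sect}, and then composing $\Phi_{\pm}=\Psi_{\pm}\circ\Psi_{\pm}^{(N+2)}$. The details you supply (propagation of div-integrability, preservation of the residue, analyticity of the sectorial remainders) are exactly the points implicit in the paper's one-line argument.
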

As already mentioned, we prove in the next section that $\Phi_{+}$
and $\Phi_{-}$ are unique as germs (see Proposition \ref{prop: unique normalizations}),
and that they are the weak 1-sums of the unique formal normalizing
map $\hat{\Phi}$ given by Theorem \ref{thm: forme normalel formelle}
(see Proposition \ref{prop: Weak sectorial normalizations}).

\subsection{Proof of Proposition \ref{prop: norm sect}. }

~

We give here two consecutive propositions which allow to prove Proposition
\ref{prop: norm sect} as an immediate consequence. When we say that
a function $f:U\rightarrow\ww C$ is \emph{dominated} by another $g:U\rightarrow\ww R_{+}$
in $U$, it means that there exists $L>0$ such that for all $u\in U$,
we have $\abs{f\left(u\right)}\leq L.g\left(u\right)$.
\begin{prop}
\label{prop: radial part}

Let ${\displaystyle Y_{\pm}^{\left(N+2\right)}=Y_{0}+D_{\pm}\overrightarrow{\cal C}+R_{\pm}\overrightarrow{\cal R}},$
where 
\[
\begin{cases}
D_{\pm}\left(x,\mathbf{y}\right)=c\left(y_{1}y_{2}\right)+x^{N+2}D_{\pm}^{\left(N+2\right)}\left(x,\mathbf{y}\right)\\
R_{\pm}\left(x,\mathbf{y}\right)=x^{N+2}R_{\pm}^{\left(N+2\right)}\left(x,\mathbf{y}\right)
\end{cases}\qquad,
\]
with $N\in\ww N_{>0}$, $c\left(v\right)\in v\ww C\acc v$ of order
at least one, and $D_{\pm}^{\left(N+2\right)},R_{\pm}^{\left(N+2\right)}$
analytic in $S_{\pm}\left(r,\epsilon\right)\times\left(\ww C^{2},0\right)$
and dominated by $\norm{\mathbf{y}}_{\infty}$. Assume that $\Re\left(a_{1}+a_{2}\right)>0$.

Then, possibly by reducing $r>0$ and the neighborhood $\left(\ww C^{2},0\right)$,
there exist two germs of sectorial fibered diffeomorphisms $\varphi_{+}$
and $\varphi_{-}$ in $S_{+}\left(r,\epsilon\right)\times\left(\ww C^{2},0\right)$
and $S_{-}\left(r,\epsilon\right)\times\left(\ww C^{2},0\right)$
respectively, which conjugate $Y_{\pm}^{\left(N+2\right)}$ to 
\begin{eqnarray*}
Y_{\overrightarrow{\cal C},\pm} & := & Y_{0}+C_{\pm}\overrightarrow{\cal C}\qquad,
\end{eqnarray*}
where $C_{\pm}\left(x,\mathbf{y}\right)=D_{\pm}\circ\varphi_{\pm}^{-1}\left(x,\mathbf{z}\right)$.
Moreover we can chose $\varphi_{\pm}$ to be of the form 
\begin{eqnarray*}
\varphi_{\pm}\left(x,\mathbf{y}\right) & = & \left(x,y_{1}\exp\left(\rho_{\pm}\left(x,\mathbf{y}\right)\right),y_{2}\exp\left(\rho_{\pm}\left(x,\mathbf{y}\right)\right)\right)\qquad,
\end{eqnarray*}
where $\rho_{\pm}\left(x,\mathbf{y}\right)=x^{N+1}\tilde{\rho}_{\pm}\left(x,\mathbf{y}\right)$
and $\tilde{\rho}_{\pm}$ is analytic in $S_{\pm}\left(r,\epsilon\right)\times\left(\ww C^{2},0\right)$
and dominated by $\norm{\mathbf{y}}_{\infty}$.\end{prop}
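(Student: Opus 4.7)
The plan is to realize $\varphi_{\pm}$ as the time-$\rho_{\pm}$ flow of $\overrightarrow{\mathcal{R}}$, for a suitable sectorial function $\rho_{\pm}$, and to reduce the problem to solving a single homological equation. A direct computation, exactly analogous to the one carried out in the proof of Proposition~\ref{prop: ordre N avec eq homologique} for the formal map $\Lambda_\tau$, shows that if $\varphi_{\pm}(x,\mathbf{y}) = \bigl(x, y_1 e^{\rho_{\pm}(x,\mathbf{y})}, y_2 e^{\rho_{\pm}(x,\mathbf{y})}\bigr)$, then
\[
(\varphi_{\pm})_{*}\bigl(Y_{\pm}^{(N+2)}\bigr) = Y_0 + \bigl(D_{\pm}\circ\varphi_{\pm}^{-1}\bigr)\overrightarrow{\mathcal{C}} + \bigl(R_{\pm} + \mathcal{L}_{Y_{\pm}^{(N+2)}}(\rho_{\pm})\bigr)\circ\varphi_{\pm}^{-1}\,\overrightarrow{\mathcal{R}}\,.
\]
Hence the radial part is killed, and the tangential part becomes $C_{\pm} = D_{\pm}\circ\varphi_{\pm}^{-1}$, as soon as $\rho_{\pm}$ solves the homological equation
\begin{equation}\label{eq:homrad}
\mathcal{L}_{Y_{\pm}^{(N+2)}}(\rho_{\pm}) = -R_{\pm} = -x^{N+2}\,R_{\pm}^{(N+2)}(x,\mathbf{y})\,.
\end{equation}

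The next step is to invoke the sectorial homological equation lemma (Lemma~\ref{lem: solution eq homo}) to produce such a $\rho_{\pm}$, applied to the right-hand side $-x^{N+2}R_{\pm}^{(N+2)}$ (so the relevant order is $M+1 = N+2$ in the notation of the lemma). Since $R_{\pm}^{(N+2)}$ is dominated by $\|\mathbf{y}\|_{\infty}$ in $S_{\pm}(r,\epsilon)\times\mathbf{D}(\mathbf{0},\mathbf{r})$, the lemma provides a solution of the form $\rho_{\pm}(x,\mathbf{y}) = x^{N+1}\tilde{\rho}_{\pm}(x,\mathbf{y})$ with $\tilde{\rho}_{\pm}$ analytic on $S_{\pm}(r,\epsilon)\times\bigl(\mathbb{C}^2,0\bigr)$ and itself dominated by $\|\mathbf{y}\|_{\infty}$. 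Heuristically, the gain of one power of $x$ reflects the regular-singular nature of $Y_0$ in the variable $x$ (the factor $x^2\partial_x$), while the strict non-degeneracy assumption $\Re(a_1+a_2) > 0$ is what guarantees boundedness of the path-integral construction along asymptotic trajectories of $Y_{\pm}^{(N+2)}$ inside the wide sectors $S_{\pm}(r,\epsilon)$.

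It then remains to check that $\varphi_{\pm}$ so defined is a genuine germ of sectorial fibered diffeomorphism with the announced shape. Since $\rho_{\pm} = x^{N+1}\tilde{\rho}_{\pm}$ with $\tilde{\rho}_{\pm}$ bounded and vanishing on $\{\mathbf{y}=\mathbf{0}\}$, one has $\rho_{\pm} = \mathrm{O}\bigl(|x|^{N+1}\|\mathbf{y}\|_{\infty}\bigr)$ on the sectorial domain, so up to reducing $r$ and $\mathbf{r}$ we may assume $|\rho_{\pm}|$ arbitrarily small; the local inversion theorem then shows $\varphi_{\pm}$ is biholomorphic onto its image, which still contains a sectorial neighbourhood of the form $S_{\pm}(r',\epsilon)\times\mathbf{D}(\mathbf{0},\mathbf{r}')$. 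Expanding $e^{\rho_{\pm}} = 1 + \rho_{\pm} + \cdots$ gives
\[
\varphi_{\pm}(x,\mathbf{y}) = \bigl(x,\,y_1,\,y_2\bigr) + x^{N+1}\bigl(0,\,y_1\tilde{\rho}_{\pm} + \cdots,\,y_2\tilde{\rho}_{\pm} + \cdots\bigr)\,,
\]
which has the required form; in particular $\varphi_{\pm}$ is tangent to the identity up to order $\mathrm{O}(\|\mathbf{y}\|^2)$ since $\tilde{\rho}_{\pm}$ is dominated by $\|\mathbf{y}\|_{\infty}$.

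The main obstacle in this proof is of course the existence part, i.e.\ solving~\eqref{eq:homrad} with the claimed bounds. This is precisely where the path-integration method (following Teyssier~\cite{teyssier2004equation,Teyssier03}) is needed: one writes $\rho_{\pm}$ as the integral of a well-chosen meromorphic $1$-form along asymptotic characteristics of $Y_{\pm}^{(N+2)}$, and the strict non-degeneracy $\Re(a_1+a_2)>0$ is what forces these characteristics to accumulate on $\{x=0,\mathbf{y}=\mathbf{0}\}$ in $S_{\pm}(r,\epsilon)$ and makes the integral converge with the required estimates. Once Lemma~\ref{lem: solution eq homo} is in hand, the rest of Proposition~\ref{prop: radial part} reduces to the verifications sketched above.
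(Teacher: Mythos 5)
Your proposal is correct and follows essentially the same route as the paper: conjugate by the flow of $\overrightarrow{\cal R}$ at time $\rho_{\pm}$, reduce to the homological equation $\cal L_{Y_{\pm}^{(N+2)}}\left(\rho_{\pm}\right)=-x^{N+2}R_{\pm}^{(N+2)}$, and solve it by Lemma \ref{lem: solution eq homo} with $M=N+1$ and $A_{\pm}=-R_{\pm}^{(N+2)}$, exactly as in the paper. The only imprecision is the appeal to the local inversion theorem for the diffeomorphism property: local invertibility does not by itself give global injectivity on the sectorial domain, which is why the paper devotes Lemma \ref{lem: diffeo sectoriel} to a quantitative argument (mean value theorem plus Cauchy estimates on a slightly larger polydisc) showing that $\left(x,\mathbf{y}\right)\mapsto\left(x,y_{1}e^{f_{\pm}},y_{2}e^{g_{\pm}}\right)$ is injective once $f_{\pm},g_{\pm}$ are uniformly small — your observation that $\left|\rho_{\pm}\right|$ can be made small after shrinking the domain is the right input, but it needs this global perturbation argument rather than a purely local one.
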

\begin{rem}
Notice that $\varphi_{\pm}^{-1}$ is of the form 
\[
\varphi_{\pm}^{-1}\left(x,\mathbf{z}\right)=\left(x,z_{1}\left(1+x^{N+1}\vartheta\left(x,\mathbf{z}\right)\right),z_{2}\left(1+x^{N+1}\vartheta\left(x,\mathbf{z}\right)\right)\right)\qquad,
\]
where $\vartheta$ is analytic in $S_{\pm}\left(r,\epsilon\right)\times\left(\ww C^{2},0\right)$
and dominated by $\norm{\mathbf{z}}_{\infty}$. Consequently: 
\[
C_{\pm}\left(x,\mathbf{z}\right)=c\left(z_{1}z_{2}\right)+x^{N+1}C_{\pm}^{\left(N+1\right)}\left(x,\mathbf{z}\right)\qquad,
\]
where $c$ is the same as above and $C_{\pm}$ is analytic in $S_{\pm}\left(r,\epsilon\right)\times\left(\ww C^{2},0\right)$
and dominated by $\norm{\mathbf{z}}_{\infty}$.\end{rem}
\begin{prop}
\label{prop: tangential part}Let ${\displaystyle Y_{\cal C,\pm}:=Y_{0}+C_{\pm}\overrightarrow{\cal C}}$,
where 
\[
C_{\pm}\left(x,\mathbf{z}\right)=c\left(z_{1}z_{2}\right)+x^{N+1}C_{\pm}^{\left(N+1\right)}\left(x,\mathbf{z}\right)\qquad,
\]
with $N\in\ww N_{>0}$, $c\left(v\right)\in v\ww C\acc v$ of order
at least one, and $C_{\pm}^{\left(N+1\right)}$ analytic in $S_{\pm}\left(r,\epsilon\right)\times\left(\ww C^{2},0\right)$
and dominated by $\norm{\mathbf{z}}_{\infty}$ . Assume $\Re\left(a_{1}+a_{2}\right)>0$. 

Then, possibly by reducing $r>0$ and the neighborhood $\left(\ww C^{2},0\right)$,
there exist two germs of sectorial fibered diffeomorphisms $\psi_{+}$
and $\psi_{-}$ in $S_{+}\left(r,\epsilon\right)\times\left(\ww C^{2},0\right)$
and $S_{-}\left(r,\epsilon\right)\times\left(\ww C^{2},0\right)$
respectively, which conjugate $Y_{\cal C,\pm}$ to 
\begin{eqnarray*}
\ynorm & := & Y_{0}+c\left(v\right)\overrightarrow{\cal C}\qquad.
\end{eqnarray*}
Moreover, we can chose $\psi_{\pm}$ to be of the form 
\begin{eqnarray*}
\psi_{\pm}\left(x,\mathbf{z}\right) & = & \left(x,z_{1}\exp\left(-\chi_{\pm}\left(x,\mathbf{z}\right)\right),z_{2}\exp\left(\chi_{\pm}\left(x,\mathbf{z}\right)\right)\right)\qquad,
\end{eqnarray*}
where $\chi_{\pm}\left(x,\mathbf{z}\right)=x^{N}\tilde{\chi}_{\pm}\left(x,\mathbf{z}\right)$
and $\tilde{\chi}$ is analytic in $S_{\pm}\left(r,\epsilon\right)\times\left(\ww C^{2},0\right)$
and dominated by $\norm{\mathbf{z}}_{\infty}$.
\end{prop}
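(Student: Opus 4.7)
The proof proceeds by reducing the conjugacy equation to a single scalar homological equation, which is then solved by the main technical lemma of the section (Lemma \ref{lem: solution eq homo}).

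First I would set up the homological equation. The ansatz $\psi_\pm$ is precisely the time-$\chi_\pm$ flow of the vector field $\overrightarrow{\cal C} = -z_1\pp{z_1} + z_2\pp{z_2}$, whose defining property is that it preserves the resonant monomial $v = z_1 z_2$. Consequently, under the substitution $\mathbf{y} = \psi_\pm(x,\mathbf{z})$ we have $y_1 y_2 = z_1 z_2$, so that $c(y_1 y_2) = c(z_1 z_2)$ and the term $c(v)\overrightarrow{\cal C}$ transforms trivially. A direct computation of $\tx D\psi_\pm \cdot Y_{\cal C,\pm}$, component by component, then shows that the conjugacy condition $(\psi_\pm)_*(Y_{\cal C,\pm}) = \ynorm$ is equivalent to the single scalar equation
\[
\cal L_{Y_{\cal C,\pm}}(\chi_\pm) \;=\; c(z_1 z_2) - C_\pm(x,\mathbf{z}) \;=\; -x^{N+1} C_\pm^{(N+1)}(x,\mathbf{z}) \,.
\]
Indeed, the two components ($z_1$ and $z_2$) of the push-forward identity yield the same scalar equation, precisely because $\overrightarrow{\cal C}$ acts symmetrically on $z_1$ and $z_2$; this is the structural reason for using an ansatz of flow-type rather than a general fibered diffeomorphism.

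Next I would apply Lemma \ref{lem: solution eq homo} (solution of homological equations on sectorial domains by integration of a meromorphic $1$-form along asymptotic paths) to produce $\chi_\pm$. The right-hand side $-x^{N+1}C_\pm^{(N+1)}$ is analytic in $S_\pm(r,\epsilon)\times\left(\ww C^2,0\right)$, vanishes to order $N+1$ in $x$, and is dominated by $\norm{\mathbf{z}}_\infty$ in $\mathbf{z}$. Noting that $Y_{\cal C,\pm}$ itself is of the normalized-up-to-order-$(N+1)$ type handled by that lemma (its ``radial'' part has already been killed in Proposition \ref{prop: radial part}, leaving only the $\overrightarrow{\cal C}$-direction perturbation), the lemma provides a solution $\chi_\pm \in \cal O(S_\pm(r',\epsilon)\times\left(\ww C^2,0\right))$ of the form $\chi_\pm = x^N \tilde\chi_\pm$ with $\tilde\chi_\pm$ analytic on the sectorial domain and dominated by $\norm{\mathbf{z}}_\infty$, exactly as required.

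Finally I would verify that $\psi_\pm$ defines a germ of sectorial fibered diffeomorphism. Since $\chi_\pm(x,\mathbf{z}) = \tx O(x^N \norm{\mathbf{z}})$, by further shrinking $r$ and the polydisc we may assume $\abs{\chi_\pm} < \frac{1}{2}$ throughout $S_\pm(r,\epsilon)\times\left(\ww C^2,0\right)$; then $e^{\pm\chi_\pm}$ stays close to $1$ and $\psi_\pm$ is injective with analytic inverse, the image being again a sectorial neighborhood in $\germsect{\arg(\pm i\lambda)}{2\epsilon}$. Moreover $\psi_\pm(x,\mathbf{z}) - \tx{Id}(x,\mathbf{z}) = \tx O(x^N \norm{\mathbf{z}}^2)$, which gives the quantitative form announced in Proposition \ref{prop: norm sect} (with $N$ replaced by itself after combining with Proposition \ref{prop: radial part}).

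The only substantive difficulty is the invocation of Lemma \ref{lem: solution eq homo}; everything else is an essentially algebraic reduction made possible by the $\overrightarrow{\cal C}$-invariance of the resonant monomial $v$. Once that scalar reduction is observed, the proposition follows by the same mechanism already used for the radial part (Proposition \ref{prop: radial part}), with $\overrightarrow{\cal C}$ playing the role that $\overrightarrow{\cal R}$ played there.
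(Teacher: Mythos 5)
Your proof is correct and follows essentially the same route as the paper: reduce the conjugacy to the single scalar homological equation $\cal L_{Y_{\overrightarrow{\cal C},\pm}}\left(\chi_{\pm}\right)=-x^{N+1}C_{\pm}^{\left(N+1\right)}$ (using that the flow of $\overrightarrow{\cal C}$ preserves $v=z_{1}z_{2}$), solve it by Lemma \ref{lem: solution eq homo} with $M=N$ and $A_{\pm}=-C_{\pm}^{\left(N+1\right)}$, and check that the resulting flow-type map is a sectorial fibered diffeomorphism. The only step you state a bit too quickly is the injectivity of $\psi_{\pm}$: smallness of $\chi_{\pm}$ alone does not give injectivity of $\mathbf{z}\mapsto\left(z_{1}e^{-\chi_{\pm}},z_{2}e^{\chi_{\pm}}\right)$ — one also needs a bound on $\mathrm{D}_{\mathbf{z}}\left(e^{\pm\chi_{\pm}}\right)$ obtained by Cauchy estimates on a slightly larger polydisc — which is exactly what the paper's Lemma \ref{lem: diffeo sectoriel} supplies.
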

If we assume for a moment the two propositions above, the proof of
Proposition becomes obvious.
\begin{proof}[Proof of Proposition \ref{prop: norm sect}.]

It is an immediate consequence of the consecutive application of the
previous two propositions, just by taking $\Psi_{\pm}=\psi_{\pm}\circ\varphi_{\pm}$
with the notations above.
\end{proof}

\subsection{Proof of Propositions \ref{prop: radial part} and \ref{prop: tangential part}}

~

In order to prove Propositions \ref{prop: radial part} and \ref{prop: tangential part},
we will need the following lemmas. The first one gives the existence
of analytic solutions (in sectorial domains) to a homological equations
we need to solve.
\begin{lem}
\label{lem: solution eq homo}Let ${\displaystyle Z_{\pm}:=Y_{0}+C_{\pm}\left(x,\mathbf{y}\right)\overrightarrow{\cal C}+xR_{\pm}^{\left(1\right)}\left(x,\mathbf{y}\right)\overrightarrow{\cal R}}$,
with $C_{\pm},R_{\pm}^{\left(1\right)}$ analytic in $S_{\pm}\left(r,\epsilon\right)\times\left(\ww C^{2},0\right)$
and dominated by $\norm{\mathbf{y}}_{\infty}$ and let also $A_{\pm}\left(x,\mathbf{y}\right)$
be analytic in $S_{\pm}\left(r,\epsilon\right)\times\left(\ww C^{2},0\right)$
and dominated by $\norm{\mathbf{y}}_{\infty}$. Then for all $M\in\ww N_{>0}$,
possibly by reducing $r>0$ and the neighborhood $\left(\ww C^{2},0\right)$,
there exists a solution $\alpha_{\pm}$ to the homological equation
\begin{equation}
\cal L_{Z_{\pm}}\left(\alpha_{\pm}\right)=x^{M+1}A_{\pm}\left(x,\mathbf{y}\right)\qquad,\label{eq: eq homo lemme}
\end{equation}
such that $\alpha_{\pm}\left(x,\mathbf{y}\right)=x^{M}\tilde{\alpha}_{\pm}\left(x,\mathbf{y}\right)$,
where $\tilde{\alpha}_{\pm}$ is a germ of analytic function in $S_{\pm}\left(r,\epsilon\right)\times\left(\ww C^{2},0\right)$
and dominated by $\norm{\mathbf{y}}_{\infty}$.
\end{lem}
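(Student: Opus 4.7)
The plan is to construct $\alpha_\pm$ by path-integrating the right-hand side along complex-time trajectories of $Z_\pm$ asymptotic to the origin, following Arnold's method of characteristics \emph{à la} Teyssier. The key geometric observation is that the complex-time ray $t=\tau\cdot(\pm i)/\lambda$, $\tau\in[0,+\infty)$, is privileged for the sector $S_\pm$. Along this ray, the $x$-component of the $Y_0$-flow satisfies $1/x(\tau)=1/x_0\mp \tau i/\lambda$; for $x_0\in S_\pm(r,\epsilon)$, the point $\mp x_0 i/\lambda$ lies in a half-plane strictly away from $\wr_{>0}$, so $\tau\mapsto 1/x_0\mp\tau i/\lambda$ stays in the image of $S_\pm$ under $x\mapsto 1/x$ and $x(\tau)\to 0$ tangentially to $\arg(\pm i\lambda)$. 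Simultaneously, the linearized $y_j$-equations reduce to $\tx dy_j/\tx d\tau=\mp i\, y_j$, so $\abs{y_j(\tau)}$ merely oscillates. No other complex-time direction enjoys both of these properties.

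First I would construct the full perturbed trajectory $\gamma_\pm(\tau;x_0,\mathbf{y}_0)$ of $Z_\pm$ along this ray by a Banach fixed-point argument: rewrite the flow ODE as an integral equation via variation of parameters, and look for a fixed point in the space of continuous functions $(x(\cdot),\mathbf{y}(\cdot)):[0,+\infty)\to\wc^3$ equipped with the weighted sup-norms $\sup_\tau\abs{1\mp\tau x_0 i/\lambda}\abs{x(\tau)}$ and $\sup_\tau\norm{\mathbf{y}(\tau)}_\infty$. Since $C_\pm$, $R_\pm^{(1)}$, and $A_\pm$ all vanish at $\mathbf{y}=\mathbf{0}$, after further shrinkage of $r$ and of the polydisc $\mathbf{D}(\mathbf{0},\mathbf{r})$ the corresponding operator contracts on the closed ball of radius $(2\abs{x_0},2\norm{\mathbf{y}_0}_\infty)$, giving existence, holomorphic dependence on $(x_0,\mathbf{y}_0)$, and the uniform bounds $\abs{x(\tau)}\leq 2\abs{x_0}/\abs{1\mp\tau x_0 i/\lambda}$ and $\norm{\mathbf{y}(\tau)}_\infty\leq 2\norm{\mathbf{y}_0}_\infty$ on all of $[0,+\infty)$. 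I would then set
\[
\alpha_\pm(x_0,\mathbf{y}_0) := -\frac{\pm i}{\lambda}\int_0^{+\infty} x(\tau)^{M+1}\,A_\pm\bigl(\gamma_\pm(\tau;x_0,\mathbf{y}_0)\bigr)\,\tx d\tau.
\]
The identity $\cal L_{Z_\pm}(\alpha_\pm)=x^{M+1}A_\pm$ follows from differentiating under the integral and using $\tx d/\tx d\tau=(\pm i/\lambda)\cal L_{Z_\pm}$ along the flow, together with $\alpha_\pm\to 0$ at $\tau=+\infty$. The convergence and desired bound both come from the single elementary estimate
\[
\int_0^{+\infty}\frac{\abs{x_0}^{M+1}\,\tx d\tau}{\abs{1\mp\tau x_0 i/\lambda}^{M+1}}\ \lesssim\ \abs{x_0}^M,
\]
valid uniformly in $x_0\in S_\pm(r,\epsilon)$ because the line $\tau\mapsto 1\mp\tau x_0 i/\lambda$ stays at distance bounded below by a positive constant (depending only on $\epsilon$) from the origin and grows linearly. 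Combined with $\abs{A_\pm(\gamma_\pm(\tau))}\lesssim\norm{\mathbf{y}(\tau)}_\infty\leq 2\norm{\mathbf{y}_0}_\infty$, this yields $\abs{\alpha_\pm(x_0,\mathbf{y}_0)}\lesssim\abs{x_0}^M\norm{\mathbf{y}_0}_\infty$, hence the sought form $\alpha_\pm=x^M\tilde\alpha_\pm$ with $\tilde\alpha_\pm$ dominated by $\norm{\mathbf{y}}_\infty$. Analyticity of $\alpha_\pm$ in $(x_0,\mathbf{y}_0)$ follows from holomorphic dependence of $\gamma_\pm$ on its initial data, dominated convergence, and Morera.

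The main obstacle is the fixed-point step producing $\gamma_\pm$. Because the linearized $\mathbf{y}$-dynamics only oscillate along the chosen complex-time ray, there is no exponential damping available to absorb the nonlinear coupling between $x$ and $\mathbf{y}$; the contraction must come entirely from the smallness of $\abs{x_0}$ and $\norm{\mathbf{y}_0}_\infty$ combined with the $1/\tau$ decay of $x(\tau)$. The delicate point is the precise choice of weighted norm on the $x$-coordinate so as to exploit that decay while controlling the $\mathbf{y}$-feedback, and in verifying that the image stays inside the domain $S_\pm(r,\epsilon)\times\mathbf{D}(\mathbf{0},\mathbf{r})$ of definition of $Z_\pm$ uniformly for all $\tau\geq 0$. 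Once the trajectories are built with these uniform bounds, the remainder of the argument is a routine unravelling of the integral representation.
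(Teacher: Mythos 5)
Your overall strategy --- integrating $x^{M+1}A_{\pm}$ along trajectories asymptotic to the origin and recovering the homological equation by differentiating under the integral sign --- is exactly the paper's (this is the path-integration method the author attributes to Teyssier), and your final convergence estimate $\int_{0}^{+\infty}\abs{x(\tau)}^{M+1}\,\tx d\tau\lesssim\abs{x_{0}}^{M}$ is correct. The gap is in the choice of trajectories. You flow $Z_{\pm}$ itself along the straight complex-time ray $t=\pm i\tau/\lambda$ and claim the fixed-point operator preserves the ball $\norm{\mathbf{y}(\cdot)}_{\infty}\leq2\norm{\mathbf{y}_{0}}_{\infty}$ after shrinking $r$ and the polydisc. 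This fails for two independent reasons. First, along your ray the $y_{1}$-equation reads $\tx dy_{1}/\tx d\tau=y_{1}\left(\mp i\mp iC_{\pm}/\lambda\pm i(a_{1}x+xR_{\pm}^{(1)})/\lambda\right)$, and the tangential coefficient $C_{\pm}$ is merely dominated by $\norm{\mathbf{y}}_{\infty}$: it carries no factor of $x$ and does not decay in $\tau$, so it contributes a drift of constant size $O(\norm{\mathbf{y}_{0}}_{\infty})$ to $\tx d\log\abs{y_{1}}/\tx d\tau$, hence a possible growth $e^{c\norm{\mathbf{y}_{0}}_{\infty}\tau}$ that no shrinking of the initial data can absorb (and the drift has the opposite sign for $y_{2}$, so it cannot be removed by a rescaling). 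Second, even discarding $C_{\pm}$, the terms $a_{j}xy_{j}$ integrate along your ray to $\abs{y_{j}(\tau)}\sim\tau^{-\Re(a_{j})}$; the standing hypothesis of this section is only $\Re(a_{1}+a_{2})>0$, so one of $\Re(a_{1}),\Re(a_{2})$ may well be negative and the corresponding coordinate grows polynomially and leaves the polydisc. Your argument never invokes $\Re(a_{1}+a_{2})>0$, which is a symptom of the problem.

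The paper's remedy is precisely the reparametrization you skipped: it integrates the \emph{real-time} flow of $X_{\pm}=\frac{\pm i}{1+\frac{a_{2}-a_{1}}{2}x+C_{\pm}}Z_{\pm}$. Dividing by $1+\frac{a_{2}-a_{1}}{2}x+C_{\pm}$ simultaneously (i) absorbs the whole tangential coefficient, so that $C_{\pm}$ disappears from the equations for $\abs{y_{1}}$ and $\abs{y_{2}}$, and (ii) symmetrizes $a_{1},a_{2}$ into their mean $\frac{a_{1}+a_{2}}{2}$, whose real part is positive by strict non-degeneracy; both moduli then genuinely decrease once $x(t)$ has entered a suitable subsector $\Sigma_{+}(1,r,\omega)$. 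Because the reparametrized trajectories are no longer straight lines in the $1/x$-chart, the observation that "the ray stays in the sector'' must be replaced by the construction of an explicit flow-invariant domain $\Omega_{\pm}$ squeezed between $S_{\pm}(r',\epsilon)\times\mathbf{D}(\mathbf{0},\mathbf{r}')$ and $S_{\pm}(r,\epsilon)\times\mathbf{D}(\mathbf{0},\mathbf{r})$, whose spiralling boundary controls the transient growth of $\abs{x(t)}$, $\abs{y_{j}(t)}$ and the drift of $\arg(x(t))$ before the trajectory reaches the contracting subsector. Once the trajectories carry these uniform bounds, the remaining unravelling of the integral representation is as you describe.
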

We will prove this lemma in subsection \ref{sub: proof lemma}. The
following lemma proves that $\varphi_{\pm}$ and $\psi_{\pm}$ constructed
in Propositions \ref{prop: radial part} and \ref{prop: tangential part}
are indeed germs of sectorial fibered diffeomorphisms in domains of
the form $S_{\pm}\left(r,\epsilon\right)\times\left(\ww C^{2},0\right)$.
\begin{lem}
\label{lem: diffeo sectoriel}Let $f_{\pm},g_{\pm}$ be two germs
of analytic and bounded functions in $S_{\pm}\left(r,\epsilon\right)\times\left(\ww C^{2},0\right)$,
which tend to $0$ as $\left(x,\mathbf{y}\right)\rightarrow\left(0,\mathbf{0}\right)$
in $S_{\pm}\left(r,\epsilon\right)\times\left(\ww C^{2},0\right)$.
Then 
\[
\phi_{\pm}:\left(x,\mathbf{y}\right)\mapsto\left(x,y_{1}\exp\left(f_{\pm}\left(x,\mathbf{y}\right)\right),y_{2}\exp\left(g_{\pm}\left(x,\mathbf{y}\right)\right)\right)
\]
 defines a germ of sectorial fibered diffeomorphism analytic in $S_{\pm}\left(r,\epsilon\right)\times\left(\ww C^{2},0\right)$
(possibly by reducing $r>0$ and the neighborhood $\left(\ww C^{2},0\right)$).
\end{lem}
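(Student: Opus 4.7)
The plan is to verify that $\phi_{\pm}$ is holomorphic and fibered on an appropriate sectorial neighborhood, and then to construct an analytic inverse by a parametrized contraction-mapping argument. Write $h_{1}:=\exp(f_{\pm})$ and $h_{2}:=\exp(g_{\pm})$, so that $\phi_{\pm}(x,\mathbf{y})=(x,y_{1}h_{1}(x,\mathbf{y}),y_{2}h_{2}(x,\mathbf{y}))$. Since $f_{\pm},g_{\pm}$ are holomorphic and bounded on $S_{\pm}(r,\epsilon)\times\mathbf{D}(\mathbf{0},\mathbf{r})$ and tend to $0$ as $(x,\mathbf{y})\to(0,\mathbf{0})$, the map $\phi_{\pm}$ is itself holomorphic, fibered, and satisfies $\phi_{\pm}(x,\mathbf{y})=\mathrm{Id}(x,\mathbf{y})+o(\norm{\mathbf{y}})$.

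To build the inverse, for given $(x,\mathbf{z})$ near the origin with $x\in S_{\pm}$, we seek $\mathbf{y}$ solving
\[
y_{1}=z_{1}\exp\bigl(-f_{\pm}(x,\mathbf{y})\bigr),\qquad y_{2}=z_{2}\exp\bigl(-g_{\pm}(x,\mathbf{y})\bigr),
\]
which we view as a fixed-point equation $\mathbf{y}=T_{x,\mathbf{z}}(\mathbf{y})$. The first step is to choose, for any desired $\delta\in\left]0,\tfrac{1}{2}\right[$, radii $r>0$ and $\mathbf{r}=(r_{1},r_{2})\in(\wr_{>0})^{2}$ small enough so that $\abs{f_{\pm}},\abs{g_{\pm}}<\log(1+\delta)$ on $S_{\pm}(r,\epsilon)\times\mathbf{D}(\mathbf{0},\mathbf{r})$; this is possible since $f_{\pm},g_{\pm}\to 0$ at the origin of $S_{\pm}\times(\ww C^{2},0)$. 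By Cauchy estimates with respect to $\mathbf{y}$ on $\mathbf{D}(\mathbf{0},\mathbf{r})$, we obtain on a smaller polydisc $\mathbf{D}(\mathbf{0},\mathbf{r}')$ the derivative bounds $\abs{\partial_{y_{j}}f_{\pm}},\abs{\partial_{y_{j}}g_{\pm}}<\delta$. For $\mathbf{z}\in\mathbf{D}(\mathbf{0},\mathbf{r}'/2)$ and $x\in S_{\pm}(r,\epsilon)$, the operator $T_{x,\mathbf{z}}$ then sends $\overline{\mathbf{D}}(\mathbf{0},\mathbf{r}')$ into itself and is a contraction of Lipschitz constant at most $C\delta$ for some absolute $C>0$, yielding a unique fixed point $\mathbf{y}=\mathbf{y}(x,\mathbf{z})$.

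Since the contraction estimate is uniform in $(x,\mathbf{z})\in S_{\pm}(r,\epsilon)\times\mathbf{D}(\mathbf{0},\mathbf{r}'/2)$ and the iterates $T_{x,\mathbf{z}}^{n}(\mathbf{0})$ are holomorphic in $(x,\mathbf{z})$, the fixed point depends holomorphically on $(x,\mathbf{z})$ by uniform convergence. This provides an analytic inverse $\phi_{\pm}^{-1}$ on a sectorial neighborhood of the origin; together with the form of $\phi_{\pm}$ itself, this confirms $\phi_{\pm}\in\fdiff[\cal S,\tx{Id}]$ for some $\cal S\in\cal S_{\arg(\pm i\lambda),\eta}$ in the sense of Definition \ref{def: sectorial diff}. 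The main (minor) obstacle is ensuring that the Cauchy-estimate and contraction arguments are uniform in the sectorial parameter $x$; this is guaranteed precisely because the hypothesis $f_{\pm},g_{\pm}\to 0$ at the origin holds \emph{throughout} the entire sectorial neighborhood rather than merely along individual rays.
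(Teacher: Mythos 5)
Your proof is correct, but it takes a genuinely different route from the paper's. The paper proves injectivity directly: given $\phi_{\pm}\left(\mathbf{x}\right)=\phi_{\pm}\left(\mathbf{x}'\right)$, the fibered structure forces $x=x'$, and a lower bound on $\abs{y_{1}e^{f_{\pm}\left(\mathbf{x}\right)}-y'_{1}e^{f_{\pm}\left(\mathbf{x}'\right)}}$ obtained from the mean value inequality plus Cauchy estimates forces $\mathbf{y}=\mathbf{y}'$; it then invokes the several-complex-variables fact that an injective holomorphic map is a biholomorphism onto its (open) image. You instead build the inverse explicitly as the fixed point of the parametrized contraction $T_{x,\mathbf{z}}$, with holomorphic dependence on $\left(x,\mathbf{z}\right)$ coming from uniform convergence of the iterates. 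Both arguments hinge on exactly the same quantitative cancellation: the Cauchy estimate for $\partial_{y_{j}}f_{\pm}$ on a shrunken polydisc produces a factor of order $\delta/r_{j}$ (not $\delta$ as you wrote --- this is a minor slip), which is absorbed by the prefactor $\abs{z_{j}}\leq r'_{j}/2$ (respectively $\abs{y'_{1}}\leq r_{1}$ in the paper), so your final Lipschitz bound $C\delta$ and hence the contraction are still valid. What each approach buys: the paper's is shorter but relies on the nontrivial theorem that injective holomorphic maps in $\ww C^{n}$ have holomorphic inverses, whereas yours is self-contained and gives explicit control of the domain on which $\phi_{\pm}^{-1}$ is defined, which is in fact what one needs when composing germs of sectorial diffeomorphisms as in Definition \ref{def: sectorial diff}. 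It is mildly ironic, given the paper's stated aim of avoiding fixed-point methods, that you reintroduce one here, but this finite-dimensional contraction is entirely elementary and unobjectionable.
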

Let us explain why these lemmas imply Propositions \ref{prop: radial part}
and \ref{prop: tangential part}.
\begin{proof}[Proof of both Propositions \ref{prop: radial part} and \ref{prop: tangential part}.]

It is sufficient to apply Lemma \ref{lem: solution eq homo} with
$M=N+1$ , $A_{\pm}=-R_{\pm}^{\left(N+2\right)}$, $\alpha_{\pm}=\rho_{\pm}$
and $Z_{\pm}=Y_{\pm}^{\left(N+2\right)}$ for Proposition \ref{prop: radial part},
and with $M=N$, $A_{\pm}=-C_{\pm}^{\left(N+1\right)}$, $\alpha_{\pm}=\chi_{\pm}$
and $Z_{\pm}=Y_{\overrightarrow{\cal C},\pm}$ for Proposition \ref{prop: tangential part}.
Then we use Lemma \ref{lem: diffeo sectoriel} to see that $\varphi_{\pm}$
and $\psi_{\pm}$ are germs of sectorial fibered diffeomorphisms on
the considered domains, and we finally check that they do the conjugacy
we want. With the notations above:
\begin{eqnarray*}
\mbox{D}\varphi_{\pm}\cdot Y_{\pm}^{\left(N+2\right)} & = & \left(\begin{array}{c}
\cal L_{Y_{\pm}^{\left(N+2\right)}}\left(x\right)\\
\cal L_{Y_{\pm}^{\left(N+2\right)}}\left(y_{1}\exp\left(\rho_{\pm}\left(x,\mathbf{y}\right)\right)\right)\\
\cal L_{Y_{\pm}^{\left(N+2\right)}}\left(y_{2}\exp\left(\rho_{\pm}\left(x,\mathbf{y}\right)\right)\right)
\end{array}\right)\\
 & = & \left(\begin{array}{c}
x^{2}\\
\left(\cal L_{Y_{\pm}^{\left(N+2\right)}}\left(y_{1}\right)+y_{1}\left(\cal L_{Y_{\pm}^{\left(N+2\right)}}\left(\rho_{\pm}\right)\right)\right)\exp\left(\rho_{\pm}\left(x,\mathbf{y}\right)\right)\\
\left(\cal L_{Y_{\pm}^{\left(N+2\right)}}\left(y_{2}\right)+y_{2}\left(\cal L_{Y_{\pm}^{\left(N+2\right)}}\left(\rho_{\pm}\right)\right)\right)\exp\left(\rho_{\pm}\left(x,\mathbf{y}\right)\right)
\end{array}\right)\\
 & = & \left(\begin{array}{c}
x^{2}\\
\left(-\lambda+a_{1}x-D_{\pm}\left(x,\mathbf{y}\right)\right)y_{1}\exp\left(\rho_{\pm}\left(x,\mathbf{y}\right)\right)\\
\left(\lambda+a_{2}x+D_{\pm}\left(x,\mathbf{y}\right)\right)y_{2}\exp\left(\rho_{\pm}\left(x,\mathbf{y}\right)\right)
\end{array}\right)\\
 &  & \big(\mbox{we have used }\cal L_{Y_{\pm}^{\left(N+2\right)}}\left(\rho_{\pm}\right)=-x^{N+2}R_{\pm}^{\left(N+2\right)}\big)\\
 & = & \left(Y_{0}+C_{\pm}\overrightarrow{\cal C}\right)\circ\varphi_{\pm}\left(x,\mathbf{y}\right)\\
 & = & Y_{\overrightarrow{\cal C},\pm}\circ\varphi_{\pm}\left(x,\mathbf{y}\right)\hfill,
\end{eqnarray*}
so that $\left(\varphi_{\pm}\right)_{*}\left(Y_{\pm}^{\left(N+2\right)}\right)=Y_{\overrightarrow{\cal C},\pm}$
and then
\begin{eqnarray*}
\mbox{D}\psi_{\pm}\cdot Y_{\overrightarrow{\cal C},\pm} & = & \left(\begin{array}{c}
\cal L_{Y_{\overrightarrow{\cal C},\pm}}\left(x\right)\\
\cal L_{Y_{\overrightarrow{\cal C},\pm}}\left(z_{1}\exp\left(-\chi\left(x,\mathbf{z}\right)\right)\right)\\
\cal L_{Y_{\overrightarrow{\cal C},\pm}}\left(z_{2}\exp\left(\chi\left(x,\mathbf{z}\right)\right)\right)
\end{array}\right)\\
 & = & \left(\begin{array}{c}
x^{2}\\
\left(\cal L_{Y_{\overrightarrow{\cal C},\pm}}\left(z_{1}\right)+z_{1}\left(\cal L_{Y_{\overrightarrow{\cal C},\pm}}\left(\chi\right)\right)\right)\exp\left(-\chi\left(x,\mathbf{z}\right)\right)\\
\left(\cal L_{Y_{\overrightarrow{\cal C},\pm}}\left(z_{2}\right)+z_{2}\left(\cal L_{Y_{\overrightarrow{\cal C},\pm}}\left(\chi\right)\right)\right)\exp\left(\chi\left(x,\mathbf{z}\right)\right)
\end{array}\right)\\
 & = & \left(\begin{array}{c}
x^{2}\\
\left(-\lambda+a_{1}x-c\left(z_{1}z_{2}\right)\right)z_{1}\exp\left(-\chi\left(x,\mathbf{z}\right)\right)\\
\left(\lambda+a_{2}x+c\left(z_{1}z_{2}\right)\right)z_{2}\exp\left(\chi\left(x,\mathbf{y}\right)\right)
\end{array}\right)\\
 &  & \big(\mbox{we have used }\cal L_{Y_{\overrightarrow{\cal C},\pm}}\left(\chi_{\pm}\right)=-x^{N+1}C_{\pm}^{\left(N+1\right)}\big)\\
 & = & \left(Y_{0}+c\left(u\right)\overrightarrow{\cal C}\right)\circ\psi_{\pm}\left(x,\mathbf{z}\right)\\
 & = & \ynorm\circ\psi_{\pm}\left(x,\mathbf{z}\right)\hfill,
\end{eqnarray*}
so that $\left(\psi_{\pm}\right)_{*}\left(Y_{\overrightarrow{\cal C},\pm}\right)=\ynorm$
.
\end{proof}

\subsection{Proof of Lemma \ref{lem: diffeo sectoriel}}

~
\begin{proof}[\emph{Proof of Lemma \ref{lem: diffeo sectoriel}.}]

We consider two germs of analytic functions $f_{\pm},g_{\pm}$ in
$S_{\pm}\left(r,\epsilon\right)\times\left(\ww C^{2},0\right)$ which
which tend to $0$ as $\left(x,\mathbf{y}\right)\rightarrow\left(0,\mathbf{0}\right)$
in $S_{\pm}\left(r,\epsilon\right)\times\left(\ww C^{2},0\right)$,
and we define 
\[
\phi_{\pm}:\left(x,\mathbf{y}\right)\mapsto\left(x,y_{1}\exp\left(f_{\pm}\left(x,\mathbf{y}\right)\right),y_{2}\exp\left(g_{\pm}\left(x,\mathbf{y}\right)\right)\right)\,\,.
\]
Let us first prove that $\phi_{\pm}$ is into. Let $\mathbf{x}=\left(x,y_{1},y_{2}\right)$
and $\mathbf{x}'=\left(x',y'_{1},y'_{2}\right)$ in $S_{\pm}\left(r,\epsilon\right)\times\left(\ww C^{2},0\right)$
such that $\phi_{\pm}\left(\mathbf{x}\right)=\phi_{\pm}\left(\mathbf{x}'\right)$.
Since $\phi_{\pm}$ is fibered, necessarily $x=x'$. Then assume that
$\left(y_{1},y_{2}\right)\neq\left(y'_{1},y'_{2}\right)$, such that
\[
\left\Vert \left(y_{1}-y'_{1},y_{2}-y'_{2}\right)\right\Vert _{\infty}>0
\]
 and for instance $\left\Vert \left(y_{1}-y'_{1},y_{2}-y'_{2}\right)\right\Vert _{\infty}=\abs{y_{1}-y'_{1}}>0$
(the other case can be done similarly). We denote by $D_{\mathbf{y}}$
the derivative with respect to variables $\left(y_{1},y_{2}\right)$.
According to the mean value theorem: 
\begin{eqnarray*}
\abs{\frac{e^{f_{\pm}\left(\mathbf{x}\right)}-e^{f_{\pm}\left(\mathbf{x'}\right)}}{y_{1}-y'_{1}}} & \leq & \underset{\left(z_{1},z_{2}\right)\in[\left(y_{1},y_{2}\right),\left(y'_{1},y'_{2}\right)]}{\mbox{sup}}\left\Vert D_{\mathbf{y}}\left(e^{f\pm}\right)\left(x,z_{1},z_{2}\right)\right\Vert _{\infty}.
\end{eqnarray*}

Consequently we have: 
\begin{eqnarray*}
0 & = & \abs{y_{1}e^{f_{\pm}\left(\mathbf{x}\right)}-y'_{1}e^{f_{\pm}\left(\mathbf{x'}\right)}}\\
 & = & \abs{e^{f_{\pm}\left(\mathbf{x}\right)}}.\abs{y_{1}-y'_{1}}.\abs{1+\frac{y'_{1}}{e^{f_{\pm}\left(\mathbf{x}\right)}}.\frac{e^{f_{\pm}\left(\mathbf{x}\right)}-e^{f_{\pm}\left(\mathbf{x'}\right)}}{y_{1}-y'_{1}}}\\
 & \geq & \abs{e^{f_{\pm}\left(\mathbf{x}\right)}}.\abs{y_{1}-y'_{1}}.\left(1-\abs{\frac{y'_{1}}{e^{f_{\pm}\left(\mathbf{x}\right)}}}.\abs{\frac{e^{f_{\pm}\left(\mathbf{x}\right)}-e^{f_{\pm}\left(\mathbf{x'}\right)}}{y_{1}-y'_{1}}}\right)\\
 & \geq & \abs{e^{f_{\pm}\left(\mathbf{x}\right)}}.\abs{y_{1}-y'_{1}}.\left(1-\abs{\frac{y'_{1}}{e^{f_{\pm}\left(\mathbf{x}\right)}}}.\underset{\left(z_{1},z_{2}\right)\in[\left(y_{1},y_{2}\right),\left(y'_{1},y'_{2}\right)]}{\mbox{sup}}\left\Vert D_{\mathbf{y}}\left(e^{f_{\pm}}\right)\left(x,z_{1},z_{2}\right)\right\Vert _{\infty}.\right)
\end{eqnarray*}

Assume that we chose $\left(\ww C^{2},0\right)=\mathbf{D\left(0,r\right)}$
small enough such that $f_{\pm}$ is analytic in 
\[
S_{\pm}\left(r,\epsilon\right)\times D\left(0,3r{}_{1}+\delta\right)\times D\left(0,3r{}_{2}+\delta\right)
\]
with $\delta>0$ small. Without lost of generality we can take $r_{1}=r_{2}$.
We apply Cauchy \textquoteright s integral formula to $z_{1}\mapsto e^{f_{\pm}\left(x,z_{1},z_{2}\right)}$,
for all fixed $z_{2}$ , integrating on the circle of center $0$
and radius $3r_{1}=3r_{2}$. Similarly we also apply Cauchy \textquoteright s
integral formula to $z_{2}\mapsto e^{f_{\pm}\left(x,z_{1},z_{2}\right)}$,
for all fixed $z_{1}$, integrating on the circle of center $0$ and
radius $3r_{2}=3r_{1}$. Then we obtain 
\[
\underset{\left(z_{1},z_{2}\right)\in[\left(y_{1},y_{2}\right),\left(y'_{1},y'_{2}\right)]}{\mbox{sup}}\left\Vert D_{\mathbf{y}}\left(e^{f_{\pm}}\right)\left(x,z_{1},z_{2}\right)\right\Vert _{\infty}\leq\frac{3}{4r_{1}}.\exp\left(\underset{\mathbf{x}\in S_{\pm}\left(r,\epsilon\right)\times\mathbf{D\left(0,r\right)}}{\mbox{sup}}\left(\abs{f_{\pm}\left(\mathbf{x}\right)}\right)\right)\quad,
\]

such that: 
\begin{eqnarray*}
0 & = & \abs{y_{1}e^{f_{\pm}\left(\mathbf{x}\right)}-y'_{1}e^{f_{\pm}\left(\mathbf{x'}\right)}}\\
 & \geq & \abs{e^{f_{\pm}\left(\mathbf{x}\right)}}.\abs{y_{1}-y'_{1}}.\left(1-\frac{3}{4}\exp\left(\underset{\mathbf{x}\in S_{\pm}\left(r,\epsilon\right)\times\mathbf{D\left(0,r\right)}}{\mbox{sup}}\left(2\abs{f_{\pm}\left(\mathbf{x}\right)}\right)\right)\right)\qquad.
\end{eqnarray*}

Since $f_{\pm}\left(\mathbf{x}\right)\underset{\mathbf{x}\rightarrow\mathbf{0}}{\rightarrow}0$,
we can choose $r,r_{1}$ and $r_{2}$ small enough such that: 
\[
\exp\left(\underset{\mathbf{x}\in S_{\pm}\left(r,\epsilon\right)\times\mathbf{D\left(0,r\right)}}{\mbox{sup}}\left(2\abs{f_{\pm}\left(\mathbf{x}\right)}\right)\right)\leq\frac{5}{4}<\frac{4}{3}\qquad.
\]

Finally we obtain: 
\begin{eqnarray*}
0 & = & \abs{y_{1}e^{f_{\pm}\left(\mathbf{x}\right)}-y'_{1}e^{f_{\pm}\left(\mathbf{x'}\right)}}\\
 & \geq & \abs{e^{f_{\pm}\left(\mathbf{x}\right)}}\frac{\abs{y_{1}-y'_{1}}}{16}>0\qquad,
\end{eqnarray*}

and so, if $y_{1}\neq y'_{1}$, $0=\abs{y_{1}e^{\rho\left(\mathbf{x}\right)}-y'_{1}e^{\rho\left(\mathbf{x'}\right)}}>0$,
which is a contradiction.

Conclusion: $\left(y_{1},y_{2}\right)=\left(y'_{1},y'_{2}\right)$
and then $\phi_{\pm}$ is into in $S_{\pm}\left(r,\epsilon\right)\times\left(\ww C^{2},0\right)$.

Since $\phi_{\pm}$ is into and analytic in $S_{\pm}\left(r,\epsilon\right)\times\left(\ww C^{2},0\right)$,
it is a biholomorphism between $S_{\pm}\left(r,\epsilon\right)\times\left(\ww C^{2},0\right)$
and its image which is necessarily open (an analytic function is open),
and of the same form.
\end{proof}

\subsection{\label{sub: proof lemma}Resolution of the homological equation:
proof of Lemma \ref{lem: solution eq homo}}

~

The goal of this subsection is to prove Lemma \ref{lem: solution eq homo}
by studying the existence of paths asymptotic to the singularity and
tangent to the foliation, and then to use them to construct the solution
to the homological equation $\left(\mbox{\ref{eq: eq homo lemme}}\right)$.

\emph{For convenience and without lost of generality we assume $\lambda=1$}
\emph{during this subsection} $\Big($otherwise we can divide our
vector field by $\lambda\neq0$, make $x\mapsto\lambda x$ and finally
consider $\exp\left(-i\arg\left(\lambda\right)\right).S_{\pm}\left(r,\epsilon\right)$
instead of $S_{\pm}\left(r,\epsilon\right)$: these modifications
do not change $a_{1}$ and $a_{2}$,$\Big)$.

\subsubsection{\label{sub:Domain-of-stability}Domain of stability and asymptotic
paths}

~

We consider 
\begin{eqnarray*}
Z_{\pm} & = & Y_{0}+C_{\pm}\left(x,\mathbf{y}\right)\overrightarrow{\cal C}+xR_{\pm}^{\left(1\right)}\left(x,\mathbf{y}\right)\overrightarrow{\cal R}\\
 & = & \left(\begin{array}{c}
x^{2}\\
y_{1}\left(-\left(1+C_{\pm}\left(x,\mathbf{y}\right)\right)+a_{1}x+xR_{\pm}^{\left(1\right)}\left(x,\mathbf{y}\right)\right)\\
y_{2}\left(1+C_{\pm}\left(x,\mathbf{y}\right)+a_{2}x+xR_{\pm}^{\left(1\right)}\left(x,\mathbf{y}\right)\right)
\end{array}\right)
\end{eqnarray*}
with $\Re\left(a_{1}+a_{2}\right)>0$, and $C_{\pm},R_{\pm}^{\left(1\right)}$
analytic in $S_{\pm}\left(r,\epsilon\right)\times\mathbf{D\left(0,r\right)}$
and dominated by $\norm{\mathbf{y}}_{\infty}$. More precisely, we
consider the Cauchy problem of unknown $\mathbf{x}\left(t\right):=\left(x\left(t\right),y_{1}\left(t\right),y_{2}\left(t\right)\right)$,
with real and increasing time $t\geq0$, associated to 
\[
X_{\pm}:=\frac{\pm i}{1+\left(\frac{a_{2}-a_{1}}{2}\right)x+C_{\pm}}Z_{\pm}\,\,,
\]
\emph{i.e.} 
\begin{equation}
\begin{cases}
\ddd xt=\frac{\pm ix^{2}}{1+\left(\frac{a_{2}-a_{1}}{2}\right)x+C_{\pm}}\\
\ddd{y_{1}}t=\frac{\pm iy_{1}}{1+\left(\frac{a_{2}-a_{1}}{2}\right)x+C_{\pm}}\left(-\left(1+C_{\pm}\left(x,\mathbf{y}\right)\right)+a_{1}x+xR_{\pm}^{\left(1\right)}\left(x,\mathbf{y}\right)\right)\\
\ddd{y_{2}}t=\frac{\pm iy_{2}}{1+\left(\frac{a_{2}-a_{1}}{2}\right)x+C_{\pm}}\left(1+C_{\pm}\left(x,\mathbf{y}\right)+a_{2}x+xR_{\pm}^{\left(1\right)}\left(x,\mathbf{y}\right)\right)\\
\mathbf{x}\left(t\right)=\mathbf{x}_{0}=\left(x_{0},y_{1,0},y_{2,0}\right)\in S_{\pm}\left(r,\epsilon\right)\times\times\mathbf{D\left(0,r\right)} & .
\end{cases}\label{eq: Probleme de cauchy}
\end{equation}
We denote by $\left(t,\mathbf{x}_{0}\right)\mapsto\Phi_{X_{\pm}}^{t}\left(\mathbf{x}_{0}\right)$
the flow of $X_{\pm}$ with increasing time $t\geq0$ and with initial
point $\mathbf{x}_{0}$: $\Phi_{X\pm}^{0}\left(\mathbf{x}_{0}\right)=\mathbf{x}_{0}$.

We will prove the following:
\begin{prop}
\label{prop: domaine stable}For all ${\displaystyle \epsilon\in\left]0,\frac{\pi}{2}\right[}$,
there exists finite sectors $S_{\pm}\left(r,\epsilon\right),S_{\pm}\left(r',\epsilon\right)$
with $r,r'>0$ and an open domain $\Omega_{\pm}$ stable by the flow
of $\left(\mbox{\ref{eq: Probleme de cauchy}}\right)$ with increasing
time $t\geq0$ such that 
\[
S_{\pm}\left(r',\epsilon\right)\times\mathbf{D\left(0,r'\right)}\subset\Omega_{\pm}\subset S_{\pm}\left(r,\epsilon\right)\times\mathbf{D\left(0,r\right)}\qquad,
\]
(\emph{cf. }figure \ref{fig: domaine stable}). Moreover, if $\mathbf{x}_{0}\in\Omega_{\pm}$
then the corresponding solution of $\left(\mbox{\ref{eq: Probleme de cauchy}}\right)$,
namely $\mathbf{x}\left(t\right):=\Phi_{X_{\pm}}^{t}\left(\mathbf{x}_{0}\right)$
exists for all $t\geq0$ and $\mathbf{x}\left(t\right)\rightarrow\mathbf{0}$
as $t\rightarrow+\infty$.
\end{prop}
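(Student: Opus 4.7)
The heart of the proof is a bootstrap argument resting on two clean differential identities that follow from the very choice of the normalizing factor $1+\tfrac{a_{2}-a_{1}}{2}x+C_{\pm}$ in the denominator of $X_{\pm}$. A direct computation, exploiting the fact that this denominator equals half the sum of the $y_{2}$-coefficient of $Z_{\pm}$ and minus the $y_{1}$-coefficient of $Z_{\pm}$, yields
\[
\frac{\mathrm{d}}{\mathrm{d}t}\log\!\left(\frac{y_{1}}{y_{2}}\right)=\mp 2i
\quad\text{and}\quad
\frac{\mathrm{d}}{\mathrm{d}t}\log(y_{1}y_{2})=\bigl(a_{1}+a_{2}+2R_{\pm}^{(1)}(x,\mathbf{y})\bigr)\frac{\dot{x}}{x}.
\]
The first identity is \emph{exact}: $\abs{y_{1}/y_{2}}$ is a conserved quantity along the trajectories of $X_{\pm}$. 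Integrating the second identity along a trajectory and using $\Re(a_{1}+a_{2})>0$ together with the smallness of $R_{\pm}^{(1)}$ (which is dominated by $\norm{\mathbf{y}}_{\infty}\le r$), one finds, as long as $\abs{x(t)}\le\abs{x_{0}}$,
\[
\abs{y_{j}(t)}\le\abs{y_{j,0}}\cdot\left(\frac{\abs{x(t)}}{\abs{x_{0}}}\right)^{(\Re(a_{1}+a_{2})-Cr)/2},\qquad j=1,2,
\]
so that $\mathbf{y}(t)$ is controlled by $\mathbf{y}(0)$ and tends to $\mathbf{0}$ as soon as $\abs{x(t)}\to 0$.

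For the $x$-coordinate I pass to the variable $w:=1/x$, which satisfies
\[
\dot{w}=\frac{\mp i}{1+\tfrac{a_{2}-a_{1}}{2}x+C_{\pm}(x,\mathbf{y})}=\mp i+O(\abs{x}+\norm{\mathbf{y}}).
\]
The geometric key is that when $x\in S_{\pm}(r,\epsilon)$, the variable $w$ lives in a wedge $W_{\pm}$ of opening $\pi+2\epsilon$ bisected by the ray $\mp i\,\mathbb{R}_{>0}$, and the unperturbed trajectory $t\mapsto w_{0}\mp it$ is a vertical half-line that remains inside $W_{\pm}$ and escapes to infinity. Integrating the perturbation gives $\abs{w(t)-(w_{0}\mp it)}\le\int_{0}^{t}(\abs{x(s)}+\norm{\mathbf{y}(s)})\,\mathrm{d}s=o(t)$, because the a priori bounds yield $\abs{x(s)}=O(1/s)$ and $\norm{\mathbf{y}(s)}=O(\abs{x(s)}^{\delta})$ with $\delta=(\Re(a_{1}+a_{2})-Cr)/2>0$. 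Hence $w(t)$ remains within $W_{\pm}$, escapes to infinity with $\arg w(t)\to\arg(\mp i)$, and consequently $x(t)\to 0$ while staying in $S_{\pm}(r,\epsilon)$.

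Finally I would define $\Omega_{\pm}$ as the open set of initial conditions $(x_{0},\mathbf{y}_{0})\in S_{\pm}(r,\epsilon)\times\mathbf{D}(\mathbf{0},r)$ whose forward trajectory remains in that product and tends to the origin; the preceding estimates, combined with continuity of the flow, then show that $\Omega_{\pm}$ contains some poly-sector $S_{\pm}(r',\epsilon)\times\mathbf{D}(\mathbf{0},r')$ for $r'>0$ small enough. The main technical obstacle is to close the bootstrap \emph{uniformly} in time: one must verify that the $o(t)$ perturbation of $w(t)$ is strictly smaller than the distance from $w_{0}\mp it$ to $\partial W_{\pm}$ for \emph{all} $t\ge 0$, which is a constraint that can fail near $t=0$ when $\Im(w_{0})>0$ (i.e.\ when $w_{0}$ sits near the boundary of $W_{\pm}$). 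This forces one to choose $r,r',\epsilon$ and the perturbation bounds in a carefully ordered fashion to rule out any pathology at small times and to make the resulting estimates quantitative enough to yield the open domain $\Omega_{\pm}$.
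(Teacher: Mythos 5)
Your two differential identities are correct and give a genuinely different (and rather elegant) skeleton from the paper's: the exactness of $\frac{\dd}{\dd t}\log(y_{1}/y_{2})=\mp2i$ (so $\abs{y_{1}/y_{2}}$ is conserved) and the relation $\frac{\dd}{\dd t}\log(y_{1}y_{2})=(a_{1}+a_{2}+2R_{\pm}^{(1)})\dot{x}/x$ slave the transverse variables to $x$, and the substitution $w=1/x$ rectifies the base dynamics to $\dot{w}=\mp i+O(\abs x+\norm{\mathbf y})$. The paper instead works directly with $\abs x,\abs{y_{1}},\abs{y_{2}},\arg(x)$ and covers $S_{\pm}(r,\epsilon)$ by three subsectors $\Sigma_{+}(1,r,\omega)$, $\Theta_{\pm}(r,\mu)$, proving monotonicity of the relevant quantity in each and reparametrizing by $\theta=\arg(x)$ in the boundary slivers; the stable domain $\Omega_{\pm}$ is then written down explicitly by exponential-in-$\theta$ inequalities. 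Your route would, if completed, buy a more transparent geometric picture and an essentially exact treatment of the $\mathbf y$-dynamics.

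However, as written there is a genuine gap, and it sits exactly where the content of the proposition lies. First, your bound $\abs{y_{j}(t)}\leq\abs{y_{j,0}}(\abs{x(t)}/\abs{x_{0}})^{(\Re(a_{1}+a_{2})-Cr)/2}$ is asserted ``as long as $\abs{x(t)}\leq\abs{x_{0}}$'', but for initial conditions near the edges of the wide sector (opening $>\pi$) the modulus $\abs{x(t)}$ \emph{increases} for an initial time interval — in the $w$-plane, $w(t)\approx w_{0}\mp it$ with $\Im(\mp i\,\overline{w_{0}})<0$ first moves $w$ \emph{toward} the origin — so the hypothesis fails precisely in the problematic regime, and no substitute estimate for $\norm{\mathbf y(t)}$ is provided there. (One also needs to absorb the bounded factor coming from $\Im(a_{1}+a_{2})\cdot\arg(x(t)/x_{0})$, and to justify $\int_{0}^{t}\abs{R_{\pm}^{(1)}}\,\abs{\dot x/x}\,\dd s\lesssim r\log(\abs{x_{0}}/\abs{x(t)})$, which itself presupposes that $\abs{x}$ decays at a definite rate — true only after the region analysis.) Second, you define $\Omega_{\pm}$ tautologically as the set of initial conditions whose forward orbit stays in the product and converges to $\mathbf 0$; with that definition, stability is free but the two substantive claims — that $\Omega_{\pm}$ is open and that it contains some $S_{\pm}(r',\epsilon)\times\mathbf{D}(\mathbf 0,\mathbf{r'})$ — are exactly what must be proved, and they require the uniform quantitative control of the excursion of $\abs{x(t)}$ and $\norm{\mathbf y(t)}$ during the transition through the boundary slivers. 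You explicitly flag this as ``the main technical obstacle'' and indicate that $r,r',\epsilon$ must be ``chosen in a carefully ordered fashion'', but you do not carry it out; in the paper this is the whole of the proof (the $\theta$-monotonicity in $\Theta_{\pm}(r,\mu)$, the bound \eqref{eq: temps critique} on the transition time, the estimates \eqref{eq: estimation zone croissante}, and the resulting relation $r'=r\exp(-\frac{1+\delta}{\mu-\delta}(\epsilon+\arcsin\omega))$). Until that step is made quantitative, the proposition is not established.
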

\begin{figure}
\includegraphics[bb=0bp 0bp 750bp 440bp,scale=0.55]{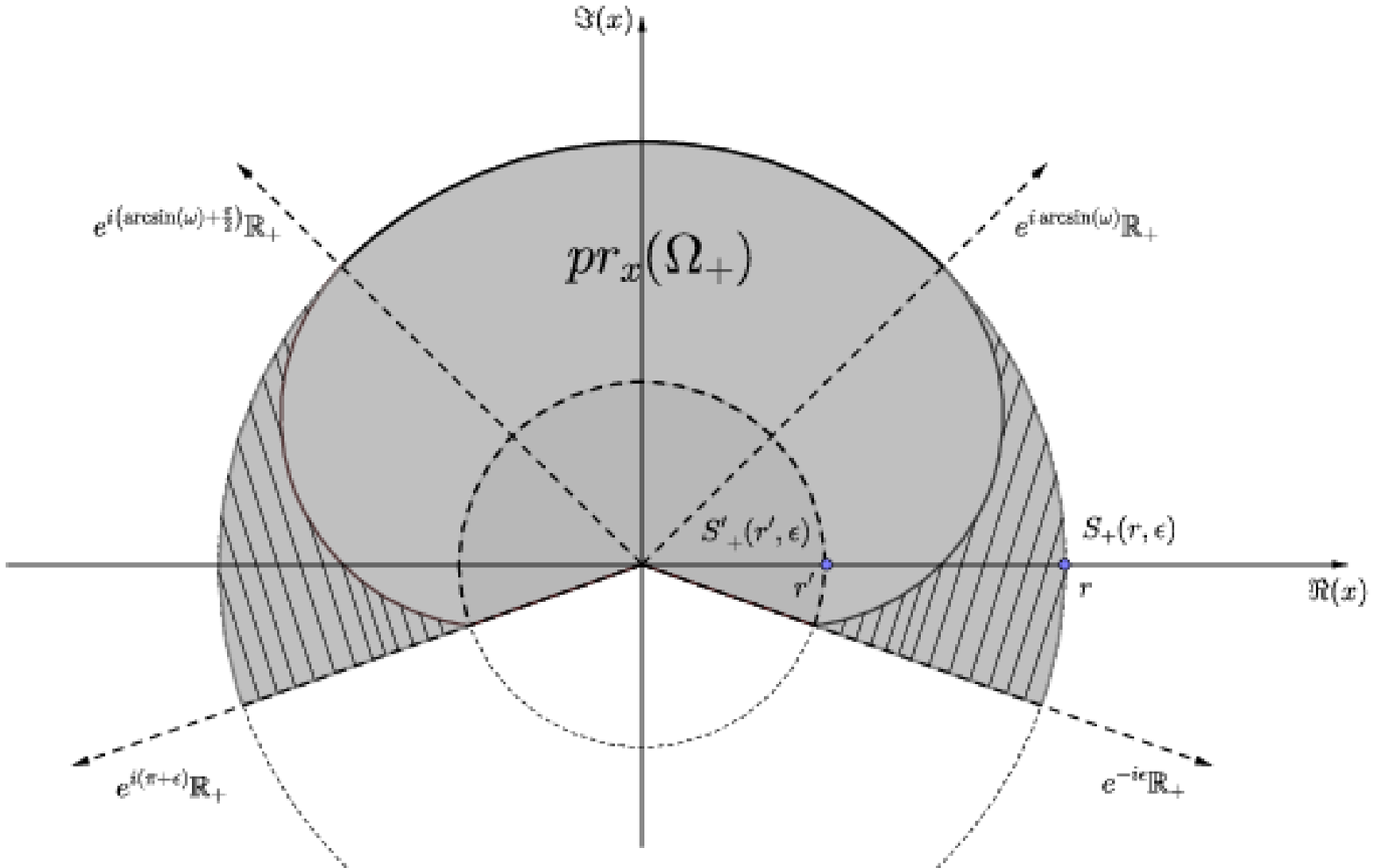}\protect\caption{\selectlanguage{french}%
\label{fig: domaine stable}\foreignlanguage{english}{Representation
of the projection $pr_{x}\left(\Omega_{+}\right)$ of the stable domain
$\Omega_{+}$ in the $x$-space. }\selectlanguage{english}%
}
\end{figure}

\begin{rem}
This will prove that the solution $\mathbf{x}\left(t\right)$ to $\left(\mbox{\ref{eq: Probleme de cauchy}}\right)$
exists for all $t\geq0$ and tends to the origin: it defines a path
tangent to the foliation and asymptotic to the origin. Moreover, notice
that the domain $\Omega_{\pm}$ depends on the choice of $r$ and
$r'>0$.\end{rem}
\begin{defn}
\label{def: chemin asympto}We define the \emph{asymptotic path} with
base point $\mathbf{x}_{0}\in\Omega_{\pm}$ associated to $X_{\pm}$
the path $\gamma_{\pm,\mathbf{x}_{0}}:=\acc{\Phi_{X_{\pm}}^{t}\left(\mathbf{x}_{0}\right),\, t\geq0}$. 
\end{defn}
\emph{For convenience and without lost of generality we }only detail
the case where ``$\pm=+$'' (the case where ``$\pm=-$'' is totally
similar).

If we write $a:=a_{1}+a_{2}$ and $b:=\frac{a_{2}-a_{1}}{2}$, in
the case ``$\pm=+$'' we have:
\[
\begin{cases}
\ddd xt=\frac{ix^{2}}{1+bx+C_{+}}\\
\ddd{y_{1}}t=iy_{1}\left(-1+\left(\frac{\frac{a}{2}+R_{+}^{\left(1\right)}\left(x,\mathbf{y}\right)}{1+bx+C_{+}\left(x,\mathbf{y}\right)}\right)x\right)\\
\ddd{y_{2}}t=iy_{2}\left(1+\left(\frac{\frac{a}{2}+R_{+}^{\left(1\right)}\left(x,\mathbf{y}\right)}{1+bx+C_{+}\left(x,\mathbf{y}\right)}\right)x\right)\\
\mathbf{x}\left(t\right)=\mathbf{x}_{0}=\left(x_{0},y_{1,0},y_{2,0}\right)\in S_{+}\left(r,\epsilon\right)\times\mathbf{D\left(0,r\right)}.
\end{cases}
\]
We also consider the differential equations satisfied by $\abs{x\left(t\right)}$,
$\abs{y_{1}\left(t\right)}$, $\abs{y_{2}\left(t\right)}$ and $\theta\left(t\right):=\arg\left(x\left(t\right)\right)$:
\[
\begin{cases}
\ddd{\abs{x\left(t\right)}}t=\abs{x\left(t\right)}\Re\left(\frac{ix\left(t\right)}{1+bx\left(t\right)+C_{+}\left(\mathbf{x}\left(t\right)\right)}\right)\\
\ddd{\abs{y_{1}\left(t\right)}}t=\abs{y_{1}\left(t\right)}\Re\left(ix\left(t\right)\left(\frac{\frac{a}{2}+R_{+}^{\left(1\right)}\left(\mathbf{x}\left(t\right)\right)}{1+bx\left(t\right)+C_{+}\left(\mathbf{x}\left(t\right)\right)}\right)\right)\\
\ddd{\abs{y_{2}\left(t\right)}}t=\abs{y_{2}\left(t\right)}\Re\left(ix\left(t\right)\left(\frac{\frac{a}{2}+R_{+}^{\left(1\right)}\left(\mathbf{x}\left(t\right)\right)}{1+bx\left(t\right)+C_{+}\left(\mathbf{x}\left(t\right)\right)}\right)\right)\\
\ddd{\theta\left(t\right)}t=\Im\left(\frac{ix\left(t\right)}{1+bx\left(t\right)+C_{+}\left(\mathbf{x}\left(t\right)\right)}\right).
\end{cases}
\]

\medskip{}
For any non-zero complex number $\zeta$ and positive numbers $R,B>0$,
we denote by $\Sigma_{+}\left(\zeta,R,B\right)$ the sector of radius
$R$ bisected by $i\bar{\zeta}\ww R_{+}$ and of opening $\pi-2\arcsin\left(B\right)=2\arccos\left(B\right)$:
\begin{eqnarray*}
\Sigma_{+}\left(\zeta,R,B\right) & := & \acc{x\in D\left(0,R\right)\mid\Im\left(\zeta x\right)>B\abs{\zeta x}}\\
 & = & \acc{x\in D\left(0,R\right)\mid-\arccos\left(B\right)<\arg\left(x\right)-\arg\left(i\bar{\zeta}\right)<\arccos\left(B\right)}\,\,.
\end{eqnarray*}
For $T,R>0$, we denote by $\Theta_{+}\left(R,T\right)$ $\big($\emph{resp.}
$\Theta_{-}\left(R,T\right)$$\big)$ the sector of radius $R$ bisected
by $\ww R_{+}$ $\big($\emph{resp.} $\ww R_{-}$$\big)$ and of opening
$2\arccos\left(T\right)$:
\begin{eqnarray*}
\Theta_{+}\left(R,T\right) & := & \acc{x\in D\left(0,R\right)\mid\Re\left(x\right)>T\abs x}\\
 & = & \acc{x\in D\left(0,R\right)\mid-\arccos\left(T\right)<\arg\left(x\right)<\arccos\left(T\right)}\\
\Theta_{-}\left(R,T\right) & := & \acc{x\in D\left(0,R\right)\mid\Re\left(x\right)<-T\abs x}\\
 & = & \acc{x\in D\left(0,R\right)\mid-\arccos\left(T\right)<\arg\left(x\right)-\pi<\arccos\left(T\right)}\,\,
\end{eqnarray*}
Since $\Re\left(a\right)>0$ by assumption, we can choose $\omega'\in\left]0,\frac{\Re\left(a\right)}{\abs a}\right[$,
such that $\Sigma_{+}\left(a,r,\omega'\right)$ contains $i\ww R_{>0}$.
Indeed, we have 
\[
\abs{\arg\left(i\right)-\arg\left(i\overline{a}\right)}=\abs{\arg\left(a\right)}<\arccos\left(\omega'\right)\,\,.
\]
In particular, we have: 
\[
0<\arccos\left(\omega'\right)-\left|\arg\left(a\right)\right|<\frac{\pi}{2}
\]
so that 
\[
0<\cos\left(\arccos\left(\omega'\right)-\left|\arg\left(a\right)\right|\right)<1\,\,.
\]
Hence we take $\omega>0$ such that 
\begin{equation}
\omega\in\left]\cos\left(\arccos\left(\omega'\right)-\left|\arg\left(a\right)\right|\right),1\right[\,\,,\label{eq: choix de w}
\end{equation}
and then ${\displaystyle \Sigma_{+}\left(1,r,\omega\right)\subset\Sigma_{+}\left(a,r,\omega'\right)}$.
Indeed, if $x\in\Sigma_{+}\left(1,r,\omega\right)$, then:
\begin{equation}
-\arccos\left(\omega\right)<\arg\left(x\right)-\frac{\pi}{2}<\arccos\left(\omega\right)\,\,,\label{eq: ineg arg et arccos1}
\end{equation}
and therefore
\begin{eqnarray*}
\abs{\arg\left(x\right)-\arg\left(i.\overline{a}\right)} & < & \arccos\left(\omega\right)+\abs{\arg\left(a\right)}\\
 &  & (\mbox{by \eqref{eq: ineg arg et arccos1}})\\
 & < & \arccos\left(\omega'\right)\\
 &  & (\mbox{by \eqref{eq: choix de w}}).
\end{eqnarray*}
Finally, we fix $\mu\in\left]0,\sqrt{1-\omega^{2}}\right[$ small
enough such that 
\begin{eqnarray*}
\Theta_{+}\left(r,\mu\right)\cap\Sigma_{+}\left(1,r,\omega\right) & \neq & \emptyset\\
\Theta_{-}\left(r,\mu\right)\cap\Sigma_{+}\left(1,r,\omega\right) & \neq & \emptyset
\end{eqnarray*}
and 
\[
{\displaystyle S_{+}\left(r,\epsilon\right)\subset\Sigma_{+}\left(1,r,\omega\right)\cup\Theta_{+}\left(r,\mu\right)\cup\Theta_{-}\left(r,\mu\right)\,\,.}
\]
More precisely, we must have $0<\epsilon<\arccos\left(\mu\right)$.
The idea is now to study the behavior of $t\mapsto x\left(t\right)$
(where $t\mapsto\mathbf{x}\left(t\right)=\left(x\left(t\right),y_{1}\left(t\right),y_{2}\left(t\right)\right)$
is the solution of (\ref{eq: Probleme de cauchy})) over each domains
$\Sigma_{+}\left(1,r,\omega\right),\Theta_{+}\left(r,\mu\right),\Theta_{-}\left(r,\mu\right)$
(\emph{cf. }figure \ref{fig:Repr=0000E9sentation-des-domaines})\emph{.}
\begin{figure}
\includegraphics[scale=0.55]{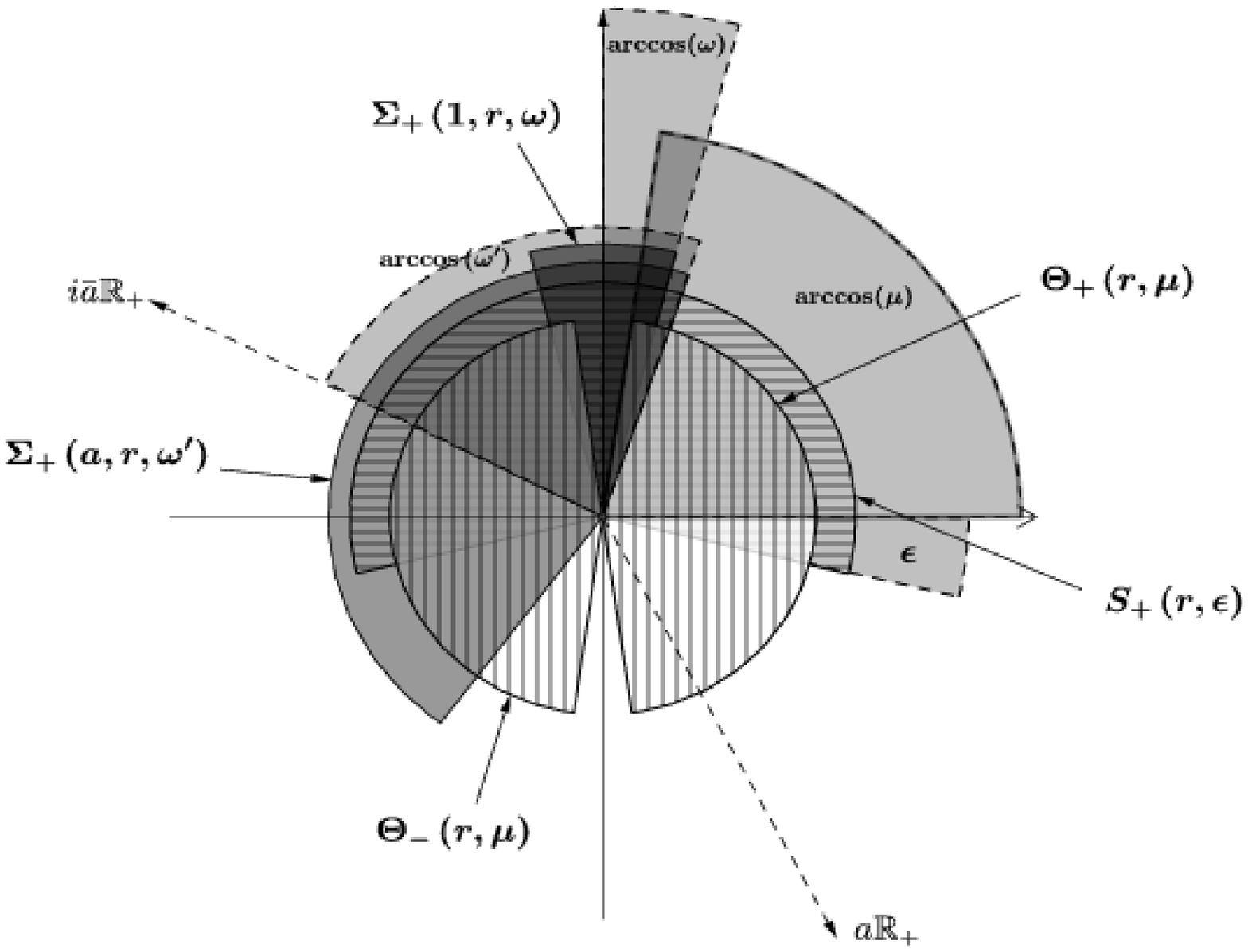}\protect\caption{\label{fig:Repr=0000E9sentation-des-domaines}Representation of domains
$\Sigma_{+}\left(1,r,\omega\right),\Sigma_{+}\left(a,r,\omega'\right),\Theta_{+}\left(r,\mu\right),\Theta_{-}\left(r,\mu\right),S_{+}\left(r,\epsilon\right)$
(with modified radii for more clarity).}
\end{figure}

We can now prove the following result, which is a precision of Proposition
\ref{prop: domaine stable}.

~
\begin{lem}
~
\begin{enumerate}
\item There exists $r,r_{1},r_{2}>0$ such that $\Sigma_{+}\left(1,r,\omega\right)\times\mathbf{D\left(0,r\right)}$
is stable by the flow of $\left(\mbox{\ref{eq: Probleme de cauchy}}\right)$
with increasing time $t\geq0$. Moreover in this region $\abs{x\left(t\right)}$,
$\abs{y_{1}\left(t\right)}$ and $\abs{y_{2}\left(t\right)}$ decrease
and go to $0$ as $t\rightarrow+\infty$.
\item There exists $0<r'<r$, $0<r'_{1}<r_{1}$, $0<r'_{2}<r_{2}$ and an
open domain $\Omega_{+}$ stable under the action flow of $\left(\mbox{\ref{eq: Probleme de cauchy}}\right)$
with increasing time $t\geq0$ such that 
\[
S_{+}\left(r',\epsilon\right)\times\mathbf{D\left(0,r'\right)}\subset\Omega_{+}\subset S_{+}\left(r,\epsilon\right)\times\mathbf{D\left(0,r\right)}\,\,.
\]
Moreover, if $x_{0}\in\Theta_{+}\left(r',\mu\right)$ $\Big($\emph{resp.}
$x_{0}\in\Theta_{-}\left(r',\mu\right)$$\Big)$ , then $\theta\left(t\right)=\arg\left(x\left(t\right)\right),t\geq0$
is increasing (\emph{resp.} decreasing) as long as $x\left(t\right)$
remains in $\Theta_{+}\left(r',\mu\right)$ $\Big($\emph{resp.} $\Theta_{-}\left(r',\mu\right)$$\Big)$.
Finally, there exists $t_{0}\geq0$ such that for all $t\geq t_{0}$,
$x\left(t\right)\in\Sigma_{+}\left(1,r,\omega\right)$.
\end{enumerate}
\end{lem}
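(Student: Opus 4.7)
The strategy is to derive explicit differential inequalities for $\abs{x(t)}$, $\abs{y_j(t)}$ and $\theta(t)=\arg\left(x(t)\right)$ directly from the ODE system displayed just before the lemma. Since $C_+$ and $R_+^{\left(1\right)}$ are analytic in the considered domain and dominated by $\norm{\mathbf{y}}_{\infty}$, for $r>0$ small enough the factor $1+bx+C_+$ stays arbitrarily close to $1$, so the four equations read, modulo relative errors of order $r$,
\begin{equation*}
\ddd{\abs{x}}{t}\approx-\Im\left(x\right)\abs{x},\quad\ddd{\theta}{t}\approx\Re\left(x\right),\quad\ddd{\abs{y_j}}{t}\approx-\tfrac{1}{2}\Im\left(ax\right)\abs{y_j}.
\end{equation*}
In $\Sigma_+\left(1,r,\omega\right)$, the defining inequality $\Im(x)>\omega\abs{x}$ yields $\ddd{\abs{x}}{t}\leq-\tfrac{\omega}{2}\abs{x}^2$, which integrates to $\abs{x(t)}=O(1/t)$ and in particular $\abs{x(t)}\to 0$. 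By the choice of $\omega$ we have $\Sigma_+\left(1,r,\omega\right)\subset\Sigma_+\left(a,r,\omega'\right)$, so $\Im(ax)>\omega'\abs{ax}$ and $\ddd{\abs{y_j}}{t}\leq-\tfrac{\omega'\abs{a}}{4}\abs{x}\abs{y_j}$; since $\int_0^{+\infty}\abs{x(s)}\tx ds$ diverges, Grönwall gives $\abs{y_j(t)}\to 0$. Finally, $\ddd{\theta}{t}\approx\Re(x)$ has opposite signs on the two boundary rays of $\Sigma_+\left(1,r,\omega\right)$, pushing $\theta(t)$ towards the bisector $\arg\left(x\right)=\pi/2$, so $\Sigma_+\left(1,r,\omega\right)\times\mathbf{D}\left(\mathbf{0},\mathbf{r}\right)$ is forward-invariant.

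\textbf{Plan for Part 2.} I define $\Omega_+$ as the saturation of $S_+\left(r',\epsilon\right)\times\mathbf{D}\left(\mathbf{0},\mathbf{r'}\right)$ under the forward flow of $X_+$, for some $0<r'<r$ and $\mathbf{r'}$ small, to be chosen. Openness is immediate from continuous dependence on initial data, and the left containment holds by construction. For the right containment, the covering $S_+\left(r,\epsilon\right)\subset\Sigma_+\left(1,r,\omega\right)\cup\Theta_+\left(r,\mu\right)\cup\Theta_-\left(r,\mu\right)$ reduces matters (using Part 1) to starting points with $x_0\in\Theta_\pm\left(r',\mu\right)$. On $\Theta_+\left(r',\mu\right)$ one has $\Re(x)>\mu\abs{x}>0$, hence $\ddd{\theta}{t}>0$ to leading order so $\theta(t)$ strictly increases; the key quantitative estimate comes from the comparison
\begin{equation*}
\ddd{\log\abs{x}}{\theta}\approx-\tan\theta,
\end{equation*}
which integrates to $\abs{x(\theta)}\approx\abs{x_0}\cos\theta_0/\cos\theta$. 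As $\theta$ rotates from $\theta_0\in\left(-\arccos\mu,\arccos\mu\right)$ up to $\pi/2-\arccos\omega$ (the point of entry into $\Sigma_+\left(1,r,\omega\right)$, which exists because $\mu<\sqrt{1-\omega^2}$), one deduces $\abs{x(\theta)}\leq\abs{x_0}/\sqrt{1-\omega^2}$ throughout; choosing $r'<r\sqrt{1-\omega^2}$ then keeps $x(t)$ inside $D(0,r)$, and a similar estimate on $\abs{y_j(t)}$ (with a bounded rather than divergent integral over the rotation phase) keeps $\mathbf{y}(t)$ in $\mathbf{D}\left(\mathbf{0},\mathbf{r}\right)$ provided $\mathbf{r'}$ is small enough. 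The symmetric argument treats $\Theta_-\left(r',\mu\right)$, with $\theta(t)$ decreasing. After finite time $t_0$ the trajectory enters $\Sigma_+\left(1,r,\omega\right)$ and Part 1 then provides convergence to the origin.

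\textbf{Main obstacle.} The delicate point is showing that during the rotation phase in $\Theta_\pm\left(r',\mu\right)$ the modulus $\abs{x(t)}$ does not grow past $r$: indeed $\ddd{\abs{x}}{t}\approx-\Im(x)\abs{x}$ can be of positive sign throughout $\Theta_-$ and part of $\Theta_+$. This is precisely what the comparison identity $\ddd{\log\abs{x}}{\theta}\approx-\tan\theta$ controls, and the overlap condition $\mu<\sqrt{1-\omega^2}$ imposed in the setup is exactly what makes $\int\tan\theta\,\tx d\theta$ finite over the angular distance to be traveled. Once this a priori modulus bound is secured, both the containment $\Omega_+\subset S_+\left(r,\epsilon\right)\times\mathbf{D}\left(\mathbf{0},\mathbf{r}\right)$ and the monotonicity statements on $\theta(t)$ follow directly from the sign computations above.
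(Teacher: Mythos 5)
Your plan is correct and follows essentially the same architecture as the paper's proof: cover $S_{+}\left(r,\epsilon\right)$ by $\Sigma_{+}\left(1,r,\omega\right)\cup\Theta_{+}\left(r,\mu\right)\cup\Theta_{-}\left(r,\mu\right)$, establish decay and forward invariance of $\Sigma_{+}\left(1,r,\omega\right)\times\mathbf{D\left(0,r\right)}$ via sign estimates on $\ddd{\abs x}t$, $\ddd{\abs{y_{j}}}t$, $\ddd{\theta}t$, and control the rotation phase in $\Theta_{\pm}$ by reparametrizing by $\theta$. The only (harmless) deviations are that you define $\Omega_{+}$ as the forward saturation of $S_{+}\left(r',\epsilon\right)\times\mathbf{D\left(0,r'\right)}$ rather than by the paper's explicit exponential inequalities in $\arg\left(x\right)$, and you integrate the model identity $\ddd{\log\abs x}{\theta}\approx-\tan\theta$ exactly instead of using the paper's uniform bound $\abs{\ddd{\log\abs x}{\theta}}\leq\frac{1+\delta}{\mu-\delta}$ (note only that the finiteness of $\int\tan\theta\,\dd[\theta]$ comes from $\omega<1$; the role of $\mu<\sqrt{1-\omega^{2}}$ is to guarantee the entry ray of $\Sigma_{+}\left(1,r,\omega\right)$ lies inside $\Theta_{\pm}\left(r,\mu\right)$, so the rotation terminates while $\ddd{\theta}t$ is still bounded away from zero).
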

\begin{proof}
We fix $\delta\in\left]0,\min\left(\omega,\mu\right)\right[$, $\delta'\in\left]0,\omega'\right[$
and we take $r>0$ small enough such that for all $\mathbf{x}=\left(x,\mathbf{y}\right)\in S_{+}\left(r,\epsilon\right)\times\mathbf{D\left(0,r\right)}$,
we have
\[
\begin{cases}
\abs{\frac{1}{1+bx+C_{+}\left(\mathbf{x}\right)}-1}<\delta\\
\abs{\frac{\frac{a}{2}+R_{+}^{\left(1\right)}\left(\mathbf{x}\right)}{1+bx+C_{+}\left(\mathbf{x}\right)}-\frac{a}{2}}<\delta' & \,\,.
\end{cases}
\]
Consequently for all $\mathbf{x}\in S_{+}\times\mathbf{D\left(0,r\right)}$
we have the following estimations:
\[
\begin{cases}
-\abs x\left(1+\delta\right)<\Re\left(\frac{ix}{1+bx+C_{+}\left(\mathbf{x}\right)}\right)<\abs x\left(1+\delta\right)\\
-\abs x\left(\abs{\frac{a}{2}}+\delta'\right)<\Re\left(ix\left(\frac{\frac{a}{2}+R_{+}^{\left(1\right)}\left(x,\mathbf{y}\right)}{1+bx+C_{+}\left(x,\mathbf{y}\right)}\right)\right)<\abs x\left(\abs{\frac{a}{2}}+\delta'\right) & \,\,.
\end{cases}
\]
Moreover:
\begin{itemize}
\item if $x\in\Sigma_{+}\left(1,r,\omega\right)$ then 
\begin{eqnarray*}
\Re\left(\frac{ix}{1+bx+C_{+}\left(\mathbf{x}\right)}\right) & < & -\abs x\left(\omega-\delta\right)\qquad;
\end{eqnarray*}

\item if $x\in\Sigma_{+}\left(a,r,\omega'\right)$ $\Big($in particular
if $x\in\Sigma_{+}\left(1,r,\omega\right)$$\Big)$ then
\begin{eqnarray*}
\Re\left(ix\left(\frac{\frac{a}{2}+R_{+}^{\left(1\right)}\left(x,\mathbf{y}\right)}{1+bx+C_{+}\left(x,\mathbf{y}\right)}\right)\right) & < & -\abs x\left(\omega'-\delta'\right)\qquad;
\end{eqnarray*}

\item if $x\in\Theta_{-}\left(r,\mu\right)$ $\Big($\emph{resp.} $\Theta_{+}\left(r,\mu\right)$$\Big)$
then 
\begin{eqnarray*}
\Im\left(\frac{ix}{1+bx+C_{+}\left(\mathbf{x}\right)}\right) & < & -\abs x\left(\mu-\delta\right)\\
\Bigg(\mbox{\emph{resp. }}\Im\left(\frac{ix}{1+bx+C_{+}\left(\mathbf{x}\right)}\right) & > & \abs x\left(\mu-\delta\right)\Bigg)\qquad.
\end{eqnarray*}

\end{itemize}
Hence:
\begin{itemize}
\item for all $t\geq0$
\begin{eqnarray*}
-\left(1+\delta\right)\abs{x\left(t\right)}^{2} & <\ddd{\abs{x\left(t\right)}}t< & -\left(1+\delta\right)\abs{x\left(t\right)}^{2}
\end{eqnarray*}
and then, as long as $\mathbf{x}\left(t\right)\in S_{+}\left(r,\epsilon\right)\times\mathbf{D\left(0,r\right)}$,
we have 
\[
\abs{x\left(t\right)}>\frac{\abs{x_{0}}}{1+\left(1+\delta\right)\abs{x_{0}}t}\qquad;
\]

\item for all $t\geq0$, if $x\left(t\right)\in\Sigma_{+}\left(1,r,\omega\right)$,
then 
\begin{eqnarray}
\ddd{\abs{x\left(t\right)}}t & < & -\left(\omega-\delta\right)\abs{x\left(t\right)}^{2}\label{eq: majoration derivee de x}
\end{eqnarray}
and 
\begin{equation}
\begin{cases}
\ddd{\abs{y_{1}\left(t\right)}}t<-\left(\omega'-\delta'\right)\abs{y_{1}\left(t\right)}\abs{x\left(t\right)}\\
\ddd{\abs{y_{2}\left(t\right)}}t<-\left(\omega'-\delta'\right)\abs{y_{2}\left(t\right)}\abs{x\left(t\right)}
\end{cases}\label{eq: majoration derivee de y}
\end{equation}
so that $\abs{x\left(t\right)},\abs{y_{1}\left(t\right)}\mbox{ and }\abs{y_{2}\left(t\right)}$
are decreasing as long as $x\left(t\right)\in\Sigma_{+}\left(1,r,\omega\right)$;
\item for all $t\geq0$, if $x\left(t\right)\in\Theta_{-}\left(r,\mu\right)$
$\Big($\emph{resp.} $\Theta_{+}\left(r,\mu\right)$$\Big)$ then
\begin{eqnarray*}
\ddd{\theta}t\left(t\right) & < & -\left(\mu-\delta\right)\abs{x\left(t\right)}<\frac{-\left(\mu-\delta\right)\abs{x_{0}}}{1+\left(1+\delta\right)\abs{x_{0}}t}\\
\Bigg(\mbox{\emph{resp. }}\ddd{\theta}t\left(t\right) & > & \left(\mu-\delta\right)\abs{x\left(t\right)}>\frac{\left(\mu-\delta\right)\abs{x_{0}}}{1+\left(1+\delta\right)\abs{x_{0}}t}\Bigg)
\end{eqnarray*}
so that $t\mapsto\theta\left(t\right)$ is strictly decreasing (\emph{resp.
}increasing) as long as $x\left(t\right)\in\Theta_{-}\left(r,\mu\right)$
$\Big($\emph{resp.} $\Theta_{+}\left(r,\mu\right)$$\Big)$. Moreover,
if $\theta_{0}=\theta\left(0\right)$ is such that $x_{0}=x\left(0\right)\in\Theta_{-}\left(t,\mu\right)\backslash\Sigma_{+}\left(1,r,\omega\right)$
$\Big($\emph{resp. }$\Theta_{+}\left(r,\mu\right)\backslash\Sigma_{+}\left(1,r,\omega\right)$$\Big)$,
then as long as $x\left(t\right)\in\Theta_{-}\left(r,\mu\right)$
$\Big($\emph{resp. }$\Theta_{+}\left(r,\mu\right)$$\Big)$ we have:
\begin{eqnarray*}
\theta\left(t\right) & < & \theta_{0}-\left(\frac{\mu-\delta}{1+\delta}\right)\ln\left(1+\left(1+\delta\right)\abs{x_{0}}t\right)\\
\Bigg(\mbox{\emph{resp. }}\theta\left(t\right) & > & \theta_{0}+\left(\frac{\mu-\delta}{1+\delta}\right)\ln\left(1+\left(1+\delta\right)\abs{x_{0}}t\right)\Bigg)\qquad.
\end{eqnarray*}
We see that $x\left(t\right)\in\Sigma_{+}\left(1,r,\omega\right)$
for all 
\[
t\geq t_{0}:=\frac{\left(\exp\left(\frac{1+\delta}{\mu-\delta}\left(\theta_{0}-\frac{\pi}{2}-\arccos\left(\omega\right)\right)\right)-1\right)}{\left(1+\delta\right)\abs{x_{0}}}
\]
 
\[
\Bigg(\mbox{\emph{resp. }}t_{0}:=\frac{\left(\exp\left(\frac{1+\delta}{\mu-\delta}\left(\frac{\pi}{2}-\arccos\left(\omega\right)-\theta_{0}\right)\right)-1\right)}{\left(1+\delta\right)\abs{x_{0}}}\Bigg)\qquad.
\]
Indeed, if $t\geq t_{0}$, with $t_{0}$ as above, and if $x\left(t\right)\in\Theta_{+}\left(r,\mu\right)$,
then we have: 
\begin{eqnarray*}
\theta\left(t\right) & > & \theta_{0}+\left(\frac{\mu-\delta}{1+\delta}\right)\ln\left(1+\left(1+\delta\right)\abs{x_{0}}t\right)\\
 & > & \theta_{0}+\left(\frac{\mu-\delta}{1+\delta}\right)\ln\left(\exp\left(\frac{1+\delta}{\mu-\delta}\left(\theta_{0}-\frac{\pi}{2}-\arccos\left(\omega\right)\right)\right)\right)\\
 & = & \theta_{0}+\frac{\pi}{2}-\arccos\left(\omega\right)-\theta_{0}=\frac{\pi}{2}-\arccos\left(\omega\right)
\end{eqnarray*}
and therefore 
\[
-\arccos\left(\omega\right)<\arg\left(x\left(t\right)\right)-\frac{\pi}{2}<0\,\,.
\]
Hence, we have $x\left(t\right)\in\Sigma_{+}\left(1,r,\omega\right)$.
Moreover, notice that 
\begin{equation}
t_{0}\leq\frac{\exp\left(\left(\frac{1+\delta}{\mu-\delta}\right)\left(\epsilon+\arcsin\left(\omega\right)\right)\right)}{\left(1+\delta\right)\abs{x_{0}}}\qquad.\label{eq: temps critique}
\end{equation}

\end{itemize}
\medskip{}
On the one hand $\Sigma_{+}\left(1,r,\omega\right)\times\mathbf{D\left(0,r\right)}$
is stable by the flow of $\left(\mbox{\ref{eq: Probleme de cauchy}}\right)$
with increasing time $t\geq0$. Indeed in this region $\abs{x\left(t\right)},\abs{y_{1}\left(t\right)}\mbox{ and }\abs{y_{2}\left(t\right)}$
are decreasing, and as soon as $x\left(t\right)$ goes in $\Sigma_{+}\left(1,r,\omega\right)\cap\Theta_{-}\left(r,\mu\right)$
$\Big($\emph{resp. }$\Sigma_{+}\left(1,r,\omega\right)\cap\Theta_{+}\left(r,\mu\right)$$\Big)$,
which is non-empty and contains a part of the boundary of $\Sigma_{+}\left(1,r,\omega\right)$
with constant argument, $\theta\left(t\right)$ is decreasing\emph{
}(\emph{resp.} increasing). Then, $x\left(t\right)$ remains in $\Sigma_{+}\left(1,r,\omega\right)$.

On the other hand, as long as we are $x\left(t\right)$ belongs to
$\Theta_{-}\left(r,\mu\right)$ $\Big($\emph{resp.} $\Theta_{+}\left(r,\mu\right)$$\Big)$
we can re-parametrized the solutions by $\left(-\theta\right)$ ($resp$
$\theta$) (we are now going to make an abuse of notation, writing
when needed $x\left(\theta\right)$ or $x\left(t\right)$):
\[
\begin{cases}
\ddd{\abs x}{\left(-\theta\right)}=-\abs x\frac{\Re\left(\frac{ix}{1+bx+C_{+}\left(\mathbf{x}\right)}\right)}{\Im\left(\frac{ix}{1+bx+C_{+}\left(\mathbf{x}\right)}\right)}\leq\abs x.\frac{1+\delta}{\mu-\delta}\\
\Bigg(\mbox{\emph{resp. }}\ddd{\abs x}{\theta}=\abs x\frac{\Re\left(\frac{ix}{1+bx+C_{+}\left(\mathbf{x}\right)}\right)}{\Im\left(\frac{ix}{1+bx+C_{+}\left(\mathbf{x}\right)}\right)}\leq\abs x.\frac{1+\delta}{\mu-\delta}\Bigg)\\
\ddd{\abs{y_{1}}}{\left(-\theta\right)}=-\abs{y_{1}}\frac{\Re\left(ix\left(\frac{\frac{a}{2}+R_{+}^{\left(1\right)}\left(x,\mathbf{y}\right)}{1+bx+C_{+}\left(x,\mathbf{y}\right)}\right)\right)}{\Im\left(\frac{ix}{1+bx+C_{+}\left(\mathbf{x}\right)}\right)}\leq\abs{y_{1}}.\frac{\abs{\frac{a}{2}}+\delta'}{\mu-\delta}\\
\Bigg(\mbox{\emph{resp. }}\ddd{\abs{y_{1}}}{\theta}=\abs{y_{1}}\frac{\Re\left(ix\left(\frac{\frac{a}{2}+R_{+}^{\left(1\right)}\left(x,\mathbf{y}\right)}{1+bx+C_{+}\left(x,\mathbf{y}\right)}\right)\right)}{\Im\left(\frac{ix}{1+bx+C_{+}\left(\mathbf{x}\right)}\right)}\leq\abs{y_{1}}.\frac{\abs{\frac{a}{2}}+\delta'}{\mu-\delta}\Bigg)\\
\ddd{\abs{y_{2}}}{\left(-\theta\right)}=-\abs{y_{2}}\frac{\Re\left(ix\left(\frac{\frac{a}{2}+R_{+}^{\left(1\right)}\left(x,\mathbf{y}\right)}{1+bx+C_{+}\left(x,\mathbf{y}\right)}\right)\right)}{\Im\left(\frac{ix}{1+bx+C_{+}\left(\mathbf{x}\right)}\right)}\leq\abs{y_{2}}.\frac{\abs{\frac{a}{2}}+\delta'}{\mu-\delta}\\
\Bigg(\mbox{\emph{resp. }}\ddd{\abs{y_{2}}}{\theta}=\abs{y_{2}}\frac{\Re\left(ix\left(\frac{\frac{a}{2}+R_{+}^{\left(1\right)}\left(x,\mathbf{y}\right)}{1+bx+C_{+}\left(x,\mathbf{y}\right)}\right)\right)}{\Im\left(\frac{ix}{1+bx+C_{+}\left(\mathbf{x}\right)}\right)}\leq\abs{y_{2}}.\frac{\abs{\frac{a}{2}}+\delta'}{\mu-\delta}\Bigg).
\end{cases}
\]
Hence, if $\theta_{0}:=\theta\left(0\right)$ is such that $x_{0}:=x\left(0\right)\in\Theta_{-}\left(r,\mu\right)$
$\Big($\emph{resp.} $\Theta_{+}\left(r,\mu\right)$$\Big)$, for
$t\leq t_{0}$ we have: 
\begin{equation}
\begin{cases}
\abs{x\left(t\right)}\leq\abs{x_{0}}\exp\left(\frac{1+\delta}{\mu-\delta}\left(\theta_{0}-\theta\left(t\right)\right)\right)\\
\Bigg(\mbox{\emph{resp. }}\abs{x\left(t\right)}\leq\abs{x_{0}}\exp\left(\frac{1+\delta}{\mu-\delta}\left(\theta\left(t\right)-\theta_{0}\right)\right)\Bigg)\\
\abs{y_{1}\left(t\right)}\leq\abs{y_{1,0}}\exp\left(\frac{\abs{\frac{a}{2}}+\delta'}{\mu-\delta}\left(\theta_{0}-\theta\left(t\right)\right)\right)\\
\Bigg(\mbox{\emph{resp. }}\abs{y_{1}\left(t\right)}\leq\abs{y_{1,0}}\exp\left(\frac{\abs{\frac{a}{2}}+\delta'}{\mu-\delta}\left(\theta\left(t\right)-\theta_{0}\right)\right)\Bigg)\\
\abs{y_{2}\left(t\right)}\leq\abs{y_{2,0}}\exp\left(\frac{\abs{\frac{a}{2}}+\delta'}{\mu-\delta}\left(\theta_{0}-\theta\left(t\right)\right)\right)\\
\Bigg(\mbox{\emph{resp. }}\abs{y_{1}\left(t\right)}\leq\abs{y_{1,0}}\exp\left(\frac{\abs{\frac{a}{2}}+\delta'}{\mu-\delta}\left(\theta\left(t\right)-\theta_{0}\right)\right)\Bigg).
\end{cases}\label{eq: estimation zone croissante}
\end{equation}

\begin{defn}
\label{def: stable domain}We define the domain $\Omega_{+}$ as the
set of all 
\[
\begin{array}{c}
\mathbf{x}=\left(x,y_{1},y_{2}\right)\in S_{+}\left(r,\epsilon\right)\times\mathbf{D\left(0,r\right)}\end{array}
\]
 such that:\end{defn}
\begin{itemize}
\item $\mbox{ if }\Im\left(x\right)\geq\omega\abs x\mbox{ then }\begin{cases}
\abs x\leq r\exp\left(\frac{1+\delta}{\mu-\delta}\left(\arg\left(x\right)-\arcsin\left(\omega\right)\right)\right)\\
\abs{y_{1}}\leq r_{1}\exp\left(\frac{\abs{\frac{a}{2}}+\delta'}{\mu-\delta}\left(\arg\left(x\right)-\arcsin\left(\omega\right)\right)\right)\\
\abs{y_{2}}\leq r_{2}\exp\left(\frac{\abs{\frac{a}{2}}+\delta'}{\mu-\delta}\left(\arg\left(x\right)-\arcsin\left(\omega\right)\right)\right)
\end{cases}$;
\item $\mbox{if }\Im\left(x\right)\leq-\omega\abs x\mbox{ then }\begin{cases}
\abs x\leq r\exp\left(\frac{1+\delta}{\mu-\delta}\left(\pi-\arcsin\left(\omega\right)-\arg\left(x\right)\right)\right)\\
\abs{y_{1}}\leq r_{1}\exp\left(\frac{\abs{\frac{a}{2}}+\delta'}{\mu-\delta}\left(\pi-\arcsin\left(\omega\right)-\arg\left(x\right)\right)\right)\\
\abs{y_{2}}\leq r_{2}\exp\left(\frac{\abs{\frac{a}{2}}+\delta'}{\mu-\delta}\left(\pi-\arcsin\left(\omega\right)-\arg\left(x\right)\right)\right)
\end{cases}$.
\end{itemize}
We see that $\Omega_{+}$ is stable by the flow of $\left(\mbox{\ref{eq: Probleme de cauchy}}\right)$
with increasing time $t\geq0$. We have seen that for any initial
condition in $\Omega_{+}$, the solution exists for any $t\geq0$,
stays in $\Omega_{+}$, and after a finite time $t_{0}\geq0$ enters
and remains in $\Sigma_{+}\left(1,r,\omega\right)$. Finally, we have:
\[
S_{+}\left(r',\epsilon\right)\times\mathbf{D}\left(\mathbf{0},\mathbf{r'}\right)\subset\Omega_{+}\subset S_{+}\left(r,\epsilon\right)\times\mathbf{D\left(0,r\right)}\,\,,
\]
where 
\[
\begin{cases}
r'=r\exp\left(-\left(\frac{1+\delta}{\mu-\delta}\right)\left(\epsilon+\arcsin\left(\omega\right)\right)\right) & <r\\
r'_{1}=r_{1}\exp\left(-\left(\frac{\abs{\frac{a}{2}}+\delta'}{\mu-\delta}\right)\left(\epsilon+\arcsin\left(\omega\right)\right)\right) & <r_{1}\\
r'_{2}=r_{2}\exp\left(-\left(\frac{\abs{\frac{a}{2}}+\delta'}{\mu-\delta}\right)\left(\epsilon+\arcsin\left(\omega\right)\right)\right) & <r_{2}\,\,.
\end{cases}
\]
\medskip{}
Let $\mathbf{x}_{0}=\left(x_{0},\mathbf{y}_{0}\right)\in\Sigma_{+}\left(1,r,\omega\right)\times\mathbf{D\left(0,r\right)}$.
From $\left(\mbox{\ref{eq: majoration derivee de x}}\right)$ and
$\left(\mbox{\ref{eq: majoration derivee de y}}\right)$we have for
all $t\geq0$:
\begin{equation}
\begin{cases}
\abs{x\left(t\right)}\leq\frac{\abs{x_{0}}}{1+\left(\omega-\delta\right)\abs{x_{0}}t}\\
\abs{y_{1}\left(t\right)}\leq\frac{\abs{y_{1,0}}}{\left(1+\left(1+\delta\right)\abs{x_{0}}t\right)^{\frac{\omega'-\delta'}{1+\delta}}}\\
\abs{y_{1}\left(t\right)}\leq\frac{\abs{y_{2,0}}}{\left(1+\left(1+\delta\right)\abs{x_{0}}t\right)^{\frac{\omega'-\delta'}{1+\delta}}} & \,\,,
\end{cases}\label{eq: estimation zone decroissante}
\end{equation}
which proves that the solutions goes to $\mathbf{0}$ as $t\rightarrow+\infty$. \end{proof}
\begin{rem}
Another stable domain $\Omega_{-}$ is defined similarly when dealing
with the case ``$\pm=-$''
\end{rem}

\subsubsection{Construction of a sectorial analytic solution to the homological
equation}

~

We consider the meromorphic 1-form $\tau:=\frac{\mbox{d}x}{x^{2}}$
, which satisfies $\tau\cdot\left(Z_{\pm}\right)=1$. Let also $A_{\pm}\left(x,\mathbf{y}\right)$
be analytic in $S_{\pm}\left(r,\epsilon\right)\times\left(\ww C^{2},0\right)$
and dominated by $\norm{\mathbf{y}}_{\infty}$, and $M\in\ww N_{>0}$.
The following proposition is a precision of Lemma \ref{lem: solution eq homo}.
\begin{prop}
For all $\mathbf{x}_{0}\in\Omega_{\pm}$ (see Definition \ref{def: stable domain}),
the integral defined by 
\[
{\displaystyle \alpha_{\pm}\left(\mathbf{x}_{0}\right):=-\int_{\gamma_{\pm,\mathbf{x}_{0}}}x^{M+1}A_{\pm}\left(\mathbf{x}\right)\,\tau}
\]
 is absolutely convergent (the integration path $\gamma_{\pm,\mathbf{x_{0}}}$
is the one of Definition \ref{def: chemin asympto}). Moreover, the
function $\mathbf{x}_{0}\mapsto\alpha_{\pm}\left(\mathbf{x}_{0}\right)$
is analytic in $\Omega_{\pm}$, satisfies 
\[
\cal L_{Z_{\pm}}\left(\alpha_{\pm}\right)=x^{M+1}A_{\pm}\left(\mathbf{x}\right)
\]
and $\alpha_{\pm}\left(x,\mathbf{y}\right)=x^{M}\tilde{\alpha}_{\pm}\left(x,\mathbf{y}\right)$,
where $\tilde{\alpha}_{\pm}$ is analytic on $\Omega_{\pm}$ and dominated
by $\norm{\mathbf{y}}_{\infty}$.\end{prop}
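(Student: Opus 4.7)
The plan is to parametrize the integral by the real time $t\ge 0$ of the flow of $X_\pm$ and then exploit the three estimates obtained in the previous subsection: the bound \eqref{eq: temps critique} on the entry time $t_0$ into $\Sigma_+(1,r,\omega)$, the bounds \eqref{eq: estimation zone croissante} on the initial ``rotation'' phase $t\in[0,t_0]$, and the polynomial decay \eqref{eq: estimation zone decroissante} of $|x(t)|$ and $\|\mathbf y(t)\|_\infty$ for $t\ge t_0$. Writing $\mathbf x(t)=\Phi_{X_\pm}^t(\mathbf x_0)$ we have $\dd x/\dd t=\pm i\,x^2/(1+bx+C_\pm)$, so $\tau\cdot X_\pm=\pm i/(1+bx+C_\pm)$ and
\[
\alpha_\pm(\mathbf x_0)=\mp i\int_0^{+\infty}\frac{x(t)^{M+1}A_\pm(\mathbf x(t))}{1+bx(t)+C_\pm(\mathbf x(t))}\,\dd t.
\]

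First I would split the integral at $t=t_0$. On $[0,t_0]$ the estimates \eqref{eq: estimation zone croissante} give $|x(t)|\le K|x_0|$ and $\|\mathbf y(t)\|_\infty\le K\|\mathbf y_0\|_\infty$ for a constant $K$ depending only on $\omega,\mu,\delta,\delta',a$; combined with $|A_\pm|\le L\|\mathbf y\|_\infty$ and \eqref{eq: temps critique}, this segment contributes at most $C_1|x_0|^{M}\|\mathbf y_0\|_\infty$. On $[t_0,+\infty)$ the estimate \eqref{eq: estimation zone decroissante} yields $|x(t)|\le|x(t_0)|/(1+(\omega-\delta)|x(t_0)|(t-t_0))$ and $\|\mathbf y(t)\|_\infty\le\|\mathbf y(t_0)\|_\infty$, so
\[
\int_{t_0}^{+\infty}|x(t)|^{M+1}\|\mathbf y(t)\|_\infty\,\dd t\le \frac{|x(t_0)|^{M}\|\mathbf y(t_0)\|_\infty}{M(\omega-\delta)}\le C_2|x_0|^M\|\mathbf y_0\|_\infty,
\]
because $M\ge 1$. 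Hence absolute convergence is established and, more importantly, the crucial uniform a priori bound
\[
|\alpha_\pm(\mathbf x_0)|\le C|x_0|^M\|\mathbf y_0\|_\infty\qquad(\mathbf x_0\in\Omega_\pm)
\]
holds.

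Analyticity of $\mathbf x_0\mapsto\alpha_\pm(\mathbf x_0)$ in $\Omega_\pm$ follows from the analytic dependence of the flow $\Phi_{X_\pm}^t$ on initial conditions (standard Cauchy--Lipschitz for holomorphic ODEs) together with uniform convergence on compacts: the above estimates are uniform with respect to $\mathbf x_0$ in any relatively compact subset of $\Omega_\pm$, so the integral can be differentiated under the integral sign. To obtain the homological equation, I would use the group property of the flow. For $s\ge 0$ small enough so that $\Phi_{Z_\pm}^s(\mathbf x_0)$ still belongs to $\Omega_\pm$, the trajectory $\gamma_{\pm,\Phi_{Z_\pm}^s(\mathbf x_0)}$ is the sub-arc $\gamma_{\pm,\mathbf x_0}$ minus its initial piece from $\mathbf x_0$ to $\Phi_{Z_\pm}^s(\mathbf x_0)$, giving
\[
\alpha_\pm(\Phi_{Z_\pm}^s(\mathbf x_0))-\alpha_\pm(\mathbf x_0)=\int_0^s x(u)^{M+1}A_\pm(\mathbf x(u))\,(\tau\cdot Z_\pm)(\mathbf x(u))\,\dd u.
\]
Since $\tau\cdot Z_\pm=1$, differentiating at $s=0$ yields $\cal L_{Z_\pm}(\alpha_\pm)(\mathbf x_0)=x_0^{M+1}A_\pm(\mathbf x_0)$ as required.

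Finally, for the factorization $\alpha_\pm=x^M\tilde\alpha_\pm$, the bound $|\alpha_\pm(x,\mathbf y)|\le C|x|^M\|\mathbf y\|_\infty$ shows that, for each fixed $\mathbf y$ with $(x,\mathbf y)\in\Omega_\pm$, the analytic function $x\mapsto\alpha_\pm(x,\mathbf y)$ vanishes at $x=0$ to order at least $M$; the quotient $\tilde\alpha_\pm:=\alpha_\pm/x^M$ is therefore holomorphic in $\Omega_\pm$ (after extension through $x=0$ by the removable singularity theorem applied sector-wise, which is harmless since $\Omega_\pm$ is the domain of the sectorial germ) and still bounded by $C\|\mathbf y\|_\infty$. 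The main technical point, and the one requiring most care, is keeping all constants uniform in $\mathbf x_0\in\Omega_\pm$ so that the estimate survives when passing to the quotient; this rests entirely on the fact that the constants $\delta,\delta',\omega,\omega',\mu$ introduced in subsection \ref{sub:Domain-of-stability} were fixed independently of $\mathbf x_0$, and that the entry time $t_0$ satisfies the explicit bound \eqref{eq: temps critique}. This concludes the outline.
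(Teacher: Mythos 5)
Your overall strategy coincides with the paper's: parametrize the integral by the real flow time of $X_{\pm}$, bound the integrand by a constant times $\abs{x\left(t\right)}^{M+1}\norm{\mathbf{y}\left(t\right)}_{\infty}$, and split at the entry time $t_{0}$ into $\Sigma_{+}\left(1,r,\omega\right)$ using (\ref{eq: temps critique}), (\ref{eq: estimation zone croissante}) and (\ref{eq: estimation zone decroissante}). Your computation of the uniform bound $\abs{\alpha_{\pm}\left(\mathbf{x}_{0}\right)}\leq C\abs{x_{0}}^{M}\norm{\mathbf{y}_{0}}_{\infty}$ is exactly the paper's final step, merely performed upfront so that it also settles absolute convergence, and the analyticity argument via uniform estimates on compact subsets is the same. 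One cosmetic point: for the factorization $\alpha_{\pm}=x^{M}\tilde{\alpha}_{\pm}$ no removable-singularity argument is needed or indeed available, since $0\notin\Omega_{\pm}$; the quotient $\alpha_{\pm}/x^{M}$ is automatically holomorphic on $\Omega_{\pm}$ and your bound gives $\abs{\tilde{\alpha}_{\pm}}\leq C\norm{\mathbf{y}}_{\infty}$ directly. One cannot ``extend through $x=0$'' on a sectorial domain.

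The one step where you genuinely diverge from the paper is the homological equation, and there your argument has a flaw as written. The path $\gamma_{\pm,\mathbf{x}_{0}}$ is an orbit of $X_{\pm}=\frac{\pm i}{1+bx+C_{\pm}}Z_{\pm}$, not of $Z_{\pm}$: for real $s>0$ the point $\Phi_{Z_{\pm}}^{s}\left(\mathbf{x}_{0}\right)$ stays on the same leaf but moves off in a different real direction, so $\gamma_{\pm,\Phi_{Z_{\pm}}^{s}\left(\mathbf{x}_{0}\right)}$ is \emph{not} a sub-arc of $\gamma_{\pm,\mathbf{x}_{0}}$ and the Chasles identity you invoke does not hold literally. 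Two repairs are possible. Either run the same semigroup argument with $\Phi_{X_{\pm}}^{s}$, for which the sub-arc property is true; this yields $\mathcal{L}_{X_{\pm}}\left(\alpha_{\pm}\right)=\frac{\pm i}{1+bx+C_{\pm}}\, x^{M+1}A_{\pm}$, and multiplying by the proportionality factor between $Z_{\pm}$ and $X_{\pm}$ gives $\mathcal{L}_{Z_{\pm}}\left(\alpha_{\pm}\right)=x^{M+1}A_{\pm}$. Or justify that the integral of the closed $1$-form $x^{M+1}A_{\pm}\tau$ restricted to a leaf is homotopy-invariant between the two asymptotic paths, so that $\alpha_{\pm}$ is a genuine leafwise primitive and $\mathcal{L}_{Z_{\pm}}\left(\alpha_{\pm}\right)=x^{M+1}A_{\pm}\left(\tau\cdot Z_{\pm}\right)$; this is the method-of-characteristics picture, but it requires an extra (if standard) argument. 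The paper sidesteps the issue by differentiating under the integral sign and rewriting $\mathcal{L}_{Z_{\pm}}\left(F\circ\Phi_{X_{\pm}}^{s}\right)$ as $\frac{1+bx+C_{\pm}}{\pm i}\,\partial_{s}\left(F\circ\Phi_{X_{\pm}}^{s}\right)$ before integrating; once corrected, your route is arguably cleaner, but it needs that one fix.
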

\begin{proof}
We are going to use the estimations obtained in the previous paragraph.
\begin{itemize}
\item Let us start by proving that the integral above is convergent. We
begin with: 
\begin{eqnarray*}
\alpha_{\pm}\left(\mathbf{x}_{0}\right) & = & -\int_{0}^{+\infty}\frac{x\left(t\right)^{M+1}A_{\pm}\left(\mathbf{x}\left(t\right)\right)}{x\left(t\right)^{2}}\frac{ix\left(t\right)^{2}}{1+bx\left(t\right)+C_{+}\left(\mathbf{x}\left(t\right)\right)}\mbox{d}t\\
 & = & -i\int_{0}^{+\infty}\frac{x\left(t\right)^{M+1}A_{\pm}\left(\mathbf{x}\left(t\right)\right)}{1+bx\left(t\right)+C_{+}\left(\mathbf{x}\left(t\right)\right)}\mbox{d}t\,\,.
\end{eqnarray*}
Since $\mathbf{x}\left(t\right)\in\Omega_{\pm}$ for all $t\geq0$
and $A_{\pm}\left(x,\mathbf{y}\right)$ is dominated by $\norm{\mathbf{y}}_{\infty}$,
we have then: 
\begin{eqnarray*}
\abs{\frac{x\left(t\right)^{M+1}A_{\pm}\left(\mathbf{x}\left(t\right)\right)}{1+bx\left(t\right)+C_{+}\left(\mathbf{x}\left(t\right)\right)}} & \leq & C\abs{x\left(t\right)}^{M+1}\norm{\mathbf{y}\left(t\right)}_{\infty}
\end{eqnarray*}
where $C>0$ is some constant, independent of $\mathbf{x}_{0}$ and
$t$. For $t\geq0$ big enough, we deduce from paragraph \ref{sub:Domain-of-stability}
that: 
\begin{eqnarray*}
\abs{\frac{x\left(t\right)^{M+1}A_{\pm}\left(\mathbf{x}\left(t\right)\right)}{1+bx\left(t\right)+C_{+}\left(\mathbf{x}\left(t\right)\right)}} & \leq & C\norm{\mathbf{y}_{0}}\left(\frac{\abs{x_{0}}}{1+\left(\omega-\delta\right)\abs{x_{0}}t}\right)^{M+1}\frac{1}{\left(1+\left(1+\delta\right)\abs{x_{0}}t\right)^{\frac{\omega'-\delta'}{1+\delta}}}\\
 & = & \underset{t\rightarrow+\infty}{\tx O}\left(\frac{1}{t^{M+1}}\right)
\end{eqnarray*}
and then the integral is absolutely convergent. 
\item Let us prove the analyticity of $\alpha_{\pm}$ in $\Omega_{\pm}$:
it is sufficient to prove that it is analytic in every compact $K\subset\Omega_{\pm}$.
Let $K$ be such a compact subset. Let $L>0$ such that for all $\mathbf{x}\in K$,
we have:
\[
\abs{\frac{A_{\pm}\left(\mathbf{x}\right)}{1+bx+C_{+}\left(\mathbf{x}\right)}}\leq L.
\]
Since $K$ in a compact subset of $\Omega_{\pm}\subset S_{\pm}\left(r,\epsilon\right)\times\wcc$
and $S_{\pm}\left(r,\epsilon\right)$ is open ($0\notin S_{\pm}\left(r,\epsilon\right)$),
there exists $\delta>0$ such that for all $\mathbf{x}=\left(x,y_{1},y_{2}\right)\in K$,
we have $\delta<\abs x<r$. Finally, according to the several estimates
in paragraph \ref{sub:Domain-of-stability}, there exists $B>0$ such
that for all $\mathbf{x}_{0}\in K$ and $t\geq0$, we have:
\[
\abs{x\left(t\right)}\leq B\frac{\abs{x_{0}}}{1+\left(\omega-\delta\right)\abs{x_{0}}t}\qquad.
\]
Hence: 
\begin{eqnarray*}
\abs{\frac{x\left(t\right)^{M+1}A_{\pm}\left(\mathbf{x}\left(t\right)\right)}{1+bx\left(t\right)+C_{+}\left(\mathbf{x}\left(t\right)\right)}} & \leq & LB^{M+1}\frac{\abs{x_{0}}^{M+1}}{\left(1+\left(\omega-\delta\right)\abs{x_{0}}t\right)^{M+1}}\\
 & \leq & \frac{LB^{M+1}r^{M+1}}{\left(1+\left(\omega-\delta\right)\delta t\right)^{M+1}},
\end{eqnarray*}
and the classical theorem concerning the analyticity of integral with
parameters proves that $\alpha_{\pm}$ is analytic in any compact
$K\subset\Omega_{\pm}$, and consequently in $\Omega_{\pm}$.
\item Let us write $F\left(\mathbf{x}\right):=\frac{\pm ix^{M+1}A_{\pm}\left(\mathbf{x}\right)}{1+bx+C_{+}\left(\mathbf{x}\right)}$,
so that 
\[
\alpha_{\pm}\left(\mathbf{x}_{0}\right)=-\int_{0}^{+\infty}F\left(\Phi_{X_{\pm}}^{t}\left(\mathbf{x}_{0}\right)\right)\mbox{d}t\,\,.
\]
 For all $\mathbf{x}_{0}\in\Omega_{\pm}$, the function $t\mapsto\mathbf{x}\left(t\right)=\Phi_{X_{\pm}}^{t}\left(\mathbf{x}_{0}\right)$
satisfies: 
\[
\pp t\left(\Phi_{X_{\pm}}^{t}\left(\mathbf{x}_{0}\right)\right)=\frac{\pm i}{1+bx\left(\Phi_{X_{\pm}}^{t}\left(\mathbf{x}_{0}\right)\right)+C_{+}\left(\Phi_{X_{\pm}}^{t}\left(\mathbf{x}_{0}\right)\right)}Z_{\pm}\left(\Phi_{\pm}^{t}\left(\mathbf{x}\right)\right)\,\,.
\]
The classical theorem about the analyticity of integral with parameters
tells us that we can compute the derivatives inside the integral symbol:
\begin{eqnarray*}
\left(\cal L_{Z_{\pm}}\alpha_{\pm}\right)\left(\mathbf{x}_{0}\right) & = & -\int_{0}^{+\infty}\cal L_{Z_{\pm}}\left(F\circ\Phi^{s}\right)\left(\mathbf{x}_{0}\right)\mbox{d}s\\
 & = & -\int_{0}^{+\infty}\mbox{D}F\left(\Phi_{X_{\pm}}^{s}\left(\mathbf{x}_{0}\right)\right).\mbox{D}\Phi_{X_{\pm}}^{s}\left(\mathbf{x}_{0}\right).Z_{\pm}\left(\mathbf{x}_{0}\right)\mbox{d}s\\
 & = & -\int_{0}^{+\infty}\mbox{D}F\left(\Phi_{X_{\pm}}^{s}\left(\mathbf{x}\right)\right).\pp t\left(\Phi_{X_{\pm}}^{s+t}\left(\mathbf{x}_{0}\right)\right)_{\mid t=0}\left(\pm\frac{1+bx_{0}+C_{\pm}\left(\mathbf{x}_{0}\right)}{i}\right)\mbox{d}s\\
 & = & -\left(\pm\frac{1+bx_{0}+C_{\pm}\left(\mathbf{x}_{0}\right)}{i}\right).\int_{0}^{+\infty}\mbox{D}F\left(\Phi_{X_{\pm}}^{s}\left(\mathbf{x}_{0}\right)\right).\pp t\left(\Phi_{X_{\pm}}^{t}\left(\mathbf{x}_{0}\right)\right)_{\mid t=s}\mbox{d}s\\
 & = & -\left(\pm\frac{1+bx_{0}+C_{\pm}\left(\mathbf{x}_{0}\right)}{i}\right).\int_{0}^{+\infty}\pp s\left(F\circ\Phi_{X_{\pm}}^{s}\left(\mathbf{x}_{0}\right)\right)ds\\
 & = & -\left(\pm\frac{1+bx_{0}+C_{\pm}\left(\mathbf{x}_{0}\right)}{i}\right).\cro{F\circ\Phi_{X_{\pm}}^{s}\left(\mathbf{x}_{0}\right)}_{s=0}^{s=+\infty}\\
 & = & -\left(\pm\frac{1+bx_{0}+C_{\pm}\left(\mathbf{x}_{0}\right)}{i}\right).\left(-F\left(\mathbf{x}_{0}\right)\right)\\
 & = & x_{0}^{M+1}A_{\pm}\left(\mathbf{x}_{0}\right).
\end{eqnarray*}

\item Let us prove that $\tilde{\alpha}_{\text{\ensuremath{\pm}}}\left(x,\mathbf{y}\right):=\frac{\alpha_{\pm}\left(x,\mathbf{y}\right)}{x^{M}}$
is bounded and dominated by $\norm{\mathbf{y}}_{\infty}$ in $\Omega_{\pm}$.
The fact that it is analytic in $\Omega_{\pm}$ is clear because $\alpha_{\pm}$
is analytic there and $0\notin\Omega_{\pm}$. As above, there exists
there exists $C>0$ such that for all $\mathbf{x}_{0}:=\left(x_{0},\mathbf{y}_{0}\right)\in\Omega_{\pm}$
and for all $t\geq0$: 
\begin{eqnarray*}
\abs{\frac{x\left(\Phi_{X_{\pm}}^{t}\left(\mathbf{x}_{0}\right)\right)^{M+1}A_{\pm}\left(\Phi_{X_{\pm}}^{t}\left(\mathbf{x}_{0}\right)\right)}{\left(1+bx\left(\Phi_{X_{\pm}}^{t}\left(\mathbf{x}_{0}\right)\right)+C_{+}\left(\Phi_{X_{\pm}}^{t}\left(\mathbf{x}_{0}\right)\right)\right)}} & \leq & C\abs{x\left(\Phi_{X_{\pm}}^{t}\left(\mathbf{x}_{0}\right)\right)}^{M+1}\norm{\mathbf{y}\left(\Phi_{X_{\pm}}^{t}\left(\mathbf{x}_{0}\right)\right)}_{\infty}\qquad.
\end{eqnarray*}
We will only deal with the case where $x_{0}\in\Theta_{\pm}\left(r,\mu\right)$
(the case where $\Sigma_{\pm}\left(1,r,\omega\right)$ is easier and
can be deduced from that case). On the one hand from $\left(\mbox{\ref{eq: estimation zone croissante}}\right)$
we have for all $t\leq t_{0}$: 
\[
\begin{cases}
\abs{x\left(\Phi_{X_{\pm}}^{t}\left(\mathbf{x}_{0}\right)\right)}\leq D\abs{x_{0}} & ,\,\mbox{where }D:=\exp\left(\frac{1+\delta}{\mu-\delta}\left(\arccos\left(\mu\right)+\epsilon\right)\right)\\
\norm{\mathbf{y}\left(\Phi_{X_{\pm}}^{t}\left(\mathbf{x}_{0}\right)\right)}_{\infty}\leq D'\norm{\mathbf{y}_{0}}_{\infty} & ,\,\mbox{where }D':=\exp\left(\frac{\abs{\frac{a}{2}}+\delta'}{\mu-\delta}\left(\arccos\left(\mu\right)+\epsilon\right)\right)\,\,.
\end{cases}
\]
On the other hand we have seen in $\left(\mbox{\ref{eq: estimation zone decroissante}}\right)$
that for all $t\geq t_{0}$:
\[
\begin{cases}
\abs{x\left(\Phi_{X_{\pm}}^{t}\left(\mathbf{x}_{0}\right)\right)}\leq\frac{\abs{x\left(\Phi_{X_{\pm}}^{t_{0}}\left(\mathbf{x}_{0}\right)\right)}}{1+\left(\omega-\delta\right)\abs{x\left(\Phi_{X_{\pm}}^{t}\left(\mathbf{x}_{0}\right)\right)}\left(t-t_{0}\right)}\\
\norm{\mathbf{y}\left(\Phi_{X_{\pm}}^{t}\left(\mathbf{x}_{0}\right)\right)}_{\infty}\leq\norm{\mathbf{y}_{0}}_{\infty} & \,\,.
\end{cases}
\]
Hence, we use the Chasles relation and the estimations above to obtain:
\begin{eqnarray*}
\abs{\tilde{\alpha}_{\pm}\left(x_{0},\mathbf{y}_{0}\right)} & \leq & \frac{\abs{\alpha_{\pm}\left(x_{0},\mathbf{y}_{0}\right)}}{\abs{x_{0}}^{M}}\\
 & \leq & \frac{CD^{M+1}D'\norm{\mathbf{y}_{0}}_{\infty}\abs{x_{0}}^{M+1}\abs{t_{0}}}{\abs{x_{0}}^{M}}\\
 &  & +\frac{C\norm{\mathbf{y}_{0}}_{\infty}}{\abs{x_{0}}^{M}}\int_{t_{0}}^{+\infty}\frac{\mbox{d}t}{\left(1+\left(\omega-\delta\right)\abs{x\left(\Phi_{X_{\pm}}^{t}\left(\mathbf{x}_{0}\right)\right)}\left(t-t_{0}\right)\right)}\\
 & \leq & CD^{M+1}D'\norm{\mathbf{y}_{0}}_{\infty}\abs{x_{0}}\abs{t_{0}}+\frac{C\norm{\mathbf{y}_{0}}_{\infty}\abs{x\left(\Phi_{X_{\pm}}^{t_{0}}\left(\mathbf{x}_{0}\right)\right)}^{M+1}}{M\left(\omega-\delta\right)\abs{x_{0}}^{M}\abs{x\left(\Phi_{X_{\pm}}^{t_{0}}\left(\mathbf{x}_{0}\right)\right)}}\,\,;
\end{eqnarray*}
and according to $\left(\mbox{\ref{eq: temps critique}}\right)$ we
have
\begin{eqnarray*}
\abs{\tilde{\alpha}_{\pm}\left(x_{0},\mathbf{y}_{0}\right)} & \leq & \left(\frac{D^{2}D'}{\left(1+\delta\right)}+\frac{1}{M\left(\omega-\delta\right)}\right)CD^{M}\norm{\mathbf{y}_{0}}_{\infty}\qquad.
\end{eqnarray*}

\end{itemize}
\end{proof}

\section{Uniqueness and weak 1-summability }

In this section, we prove the uniqueness of the normalizing maps obtained
in Corollary \ref{cor: existence normalisations sectorielles} (see
Proposition \ref{prop: unique normalizations}) and also the weak
1-summability of the formal normalizing map of Theorem \ref{thm: forme normalel formelle}
(see Proposition \ref{prop: Weak sectorial normalizations}). In particular,
we end this section by proving Theorem \ref{Th: Th drsn}.

\subsection{\label{sub :Sectorial isotropies in big sectors}Sectorial isotropies
in ``wide'' sectors and uniqueness of the normalizing maps: proof
of Proposition \ref{prop: unique normalizations}.}

~

We consider a normal form $\ynorm$ as given by Corollary \ref{cor: existence normalisations sectorielles}.
We study here the germs of sectorial isotropies of the normal form
$\ynorm$ in $S_{\pm}\times\left(\ww C^{2},0\right)$, where $S_{\pm}\in\germsect{\arg\left(\pm i\lambda\right)}{\eta}$
is a sectorial neighborhood of the origin with opening $\eta\in\left]\pi,2\pi\right[$
in the direction $\arg\left(\pm i\lambda\right)$. Proposition \ref{prop: unique normalizations}
states that the normalizing maps $\left(\Phi_{+},\Phi_{-}\right)$
are unique as sectorial germs. It is a straightforward consequence
of Proposition \ref{prop: isot sect} below, which show that the only
sectorial fibered isotropy (tangent to the identity) of the normal
form in over ``wide'' sector (\emph{i.e. }of opening $>\pi$) is
the identity itself.
\begin{defn}
A germ of sectorial fibered diffeomorphism $\Phi_{\theta,\eta}$ in
the direction $\theta\in\ww R$ with opening $\eta\geq0$ and tangent
to the identity, is a germ of fibered sectorial \emph{isotropy }of
$\ynorm$ (in the direction $\theta\in\ww R$ with opening $\eta\geq0$
and tangent to the identity ) if $\left(\Phi_{\theta,\eta}\right)_{*}\left(\ynorm\right)=\ynorm$
in $\cal S\in\cal S_{\theta,\eta}$. We denote by $\isotsect Y{\theta}{\eta}\subset\diffsect$
the subset formed composed of these elements.
\end{defn}
Proposition \ref{prop: unique normalizations} is an immediate consequence
of the following one.
\begin{prop}
\label{prop: isot sect}~For all $\eta\in\left]\pi,2\pi\right[$:
\[
\isotsect{\ynorm}{\arg\left(\pm i\lambda\right)}{\eta}=\acc{\tx{Id}}\,\,.
\]
\end{prop}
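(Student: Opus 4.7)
The approach is to expand the isotropy in Taylor series in $\mathbf{y}$ and argue coefficient by coefficient, exploiting the wide opening of $S_{\pm}$. Fix a representative of $\Phi$ analytic on $\cal S = S \times \mathbf{D}\left(\mathbf{0}, \mathbf{r}\right)$ with $S$ a sector of opening $\eta > \pi$ bisected by $\arg\left(\pm i\lambda\right)$, and write $\Phi\left(x, \mathbf{y}\right) = \left(x, \mathbf{y} + \boldsymbol{\phi}\left(x, \mathbf{y}\right)\right)$ with $\boldsymbol{\phi}\left(x, \mathbf{y}\right) = \tx O\left(\norm{\left(x, \mathbf{y}\right)}^{2}\right)$. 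Cauchy's formula in $\mathbf{y}$ yields a uniformly convergent expansion $\phi_{j}\left(x, \mathbf{y}\right) = \sum_{\mathbf{n} \in \ww N^{2}} \phi_{j, \mathbf{n}}\left(x\right) \mathbf{y}^{\mathbf{n}}$ in which each $\phi_{j, \mathbf{n}} \colon S \to \ww C$ is analytic and bounded on $S$, and the plan is to show by induction on $\left|\mathbf{n}\right|$ that every $\phi_{j, \mathbf{n}}$ vanishes identically.

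Writing $\ynorm = x^{2} \pp x + F_{1}\left(x, \mathbf{y}\right) \pp{y_{1}} + F_{2}\left(x, \mathbf{y}\right) \pp{y_{2}}$, the isotropy relation $\Phi_{*}\ynorm = \ynorm$ is equivalent to the system $\cal L_{\ynorm}\left(\phi_{j}\right) = F_{j}\left(x, \mathbf{y} + \boldsymbol{\phi}\right) - F_{j}\left(x, \mathbf{y}\right)$ for $j = 1, 2$. Reading off this system at each multi-order $\mathbf{n}$ in $\mathbf{y}$ produces an irregular singular linear ODE
\[
x^{2} \phi'_{j, \mathbf{n}}\left(x\right) + \delta_{j, \mathbf{n}}\left(x\right) \phi_{j, \mathbf{n}}\left(x\right) = g_{j, \mathbf{n}}\left(x\right),
\]
where $\delta_{j, \mathbf{n}}\left(x\right) = \mu_{j, \mathbf{n}} + \nu_{j, \mathbf{n}} x$ with $\mu_{1, \mathbf{n}} = \left(n_{2} - n_{1} + 1\right)\lambda$, $\mu_{2, \mathbf{n}} = \left(n_{2} - n_{1} - 1\right)\lambda$, and $\nu_{j, \mathbf{n}}$ an affine expression in $\left(a_{1}, a_{2}\right)$ that equals $k\left(a_{1} + a_{2}\right)$ at the resonant indices where $\mu_{j, \mathbf{n}} = 0$. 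The system is triangular: $g_{j, \mathbf{n}}$ depends only on the $\phi_{i, \mathbf{k}}$ with $\left|\mathbf{k}\right| < \left|\mathbf{n}\right|$, because the $c\left(y_{1}y_{2}\right)$-term in $\ynorm$ couples order $\mathbf{n}$ only to orders $\mathbf{n} - \left(\ell, \ell\right)$ with $\ell \geq 1$, and nonlinear self-interactions of $\boldsymbol{\phi}$ contribute only beyond the lowest non-zero order. Under the induction hypothesis the forcing vanishes and the general solution reads $\phi_{j, \mathbf{n}}\left(x\right) = C\cdot x^{-\nu_{j, \mathbf{n}}} \exp\left(\mu_{j, \mathbf{n}}/x\right)$.

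The conclusion then comes from a wide-sector analysis of these solutions. Since $S_{\pm}$ has opening strictly greater than $\pi$ and is bisected by $\arg\left(\pm i\lambda\right)$, it contains both rays $\arg\left(\lambda\right)$ and $\arg\left(-\lambda\right)$; whenever $\mu_{j, \mathbf{n}} \neq 0$ the factor $\exp\left(\mu_{j, \mathbf{n}}/x\right)$ blows up along one of these rays as $x \to 0$, and boundedness of $\phi_{j, \mathbf{n}}$ forces $C = 0$. The resonant case $\mu_{j, \mathbf{n}} = 0$---occurring for $\mathbf{n} = \left(k+1, k\right)$ with $j = 1$ or $\mathbf{n} = \left(k, k+1\right)$ with $j = 2$, $k \geq 1$---is the main obstacle: here the solution is the pure power $C\cdot x^{-k\left(a_{1}+a_{2}\right)}$, and it is precisely the \emph{strict} non-degeneracy $\Re\left(a_{1} + a_{2}\right) > 0$ (and not the weaker $a_{1} + a_{2} \notin \ww Q_{\leq 0}$) that makes $\left|x\right|^{-k\Re\left(a_{1}+a_{2}\right)} \to +\infty$ as $x \to 0$ and forces $C = 0$; without this stronger assumption, non-trivial sectorial isotropies could persist at the infinitely many resonant orders. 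The very small orders $\left|\mathbf{n}\right| \in \left\{0, 1\right\}$ form the base of the induction and are handled by hand: for $\mathbf{n} \in \left\{e_{1}, e_{2}\right\}$ the constant coefficient produced by the homogeneous ODE is eliminated by the tangency hypothesis $\boldsymbol{\phi} = \tx O\left(\norm{\left(x, \mathbf{y}\right)}^{2}\right)$, and the nonlinear coupled ODE at $\mathbf{n} = \mathbf{0}$ is controlled by the same exponential-unboundedness principle applied to its linearization (in which $\mu = \pm \lambda$), or equivalently by uniqueness of the invariant curve $\left\{\mathbf{y} = 0\right\}$ of $\ynorm$ in a wide sector.
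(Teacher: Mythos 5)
Your argument is correct and ends up at the same scalar equations $x^{2}u'+\left(\mu+\nu x\right)u=g$ with the same two mechanisms as the paper --- for $\mu=m\lambda$, $m\in\ww Z\setminus\acc 0$, the factor $\exp\left(m\lambda/x\right)$ is unbounded along one of the rays $\arg\left(\pm\lambda\right)$, both of which lie in a sector of opening $>\pi$ bisected by $\arg\left(\pm i\lambda\right)$; at the resonant indices the solution is the pure power $x^{-k\left(a_{1}+a_{2}\right)}$, killed by $\Re\left(a_{1}+a_{2}\right)>0$ --- but it is organized differently. The paper first studies the product $\psi:=\phi_{1}\phi_{2}$, which satisfies the \emph{linear} equation $\cal L_{\ynorm}\left(\psi\right)=\left(a_{1}+a_{2}\right)x\,\psi$ because the two occurrences of $c\left(\phi_{1}\phi_{2}\right)$ cancel; this yields $\phi_{1}\phi_{2}=y_{1}y_{2}$, after which the two component equations become linear and decoupled and are solved by the same coefficientwise recursion. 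You instead run a single induction on the total degree $\abs{\mathbf{n}}$, using the (correct, and worth stating as a separate lemma) fact that every coupling --- through $c\left(y_{1}y_{2}\right)$ or through products of corrections --- strictly lowers the degree, so that under the induction hypothesis each equation is homogeneous. The product trick buys complete linearity at the cost of an extra step; your route is more direct but puts all the weight on the triangularity verification. The only thin spot is the base case $\mathbf{n}=\mathbf{0}$: the coefficients $u_{j}:=\phi_{j,\mathbf{0}}$ satisfy a genuinely nonlinear closed system, and ``the exponential-unboundedness principle applied to its linearization'' is not yet a proof. It is easily completed: the system reads $x^{2}u_{1}'=\left(-\lambda+a_{1}x-c\left(u_{1}u_{2}\right)\right)u_{1}$ and $x^{2}u_{2}'=\left(\lambda+a_{2}x+c\left(u_{1}u_{2}\right)\right)u_{2}$, so each $u_{j}$ solves a \emph{linear} equation whose coefficient is a perturbation of $\mp\lambda$ by the function $c\left(u_{1}u_{2}\right)=\tx o\left(1\right)$; integrating gives $u_{1}=C_{1}\exp\left(\lambda/x+\tx o\left(1/x\right)\right)x^{a_{1}}$ and symmetrically for $u_{2}$, and boundedness on the wide sector again forces $C_{1}=C_{2}=0$. (The paper sidesteps this case by expanding $\phi_{i}$ from degree $1$ onwards, i.e.\ by taking $\phi_{i}\left(x,\mathbf{0}\right)=0$ as part of the setup; your version, once completed as above, covers the a priori more general isotropies that are only assumed tangent to the identity.)
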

\begin{proof}
Let 
\[
\phi:\left(x,\mathbf{y}\right)\mapsto\left(x,\phi_{1}\left(x,\mathbf{y}\right),\phi_{2}\left(x,\mathbf{y}\right)\right)\in\isotsect{\ynorm}{\arg\left(\pm i\lambda\right)}{\eta}
\]
 be a germ of a sectorial fibered isotropy (tangent to the identity)
of $\ynorm$ in $\cal S_{\pm}\in\germsect{\arg\left(\pm i\lambda\right)}{\eta}$
with $\eta\in\left]\pi,2\pi\right[$. Possibly by reducing our domain,
we can assume that $\cal S_{\pm}$ is bounded and of the form $S_{\pm}\times\mathbf{D\left(0,r\right)}$
(where, as usual, $S_{\pm}$ is an adapted sector and $\mathbf{D\left(0,r\right)}$
a polydisc), and that $\phi$ is bounded in this domain. We have
\[
\phi_{*}\left(\ynorm\right)=\ynorm
\]
\emph{i.e.} 
\[
\mbox{D}\phi\cdot\ynorm=\ynorm\circ\phi
\]
 which is also equivalent to: 
\begin{equation}
\begin{cases}
x^{2}\ppp{\phi_{1}}x+\left(-1-c\left(y_{1}y_{2}\right)+a_{1}x\right)y_{1}\ppp{\phi_{1}}{y_{1}}+\left(1+c\left(y_{1}y_{2}\right)+a_{2}x\right)y_{2}\ppp{\phi_{1}}{y_{2}}\\
=\phi_{1}\left(-1-c\left(\phi_{1}\phi_{2}\right)+a_{1}x\right)\\
x^{2}\ppp{\phi_{2}}x+\left(-1-c\left(y_{1}y_{2}\right)+a_{1}x\right)y_{1}\ppp{\phi_{2}}{y_{1}}+\left(1+c\left(y_{1}y_{2}\right)+a_{2}x\right)y_{2}\ppp{\phi_{2}}{y_{2}}\\
=\phi_{2}\left(1+c\left(\phi_{1}\phi_{2}\right)+a_{2}x\right) & \,\,.
\end{cases}\label{eq: isot sect}
\end{equation}
Let us consider $\psi:=\phi_{1}\phi_{2}$. Then
\[
x^{2}\ppp{\psi}x+\left(-1-c\left(y_{1}y_{2}\right)+a_{1}x\right)y_{1}\ppp{\psi}{y_{1}}+\left(1+c\left(y_{1}y_{2}\right)+a_{2}x\right)y_{2}\ppp{\psi}{y_{2}}=\left(a_{1}+a_{2}\right)x\psi\qquad.
\]
By assumption we can write
\[
{\displaystyle \psi\left(x,\mathbf{y}\right)=\sum_{j_{1}+j_{2}\geq2}}\psi_{j_{1},j_{2}}\left(x\right)y_{1}^{j_{1}}y_{2}^{j_{2}}\,\,,
\]
where $\psi_{j_{1},j_{2}}\left(x\right)$ is analytic and bounded
in $S_{\pm}$ for all $j_{1},j_{2}\geq0$ and such that 
\[
{\displaystyle \sum_{j_{1}+j_{2}\geq1}\left(\underset{x\in S_{\pm}}{\sup}\left(\abs{\psi_{j_{1},j_{2}}\left(x\right)}\right)\right)y_{1}^{j_{1}}y_{2}^{j_{2}}}
\]
 is convergent near the origin of $\ww C^{2}$ (\emph{e.g. }in $\mathbf{D\left(0,r\right)})$.
Consequently, with an argument of uniform convergence in every compact
subset, we have for all $j_{1},j_{2}\geq0$:
\begin{eqnarray*}
 &  & x^{2}\ddd{\psi_{j_{1};j_{2}}}x\left(x\right)+\left(j_{2}-j_{1}+\left(a_{1}\left(j_{1}-1\right)+a_{2}\left(j_{2}-1\right)\right)x\right)\psi_{j_{1},j_{2}}\left(x\right)\\
 &  & =\left(j_{1}-j_{2}\right)\sum_{l=1}^{\min\left(j_{1},j_{2}\right)}\psi_{j_{1}-l,j_{2}-l}\left(x\right)c_{l}\,\,\,.
\end{eqnarray*}
For $j_{1}=j_{2}=j\geq1$, we have 
\[
\psi_{j,j}\left(x\right)=b_{j,j}x^{-\left(j-1\right)\left(a_{1}+a_{2}\right)},\qquad b_{j,j}\in\ww C.
\]
Since $\Re\left(a_{1}+a_{2}\right)>0$, the function $x\mapsto\psi_{j,j}\left(x\right)$
is bounded near the origin if and only if $b_{j,j}=0$ or $j=1$.
For $j_{1}>j_{2}$, we see recursively that $\psi_{j_{1},j_{2}}\left(x\right)=0$.
Indeed, we obtain by induction that 
\[
\psi_{j_{1},j_{2}}\left(x\right)=b_{j_{1},j_{2}}\exp\left(\frac{j_{2}-j_{1}}{x}\right)x^{-\left(a_{1}\left(j_{1}-1\right)+a_{2}\left(j_{2}-1\right)\right)}\qquad,
\]
and since it has to be bounded on $S_{\pm}$, we necessarily have
$b_{j_{1},j_{2}}=0$. Similarly, for $j_{1}<j_{2}$, we see recursively
that $\psi_{j_{1},j_{2}}\left(x\right)=0$. As a conclusion, $\psi\left(x,\mathbf{y}\right)=b_{1,1}y_{1}y_{2}=y_{1}y_{2}$
(we must have $b_{1,1}=1$ since $\phi$ is tangent to the identity). 

We can now solve separately each equation in $\left(\mbox{\ref{eq: isot sect}}\right)$:
\[
\begin{cases}
x^{2}\ppp{\phi_{1}}x+\left(-1-c\left(y_{1}y_{2}\right)+a_{1}x\right)y_{1}\ppp{\phi_{1}}{y_{1}}+\left(1+c\left(y_{1}y_{2}\right)+a_{2}x\right)y_{2}\ppp{\phi_{1}}{y_{2}}\\
=\phi_{1}\left(-1-c\left(y_{1}y_{2}\right)+a_{1}x\right)\\
x^{2}\ppp{\phi_{2}}x+\left(-1-c\left(y_{1}y_{2}\right)+a_{1}x\right)y_{1}\ppp{\phi_{2}}{y_{1}}+\left(1+c\left(y_{1}y_{2}\right)+a_{2}x\right)y_{2}\ppp{\phi_{2}}{y_{2}}\\
=\phi_{2}\left(1+c\left(y_{1}y_{2}\right)+a_{2}x\right) & \,\,\,.
\end{cases}
\]
As above for $i=1,2$ we can write 
\[
{\displaystyle \phi_{i}\left(x,\mathbf{y}\right)=\sum_{j_{1}+j_{2}\geq1}}\phi_{i,j_{1},j_{2}}\left(x\right)y_{1}^{j_{1}}y_{2}^{j_{2}}\,\,,
\]
 where $\phi_{i,j_{1},j_{2}}\left(x\right)$ is analytic and bounded
in $S_{\pm}$ for all $j_{1},j_{2}\geq0$ and such that 
\[
{\displaystyle \sum_{j_{1}+j_{2}\geq1}\left(\underset{x\in S_{\pm}}{\sup}\left(\abs{\phi_{i,j_{1},j_{2}}\left(x\right)}\right)\right)y_{1}^{j_{1}}y_{2}^{j_{2}}}
\]
 is a convergent entire series near the origin of $\ww C^{2}$ (\emph{e.g.
}in $\mathbf{D\left(0,r\right)})$. As above, using the uniform convergence
in every compact subset and identifying terms of same homogeneous
degree $\left(j_{1},j_{2}\right)$, we obtain:
\[
\begin{cases}
{\displaystyle x^{2}\ddd{\phi_{1,j_{1};j_{2}}}x\left(x\right)+\left(j_{2}-j_{1}+1+\left(a_{1}\left(j_{1}-1\right)+a_{2}j_{2}\right)x\right)\phi_{1,j_{1},j_{2}}\left(x\right)}\\
{\displaystyle =\sum_{l=1}^{\min\left(j_{1},j_{2}\right)}\phi_{1,j_{1}-l,j_{2}-l}\left(x\right)\left(j_{1}-j_{2}-1\right)c_{l}}\\
{\displaystyle x^{2}\ddd{\phi_{2,j_{1};j_{2}}}x\left(x\right)+\left(j_{2}-j_{1}-1+\left(a_{1}j_{1}+a_{2}\left(j_{2}-1\right)\right)x\right)\phi_{2,j_{1},j_{2}}\left(x\right)}\\
{\displaystyle =\sum_{l=1}^{\min\left(j_{1},j_{2}\right)}\phi_{2,j_{1}-l,j_{2}-l}\left(x\right)\left(j_{1}-j_{2}+1\right)c_{l}\qquad.}
\end{cases}
\]
From this we deduce: 
\[
\begin{cases}
\phi_{1,1,0}\left(x\right)=p_{1,0}\in\ww C\backslash\acc 0\\
\phi_{2,0,1}\left(x\right)=q_{0,1}\in\ww C\backslash\acc 0
\end{cases}
\]
with $p_{1,0}q_{0,1}=1$. Then, using the assumption that $\phi_{i,j_{1},j_{2}}\left(x\right)$
is analytic and bounded in $S_{\pm}$ for all $j_{1},j_{2}\geq0$,
we see (by induction on $j\geq1$) that
\[
\forall j\geq1\,\,\begin{cases}
\phi_{1,j+1,j}=0\\
\phi_{2,j,j+1}=0
\end{cases}\qquad.
\]
Indeed, we show recursively that for all $j\geq1$, we have:
\[
x^{2}\ddd{\phi_{1,j+2,j+1}}x\left(x\right)+\left(j+1\right)\left(a_{1}+a_{2}\right)x\phi_{1,j+2,j+1}\left(x\right)=0\,\,,
\]
 and the general solution to this equation is:
\[
\phi_{1,j+2,j+1}\left(x\right)={\displaystyle p_{j+2,j+1}x^{-\left(j+1\right)\left(a_{1}+a_{2}\right)}}\,\,,\,\,\mbox{with }p_{j+2j+1}\in\ww C\,\,.
\]
The quantity $\phi_{1,j+2,j+1}\left(x\right)$ is bounded near the
origin if and only if $p_{j+2,j+1}=0$, since $\Re\left(a_{1}+a_{2}\right)>0$.
The same arguments work for $\phi_{2,j,j+1}$, $j\geq1$. Consequently:
\[
{\displaystyle \begin{cases}
{\displaystyle x^{2}\ddd{\phi_{1,j_{1};j_{2}}}x\left(x\right)+\left(j_{2}-j_{1}+1+\left(a_{1}\left(j_{1}-1\right)+a_{2}j_{2}\right)x\right)\phi_{1,j_{1},j_{2}}\left(x\right)}\\
{\displaystyle =\left(j_{1}-j_{2}-1\right)\sum_{l=1}^{\min\left(j_{1},j_{2}\right)}\phi_{1,j_{1}-l,j_{2}-l}\left(x\right)c_{l}}\\
{\displaystyle x^{2}\ddd{\phi_{2,j_{1};j_{2}}}x\left(x\right)+\left(j_{2}-j_{1}-1+\left(a_{1}j_{1}+a_{2}\left(j_{2}-1\right)\right)x\right)\phi_{2,j_{1},j_{2}}\left(x\right)}\\
{\displaystyle =\left(j_{1}-j_{2}+1\right)\sum_{l=1}^{\min\left(j_{1},j_{2}\right)}\phi_{2,j_{1}-l,j_{2}-l}\left(x\right)c_{l}\qquad.}
\end{cases}}
\]
 Once again, we see recursively that for $j_{1}>j_{2}+1$, $\phi_{1,j_{1},j_{2}}\left(x\right)=0$.
Indeed, we obtain by induction that 
\[
\phi_{1,j_{1},j_{2}}\left(x\right)=p_{j_{1},j_{2}}\exp\left(\frac{j_{2}-j_{1}+1}{x}\right)x^{-\left(a_{1}\left(j_{1}-1\right)+a_{2}j_{2}\right)}\qquad,
\]
and since this has to be bounded on $S_{\pm}$, we necessarily have
$p_{j_{1},j_{2}}=0$, and therefore $\phi_{1,j_{1},j_{2}}\left(x\right)=0$.
Similarly, for $j_{1}<j_{2}+1$, we prove that $\phi_{j_{1},j_{2}}\left(x\right)=0$.
As a conclusion, $\phi_{1}\left(x,\mathbf{y}\right)=y_{1}$. By exactly
the same kind of arguments we have $\phi_{2}\left(x,\mathbf{y}\right)=y_{2}$.
\end{proof}

\subsection{\label{sub: Weak 1-summability of normalization}Weak 1-summability
of the normalizing map}

~

Let us consider the same data as in Lemma \ref{lem: solution eq homo}.
The following lemma states that an analytic solution to the considered
homological equation in $\cal S_{\pm}\in\cal S_{\arg\left(\pm i\lambda\right),\eta}$
with $\eta\in\big[\pi,2\pi\big[$, admits a weak Gevrey-1 asymptotic
expansion in this sector. In other words, it is the weak 1-sum of
a formal solution the homological equation. Let us re-use the notations
introduced at the beginning of the latter section \ref{sec:Sectorial-analytic-normalization}.
\begin{lem}
\label{lem: weak summability homo equation}Let 

\[
Z:=Y_{0}+C\left(x,\mathbf{y}\right)\overrightarrow{\cal C}+xR^{\left(1\right)}\left(x,\mathbf{y}\right)\overrightarrow{\cal R}\,\,
\]
be a formal vector field weakly 1-summable in $\cal S_{\pm}\in\cal S_{\arg\left(\pm i\lambda\right),\eta}$,
with $\eta\in\big[\pi,2\pi\big[$ and $C,R^{\left(1\right)}$ of order
at least one with respect to $\mathbf{y}$. We denote by 
\[
Z_{\pm}:=Y_{0}+C_{\pm}\left(x,\mathbf{y}\right)\overrightarrow{\cal C}+xR_{\pm}^{\left(1\right)}\left(x,\mathbf{y}\right)\overrightarrow{\cal R}
\]
the associate weak 1-sum in $\cal S_{\pm}$. Let also $A\in\form{x,\mathbf{y}}$
be weakly 1-summable in $\cal S_{\pm}$, of 1-sum $A_{\pm}$ and of
order at least one with respect to $\mathbf{y}$. Then, any sectorial
germ of an analytic function of the form $\alpha_{\pm}\left(x,\mathbf{y}\right)=x^{M}\tilde{\alpha}_{\pm}\left(x,\mathbf{y}\right)$
, with $M\in\ww N_{>0}$ and $\tilde{\alpha}_{\pm}$ analytic in $\cal S_{\pm}$,
which is dominated by $\norm{\mathbf{y}}_{\infty}$ and satisfies
\[
\cal L_{Z_{\pm}}\left(\alpha_{\pm}\right)=x^{M+1}A_{\pm}\left(x,\mathbf{y}\right)\qquad,
\]
has a Gevrey-1 asymptotic expansion in $\cal S_{\pm}$, denoted by
$\alpha$. Moreover, $\alpha$ is a formal solution to
\[
\cal L_{Z}\left(\alpha\right)=x^{M+1}A\left(x,\mathbf{y}\right)\qquad.
\]
\end{lem}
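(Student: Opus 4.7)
The approach is to expand every object as a Taylor series in $\mathbf{y}$ with coefficients depending on $x$, reducing the homological equation to a triangular hierarchy of scalar singular ODEs in $x$ indexed by $\mathbf{n}\in\ww N^{2}$. First I would write
\[
\alpha_{\pm}\left(x,\mathbf{y}\right)=\sum_{\abs{\mathbf{n}}\geq1}\alpha_{\pm,\mathbf{n}}\left(x\right)\mathbf{y}^{\mathbf{n}},\qquad A_{\pm}\left(x,\mathbf{y}\right)=\sum_{\abs{\mathbf{n}}\geq1}A_{\pm,\mathbf{n}}\left(x\right)\mathbf{y}^{\mathbf{n}},
\]
(and similarly for $C_{\pm}$, $R_{\pm}^{\left(1\right)}$), where each coefficient is analytic and bounded on the projection $S_{\pm}$ of $\cal S_{\pm}$, obtained by the Cauchy formula of subsection \ref{sub:Weak-Gevrey-1-power}. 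Extracting the coefficient of $\mathbf{y}^{\mathbf{n}}$ in $\cal L_{Z_{\pm}}\left(\alpha_{\pm}\right)=x^{M+1}A_{\pm}$ yields, for every $\mathbf{n}=\left(n_{1},n_{2}\right)$ with $\abs{\mathbf{n}}\geq1$, an ODE of the form
\[
x^{2}\ddd{\alpha_{\pm,\mathbf{n}}}x+\Big(\left(n_{2}-n_{1}\right)\lambda+\left(a_{1}n_{1}+a_{2}n_{2}\right)x\Big)\alpha_{\pm,\mathbf{n}}=\zeta_{\pm,\mathbf{n}}\left(x\right),
\]
where $\zeta_{\pm,\mathbf{n}}$ depends polynomially on the $\alpha_{\pm,\mathbf{k}}$ for $\abs{\mathbf{k}}<\abs{\mathbf{n}}$ and on the Taylor coefficients of $A_{\pm},C_{\pm},R_{\pm}^{\left(1\right)}$; by induction hypothesis and Lemma \ref{lem:diff_alg_and_summability} it is the $1$-sum in the direction $\theta:=\arg\left(\pm i\lambda\right)$ of some $\hat{\zeta}_{\mathbf{n}}\in\form x$.

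Next, I would construct by induction on $\abs{\mathbf{n}}$ a formal power series $\hat{\alpha}_{\mathbf{n}}\left(x\right)$, $1$-summable in direction $\theta$, whose $1$-sum coincides with $\alpha_{\pm,\mathbf{n}}$. In the non-resonant range $n_{1}\neq n_{2}$, the equation fits the irregular-singular framework of Proposition \ref{prop: solution borel sommable precise}(\ref{enu:Assume--and}) with $k=\left(n_{2}-n_{1}\right)\lambda\neq0$ and $\alpha=\left(a_{1}n_{1}+a_{2}n_{2}\right)/k$; the singularity $-k$ lies on $\lambda\wr$, which is orthogonal to the bisecting direction $\theta$ of a sector $\Delta_{\theta,\delta,\rho}$, so $d_{k}$ is bounded below by a positive multiple of $\abs{n_{2}-n_{1}}\abs{\lambda}$ uniformly in $\mathbf{n}$, and the proposition furnishes a unique $1$-summable $\hat{\alpha}_{\mathbf{n}}$. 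The uniqueness of a sectorial solution bounded in an $S_{\theta,\pi}$-neighborhood, also guaranteed by the proposition, forces the $1$-sum to be precisely $\alpha_{\pm,\mathbf{n}}$. In the resonant range $n_{1}=n_{2}\geq1$, the ODE reduces to $x^{2}\alpha_{\pm,\mathbf{n}}'+n_{1}\left(a_{1}+a_{2}\right)x\alpha_{\pm,\mathbf{n}}=\zeta_{\pm,\mathbf{n}}$; inductively $\zeta_{\pm,\mathbf{n}}$ is divisible by $x$ (a consequence of $\alpha_{\pm}=x^{M}\tilde{\alpha}_{\pm}$), so after dividing by $x$ we are in the regular-singular setting of Proposition \ref{prop: solution borel sommable precise}(\ref{enu:Assume--and-1}) with multiplier $n_{1}\left(a_{1}+a_{2}\right)$ of positive real part (by the strict non-degeneracy inherited from the context in which this lemma is applied), yielding again a matched $1$-summable $\hat{\alpha}_{\mathbf{n}}$.

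Finally, I would assemble $\alpha\left(x,\mathbf{y}\right):=\sum_{\abs{\mathbf{n}}\geq1}\hat{\alpha}_{\mathbf{n}}\left(x\right)\mathbf{y}^{\mathbf{n}}\in\form{x,\mathbf{y}}$. By construction every $\hat{\alpha}_{\mathbf{n}}$ is $1$-summable in direction $\theta$ with $1$-sum $\alpha_{\pm,\mathbf{n}}$, and $\sum_{\mathbf{n}}\alpha_{\pm,\mathbf{n}}\left(x\right)\mathbf{y}^{\mathbf{n}}=\alpha_{\pm}\left(x,\mathbf{y}\right)$ is analytic in the sectorial neighborhood $\cal S_{\pm}\in\cal S_{\theta,\pi}$; this is exactly the definition of weak $1$-summability, so $\alpha$ is weakly $1$-summable with weak $1$-sum $\alpha_{\pm}$. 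The formal identity $\cal L_{Z}\left(\alpha\right)=x^{M+1}A$ then follows from the analytic identity $\cal L_{Z_{\pm}}\left(\alpha_{\pm}\right)=x^{M+1}A_{\pm}$ by Corollary \ref{cor: summability push-forward faible} and the injectivity of weak $1$-summation on differential algebras given by Lemma \ref{lem: proprietes faibles}(\ref{enu: derivation faible}). The main obstacle I foresee is the middle step: one must verify that the Borel-norm estimate $(\ref{eq: inegalite norme borel})$ propagates along the induction with enough uniformity in $\mathbf{n}$, both to apply Proposition \ref{prop: solution borel sommable precise} at each step and to ensure that the matching $\mathcal{L}_{\theta}\left(\cal B\left(\hat{\alpha}_{\mathbf{n}}\right)\right)=\alpha_{\pm,\mathbf{n}}$ holds; here the orthogonality between $\wr\lambda$ and $\wr\left(i\lambda\right)$ is what keeps the distances $d_{k}$ and the constants $\beta$ controlled uniformly in $\mathbf{n}$.
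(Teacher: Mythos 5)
Your proposal follows essentially the same route as the paper's proof: expand in powers of $\mathbf{y}$, reduce to the triangular hierarchy of ODEs $x^{2}\alpha_{\mathbf{n}}'+\left(\lambda\left(n_{2}-n_{1}\right)+\left(a_{1}n_{1}+a_{2}n_{2}\right)x\right)\alpha_{\mathbf{n}}=G_{\mathbf{n}}$, and induct on $\abs{\mathbf{n}}$, invoking part (1) of Proposition \ref{prop: solution borel sommable precise} when $n_{1}\neq n_{2}$ and part (2) (after factoring out $x$ from the right-hand side) when $n_{1}=n_{2}$, with boundedness on $S_{\pm}$ pinning the $1$-sums to the given sectorial coefficients. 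The only remark worth making is that the uniformity in $\mathbf{n}$ of the Borel estimates, which you flag as the main obstacle, is not actually needed here: the conclusion is only \emph{weak} $1$-summability, for which the termwise $1$-summability of each $\hat{\alpha}_{\mathbf{n}}$ together with the already-given analyticity of $\alpha_{\pm}$ on $\cal S_{\pm}$ suffices, exactly as in the paper.
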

\begin{proof}
Let us write $Z$ as follow: 
\begin{eqnarray*}
Z & = & x^{2}\pp x+\left(-\left(\lambda+d\left(y_{1}y_{2}\right)\right)+a_{1}x+F_{1}\left(x,\mathbf{y}\right)\right)y_{1}\pp{y_{1}}\\
 &  & +\left(\lambda+d\left(y_{1}y_{2}\right)+a_{2}x+F_{2}\left(x,\mathbf{y}\right)\right)y_{2}\pp{y_{2}}\,\,,
\end{eqnarray*}
with $F_{1},F_{2}$ weakly 1-summable in $\cal S_{\pm}\in\cal S_{\arg\left(\pm i\lambda\right),\eta}$,
with $\eta\in\big[\pi,2\pi\big[$, of weak 1-sums $F_{1,\pm},F_{2,\pm}$
respectively, which are dominated by $\norm{\mathbf{y}}$, and with
$d\left(v\right)\in v\germ v$ without constant term. Consider the
Taylor expansion with respect to $\mathbf{y}$ of $d$,$F_{1}$,$F_{2}$,$A$
and $\alpha$:
\[
\begin{cases}
{\displaystyle d\left(y_{1}y_{2}\right)=\sum_{k\geq1}d_{k}y_{1}^{k}y_{2}^{k}}\\
{\displaystyle F_{1}\left(x,\mathbf{y}\right)=\sum_{j_{1}+j_{2}\geq1}F_{1,\mathbf{j}}\left(x\right)\mathbf{y^{j}}}\\
{\displaystyle F_{2}\left(x,\mathbf{y}\right)=\sum_{j_{1}+j_{2}\geq1}F_{2,\mathbf{j}}\left(x\right)\mathbf{y^{j}}}\\
{\displaystyle A\left(x,\mathbf{y}\right)=\sum_{j_{1}+j_{2}\geq1}A_{\mathbf{j}}\left(x\right)\mathbf{y^{j}}}\\
{\displaystyle \alpha\left(x,\mathbf{y}\right)=\sum_{j_{1}+j_{2}\geq1}\alpha_{\mathbf{j}}\left(x\right)\mathbf{y^{j}}}
\end{cases}
\]
(same expansions are valid in $\cal S_{\pm}$ for the corresponding
weak 1-sums). As usual, possibly by reducing $\cal S_{\pm}$, we can
assume that $\cal S_{\pm}=S_{\pm}\times\mathbf{D\left(0,r\right)}$
(where $S_{\pm}$ is an adapted sector and $\mathbf{D\left(0,r\right)}$
a polydisc). The homological equation 
\[
\cal L_{Z}\left(\alpha\right)=x^{M+1}A_{\pm}\left(x,\mathbf{y}\right)
\]
can be re-written:
\begin{eqnarray*}
x^{2}\ppp{\alpha}x+\left(-\left(\lambda+d\left(y_{1}y_{2}\right)\right)+a_{1}x+F_{1,\pm}\left(x,\mathbf{y}\right)\right)y_{1}\ppp{\alpha}{y_{1}}\\
+\left(\lambda+d\left(y_{1}y_{2}\right)+a_{2}x+F_{2,\pm}\left(x,\mathbf{y}\right)\right)y_{2}\ppp{\alpha}{y_{2}} & = & x^{M+1}A_{\pm}\left(x,\mathbf{y}\right)\,\,.
\end{eqnarray*}
Using normal convergence in any compact subset of $\cal S_{\pm}$,
we can compute the partial derivatives of 
\[
{\displaystyle \alpha\left(x,\mathbf{y}\right)=\sum_{j_{1}+j_{2}\geq1}\alpha_{\mathbf{j}}\left(x\right)\mathbf{y^{j}}}
\]
 with respect to $x$, $y_{1}$ or $y_{2}$ term by term, in order
to obtain after identification: $\forall\mathbf{j}=\left(j_{1},j_{2}\right)\in\ww N^{2}$,
\begin{eqnarray*}
x^{2}\ddd{\alpha_{\mathbf{j},\pm}}x\left(x\right)+\left(\lambda\left(j_{2}-j_{1}\right)+\left(a_{1}j_{1}+a_{2}j_{2}\right)x\right)\alpha_{\mathbf{j},\pm}\left(x\right) & = & G_{\mathbf{j},\pm}\left(x\right)\,\,\,,
\end{eqnarray*}
where $G_{\mathbf{j},\pm}\left(x\right)$ depends only on $d_{k},F_{1,\mathbf{k},\pm},F_{2,\mathbf{k},\pm},\alpha_{\mathbf{k},\pm}$
and $A_{\mathbf{l},\pm}$, for $k\leq\min\left(j_{1},j_{2}\right)$,
$\abs{\mathbf{k}}\leq\mathbf{\abs j}-1$ and $\abs{\mathbf{l}}\leq\abs{\mathbf{j}}$.
We obtain a similar differential equation for the associated formal
power series. Let us prove by induction on $\abs{\mathbf{j}}\geq0$
that:
\begin{enumerate}
\item $G_{\mathbf{j},\pm}$ is the 1-sum of $G_{\mathbf{j}}$ in $S_{\pm}$,
\item $G_{j,j}\left(0\right)=0$ if $\mathbf{j}=\left(j,j\right)$
\item $\alpha_{\mathbf{j},\pm}$ is the 1-sum $\alpha_{\mathbf{j}}$ in
$S_{\pm}$.
\end{enumerate}
It is paramount to use the fact that for all $\mathbf{j}\in\ww N^{2}$,
$\alpha_{\mathbf{j},\pm}$ is bounded in $S_{\pm}$. 
\begin{itemize}
\item For $\mathbf{j}=\left(0,0\right)$, we have $G_{\left(0,0\right)}=0$
and then $\alpha_{\left(0,0\right)}=0$.
\item Let $\mathbf{j}=\left(j_{1},j_{2}\right)\in\ww N^{2}$ with $\abs{\mathbf{j}}=j_{1}+j_{2}\geq1$.
Assume the property holds for all $\mathbf{k}\in\ww N^{2}$ with $\abs{\mathbf{k}}\leq\abs{\mathbf{j}}-1$. 

\begin{enumerate}
\item Since $G_{\mathbf{j}}\left(x\right)$ depends only on $d_{k},F_{1,\mathbf{k}},F_{2,\mathbf{k}},\alpha_{\mathbf{k}}$
and $A_{\mathbf{l}}$, for $k\leq\min\left(j_{1},j_{2}\right)$, $\abs{\mathbf{k}}\leq\mathbf{\abs j}-1$
and $\abs{\mathbf{l}}\leq\abs{\mathbf{j}}$, then $G_{\mathbf{j}}$
is 1-summable in $S_{\pm}$, of 1-sum $G_{\mathbf{j},\pm}$. 
\item We also see that $G_{j,j}\left(0\right)=0$, if $\mathbf{j}=\left(j,j\right)$.
\item If $j_{1}\neq j_{2}$, then point 1. in Proposition \ref{prop: solution borel sommable precise}
tells us that there exists a unique formal solution $\alpha_{\mathbf{j}}\left(x\right)$
to the irregular differential equation we are looking at, and such
that $\alpha_{\mathbf{j}}\left(0\right)=\frac{1}{\lambda\left(j_{2}-j_{1}\right)}G_{\mathbf{j}}\left(0\right)$
. Moreover, this solution is 1-summable in $S_{\pm}$ since the same
goes for $G_{\mathbf{j}}$.\\

\item If however $j_{1}=j_{2}=j\geq1$, since $G_{\left(j,j\right)}\left(0\right)=0$
we can write $G_{\left(j,j\right)}\left(x\right)=x\tilde{G}_{\left(j,j\right)}\left(x\right)$
with $\tilde{G}_{\left(j,j\right)}\left(x\right)$ 1-summable in $S_{\pm}$,
and then the differential equation becomes regular: 
\[
x\ddd{\alpha_{\left(j,j\right),\pm}}x\left(x\right)+\left(a_{1}+a_{2}\right)j\alpha_{\left(j,j\right),\pm}\left(x\right)=\tilde{G}_{\left(j,j\right),\pm}\left(x\right)\,\,.
\]
Since $\Re\left(a_{1}+a_{2}\right)>0$, according to point 2. in Proposition
\ref{prop: solution borel sommable precise}, the latter equation
has a unique formal solution $\alpha_{\left(j,j\right)}\left(x\right)$
such that $\alpha_{\left(j,j\right)}\left(0\right)=\frac{\tilde{G}_{\left(j,j\right)}\left(0\right)}{\left(a_{1}+a_{2}\right)j}$,
and this solution is moreover 1-summable in $S_{\pm}$, and its 1-sum
is the only solution to this equation bounded in $S_{\pm}$. Thus,
it is necessarily $\alpha_{\left(j,j\right),\pm}$.
\end{enumerate}
\end{itemize}
\end{proof}
We are now able to prove the weak 1-summability of the formal normalizing
map.
\begin{prop}
\label{prop: Weak sectorial normalizations}The sectorial normalizing
maps $\left(\Phi_{+},\Phi_{-}\right)$ in Corollary \ref{cor: existence normalisations sectorielles}
are the weak 1-sums in $\cal S_{\pm}\in\cal S_{\arg\left(\pm\lambda\right),\eta}$
of the formal normalizing map $\hat{\Phi}$ given by Theorem \ref{thm: forme normalel formelle},
for all $\eta\in\big[\pi,2\pi\big[$. In particular, $\hat{\Phi}$
is weakly 1-summable, except for $\arg\left(\pm\lambda\right)$.\end{prop}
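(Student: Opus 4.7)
The strategy is to combine the 1-summable normalization up to arbitrary finite order (Proposition \ref{prop: forme pr=0000E9par=0000E9e ordre N}) with the sectorial refinement constructed in Section \ref{sec:Sectorial-analytic-normalization}, and to use Lemma \ref{lem: weak summability homo equation} to transfer the weak Gevrey-1 asymptotic behavior from the solutions of the homological equations to the resulting sectorial conjugacy.

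More precisely, I would proceed as follows. Fix an integer $N\geq 2$ and apply Proposition \ref{prop: forme pr=0000E9par=0000E9e ordre N} to obtain a formal fibered diffeomorphism $\Psi^{(N)}$, tangent to the identity and 1-summable in every direction $\theta\neq\arg(\pm\lambda)$, conjugating $Y$ to a vector field $Y^{(N)}$ normalized up to order $N$. Denote by $\Psi^{(N)}_{\pm}$ its 1-sums in directions $\arg(\pm i\lambda)$. Next, apply Propositions \ref{prop: radial part} and \ref{prop: tangential part} to $Y^{(N)}_{\pm}$ (with $N$ replaced by $N+2$ by first truncating further, or equivalently applying the lemmas with $M=N+1$ and $M=N$) to obtain $\Psi_\pm\in\diffsect[\arg(\pm i\lambda)][\eta]$ of the form $\Psi_\pm(x,\mathbf{y})=\tx{Id}(x,\mathbf{y})+x^{N}\mathbf{P}^{(N)}_{\pm}(x,\mathbf{y})$ conjugating $Y^{(N)}_{\pm}$ to $\ynorm$. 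By construction, the sectorial normalizing map of Corollary \ref{cor: existence normalisations sectorielles} is $\Phi_\pm=\Psi_\pm\circ\Psi^{(N)}_\pm$.

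The key step is to show that $\Psi_\pm$ admits a weak Gevrey-1 asymptotic expansion $\hat{\Psi}$ in $\cal S_\pm$. Recall that in the proof of Proposition \ref{prop: norm sect}, the components of $\Psi_\pm$ are built from exponentials of two sectorial functions $\rho_\pm(x,\mathbf{y})=x^{N+1}\tilde\rho_\pm(x,\mathbf{y})$ and $\chi_\pm(x,\mathbf{y})=x^{N}\tilde\chi_\pm(x,\mathbf{y})$, each obtained as a solution of the homological equation of Lemma \ref{lem: solution eq homo}. Applying Lemma \ref{lem: weak summability homo equation} to these two homological equations (note that $Y^{(N)}$ and the right-hand sides are 1-summable, in particular weakly 1-summable, in the relevant directions), $\rho_\pm$ and $\chi_\pm$ admit weak Gevrey-1 asymptotic expansions $\hat{\rho},\hat{\chi}\in x^{N+1}\form{x,\mathbf{y}}$ and $x^{N}\form{x,\mathbf{y}}$, respectively, that are formal solutions of the corresponding formal homological equations. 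Using the composition result for weak 1-sums (Proposition \ref{prop: composition faible}) together with Lemma \ref{lem: proprietes faibles}\eqref{enu: derivation faible}, $\Psi_\pm$ itself admits a weak Gevrey-1 asymptotic expansion $\hat{\Psi}$ of the form $\tx{Id}+x^N\hat{\mathbf{P}}^{(N)}$, and $\hat{\Psi}$ formally conjugates $Y^{(N)}$ to $\ynorm$.

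Now $\hat{\Psi}\circ\hat{\Psi}^{(N)}$ is a formal fibered diffeomorphism tangent to the identity that conjugates $Y$ to $\ynorm$. By the uniqueness part of Theorem \ref{thm: forme normalel formelle}, it must coincide with $\hat{\Phi}$. Finally, applying Corollary \ref{cor: summability push-forward faible} (weak version) or directly Proposition \ref{prop: composition faible} to the composition $\Phi_\pm=\Psi_\pm\circ\Psi^{(N)}_\pm$ (the hypothesis $\hat{\Psi}^{(N)}(x,\mathbf{0})=\mathbf{0}$ is needed; this is guaranteed because $\Psi^{(N)}$ is fibered and tangent to the identity, so the dependent-variable components vanish at $\mathbf{y}=\mathbf{0}$, which follows from the structure of the various steps in Proposition \ref{prop: forme pr=0000E9par=0000E9e ordre N}), we conclude that $\Phi_\pm$ admits $\hat{\Phi}$ as weak Gevrey-1 asymptotic expansion in $\cal S_\pm$. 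The last assertion of the proposition then follows by definition of weak 1-summability.

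The main obstacle is the identification step at the level of formal objects: one must verify carefully that $\hat{\Psi}$, assembled from the weak asymptotic expansions of $\rho_\pm,\chi_\pm$, is indeed a formal fibered diffeomorphism that formally conjugates $Y^{(N)}$ to $\ynorm$, and that the composition $\hat{\Psi}\circ\hat{\Psi}^{(N)}$ satisfies the hypotheses (in particular the vanishing on $\acc{\mathbf{y}=\mathbf{0}}$ required by Proposition \ref{prop: composition faible}) so that the weak 1-sum of the composition equals the composition of the weak 1-sums. Once this bookkeeping is settled, uniqueness in Theorem \ref{thm: forme normalel formelle} provides the desired identification $\hat{\Psi}\circ\hat{\Psi}^{(N)}=\hat{\Phi}$ with no additional work.
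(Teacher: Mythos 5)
Your proposal follows essentially the same route as the paper: decompose $\Phi_{\pm}$ as the composition of the 1-summable map of Proposition \ref{prop: forme pr=0000E9par=0000E9e ordre N} with the sectorial maps of Propositions \ref{prop: radial part} and \ref{prop: tangential part}, apply Lemma \ref{lem: weak summability homo equation} to the homological equations defining $\rho_{\pm}$ and $\chi_{\pm}$, compose via Proposition \ref{prop: composition faible}, and identify the resulting formal expansion with $\hat{\Phi}$ through the uniqueness statement of Theorem \ref{thm: forme normalel formelle}. One caveat: your justification of the vanishing hypothesis in Proposition \ref{prop: composition faible} (``fibered and tangent to the identity implies the dependent components vanish at $\mathbf{y}=\mathbf{0}$'') is not literally correct, since the center-manifold straightening inside Proposition \ref{prop: diag prep} contributes translation terms $\hat{y}_{i}\left(x\right)$ independent of $\mathbf{y}$; this bookkeeping point is glossed over in the paper's own proof as well and does not alter the overall argument.
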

\begin{proof}
The normalizing map $\Phi_{\pm}$ from Corollary \ref{cor: existence normalisations sectorielles}
is constructed as the composition of two germs of sectorial diffeomorphisms,
using successively Propositions \ref{prop: forme pr=0000E9par=0000E9e ordre N}
and \ref{prop: norm sect}. The sectorial map obtained in Proposition
\ref{prop: forme pr=0000E9par=0000E9e ordre N} is 1-summable except
in directions ${\displaystyle \arg\left(\pm\lambda\right)}$. The
sectorial transformation in Proposition \ref{prop: norm sect} is
constructed as the composition of two germs of sectorial diffeomorphisms,
using successively Proposition \ref{prop: radial part} and \ref{prop: tangential part}.
Both of these two sectorial maps are built thanks to Lemma \ref{lem: solution eq homo}.
Lemma \ref{lem: weak summability homo equation} above justifies that
each of these maps admits in fact a weak Gevrey-1 asymptotic expansion
in a domain of the form $\cal S_{\pm}\in\cal S_{\arg\left(\pm\lambda\right),\eta}$,
for all $\eta\in\big[\pi,2\pi\big[$. Consequently, the same goes
for the sectorial diffeomorphisms of Proposition \ref{prop: norm sect},
and then for those of Corollary \ref{cor: existence normalisations sectorielles}
(we used here Proposition \ref{prop: composition faible} for the
composition). 

Using item \ref{enu: derivation faible} in Lemma \ref{lem: proprietes faibles},
we deduce that the weak Gevrey-1 asymptotic expansion of the sectorial
normalizing maps of Corollary \ref{cor: existence normalisations sectorielles}
is therefore a formal normalizing map, such as the one given by Theorem
\ref{thm: forme normalel formelle}. By uniqueness of such a normalizing
map, it is $\hat{\Phi}$.
\end{proof}

\subsection{Proof of Theorem \ref{Th: Th drsn}.}

~

We can now prove Theorem \ref{Th: Th drsn}.
\begin{proof}[Proof of Theorem \ref{Th: Th drsn}.]

The existence of $\Phi_{+}$ and $\Phi_{-}$ is obtained in Corollary
\ref{cor: existence normalisations sectorielles}. The uniqueness
is given by \ref{prop: unique normalizations}. The weak 1-summability
is proved in thanks to Proposition \ref{prop: Weak sectorial normalizations}.
\end{proof}
\bibliographystyle{alpha}
\bibliography{references_preprint_AIF}

\end{document}